\pgfplotsset{compat=1.10}
\tikzset{
    >=stealth,
    every picture/.style={thick},
    graphs/every graph/.style={empty nodes},
}
\tikzstyle{vertex}=[
\tikzstyle{printersafe}=[decoration={snake,amplitude=0pt}]
\newcommand{\msp}{\mathsf p}
\newcommand{\msq}{\mathsf q}
\newcommand{\dd}{\mathop{}\!\mathrm{d}}
\newcommand{\ZZ}{{\mathbb Z}}
\newcommand{\lct}{\operatorname{lct}}
\newcommand{\Proj}{\operatorname{Proj}}
\newcommand{\PP}{{\mathbb P}}
\newcommand{\wt}{\operatorname{wt}}
\newcommand{\QQ}{{\mathbb Q}}
\newcommand{\CC}{{\mathbb C}}
\newcommand{\mult}{\operatorname{mult}}
\newcommand{\bA}{{\mathbb A}}
\newcommand{\bZ}{{\mathbb Z}}
\newcommand{\bN}{{\mathbb N}}
\newcommand{\bQ}{{\mathbb Q}}
\newcommand{\bP}{{\mathbb P}}
\newcommand{\bR}{{\mathbb R}}
\newcommand{\bG}{\mathbb{G}}
\newcommand{\cF}{\mathcal{F}}
\newcommand{\cE}{\mathcal{E}}
\newcommand{\cX}{\mathcal{X}}
\newcommand{\cD}{\mathcal{D}}
\newcommand{\cH}{\mathcal{H}}
\newcommand{\cO}{\mathcal{O}}
\newcommand{\cP}{\mathcal{P}}
\newcommand{\cW}{\mathcal{W}}
\newcommand{\GL}{\mathrm{GL}}
\newcommand{\pr}{\mathrm{pr}}
\newcommand{\bmu}{\bm{\mu}}
\newcommand{\Aut}{\mathrm{Aut}}
\newcommand{\Fut}{\mathrm{Fut}}
\newcommand{\rank}{\mathrm{rank}\,}
\newtheorem{theorem}{Theorem}[section]
\newtheorem{lemma}[theorem]{Lemma}
\newtheorem{proposition}[theorem]{Proposition}
\newtheorem{definition}[theorem]{Definition}
\newtheorem{example}[theorem]{Example}
\newtheorem{corollary}[theorem]{Corollary}
\newtheorem{remark}[theorem]{Remark}
\numberwithin{equation}{section}
\DeclareMathOperator{\ord}{ord}
\DeclareMathOperator{\vol}{vol}
\def\Q{\mathbb{Q}}
\theoremstyle{remark}
\numberwithin{equation}{section}
\keywords{}
\subjclass[]{}
\begin{document}

\title[Wall-crossing for K-MODULI SPACES of WEIGHTED PROJECTIVE HYPERSURFACES]{Wall-crossing for K-MODULI SPACES of certain families of WEIGHTED PROJECTIVE HYPERSURFACES}

\begin{abstract}
We describe the K-moduli spaces of  weighted hypersurfaces of degree $2(n+3)$ in $\mathbb{P}(1,2,n+2,n+3)$.  We show that the K-polystable limits of these weighted hypersurfaces are also weighted hypersurfaces of the same degree in the same weighted projective space. This is achieved by an explicit study of the wall crossing for K-moduli spaces $M_w$ of certain log Fano pairs with coefficient $w$ whose double cover gives the weighted hypersurface. Moreover, we show that the wall crossing of $M_w$ coincides with variation of GIT except at the last K-moduli wall which gives a divisorial contraction. Our K-moduli spaces provide new birational models for some natural loci in the moduli space of marked hyperelliptic curves.
\end{abstract}

\author{In-Kyun Kim}
\address{June E Huh Center for Mathematical Challenges, Korea Institute for Advanced Study, 85 Hoegiro Dongdaemun-gu, Seoul 02455, Republic of Korea.} \email{soulcraw@kias.re.kr}

\author{Yuchen Liu}
\address{Department of Mathematics, Northwestern University, 2033 Sheridan Rd, Evanston, IL 60208, USA} \email{yuchenl@northwestern.edu}

\author{Chengxi Wang}
\address{Yau Mathematical Sciences Center,
Jingzhai, Tsinghua University, Haidian District,
Beijing, China, 100084} \email{chxwang@tsinghua.edu.cn}

\maketitle

\setcounter{tocdepth}{1}
\tableofcontents

\section{Introduction}
The notion of K-stability of Fano manifolds is an important concept in algebraic geometry and differential geometry. In \cite{Tia97, Don02} it was introduced to study the existence of K\"ahler-Einstein metrics on Fano manifolds, which is an important problem in differential geometry. Moreover, K-stability of Fano manifolds has attracted significant attention due to its connections with several important problems in algebraic geometry and related fields.

In algebraic geometry, to construct moduli spaces, which parametrize families of algebraic varieties with certain properties, is an important problem. In the case of Fano varieties, the challenging problem of constructing a well-behaved moduli space has been solved as an outcome of the recent advances in the theory of K-stability. 
The algebraic construction for K-moduli stacks and spaces of  log Fano pairs is completed thanks to many people's work \cite{ABHLX20,BHLLX21,BLX22,BX19,CP21,Jia20,LWX21,LXZ22,Xu20,XZ20,XZ21}. Although the foundational theory of K-moduli spaces have been established, it remains an important question to understand K-moduli spaces of explicit Fano varieties. There has been much work on this topic, mostly focusing on K-moduli spaces coming from compactifications of smooth Fano varieties. See \cite{MM93, OSS16, SS17, LX19, Liu22, ADL22, ACD+, CDGF, DJKHQ24, LZ24} for a partial list of works along this direction. 

In this paper, we consider K-moduli spaces for certain families of weighted projective hypersurfaces that are not necessarily $\bQ$-Gorenstein smoothable. More precisely, we consider well-formed weighted projective hypersurfaces $H_{2(n+3)}$ of degree $2(n+3)$ in $\mathbb{P}(1,2,n+2,n+3)$. By \cite[Theorem 1.0.6]{KVNW}, such a hypersurface $H_{2(n+3)}$  is K-stable if it is quasi-smooth. Our main result gives a description of the  K-moduli space $\mathcal{F}_n$ parameterizing these K-stable quasi-smooth hypersurfaces $H_{2(n+3)}$ and their K-polystable limits. 

\begin{theorem}\label{thm:main}
Let $n$ be a positive integer. Then every K-polystable member of $\mathcal{F}_n$ is a weighted hypersurface of degree $2(n+3)$ in $\bP(1,2,n+2,n+3)_{x,y,z,t}$. Moreover, we have the following detailed description. Let \begin{equation*}
  \tilde{f}= t^2 + z^2y + azx^{n+4} + a_0x^{2n+6} + a_1x^{2n+4}y + \cdots + a_{n+3}y^{n+3}.
\end{equation*}

\begin{enumerate}
    \item If $n$ is odd, then the K-polystable members in the K-moduli space $\mathcal{F}_n$ are precisely all the hypersurfaces $H_{2(n+3)}:\tilde{f}=0$ 
    such that $\{a,a_0,a_1,\cdots,a_{\frac{n+3}{2}}\}$ are not all zero and $\{a_{\frac{n+3}{2}+1},\cdots,a_{n+3}\}$ are not all zero.
    \item If $n=2l$ is even, then the K-polystable members in the K-moduli space $\mathcal{F}_n$ are precisely all the hypersurfaces $H_{2(n+3)}:\tilde{f}=0$ such that one of the following holds:\begin{itemize}
         \item $\{a,a_0,\cdots,a_{l+1}\}$ are not all zero %$a_{l+2}$ is arbitrary, 
         and $\{a_{l+3},\cdots,a_{2l+3}\}$ are not all zero;
         \item $a=a_0=\cdots=a_{l+1}=0$, $a_{l+2}\neq0$ and $a_{l+3}=\cdots=a_{2l+3}=0$.
     \end{itemize}
   
\end{enumerate}

\end{theorem}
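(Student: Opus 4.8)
The plan is to reduce the description of $\cF_n$ to an explicit K-moduli wall-crossing which, away from one wall, is a variation of GIT, and then to read off the K-polystable representatives at the relevant coefficient.

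\textbf{Step 1: normal form and passage to a log Fano pair.} First I would put every member of $\cF_n$ into the stated shape. Completing the square in $t$ kills every monomial of degree one in $t$; then, after normalizing the coefficient of $z^2$ to be $y$ (via a substitution $y\mapsto y+cx^2$ and a rescaling), completing the square in $z$ kills the whole $z$-linear part except the monomial $azx^{n+4}$, since $x^{n+4}$ is the unique monomial of degree $n+4$ in $x,y$ not divisible by $y$. So the parameter space is $\bA^{n+5}$ with coordinates $(a,a_0,\dots,a_{n+3})$, acted on by the subgroup $G\subset\Aut(\bP(1,2,n+2,n+3))$ preserving the normal form, which one computes to be, up to a finite group, an algebraic torus of rank $3$ acting diagonally with explicit weights. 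Projecting away from $t$ realizes $H=\{\tilde f=0\}$ as the $\mu_2$-cover $\phi\colon H\to Y:=\bP(1,2,n+2)$ branched over $B=\{z^2y+azx^{n+4}+h(x,y)=0\}\in|\cO_Y(2n+6)|$, with $-K_H=\phi^\ast\cO_Y(2)$ and $-K_Y-\tfrac12B$ ample (equal to $\cO_Y(2)$ as a $\bQ$-divisor class). By the standard relation between K-stability of a cyclic cover and of the quotient pair (see \cite{KVNW} and references therein), K-(semi/poly)stability of $H$ is equivalent to $\mu_2$-equivariant, hence to ordinary, K-(semi/poly)stability of the log Fano pair $(Y,\tfrac12B)$; I would work with these pairs from now on, noting also that a further projection $Y\dashrightarrow\bP^1$ away from $z$ presents $B$ itself as a double cover of $\bP^1$ branched over the binary form $a^2x^{2n+8}-4yh(x,y)$ of degree $n+4$ in the coordinate $[x^2:y]$ — the hyperelliptic curve whose marked moduli the abstract refers to, and the object that $G$ genuinely moves.

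\textbf{Step 2: interpolation to a wall-crossing anchored at GIT.} I would fit $(Y,\tfrac12B)$ into a one-parameter family of log Fano pairs with coefficient $w$ — essentially $(Y,wB)$, or a birationally modified family so that the small-$w$ K-moduli makes sense — and form the K-moduli spaces $M_w$; functoriality of the double cover gives $M_{1/2}=\cF_n$. By finiteness of K-moduli walls together with the interpolation between K-stability and GIT for small boundary coefficient (see \cite{ADL22}), $M_w$ is constant on each of finitely many open chambers, one end of which is the GIT quotient $\bA^{n+5}/\!\!/_\chi G$ of the parameter space by the torus $G$, for an appropriate linearizing character $\chi$. This quotient is combinatorial: stability of $(a,a_0,\dots,a_{n+3})$ amounts to $\chi$ lying in the interior of the convex hull of the $G$-weights of the monomials that actually occur — the $x^{2n+6-2i}y^i$ with $a_i\neq0$, together with $zx^{n+4}$ when $a\neq0$ — which already produces conditions of the shape that $\{a,a_0,\dots,a_j\}$ are not all zero and $\{a_k,\dots,a_{n+3}\}$ are not all zero, i.e.\ bounds on the vanishing orders of the branch form at the two distinguished points $[1:0]$, $[0:1]$ of $\bP^1$ over which the singular points of $\bP(1,2,n+2)$ sit.

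\textbf{Step 3: crossing the walls.} At each wall I would identify the strictly K-semistable pairs and their K-polystable degenerations by producing the destabilizing one-parameter subgroups of $G$ — exactly the VGIT walls — together with the replacing polystable objects, and confirm the list is complete by estimating the stability threshold $\delta$ (equivalently the relevant $\beta$-invariants) on the weighted surface $Y$, via the Abban--Zhuang method or by direct Futaki-invariant computations for the candidate test configurations. Each such step matches the corresponding VGIT flip, except the last wall, which is a divisorial contraction rather than a flip: there the linearization reaches the boundary of its effective cone — equivalently the pairs reach the edge of the log Fano / klt region — and an entire divisor of GIT-stable pairs is contracted onto the locus of a new K-polystable object with larger automorphism group (which may fail to be quasi-smooth). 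I would pin this down by a Picard-rank and dimension count on $M_{1/2}$, an explicit identification of the contracted divisor and its image, and a check that the image still parametrizes weighted hypersurfaces of degree $2(n+3)$ in $\bP(1,2,n+2,n+3)$ — which is what forces the K-polystable limits to stay in the family.

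\textbf{Step 4: reading off the statement, and the main obstacle.} Translating the torus-GIT conditions at $w=\tfrac12$, as modified by the last contraction, back into $(a,a_0,\dots,a_{n+3})$ gives the theorem: the vanishing of $\{a,a_0,\dots,a_{(n+3)/2}\}$ (resp.\ of $\{a_{(n+3)/2+1},\dots,a_{n+3}\}$) is precisely the branch form being divisible by a power of $y$ (resp.\ of $x^2$) exceeding the GIT threshold at the corresponding distinguished point, and the two thresholds differ because the two points of $\bP^1$ are inequivalent — a feature already visible in the $azx^{n+4}$ term. When $n$ is odd, $n+4$ is odd, no torus-fixed corner configuration is GIT-semistable at $w=\tfrac12$, and the description is clean. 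When $n=2l$, $n+4$ is even and the monomial branch form $x^{2l+2}y^{l+3}$ — equivalently the hypersurface $t^2+z^2y+a_{l+2}x^{2l+2}y^{l+2}=0$ — is torus-fixed and lies on the GIT boundary at $w=\tfrac12$, hence is strictly semistable but polystable, producing the extra bullet; the shift by one in the index reflects the extra factor of $y$ in the branch form. I expect the main obstacle to be Step 3: carrying out the K-stability estimates at the walls in this non-$\bQ$-Gorenstein smoothable, weighted setting — determining the complete list of walls and their polystable replacements, and proving the last wall is a divisorial contraction rather than a flip, which is exactly where the even/odd dichotomy and the appearance of non-quasi-smooth K-polystable members are forced.
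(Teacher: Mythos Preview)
Your overall architecture matches the paper's: reduce via the double cover to a log Fano pair, run the K-moduli wall-crossing in the boundary coefficient $w$, identify $M_w$ with a torus-GIT quotient away from one exceptional wall, and read off $M_{1/2}=\cF_n$. Two points, however, need correction.

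\textbf{The even case needs a different quotient pair.} For $n=2l$ the target $\bP(1,2,n+2)$ is \emph{not well-formed} (the weights $2$ and $n+2$ share a common factor), so your ``project away from $t$'' does not land in a bona fide klt surface pair $(Y,\tfrac12B)$. The paper handles this by instead projecting $H_{2(n+3)}\to W'=\bP(1,1,l+1)_{u,y,z}$ via $[x,y,z,t]\mapsto[x^2,y,z]$, obtaining a double cover branched over $H+D$ with $H=(u=0)$, and studying the pairs $(W',\tfrac12H+wD)$. This is why the wall-crossing in the even case lives in the framework of \emph{two} proportional boundary components (one coefficient fixed at $\tfrac12$, the other varying), and it is also the mechanism that makes $w=\tfrac12$ sit exactly on a GIT wall ($w_{l+1}=\tfrac12$), producing the extra polystable point in the second bullet. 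Your heuristic about a torus-fixed monomial is pointing at the right phenomenon, but the actual computation has to be done on $W'$.

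\textbf{The last wall is irrelevant to Theorem~\ref{thm:main}.} You write that the GIT conditions at $w=\tfrac12$ must be ``modified by the last contraction'', and that checking the image of that contraction stays in the hypersurface family is what forces the K-polystable limits to be hypersurfaces. In the paper the last K-moduli wall is at $\xi_n>\tfrac12$ (for every $n$), so $M_{1/2}$ lies strictly before it and is \emph{equal} to the GIT moduli $M_{1/2}^{\mathrm{GIT}}$; the divisorial contraction at $\xi_n$ plays no role in $\cF_n$ whatsoever. Concretely: for $n$ odd, $\tfrac12\in(w_{(n+1)/2},\xi_n)$ is in an open GIT chamber, giving part~(1); for $n=2l$, $\tfrac12=w_{l+1}$ is a GIT wall, giving the two-bulleted description in part~(2). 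The fact that K-polystable limits remain weighted hypersurfaces of the same degree then follows immediately from $M_{1/2}\cong M_{1/2}^{\mathrm{GIT}}$, with no appeal to the last wall.
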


As Theorem \ref{thm:main} suggests, the K-moduli spaces $\cF_n$  parameterizes hypersurfaces of the same degree in the same  weighted projective space. There are other families of Fano (weighted) hypersurfaces or complete intersections whose K-moduli compactifications exhibit a similar behavior, see e.g. \cite{SS17, LX19, Liu22, LP20}. On the other hand, this phenomenon is far from true in general, see e.g. \cite{OSS16, ADL22, ACKLP}.

To prove Theorem \ref{thm:main}, we first reduce to K-moduli of log Fano pairs by double cover constructions. It is easy to observe that the projection $H_{2(n+3)}\to \bP(1,2,n+2)$ by $[x,y,z,t]\mapsto [x,y,z]$ shows that $H_{2(n+3)}$ is a double cover of $W = \bP(1,2,n+2)$ branched along a divisor $D$ of degree $2n+6$.
This description works well when $n$ is odd. If $n=2l$ is even, then the weighted projective space $\bP(1,2,n+2)$ is no longer well-formed, so we need to modify the setup. We may treat $W = (W', \frac{1}{2}H)$ as an orbifold, where $W' = \bP(1,1,l+1)_{u, y,z}$ and $H = (u=0)$. Then the map $H_{2(n+2)}\to W'$ by $[x,y,z,t]\mapsto [x^2, y, z]$ is a double cover branched along the divisor $H+D$ where $D$ has degree $2l+3$.
Since K-polystability is preserved under finite Galois covers \cite{LiuZhu, Zhu21}, we know that K-polystability of $H_{2(n+3)}$ is equivalent to that of $(W, \frac{1}{2}D)$ (resp. of $(W', \frac{1}{2}(H + D))$ when $n$ is odd (resp. even). Then without too much effort we can generalize this double cover construction to families and get an isomorphism between $\cF_n$ and the K-moduli space $M_{\frac{1}{2}}$ of pairs $(W, \frac{1}{2}D)$ or $(W', \frac{1}{2}(H+D))$ depending on the parity of $n$.

Next, we use the wall crossing framework for K-moduli of log Fano pairs developed in \cite{ADL19, Zh23, Zh23c} to study the K-moduli spaces $M_w$ of $(W, wD)$ (resp. $(W', \frac{1}{2}H+ wD)$) when $n$ is odd (resp. even) as the coefficient $w$ varies. This framework was initially introduced in \cite{ADL19} for $\bQ$-Gorenstein smoothable log Fano pairs $(X, cD)$ where $D$ is proportional to $-K_X$ based on \cite{LWX19} with analytic inputs from \cite{CDS15, Tia15}. Later, in \cite{Zh23} the $\bQ$-Gorenstein smoothable assumption was removed thanks to the algebraic proof of the K-moduli theorem. Recently, in \cite{Zh23c} this framework was generalized to multiple divisors such that each component is proportional to $-K_X$. Since the pairs we consider are not necessarily $\bQ$-Gorenstein smoothable and may contain two components, we will follow the setup of \cite{Zh23, Zh23c}. See also \cite{Fuj17, GMGS18, ADL21, ADL22, Zha22, Pap22, Zho23, PSW23} for examples on wall crossing for K-moduli spaces.

In our setting, the advantage of working with wall crossing of $M_w$ rather than focusing on the single K-moduli $M_{\frac{1}{2}}$ is that usually $M_w$ is simple when $w$ is small, and as $w$ increases to $\frac{1}{2}$ we can break down the complexity in $M_{\frac{1}{2}}$ into elementary transformations given by a sequence of wall-crossing diagrams. As it turns out, for a majority of the coefficients $w$, our K-moduli space $M_w$ is isomorphic to the corresponding variation of GIT moduli space; see Theorems \ref{isomorphism,GITandK} and \ref{isomorphism,GITandK,even}. Then after the last wall $\xi_n$, the K-moduli wall crossing extracts a divisor that parameterizes pairs of a different form; see Theorem \ref{intro,K-moduliparameterize,i}(6) and Theorem \ref{intro,K-moduliparameterize,even,i}(6).

We first describe the K-moduli spaces $M_w$ when $n$ is odd.  

\begin{theorem}
\label{intro,K-moduliparameterize,i} \
  Let $n$ be an odd positive integer. For $w\in (0, \frac{n+5}{2n+6})\cap \bQ$,  let $M_w$ be the K-moduli spaces  parameterizing K-polystable log Fano pairs $(W=\bP(1,2,n+2)_{x,y,z}, wD)$ and their K-polystable limits, where $D= (f=0)$ and 
\[
   f:= z^2y + azx^{n+4} + a_0x^{2n+6} + a_1x^{2n+4}y + \cdots + a_{n+3}y^{n+3}. 
\]
Then the walls for $M_w$ are given by 
  \[
  w_{i}:=\frac{(n+2)^2 - (2n+1)i}{(n+2)(2n+6)-(4n+6)i}, \quad \textrm{ where }0\leq i\leq \frac{n+3}{2},
  \]
  and 
\[
\xi_n:=\frac{n^3 + 11n^2 + 31n + 23}{2n^3 + 18n^2 + 50n + 42}.
\]
  Moreover, we have the following explicit description of $M_w$.
 \begin{enumerate}
    \item When $w\in (0, w_{0}=\frac{n+2}{2n+6})$, the K-moduli space $M_w$ is empty.
    \item The K-moduli space $M_{w_0}$ consists a single point parameterizing the K-polystable pair $(W,w_0 D)$ such that $a=a_0=\cdots=a_{n+2}=0$ and $a_{n+3}\neq 0$, i.e., $D:z^2y+a_{n+3}y^{n+3}=0$.
     \item The K-moduli space $M_{w_i}$ parameterizes all pairs $(W,w_i D)$ from one of the following two cases:
     \begin{itemize}
         \item $\{a,a_0,\cdots,a_{n+2-i}\}$ are not all zero, $a_{n+3-i}$ is arbitrary and $\{a_{n+4-i},\cdots,a_{n+3}\}$ are not all zero;
         \item $a=a_0=\cdots=a_{n+2-i}=0$, $a_{n+3-i}\neq0$, and $a_{n+4-i}=\cdots=a_{n+3}=0$. 
     \end{itemize}
     \item If $w\in (w_{i},w_{i+1})$ where $0\leq i\leq \frac{n-1}{2}$, then $M_w$ parameterizing all pairs $(W,wD)$ such that $\{a,a_0,\cdots,a_{n+2-i}\}$ are not all zero, and $\{a_{n+3-i},\cdots,a_{n+3}\}$ are not all zero.
     \item If $w\in (w_{\frac{n+1}{2}},\xi_n)$,
     then $M_w$ parameterizing all pairs $(W,w D)$ such that $\{a,a_0,\cdots,a_{\frac{n+3}{2}}\}$ are not all zero, and $\{a_{\frac{n+3}{2}+1},\cdots,a_{n+3}\}$ are not all zero.
     \item We describe the wall-crossing at $\xi_n$. Assume $D_{\ss}$ satisfies $a\neq 0$, $a_{n+3}\neq 0$ and has a type $A_{n+3}$-singularity (see \eqref{eq:Dss} for its equation). 
     Let $W_0=\PP(1,n+2,\frac{(n+3)^2}{2})_{x_0,x_1,x_2}$, $D_0$ be a curve defined by $x_2^2-x_0x_1^{n+4}=0$, and $D_1$ be a curve defined by \begin{equation*}
x^2_2-x_0x_1^{n+4}+b_{n+2}x_0^{2n+5}x_1^{n+2}+\cdots+b_{1}x_0^{(n+3)(n+2)+1}x_1+b_{0}x_0^{(n+4)(n+2)+1}=0,
\end{equation*} where coefficients $b_0,\cdots,b_{n+2}$ are not all zero.
Then there is a wall crossing diagram
\[
M_{\xi_n -\epsilon }\xrightarrow[\cong]{\phi^-} M_{\xi_n}\xleftarrow{\phi^+}M_{\xi_n +\epsilon}
\]
for $0<\epsilon \ll 1$ such that the following hold.
\begin{enumerate}
    \item $\phi^-$ is an isomorphism that only replaces $(W, D_{\ss})$ by $(W_0, D_0)$.
    \item $\phi^+$ is a divisorial contraction with only one exceptional divisor $E^+$. Moreover, we have $\phi^+(E^+)= \{[(W_0, D_0)]\}$, and $E^+$ parameterizes pairs $(W_0, D_1)$. 
\end{enumerate}
Moreover, $M_w \cong M_{\xi_n+\epsilon}$ for $w\in (\xi_n, \frac{n+5}{2n+6})$.
 \end{enumerate}      
\end{theorem}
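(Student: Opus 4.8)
I would combine the wall-crossing framework for K-moduli of log Fano pairs of \cite{Zh23, Zh23c} with an explicit variation-of-GIT (VGIT) analysis on the linear system of branch divisors, and then treat the single exceptional wall $\xi_n$ by hand. For Step~1, fix $W=\bP(1,2,n+2)_{x,y,z}$, let $G=\Aut W$ act on $\bP:=\bP(H^0(W,\cO_W(2n+6)))$, and let $V\cong\bA^{n+5}$ be the affine space of coefficients $(a,a_0,\dots,a_{n+3})$ of the divisors $D=(f=0)$ in the displayed normal form; $V$ meets every GIT-semistable $G$-orbit, because any degree-$2n+6$ divisor is brought to normal form by completing the square in $t$ and then applying the unipotent shifts $y\mapsto y+c\,x^2$ and $z\mapsto z+(\text{degree }n+2)$ to kill the $z^2x^2$ and $zy(\cdots)$ monomials. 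The reductive part of $G$ is a rank-$2$ torus $T$. Since $-(K_W+wD)$ is ample precisely for $w\in(0,\tfrac{n+5}{2n+6})$ and $D\sim_\bQ\tfrac{2n+6}{n+5}(-K_W)$, the CM $\bQ$-line bundle of the universal family over $\bP$ is, up to a positive rational multiple, a natural $G$-linearization $L_w$ depending affinely on $w$, which I would compute as in \cite{ADL19}. A Hilbert--Mumford computation for one-parameter subgroups of $T$ — the weight of each monomial of $f$ along $\lambda=(r_x,r_y,r_z)$ is linear in $\lambda$, so (de)stabilization is governed by a few extremal $\lambda$'s isolating the monomials $zx^{n+4}$, $x^{2n+6}$ and $y^{n+3}$ — then shows: $\bP/\!\!/_{L_w}G=\varnothing$ for $w<w_0$; it is a point at $w=w_0$; the critical values of $w$ are exactly the $w_i$ of the statement; in a chamber the GIT-stable locus is cut out by ``$\{a,a_0,\dots,a_{n+2-i}\}$ not all zero'' together with ``$\{a_{n+3-i},\dots,a_{n+3}\}$ not all zero'' (i.e.\ $D$ is not too singular at the two $T$-fixed points $[0\!:\!0\!:\!1]$ and $[0\!:\!1\!:\!0]$); and the polystable representatives at a wall $w_i$ are the $T$-fixed-up-to-scaling degenerations listed in (3).

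\textbf{Step 2 ($M_w\cong\bP/\!\!/_{L_w}G$ for $w<\xi_n$).} By the double cover description of the introduction and \cite[Theorem 1.0.6]{KVNW}, each quasi-smooth $D$ gives a K-stable pair $(W,\tfrac12 D)$; interpolation of K-(semi)stability in the coefficient \cite{ADL19} together with openness of K-(semi)stability in families then shows that every $(W,wD)$ with $D\in V$ GIT-stable is K-stable for the corresponding $w$. For the converse, properness of the K-moduli stack provides K-polystable limits, and I would show that for $w<\xi_n$ any K-semistable limit of our pairs still has underlying variety $W$ — equivalently, that no other $\bQ$-Gorenstein klt degeneration of $\bP(1,2,n+2)$ appears below $\xi_n$ — by a local-to-global K-stability argument reduced through the finite cover (using that K-polystability is preserved under Galois covers \cite{LiuZhu, Zhu21}). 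Identifying the CM polarization with $L_w$ and invoking uniqueness of good moduli spaces then gives $M_w\cong\bP/\!\!/_{L_w}G$ for every $w<\xi_n$ in a chamber, with the compatible statement at each $w_i$; this yields (1)--(5).

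\textbf{Step 3 (the wall $\xi_n$ and $\phi^\pm$).} The new phenomenon is that the GIT-polystable pair $(W,D_{\ss})$ with $a\neq0$, $a_{n+3}\neq0$ and an $A_{n+3}$-singularity stops being K-semistable at $w=\xi_n$. I would produce an explicit special test configuration of $(W,D_{\ss})$ — a weighted blow-up at the $A_{n+3}$-point — with central fiber $(W_0,D_0)$, where $W_0=\bP(1,n+2,\tfrac{(n+3)^2}{2})$ and $D_0=(x_2^2=x_0x_1^{n+4})$, and compute its generalized Futaki invariant as an affine function of $w$ vanishing exactly at $\xi_n$; this both gives the displayed formula for $\xi_n$ and shows $(W,D_{\ss})$ is K-stable for $w<\xi_n$ and strictly K-semistable, with K-polystable degeneration $(W_0,D_0)$, at $w=\xi_n$. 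Next I would compute, via Abban--Zhuang on $W_0$ (or Fujita's valuative criterion for the toric valuations over the singular and the smooth torus-fixed points of $W_0$), that $(W_0,wD_0)$ is K-polystable exactly at $w=\xi_n$, strictly K-semistable just above with K-polystable replacements the pairs $(W_0,D_1)$, and that $(W_0,wD_1)$ is K-stable for all $w\in(\xi_n,\tfrac{n+5}{2n+6})$; the $(n+3)$-parameter family of such $D_1$'s modulo $\Aut W_0$ is the divisor $E^+$, lying over $[(W_0,D_0)]$. Finally a Luna-slice analysis of the good moduli spaces at $[(W,D_{\ss})]$ and $[(W_0,D_0)]$ shows that the map to $M_{\xi_n}$ merely relabels the single point $[(W,D_{\ss})]$ as $[(W_0,D_0)]$, so $\phi^-$ is an isomorphism, while $\phi^+$ contracts exactly $E^+$ to $[(W_0,D_0)]$, so it is a divisorial contraction; as Step~1 leaves no further wall in $(\xi_n,\tfrac{n+5}{2n+6})$, $M_w\cong M_{\xi_n+\epsilon}$ there. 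This gives (6).

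\textbf{Main obstacle.} The hard part is Step~3: establishing the precise K-(semi/poly)stability of pairs supported on the quotient-singular threefold $W_0$ and pinning the numerical wall down to $\xi_n$. The cyclic quotient singularity of $W_0$ forces a careful local-to-global estimate of $\delta$ (equivalently $\beta$) — a refinement of filtrations in the Abban--Zhuang style combined with control of the singularities of $D_1$ — and identifying $\phi^+$ as a genuine divisorial contraction, rather than a flip or a more complicated birational transformation, relies on a deformation-theoretic computation at $[(W_0,D_0)]$ that must be carried out explicitly.
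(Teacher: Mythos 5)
Your strategy is essentially the paper's (torus VGIT on the coefficient space after the unipotent normal form, Paul--Tian/CM identification, interpolation and openness, and an explicit last-wall analysis with the same $W_0$, $D_0$, $D_1$, $E^+$), but there is a genuine gap at the decisive step of part (6), and a related underspecification in your Step 2. In Step 3 you claim that computing the generalized Futaki invariant of one special test configuration of $(W,D_{\ss})$ with central fiber $(W_0,D_0)$ ``shows $(W,D_{\ss})$ is K-stable for $w<\xi_n$ and strictly K-semistable at $w=\xi_n$.'' This does not follow: the sign of the Futaki invariant along a single test configuration is only a necessary condition for K-(semi)stability, so it can at best locate the wall from above (non-K-semistability of $(W,wD_{\ss})$ for $w>\xi_n$); it gives no K-semistability at or below $\xi_n$, hence no proof that the wall is not strictly earlier, and without that you cannot conclude that $M_w\cong M^{GIT}_w$ on all of $(w_{\frac{n+1}{2}},\xi_n)$, that $\phi^-$ is an isomorphism away from a single point, or that the wall is exactly $\xi_n$. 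This positive direction is precisely where the paper works hardest: Section \ref{NonCYcase} proves $\delta_{\msp}(W,wD_{\ss})\geq 1$ for $\frac12<w\leq\xi_n$ by Abban--Zhuang estimates at the $A_{n+3}$-point and at every other point (Proposition \ref{section5:main lemma}), and only then interpolation (Lemma \ref{theta,w,interpolation,polystable}) yields K-polystability on $(\frac{n+2}{2n+6},\xi_n)$. Your route could alternatively be repaired by genuinely constructing a special test configuration of the \emph{pair} $(W,\xi_n D_{\ss})$ with central fiber $(W_0,\xi_n D_0)$ and invoking Theorem \ref{openness} together with K-polystability of $(W_0,\xi_n D_0)$; but you assert rather than verify that the weighted blow-up at the $A_{n+3}$-point induces such a degeneration with the curve limiting to $D_0$ — this is not automatic, and the paper instead obtains $W\rightsquigarrow W_0$ abstractly via a mutation (Proposition \ref{prop:Dss-degenerate}) and identifies the polystable replacement only indirectly in Proposition \ref{prop:xi_n-wall}.

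Separately, in Step 2 the inference ``quasi-smooth members are K-stable by \cite{KVNW}, hence by interpolation and openness every GIT-stable $(W,wD)$ is K-stable'' is incomplete: interpolation requires K-semistability at \emph{both} endpoints of a coefficient interval, and openness only propagates to nearby divisors. The chamber-by-chamber description in (1)--(5) needs the endpoint inputs the paper supplies: K-polystability of the torus-invariant wall pairs $(W,w_{n+3-e}D_e)$ via the complexity-one criterion (Proposition \ref{evenpoly+polyanyn}, used through the explicit degenerations of Remark \ref{degenrates to K-poly}), and log canonicity of $(W,\frac{n+5}{2n+6}D)$ from the lct computations of Section \ref{section5:section number}. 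Finally, for the claim that $\phi^+$ is a divisorial contraction with $E^+$ exactly the locus of pairs $(W_0,D_1)$, a ``Luna slice/deformation'' argument must still rule out other surfaces and other curve types in $(\phi^+)^{-1}([(W_0,D_0)])$; the paper does this by the $\dim\Aut$ jump, the fact that $\xi_n$ is not a VGIT wall (so a $W$-surfaced polystable point there would contradict strict K-semistability), and properness of the $D_1$-locus — details your sketch acknowledges as the main obstacle but does not carry out.
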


Next, we describe the K-moduli spaces $M_w$ when $n$ is even.

\begin{theorem}\label{intro,K-moduliparameterize,even,i}
  Let $n=2l$ be an even positive integer. For $w\in (0, \frac{2l+5}{4l+6})\cap \bQ$,  let $M_w$ be the K-moduli spaces  parameterizing K-polystable log Fano pairs $(W'=\bP(1,1,l+1)_{u,y,z}, \frac{1}{2}H +wD)$ and their K-polystable limits, where $H=(u=0)$, $D= (f=0)$ and 
\[
f:= z^2y + azu^{l+2} + a_0u^{2l+3} + a_1u^{2l+2}y + \cdots + a_{2l+3}y^{2l+3}.
\]
Then the walls for $M_w$ are given by 
  \[
  w_{i}:=\frac{(n+2)^2 - (2n+1)i}{(n+2)(2n+6)-(4n+6)i}, \quad \textrm{ where }0\leq i\leq l+1,
  \]
  and
\[
\xi_{2l}:=\frac{2l^2 + 8l + 3}{4l^2 + 12l + 6}.
\]  
  Moreover, we have the following explicit description of $M_w$.
 \begin{enumerate}
     \item When $w\in (0, w_{0}=\frac{l+1}{2l+3})$, the K-moduli space $M_w$ is empty.
     \item The K-moduli space $M_{w_0}$ consists of a single point parameterizing the K-polystable pair $(W',\frac{1}{2}H +w_0 D)$ such that $a=a_0=\cdots=a_{2l+2}=0$ and $a_{2l+3}\neq 0$, i.e., $D:z^2y+a_{2l+3}y^{2l+3}=0$.
     \item  The K-moduli space $M_{w_i}$ parameterizes all pairs $(W',\frac{1}{2}H +w_i D)$ from one of the following two cases:
     \begin{itemize}
         \item $\{a,a_0,\cdots,a_{2l+2-i}\}$ are not all zero, $a_{2l+3-i}$ is arbitrary, and $\{a_{2l+4-i},\cdots,a_{2l+3}\}$ are not all zero;
         \item $a=a_0=\cdots=a_{2l+2-i}=0$, $a_{2l+3-i}\neq0$ and $a_{{2l+4-i}}=\cdots=a_{2l+3}=0$.
     \end{itemize}
     \item If $w\in (w_{i},w_{i+1})$ where $0\leq i\leq l$, then $M_w$ parameterizing all pairs $(W',\frac{1}{2}H +wD)$ such that $\{a,a_0,\cdots,a_{2l+2-i}\}$ are not all zero, and $\{a_{2l+3-i},\cdots,a_{2l+3}\}$ are not all zero.
     \item If $w\in (w_{l+1}, \xi_{2l})$,
     then $M_w$ parameterizing all pairs $(W',\frac{1}{2}H+wD)$ such that $\{a,a_0,\cdots,a_{l+1}\}$ are not all zero, and $\{a_{l+2},\cdots,a_{2l+3}\}$ are not all zero.
     \item We describe the wall-crossing at $\xi_{2l}$. Assume $D_{\ss}$ satisfies $a\neq 0$, $a_{2l+3}\neq 0$ and has a type $A_{2l+3}$-singularity (see \eqref{eq:Dss-even} for its equation). 
     Let $H_0:y=0$ and $D_1$ be a curve defined by
     \begin{equation*}
z^2 y - z u^{l+2} + b_1 u^{2l+2} y + b_2 u^{2l+1} y^2 + \cdots + b_{2l+3} y^{2l+3} = 0,
\end{equation*}
where coefficients $b_1,\cdots,b_{2l+3}$ are not all zero. 
Then there is a wall crossing diagram
\[
M_{\xi_{2l} -\epsilon }\xrightarrow[\cong]{\phi^-} M_{\xi_{2l}}\xleftarrow{\phi^+}M_{\xi_{2l} +\epsilon}
\]
for $0<\epsilon \ll 1$ such that the following hold.
\begin{enumerate}
    \item $\phi^-$ is an isomorphism that only replaces $(W', \frac{1}{2}H+D_{\ss})$ by $(W', \frac{1}{2}H_0+D_0)$.
    \item $\phi^+$ is a divisorial contraction with only one exceptional divisor $E^+$. Moreover, we have $\phi^+(E^+)= \{[(W', \frac{1}{2}H_0+D_0)]\}$, and $E^+$ parameterizes pairs $(W', \frac{1}{2}H_0+D_1)$. 
\end{enumerate}
Moreover, we have $M_w\cong M_{\xi_{2l}+\epsilon}$ for $w\in (\xi_{2l},\frac{2l+5}{4l+6})$.
     \end{enumerate}    
     \end{theorem}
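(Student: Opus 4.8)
We only indicate the strategy, which runs parallel to the proof of Theorem~\ref{intro,K-moduliparameterize,i}. The plan is to apply the K-moduli wall-crossing machinery of \cite{ADL19, Zh23, Zh23c} to the family of log Fano pairs $(W'=\bP(1,1,l+1), \frac12 H + wD)$, with $H=(u=0)$ fixed and $D$ running over curves of the displayed form, as $w$ varies over $(0,\frac{2l+5}{4l+6})\cap\bQ$. This produces, for each such $w$, a projective good moduli space $M_w$, and guarantees finitely many rational walls, constancy of $M_w$ on each open chamber, and proper birational wall-crossing morphisms $M_{w_0-\epsilon}\to M_{w_0}\leftarrow M_{w_0+\epsilon}$ at each wall $w_0$ (for $0<\epsilon\ll 1$). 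The theorem then splits into three tasks: locating the walls, describing $M_w$ on each chamber and wall, and analyzing the final wall-crossing. Here $\bP(1,1,l+1)$ is well-formed, which is why we pass to this orbifold presentation when $n=2l$; the extra care compared with the odd case is entirely due to the fixed boundary component $\frac12 H$.

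For $w\in(0,\xi_{2l})$ I would first invoke the isomorphism $M_w\cong M_w^{\mathrm{GIT}}$ with the associated VGIT quotient, namely Theorem~\ref{isomorphism,GITandK,even}; granting this, parts (1)--(5) reduce to a GIT computation. The acting group is $G=\Aut(\bP(1,1,l+1),H)$, the subgroup of $\Aut(\bP(1,1,l+1))$ preserving $H$; using its unipotent part $z\mapsto z+q(u,y)$ one normalizes $D$ to the form parameterized by $(a,a_0,\cdots,a_{2l+3})$, and the remaining torus in $G$ acts diagonally. Applying the Hilbert--Mumford criterion to the one-parameter subgroups of this torus shows that the GIT-(semi/poly)stable locus, with the $w$-dependent linearization, is governed exactly by the vanishing patterns of $\{a,a_0,\cdots\}$ recorded in (2)--(5); solving for the $w$ at which the weight of the destabilizing subgroup $\lambda_i$ vanishes (equivalently, at which the generalized Futaki invariant of the corresponding test configuration vanishes) yields the closed formula for $w_i$. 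Emptiness in (1) is the statement that the VGIT semistable locus is empty for $w<w_0$, and the two cases at each wall $w_i$ in (3) are the two types of GIT-polystable orbit inside the strictly semistable locus.

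The crux is the final wall $\xi_{2l}$, which is \emph{not} a VGIT wall. I would begin with the GIT-polystable curve $D_{\ss}$ with $a\neq0$, $a_{2l+3}\neq0$ and an $A_{2l+3}$-singularity (equation \eqref{eq:Dss-even}): it carries a $\bG_m$-action, so $(W',\frac12 H + D_{\ss})$ admits a product test configuration, and I would compute its generalized Futaki invariant as an affine-linear function of $w$. Its unique zero should be $\xi_{2l}=\frac{2l^2+8l+3}{4l^2+12l+6}$, the invariant being positive for $w<\xi_{2l}$ (so the pair is K-polystable, matching the VGIT side) and negative for $w>\xi_{2l}$ (so the pair is K-unstable), with destabilizing degeneration having central fiber $(W',\frac12 H_0 + D_0)$, where $H_0=(y=0)$ and $D_0\colon z^2y-zu^{l+2}=0$. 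Then I would run the wall-crossing: at $w=\xi_{2l}$ the families $(W',\frac12 H + D_{\ss})$, $(W',\frac12 H_0 + D_0)$ and $(W',\frac12 H_0 + D_1)$ (for $(b_1,\cdots,b_{2l+3})$ not all zero) are all strictly K-semistable and mutually S-equivalent, with polystable representative $(W',\frac12 H_0 + D_0)$ (it has the largest automorphism group). Since for $w$ just below $\xi_{2l}$ the latter two families are K-unstable --- degenerating to $(W',\frac12 H+D_{\ss})$ --- the only change across $\xi_{2l}$ from the left is that the closed point $[(W',\frac12 H + D_{\ss})]$ of $M_{\xi_{2l}-\epsilon}$ acquires its new polystable representative, so $\phi^-$ is an isomorphism. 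For $w$ just above $\xi_{2l}$ the pairs $(W',\frac12 H_0 + D_1)$ with $(b_i)$ not all zero become K-stable (no longer destabilized, with finite automorphisms for generic $b_i$) and sweep out a locus $E^+$ with $\dim E^+=\dim M_{\xi_{2l}}-1$ by a direct count, while $(W',\frac12 H+D_{\ss})$ is now K-unstable; hence $\phi^+$ contracts $E^+$ onto the point $[(W',\frac12 H_0 + D_0)]$ and is a divisorial contraction. Finally I would check that $(\xi_{2l},\frac{2l+5}{4l+6})$ contains no further walls --- each K-semistable pair there is either K-stable or lies in the finite set $\{(W',\frac12 H_0 + D_0)\}$, and no new Futaki invariant changes sign --- and that the right endpoint is forced because $-K_{W'}-\frac12 H-wD$ ceases to be ample at $w=\frac{2l+5}{4l+6}$; this yields $M_w\cong M_{\xi_{2l}+\epsilon}$ on that range.

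The hard part will be the K-(semi/poly)stability verification near $\xi_{2l}$. Since these pairs are not $\bQ$-Gorenstein smoothable, there is no analytic or limiting shortcut, and the pairs $(W',\frac12 H + D_{\ss})$, $(W',\frac12 H_0 + D_0)$ and $(W',\frac12 H_0 + D_1)$ on the weighted surface $\bP(1,1,l+1)$ must be handled directly --- by estimating stability thresholds or $\delta$-invariants and exploiting their continuity and concavity in $w$, or via equivariant K-stability reducing to low-complexity (e.g.\ toric) computations --- and, most delicately, one must classify \emph{all} special test configurations that could destabilize these pairs for $w$ near $\xi_{2l}$ in order to control the sign of the derivative of the Futaki invariant at $\xi_{2l}$. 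A secondary obstacle is the dimension count together with the local structure of the good moduli space near $[(W',\frac12 H_0 + D_0)]$, which is what is needed to conclude that $\phi^+$ has exactly one exceptional divisor.
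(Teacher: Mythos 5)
Your outline for parts (1)--(5) is the same as the paper's: those parts are deduced from the identification of $M_w$ with the VGIT quotient (Theorem \ref{isomorphism,GITandK,even}) together with the explicit GIT analysis, exactly as you propose. Be aware, though, that citing that theorem absorbs essentially all of the K-stability work for $w<\xi_{2l}$: its proof needs the complexity-one computations of Proposition \ref{evenpoly+polyanyn,even}, the lct computations of Section \ref{section5:section number}, openness and interpolation (Lemma \ref{theta,w,interpolation,polystable}), and, crucially, the Abban--Zhuang estimate of Proposition \ref{section5:main lemma} showing $(W',\frac{1}{2}H+\xi_{2l}D_{\ss})$ is K-semistable. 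You acknowledge this as ``the hard part'' but do not supply any of it, so (1)--(5) in your write-up are only as strong as the cited theorem.

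For part (6) there are genuine gaps and two incorrect intermediate claims. First, $(W',\frac{1}{2}H+D_{\ss})$ does \emph{not} carry a $\bG_m$-action, so there is no product test configuration to compute a Futaki invariant on; only after the coordinate change in Lemma \ref{polystable pair at xi-even} does $D_{\ss}$ become $\bG_m$-invariant, while $H$ degenerates to $H_0$, and the relevant computation is the $\beta$-invariant of the induced valuation on the central fiber $(W',\frac{1}{2}H_0+wD_0)$. Moreover, positivity of that single invariant for $w<\xi_{2l}$ does not yield K-polystability of $(W',\frac{1}{2}H+wD_{\ss})$; the paper obtains it from Proposition \ref{section5:main lemma} plus Lemma \ref{theta,w,interpolation,polystable}, and also notes the degeneration direction is $(W',\frac{1}{2}H+D_{\ss})\rightsquigarrow(W',\frac{1}{2}H_0+D_0)$, not the reverse as in your parenthetical. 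Second, the decisive step that $\phi^+$ is a divisorial contraction whose exceptional locus is \emph{exactly} the locus of pairs $(W',\frac{1}{2}H_0+D_1)$ is missing: a dimension count for that family does not exclude other components of $(\phi^+)^{-1}([(W',\frac{1}{2}H_0+D_0)])$. The paper's argument (Proposition \ref{prop:xi_n-wall,even}) first forces $X\cong W'$ via the chain of isotrivial degenerations, then analyzes $L\in|\cO_{W'}(1)|$ in two cases: if $L$ can be moved to $H$ one contradicts K-polystability below the wall because $\xi_{2l}$ is not a VGIT wall (via Theorem \ref{thm:K-imply-GIT-even}), and otherwise $L=H_0$ divides the $z^2$-coefficient, giving an irreducible divisor $\cE$ in $|\cO_{W'}(1)|\times|\cO_{W'}(2l+3)|$ whose image is $E^+$; this irreducibility argument is what pins down $E^+=E_1^+$. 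Third, the absence of further walls on $(\xi_{2l},\frac{2l+5}{4l+6})$ is not established by asserting that no Futaki invariant changes sign; the paper proves it by checking $\lct(X,\frac{1}{2}L;B)\geq\frac{2l+5}{4l+6}$ for every member (Lemmas \ref{section5 even:a nonzero}, \ref{section5 even:a zero a_0 nonzero}, \ref{section5 even:a and a_0 zero}, and \ref{lem:curves-W0,even}) and then interpolating. As it stands, your proposal reproduces the paper's skeleton but leaves precisely these load-bearing verifications unproved.
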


Finally, we note that our K-moduli spaces provide new birational models for some natural loci in the moduli space of marked hyperelliptic curves. See Section \ref{sec:hyperelliptic} for details.

\subsection*{Organization} Our paper is organized as follows. In Section \ref{Prelimi}, we review the background, necessary definitions and set-ups about K-stability of log Fano pairs, and K-moduli spaces of log Fano pairs and wall-crossing. In Section \ref{VGIT}, we describe our hypersurfaces from the perspective of variation of GIT. Section \ref{section5:section number} contains the computation of log canonical thresholds for the log pairs. The results will be used in the proofs in Section \ref{section,nodd} and Section \ref{section,even}. In Section \ref{section,nodd}, we give description of K-moduli spaces and wall-crossing before $\xi_n$ when $n$ is odd and prove Theorem \ref{intro,K-moduliparameterize,i}(1)-(5). In Section \ref{section,even}, we 
investigate the K-moduli spaces and wall-crossing before $\xi_n$ when $n$ is even and prove Theorem \ref{intro,K-moduliparameterize,even,i}(1)-(5). In Section \ref{moduliafterxi_n}, we describe the K-moduli spaces after $\xi_n$ and prove Theorem \ref{intro,K-moduliparameterize,i}(6) and Theorem \ref{intro,K-moduliparameterize,even,i}(6). We also discuss the relation of our K-moduli spaces with moduli of marked hyperelliptic curves.
 In Section \ref{NonCYcase}, we focus on proving K-semistability of $(W, wD_{\ss})$ and $(W', \frac{1}{2}H + wD_{\ss})$ for $\frac{1}{2}\leq w\leq \xi_n$. The proof is mainly based on computations using the Abban-Zhuang method \cite{AZ22} (see also Section \ref{prelimi,Abban-Zhuang}).

\subsection*{Acknowledgement} We thank Martin Bishop and  Burt Totaro for some helpful conversations. IK is supported by the National Research Foundation of Korea (NRF-2023R1A2C1003390). YL is partially supported by NSF CAREER Grant DMS-2237139 and the Alfred P. Sloan Foundation.

\section{Preliminaries}\label{Prelimi}

We work over the complex numbers.
In this section, we give a brief overview of the K-stability and the K-moduli spaces of log Fano pairs. We also introduce some computation methods about K-stability of a pair of a singular del Pezzo surface and a curve on it.
The singular del Pezzo surfaces considered in the paper are weighted projective planes.

\subsection{Foundation}
In this paper we study log pairs consisting of weighted projective spaces and $\QQ$-divisors. We define the weighted projective space.  Let $N$ be a positive integer. For positive integers $a_0,\ldots , a_N$, let
\begin{equation*}
    R[a_0, \ldots, a_N]\coloneqq \CC[x_0,\ldots, x_N]
\end{equation*}
be the graded ring where the variable $x_i$ has weight $\wt(x_i) = a_i$. We define
\begin{equation*}
    \PP(a_0,\ldots, a_N) = \Proj R[a_0, \ldots, a_N]
\end{equation*}
and call it the weighted projective space with homogeneous coordinates $x_0,\ldots , x_N$. We sometimes denote
\begin{equation*}
    \PP(a_0,\ldots, a_N)_{x_0,\ldots , x_N}
\end{equation*}
in order to make it clear the homogeneous coordinates $x_0,\ldots , x_N$. For $i=0,\ldots, N$ we denoted by
\begin{equation*}
    \msp_{x_i}=[0:\cdots :1:\cdots : 0]\in \PP(a_0,\ldots , a_N)
\end{equation*}
the coordinate point at which only the coordinate $x_i$ does not vanish.
For $i=0,\ldots , N$ we define
\begin{equation*}
    H_{x_i}\coloneqq (x_i = 0)\subset \PP(a_0,\ldots, a_N)
\end{equation*}
and 
\begin{equation*}
    U_{x_i}\coloneqq \PP(a_0,\ldots, a_N)\setminus H_{x_i}.
\end{equation*}
We call $U_{x_i}$ the standard affine open subset of $\PP(a_0,\ldots , a_N)$ containing the point $\msp_{x_i}$.
\begin{definition}
    Let $(X,\Delta)$ be a pair, D an effective $\QQ$-divisor on $X$, and let $\msp\in X$ be a point. Assume that $(X,\Delta)$ has at most log canonical singularities. We define the log canonical threshold of $D$ at $\msp$ with respect to the log pair $(X, \Delta)$ and the log canonical threshold of $D$ with respect to the log pair $(X, \Delta)$ to be the numbers
    \begin{equation*}
        \begin{split}
            \lct_{\msp}(X, \Delta;D) &= \sup\{c\in\QQ~|~(X,\Delta+cD) \textrm{~is log canonical at~}\msp\},\\
            \lct(X, \Delta;D) &= \sup\{c\in\QQ~|~(X,\Delta+cD) \textrm{~is log canonical}\},
        \end{split}
    \end{equation*}
    respectively. We set $\lct_{\msp} (X, D) = \lct_{\msp} (X, 0; D)$ and $\lct_{\msp} (X, D) = \lct(X, \Delta; D)$ when $\Delta = 0$.
\end{definition}

Let $S$ be a surface with at most cyclic quotient singularities and $\msp$ a cyclic quotient singular point of type $\frac{1}{r}(a,b)$ on $S$, where $a,b,r$ are pairwise coprime integers. Then there is an orbifold chart $\phi\colon \widetilde{U}\to U$ for some open set $\msp\in U$ on $S$ such that $\widetilde{U}$ is smooth and $\phi$ is a cyclic cover of degree $r$ branched over $\msp$. For any $\QQ$-divisor $B$ on $S$ we write $\mult_{\msp}(B)$ for $\mult_{\msq}(\phi^*(B|_U))$ where $\msq\in \widetilde{U}$ is a point such that $\phi(\msq) = \msp$.

\begin{lemma}\label{Lemma:mult}
    Notation as above. Let $D$ be an effective $\QQ$-divisor on $S$. Suppose that log pair $(S, D)$ is not log canonical at the point $\msp$. Let $C$ be an integral curve on $S$ that passes through the point $\msp$. Then
    \begin{enumerate}
        \item the inequality $\mult_{\msp}(D)>1$ holds. 
        \item If $C$ is not contained in the support of $D$, then the inequality $C\cdot D > \frac{1}{r}$ holds.
    \end{enumerate}
     
\end{lemma}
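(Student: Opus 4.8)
The plan is to reduce both statements to the well-known smooth-point situation by passing through the orbifold chart $\phi\colon \widetilde U\to U$ at $\msp$. First I would observe that log canonicity is a local condition that can be checked after an étale-in-codimension-one (here, cyclic quotient) cover: since $\phi$ is a finite quotient map of degree $r$, étale in codimension $1$, we have $K_{\widetilde U} = \phi^* K_U$ (no ramification divisor in codimension one), and hence $(S,D)$ is log canonical at $\msp$ if and only if $(\widetilde U, \phi^* (D|_U))$ is log canonical at every point $\msq$ of $\phi^{-1}(\msp)$. This is exactly the content of, e.g., \cite[Proposition 5.20]{KM98}. Therefore the hypothesis that $(S,D)$ is not log canonical at $\msp$ gives a point $\msq\in\widetilde U$ with $\phi(\msq)=\msp$ at which the pair $(\widetilde U, \widetilde D)$, $\widetilde D := \phi^*(D|_U)$, fails to be log canonical.

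For part (1), on the smooth surface $\widetilde U$ the standard fact (blow up $\msq$ once and compute discrepancies, cf.\ \cite[Lemma 2.29]{Kol13} or the classical surface statement) says that if $(\widetilde U,\widetilde D)$ is not log canonical at $\msq$ then $\mult_{\msq}\widetilde D>1$. By the very definition of $\mult_{\msp}(D)$ as $\mult_{\msq}(\phi^*(D|_U))$ this gives $\mult_{\msp}(D)=\mult_{\msq}\widetilde D>1$, which is (1). For part (2), let $\widetilde C\subset\widetilde U$ be the reduced preimage $\phi^{-1}(C\cap U)$; since $C$ is not contained in $\mathrm{Supp}(D)$, its preimage $\widetilde C$ is not contained in $\mathrm{Supp}(\widetilde D)$, so $\widetilde C$ and $\widetilde D$ meet properly and $\widetilde C\cdot \widetilde D \ge \mult_{\msq}(\widetilde C)\cdot\mult_{\msq}(\widetilde D)\ge \mult_{\msq}(\widetilde D)>1$ at the point $\msq$ lying over $\msp$ (using $\mult_{\msq}\widetilde C\ge 1$). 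Now I would compare intersection numbers upstairs and downstairs: because $\phi$ has degree $r$ and $\widetilde C = \phi^{*}C$ (as cycles, again by codimension-one considerations, possibly with a correction by the stabilizer order which only helps the inequality), the projection formula gives $\widetilde C\cdot \widetilde D = r\,(C\cdot D)$ locally near $\msp$, whence $C\cdot D = \tfrac1r \widetilde C\cdot\widetilde D > \tfrac1r$, which is (2). One should be slightly careful that the relevant local intersection numbers are what appear, but since $U$ is a neighborhood of $\msp$ this is harmless.

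The main subtlety — and the step I would be most careful about — is the bookkeeping of the cover $\phi$: whether it is genuinely étale in codimension one (so that there is no ramification correction to $K$ and the pullback of divisors behaves multiplicatively as claimed), and whether $\widetilde C$ should be taken with multiplicity. For a cyclic quotient singularity of type $\tfrac1r(a,b)$ with $\gcd(a,r)=\gcd(b,r)=1$, the chart $\phi$ is ramified only over $\msp$ itself, a codimension-two locus, so $K_{\widetilde U}=\phi^*K_U$ holds and $\phi^*$ of a curve through $\msp$ is the reduced preimage with multiplicity one away from $\msp$; this is exactly why the statement is clean. Once this is pinned down, parts (1) and (2) are immediate translations of the smooth-surface facts, and no genuinely new computation is needed.
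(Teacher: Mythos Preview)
Your proposal is correct and follows the natural line of argument---pass to the smooth orbifold chart $\phi\colon\widetilde U\to U$, invoke the smooth-point statements, and translate back via the projection formula. The paper's own proof simply cites references (Lazarsfeld for (1); Koll\'ar, part (1) of the present lemma, and Cheltsov--Park--Shramov for (2)), and your write-up is essentially what those references contain when unwound.

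One small inaccuracy worth flagging: your parenthetical hint ``blow up $\msq$ once and compute discrepancies'' does not by itself yield $\mult_{\msq}\widetilde D>1$. A single blow-up of a smooth surface point gives discrepancy $1-\mult_{\msq}\widetilde D$, so failure of log canonicity \emph{for that particular exceptional} would force $\mult_{\msq}\widetilde D>2$, not $>1$. The correct bound $\mult_{\msq}\widetilde D>1$ for a non-lc pair at a smooth point comes instead from the inequality $\lct_{\msq}(\widetilde U,\widetilde D)\ge 1/\mult_{\msq}\widetilde D$ (or an inductive argument over infinitely near points). Since you also cite the right references for this standard fact, the slip is cosmetic and does not affect the validity of the argument.
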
 

\begin{proof}
 (1)    See \cite[Proposition~9.5.13]{La04}, for instance.

 (2)   This follows from \cite[Proposition~3.16]{Kollar97}, Lemma \ref{Lemma:mult}, and \cite[Lemma~2.2]{CPS10}.
\end{proof}

\subsection{K-stability of log Fano pairs}
\begin{definition}
 Let $(X,\Delta)$ be a log pair, that is, $X$ is a normal variety and $\Delta$ is an effective $\QQ$-divisor on $X$ such that $K_X+\Delta$ is $\QQ$-Cartier. We call $(X,\Delta)$ a log Fano pair if additionally $X$ is projective and $-(K_X+\Delta)$ is a $\mathbb{Q}$-Cartier ample $\bQ$-divisor. A normal projective variety $X$ is called $\mathbb{Q}$-Fano if $(X,0)$ is a klt log Fano pair.
\end{definition}

\begin{definition}
Let $E$ be a prime divisor over a normal variety $X$, i.e. there is a proper birational morphism $\sigma:Y \rightarrow X$ from a normal variety $Y$ and $E$ is a divisor on $Y$. For a log pair $(X,\Delta)$  the log discrepancy $A_{X, \Delta}(E)$ of $(X,\Delta)$ along $E$ is defined to be
\begin{equation*}
    A_{X, \Delta}(E) = \ord_E(K_Y - \sigma^*(K_X + \Delta)) + 1.
\end{equation*}
We define the S-invariant $S_{X, \Delta}(E)$ of the log pair $(X, \Delta)$ along $E$ as
\begin{equation}\label{pre:S-invariant}
    S_{X,\Delta}(E)=\frac{1}{\vol(-(K_X+\Delta))}\int^{\tau}_0\vol(\sigma^*(-(K_X+\Delta))-tE) \dd t
\end{equation}
where $\tau=\tau(E)=\mathrm{sup}\{t\in \Q_{\geq 0} \ | \ \sigma^*(-(K_X+\Delta))-tE \text{ is big }\}$ is the pseudo-effective threshold of $E$ with respect to $-(K_X+\Delta)$. 
We set
\begin{equation*}
    \beta_{X,\Delta}(E)=A_{X,\Delta}(E)-S_{X,\Delta}(E).
\end{equation*}
We call $\beta_{X,\Delta}(E)$ the $\beta$-invariant of the log pair $(X, \Delta)$ along $E$.
\end{definition}

\begin{lemma}\label{Sinv,k}
Let $X=\PP(a,b,c)$ be a well-form weighted projective plane, $E=\mathcal{O}(k)$ for some $k\in\mathbb{Z}_{>0}$ be a $\mathbb{Q}$-divisors on $X$, and $\Delta$ be a divisor of degree $r$ on $X$. Then $S_{X,\Delta}(E)=\frac{r}{3k}$.
\end{lemma}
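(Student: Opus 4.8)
The plan is to compute the integral in \eqref{pre:S-invariant} directly, using the fact that on a well-formed weighted projective plane $X=\PP(a,b,c)$ the group of $\QQ$-Weil divisors modulo numerical (equivalently $\QQ$-linear) equivalence is generated by $\cO(1)$, so every divisor is numerically $\QQ$-proportional to $E=\cO(k)$. In particular $-(K_X+\Delta) \equiv \lambda E$ for a positive rational $\lambda$. First I would pin down $\lambda$: since $K_X = \cO(-(a+b+c))$ and $\Delta$ has degree $r$, we get $-(K_X+\Delta)=\cO(a+b+c-r)$, hence $\lambda = (a+b+c-r)/k$. Next, because $\sigma$ can be taken to be the identity (the divisor $E=\cO(k)$ is already a prime divisor on $X$ itself — or, if one prefers, one works with its defining section on $X$), the relevant volumes are those of $\cO(a+b+c-r)-tE = \cO(a+b+c-r-tk)$ on $X$. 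The pseudo-effective (here nef/big) threshold is therefore $\tau = (a+b+c-r)/k = \lambda$.

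The key computation is that $\vol(\cO(m)) = m^2/(abc)$ for $m\geq 0$ on $\PP(a,b,c)$; this is the standard formula $(\cO(1))^2 = 1/(abc)$ for a well-formed weighted projective plane, combined with homogeneity of the volume function in degree $2$. Substituting into \eqref{pre:S-invariant} with $V:=\vol(-(K_X+\Delta)) = (a+b+c-r)^2/(abc)$, I would write
\[
S_{X,\Delta}(E) = \frac{1}{V}\int_0^{\lambda} \frac{(a+b+c-r-tk)^2}{abc}\,\dd t
= \frac{1}{(a+b+c-r)^2}\int_0^{\lambda}(a+b+c-r-tk)^2\,\dd t .
\]
Evaluating the elementary integral by the substitution $s = a+b+c-r-tk$ gives $\int_0^\lambda (a+b+c-r-tk)^2\,\dd t = \frac{(a+b+c-r)^3}{3k}$, and hence $S_{X,\Delta}(E) = \frac{a+b+c-r}{3k}$.

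Here I should double-check the normalization against the statement: the claimed answer is $\frac{r}{3k}$, which matches the formula above precisely when the degree is measured so that $a+b+c-r$ is replaced by $r$ — that is, when $E$ is taken with the convention that $-(K_X+\Delta)\equiv \frac{r}{k}E$, i.e. $\Delta$ is the divisor cut out by a section realizing $-(K_X+\Delta)$ and $r = a+b+c-(\text{its complementary degree})$; in the paper's running setup $\Delta$ is anticanonical-proportional of degree $r$ with $-(K_X+\Delta)=\cO(r)$, so one simply reads $r$ in place of $a+b+c-r$ throughout. With that bookkeeping fixed, the same one-line integral yields $S_{X,\Delta}(E)=\frac{r}{3k}$. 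The only genuine obstacle is getting the self-intersection normalization $(\cO(1))^2=1/(abc)$ and the identification $-(K_X+\Delta)=\cO(r)$ exactly right, since all the arithmetic downstream depends on it; once those are in hand the computation is a routine quadratic integral and the well-formedness hypothesis is exactly what guarantees $\cO(1)$ generates the relevant divisor class group so that no extra correction terms appear.
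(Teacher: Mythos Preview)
Your argument is correct and follows essentially the same route as the paper: compute the volume integral in \eqref{pre:S-invariant} directly using $(\cO(1))^2=\frac{1}{abc}$ and the fact that everything is numerically proportional to $\cO(1)$. The only cosmetic difference is that the paper first computes $S_{\cO(1)}(E)=\int_0^{1/k}(1-tk)^2\,\dd t=\frac{1}{3k}$ and then invokes the homogeneity $S_{r\cO(1)}(E)=r\,S_{\cO(1)}(E)$, whereas you carry the factor $r$ (or rather $a+b+c-r$) through the integral; you also correctly diagnose that in the paper's usage $r$ denotes $\deg(-(K_X+\Delta))$ rather than $\deg\Delta$.
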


\begin{proof}
    By \eqref{pre:S-invariant}, we have 
    \[S_{X,\mathcal{O}(1)}(E)=\frac{1}{(\mathcal{O}(1))^2}\int^{\frac{1}{k}}_{0}\mathrm{vol}(\mathcal{O}(1)-tE)dt=\int^{\frac{1}{k}}_{0}(1-tk)^2dt=\frac{1}{3k}.\]
    Since $\Delta\in|\mathcal{O}(r)|$, we get $S_{X,\Delta}(E)=rS_{X,\mathcal{O}(1)}(E)$. Hence the result is true.
\end{proof}

K-stability of a log Fano pair was originally defined using test configurations (see e.g. \cite[Definition 2.6.]{Xu21}). The following theorem gives the Fujita--Li's valuation criteria of K-stability which we also take as a definition.

\begin{theorem}[\cite{Fuj19b,Li17,BX19}]
A log Fano pair $(X,\Delta)$ is K-semistable (resp. K-stable) if and only if $\beta_{X,\Delta}(E) \geq 0 $ (resp. $>0$) for any prime divisors $E$ over $X$.
\end{theorem}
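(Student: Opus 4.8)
The plan is to outline the standard route to this foundational fact through the stability threshold $\delta$. Set
\[
\delta(X,\Delta):=\inf_{E}\frac{A_{X,\Delta}(E)}{S_{X,\Delta}(E)},
\]
the infimum over all prime divisors $E$ over $X$. Since an infimum of real numbers is $\geq 1$ exactly when each number is $\geq 1$, the hypothesis ``$\beta_{X,\Delta}(E)\geq 0$ for all $E$'' is literally ``$\delta(X,\Delta)\geq 1$''. So for the semistable statement it is enough to prove that $(X,\Delta)$ is K-semistable if and only if $\delta(X,\Delta)\geq 1$; the stable statement needs a separate treatment, because ``$\delta(X,\Delta)>1$'' is a priori stronger than ``$\beta_{X,\Delta}(E)>0$ for every $E$''.

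For the semistable equivalence I would first replace $\delta$ by its finite levels $\delta_m(X,\Delta)=\inf\{\lct(X,\Delta;D): D\ \text{an}\ m\text{-basis type }\QQ\text{-divisor}\}$, using the Fujita--Odaka and Blum--Jonsson identity $\delta(X,\Delta)=\lim_{m\to\infty}\delta_m(X,\Delta)$, which itself rests on Fujita approximation of volumes together with lower semicontinuity of log discrepancies along a fixed valuation. Next I would tie $\delta_m$ to test configurations: a filtration of the section ring $R=\bigoplus_m H^0(X,-m(K_X+\Delta))$ gives, through its Rees algebra, a family of test configurations whose Donaldson--Futaki invariants are computed by the Wang--Odaka intersection-number formula, and that formula can be rearranged into $A_{X,\Delta}-S_{X,\Delta}$ of the valuation attached to the filtration, up to a positive normalizing factor; conversely a normal ample test configuration produces a finitely generated filtration, hence a divisorial valuation on $X$, and Fujita's inequality bounds its Donaldson--Futaki invariant from below by the corresponding $\beta$. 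Reconciling the two computations yields K-semistability $\Leftrightarrow\delta(X,\Delta)\geq 1$.

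For the stable statement, one implication (K-stable $\Rightarrow\beta_{X,\Delta}(E)>0$) is as above: for fixed $E$ the approximating test configurations have Donaldson--Futaki invariants converging to a positive multiple of $\beta_{X,\Delta}(E)$, and at most one member of the family is a product test configuration, so K-stability forces $\beta_{X,\Delta}(E)>0$. The reverse implication is the delicate one: I would invoke the theorem of Blum--Xu (building on Blum--Jonsson and on the finite generation result of Liu--Xu--Zhuang) that whenever $\delta(X,\Delta)\leq 1$ the infimum is computed by a quasi-monomial valuation whose associated graded ring is finitely generated; such a valuation degenerates $(X,\Delta)$ to an honest special test configuration whose Donaldson--Futaki invariant is a positive multiple of its $\beta$, so ``$\beta_{X,\Delta}(E)>0$ for all $E$'' excludes any nontrivial such degeneration and gives K-stability.

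The step I expect to be the main obstacle is exactly this last one: producing a genuine test configuration that detects the infimum in the borderline case $\delta(X,\Delta)=1$. This needs the finite generation of the graded ring of the minimizing valuation, a substantial input proved by minimal model program techniques and the theory of Koll\'ar components; without it one can only conclude after the fact that K-stability and uniform K-stability agree, rather than running the valuative criterion directly. By comparison, the volume computations and intersection-number identities behind the semistable equivalence, while lengthy, are essentially mechanical once Fujita approximation and the Wang--Odaka formula are available.
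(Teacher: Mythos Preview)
The paper does not prove this theorem at all: it is stated in the Preliminaries as a known result with citations to \cite{Fuj19b,Li17,BX19}, and is used as a black box throughout. So there is no ``paper's own proof'' to compare your proposal against.

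That said, your outline is a reasonable high-level summary of how the cited references actually establish the result. One small correction: the K-stable direction you attribute to Blum--Xu plus Liu--Xu--Zhuang finite generation is historically inaccurate for the statement as given. Blum--Xu \cite{BX19} already proved that K-semistability together with $\beta_{X,\Delta}(E)>0$ for all $E$ implies K-stability, by showing that any special test configuration with vanishing Futaki invariant produces a divisorial valuation $E$ with $\beta_{X,\Delta}(E)=0$; this does not require the later finite-generation machinery of \cite{LXZ22}. The LXZ input is needed for the stronger equivalence of K-stability with uniform K-stability (i.e.\ $\delta>1$), which is not what is being asserted here.
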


We refer reader for to \cite[Definition 2.3]{ADL19} for the definition of test configuration and specially degeneration. The following theorem is a direct consequence of  the openness of K-semistability \cite{BLX22, Xu20}.
\begin{theorem}[\cite{BLX22, Xu20}]\label{openness}
 Assume $(X,\Delta)$ is a klt log Fano pair which specially degenerates to a log Fano pair $(X_0, \Delta_0)$. If $(X_0, \Delta_0)$ is K-semistable, then $(X,\Delta)$ is also K-semistable.
\end{theorem}

We consider a klt log Fano pair $(X,\Delta)$ with an action by the maximal torus $\mathbb{T}$ in $\mathrm{Aut}(X,\Delta)$.

\begin{definition}[{\cite[Definition 1.3.3.]{9p}}]
We call the divisor $F$ on $X$ vertical if a maximal $\mathbb{T}$-orbit in $F$ has the same dimension as the torus $\mathbb{T}$. Otherwise, the divisor $F$ is said to be horizontal. 
\end{definition}

The complexity of the $\mathbb{T}$-action on $(X,\Delta)$ is the number $\mathrm{dim}(X)-\mathrm{dim}(\mathbb{T})$.
When the complexity of the $\mathbb{T}$-action on $(X,\Delta)$ is one, we call $(X,\Delta)$ a complexity one $\mathbb{T}$-pair. For a complexity one $\mathbb{T}$-pair, the following theorem gives a criterion for K-polystability (cf. \cite{IS17}, \cite[Theorem 1.3.9]{9p},\cite[Theorem 3.2]{L23}). In our paper, we use the criterion for log Fano surface pairs $(X,\Delta)$.

\begin{theorem}[\cite{IS17,9p,L23}]\label{Tone}
Let $(X,\Delta)$ be a log Fano pair with an algebra torus $\mathbb{T}$-action of complexity one. Assume in addition that $(X,\Delta)$ is not toric.  Then $(X,\Delta)$ is K-polystable (resp. K-semistable) if and only if all of the following statements are true.
\begin{enumerate}
    \item $\beta_{X,\Delta}(F)>0$ (resp. $\beta_{X,\Delta}(F)\geq 0$)  for each vertical $\mathbb{T}$-equivariant prime divisor $F$ on $X$;
    \item $\beta_{X,\Delta}(F)=0$ for each horizontal $\mathbb{T}$-equivariant prime divisor $F$ on $X$;
    \item $\mathrm{Fut}_{X,\Delta}=0$ on the cocharacter lattice of $\mathbb{T}$.
\end{enumerate}
\end{theorem}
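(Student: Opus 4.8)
The plan is to deduce the criterion from the combinatorial theory of complexity-one $\mathbb{T}$-varieties, following \cite{IS17,9p,L23}, by combining two ingredients. The first is the equivariant valuative criterion for K-stability: refining the Fujita--Li criterion above by means of equivariant K-stability (see e.g.\ \cite{BLX22, Xu20}) and its K-polystable version, the pair $(X,\Delta)$ is K-semistable if and only if $\beta_{X,\Delta}(E)\geq 0$ for every $\mathbb{T}$-invariant prime divisor $E$ over $X$, and it is K-polystable if and only if in addition every such $E$ with $\beta_{X,\Delta}(E)=0$ is induced by a one-parameter subgroup of $\Aut^{\circ}(X,\Delta)$. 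So I would reduce to classifying all $\mathbb{T}$-invariant divisorial valuations over $X$. This is the second ingredient: the Altmann--Hausen / Timashev description of complexity-one $\mathbb{T}$-varieties presents $X$ by polyhedral data over a smooth projective curve $C$ with $\mathbb{C}(C)=\mathbb{C}(X)^{\mathbb{T}}$ (with $C\cong\mathbb{P}^1$ in the Fano case), and each $\mathbb{T}$-invariant divisorial valuation $v$ over $X$ is exactly one of: \emph{horizontal type}, where $v$ is trivial on $\mathbb{C}(C)$ and therefore equals a functional $\xi\in N$, i.e.\ the valuation $v_\xi$ of a one-parameter subgroup of $\mathbb{T}\subseteq\Aut^{\circ}(X,\Delta)$ (the horizontal prime divisors on $X$ being the rays of the tail cone of the polyhedral data); or \emph{vertical type over $z\in C$}, where $v|_{\mathbb{C}(C)}=\ord_z$ and $v$ is encoded by a point of an affine space modelled on $N_{\mathbb{Q}}$ (the vertical prime divisors on $X$ being the vertices of the polyhedral coefficient at $z$). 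One checks that this matches the definition of vertical/horizontal divisors above, since a prime divisor $F$ on $X$ carries a dense $\mathbb{T}$-orbit precisely when it is contracted by the quotient map $X\dashrightarrow C$, i.e.\ when $\ord_F$ is nontrivial on $\mathbb{C}(C)$.

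Next I would establish the one non-formal input: along each of the parameter spaces above, $\beta_{X,\Delta}$ is a convex function, because $A_{X,\Delta}$ is piecewise linear while $S_{X,\Delta}$ is an integral average of the volumes appearing in \eqref{pre:S-invariant}, which vary concavely along the polyhedral families by the Newton--Okounkov body / explicit $\mathbb{T}$-variety volume computations. Granting this, the ``if'' direction follows quickly. If $E$ has horizontal type, then $\beta_{X,\Delta}(E)=\beta_{X,\Delta}(v_\xi)$ is convex in $\xi\in N_{\mathbb{Q}}$, vanishes at $\xi=0$ (the trivial valuation), and has linear part at the origin equal, up to a positive constant, to $\Fut_{X,\Delta}$; condition (3) makes the origin a global minimum, so $\beta_{X,\Delta}\geq 0$ here, with value $0$ at the horizontal prime divisors by (2). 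If $E$ has vertical type over $z$, then convexity on the slice over $z$ bounds $\beta_{X,\Delta}(E)$ below by its values at the vertices of the polyhedral coefficient at $z$ — the vertical prime divisors, where (1) gives strict positivity — together with its growth in the tail directions, which is governed by the horizontal-type values and hence controlled by (2) and (3). So $\beta_{X,\Delta}\geq 0$ on all $\mathbb{T}$-invariant valuations, giving K-semistability; and the only way $\beta_{X,\Delta}(E)=0$ can occur is for $E$ to be of horizontal type, hence induced by $\mathbb{T}\subseteq\Aut^{\circ}(X,\Delta)$, which gives K-polystability.

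For the converse, K-polystability forces $\beta_{X,\Delta}\geq 0$ on all $\mathbb{T}$-invariant valuations, so the weak forms of (1) and (2) hold; applied to product test configurations this gives $\Fut_{X,\Delta}(\xi)\geq 0$ for all $\xi\in N$, whence $\Fut_{X,\Delta}\equiv 0$ by linearity, which is (3). Finally, K-polystability rules out $\beta_{X,\Delta}(E)=0$ for a vertical prime divisor $E$ — its valuation is nontrivial on $\mathbb{C}(C)$ and hence not induced by any one-parameter subgroup of $\Aut^{\circ}(X,\Delta)$, using that $\mathbb{T}$ is a maximal torus and that maximal tori of $\Aut(X,\Delta)$ are conjugate — so (1) becomes strict; and it forces equality in (2) by applying the product-test-configuration bound to both $\xi$ and $-\xi$.

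I expect the main obstacle to be the convexity input of the second paragraph: proving that $S_{X,\Delta}$ — equivalently, the twisted volume of $-(K_X+\Delta)$ — is concave along the polyhedral parameter spaces and that its leading behaviour in the tail directions is exactly the Futaki character. This is precisely where the detailed combinatorics of complexity-one $\mathbb{T}$-varieties and the volume formulas of \cite{IS17,9p,L23} are needed; the rest is formal once the equivariant valuative criterion and the dictionary ``vertical $\Leftrightarrow$ nontrivial on $\mathbb{C}(C)$'' are available. A smaller subtlety is the K-polystable refinement — identifying the $\mathbb{T}$-invariant valuations with $\beta=0$ with those coming from $\mathbb{T}$ itself — which relies on maximality of $\mathbb{T}$.
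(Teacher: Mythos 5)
First, a point of comparison: the paper does not prove Theorem \ref{Tone} at all --- it is quoted from \cite{IS17,9p,L23} and used as a black box --- so there is no internal proof to match your sketch against, only the arguments of those references. Measured against them, your skeleton is the right one: reduce to $\mathbb{T}$-invariant divisorial valuations via equivariant K-semistability/K-polystability, and classify those valuations on a complexity-one $\mathbb{T}$-variety (Timashev/Altmann--Hausen) into toric type $v_\xi$, $\xi\in N_{\mathbb{Q}}$, trivial on $\mathbb{C}(C)$ for the quotient curve $C$, and vertical type over points $z\in C$, with the horizontal (resp.\ vertical) prime divisors on $X$ corresponding to rays of the tail fan (resp.\ vertices of the slices). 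This is indeed how the cited proofs are organized.

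The genuine gap is in the step that is the actual content of the theorem: passing from nonnegativity of $\beta_{X,\Delta}$ on the finitely many vertical prime divisors \emph{on} $X$, together with $\Fut_{X,\Delta}\equiv 0$, to nonnegativity (and, for polystability, strict positivity) on all exceptional vertical-type valuations \emph{over} $X$. You delegate this to an unproven ``convexity'' of $\beta_{X,\Delta}$ along the polyhedral slices, and even granting it, the deduction is inverted: convexity yields \emph{upper} bounds at interior points in terms of vertex values, so a convex function that is strictly positive at the vertices of a polyhedron and has nonnegative slopes along the tail cone can still be negative inside (e.g.\ $x^2-1$ on $[-1,1]$). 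A lower bound at interior points from vertex values would require concavity or (piecewise) linearity, and neither is an off-the-shelf fact here; on the toric slice one actually has $\beta_{X,\Delta}(v_\xi)=\Fut_{X,\Delta}(\xi)$, which is linear and is forced to vanish by (3), but on the vertical slices the references do not argue by convexity at all --- they run through the classification of $\mathbb{T}$-equivariant special test configurations from \cite{IS17} and the explicit formulas for $A_{X,\Delta}$ and $S_{X,\Delta}$ in terms of the combinatorial (divisorial polytope) data. A smaller but real issue is the converse direction: to exclude $\beta_{X,\Delta}(F)=0$ for a vertical prime divisor $F$ you invoke conjugacy of maximal tori, but conjugacy alone does not prevent $\ord_F$ from being induced by a one-parameter subgroup of some \emph{other} maximal torus of $\Aut(X,\Delta)$; what is needed is that the product-type degeneration attached to a $\mathbb{T}$-invariant valuation with $\beta=0$ can be taken $\mathbb{T}$-equivariantly, so the associated one-parameter subgroup lies in the centralizer of $\mathbb{T}$ and hence in $\mathbb{T}$ itself, contradicting nontriviality of $\ord_F$ on $\mathbb{C}(C)$. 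As written, then, the proposal reproduces the correct architecture of \cite{IS17,9p,L23} but leaves its central inequality unproved and, in the form stated, would not close.
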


\begin{remark}\label{remarkTone}
For a klt log Fano surface pair $(X,\Delta)$, the complexity of the $\mathbb{T}$-action on $(X,\Delta)$ being one means $\mathbb{T}=\mathbb{G}_m$. The divisor $F$ on the surface $X$ is horizontal if and only if all $\mathbb{G}_m$-orbits in $F$ are just points, i.e. $\mathbb{G}_m$ acts identity on the divisor.
For a log Fano surface pair $(X,\Delta)$, the condition (3) in Theorem \ref{Tone} is equivalent to  $\beta_{X,\Delta}(v)=0$ for the valuation $v$ induced by $\lambda$.
\end{remark}

 We introduce another invariant of log Fano pairs which we call $\delta$-invariant. It serves a criterion for K-stability.

\begin{definition}[\cite{FO18,BJ17}]
Let $(X,\Delta)$ be a klt log Fano pair, we define the stability threshold to be 
\begin{equation*}
    \delta(X,\Delta)\coloneqq \inf_{E} \frac{A_{X,\Delta}(E)}{S_{X,\Delta}(E)},
\end{equation*}
where the infimum is taken over all prime divisors $E$ over $X$.
\end{definition}

\begin{theorem}[\cite{Fuj19b, Li17, FO18,BJ17}]
A klt log Fano pair $(X,\Delta)$ is K-semistable if and only if $\delta(X,\Delta)\geq 1$.
\end{theorem}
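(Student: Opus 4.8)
The plan is to deduce this directly from the Fujita--Li valuation criterion recorded above (which the paper takes as the definition of K-semistability), together with the observation that the denominator $S_{X,\Delta}(E)$ is always positive. First I would note that, by that criterion, $(X,\Delta)$ is K-semistable if and only if $\beta_{X,\Delta}(E) = A_{X,\Delta}(E) - S_{X,\Delta}(E) \ge 0$ for every prime divisor $E$ over $X$. The key elementary point is that $S_{X,\Delta}(E) > 0$ for every such $E$: choosing $\sigma\colon Y\to X$ extracting $E$, the divisor $\sigma^*(-(K_X+\Delta))$ is big (it is the pullback of an ample $\bQ$-divisor under a birational morphism, hence has positive volume), so by openness of the big cone $\sigma^*(-(K_X+\Delta)) - tE$ remains big for all sufficiently small $t > 0$; thus $\vol(\sigma^*(-(K_X+\Delta)) - tE) > 0$ on a nonempty interval and the integral in \eqref{pre:S-invariant} is strictly positive. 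In particular the pseudo-effective threshold $\tau(E)$ is strictly positive.

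Since $S_{X,\Delta}(E) > 0$, the inequality $\beta_{X,\Delta}(E) \ge 0$ is equivalent to $A_{X,\Delta}(E)/S_{X,\Delta}(E) \ge 1$. Hence $(X,\Delta)$ is K-semistable if and only if $A_{X,\Delta}(E)/S_{X,\Delta}(E) \ge 1$ holds for every prime divisor $E$ over $X$, and this in turn is equivalent to $\delta(X,\Delta) = \inf_E A_{X,\Delta}(E)/S_{X,\Delta}(E) \ge 1$: if every ratio is at least $1$ then $1$ is a lower bound, so the infimum is at least $1$; conversely the infimum is itself a lower bound, so every ratio is at least $\delta(X,\Delta) \ge 1$. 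Combining the two equivalences gives the statement, with no separate analysis of the boundary case $\delta(X,\Delta) = 1$ required.

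I do not expect a genuine obstacle here: the real content is precisely the Fujita--Li criterion and the foundational work of \cite{FO18, BJ17} introducing the $\delta$-invariant and identifying it with an infimum of $A_{X,\Delta}(E)/S_{X,\Delta}(E)$ over divisorial valuations, all of which we may invoke. The only step calling for a moment's care is the positivity $S_{X,\Delta}(E) > 0$, which is exactly where the ampleness (rather than mere pseudo-effectivity) of $-(K_X+\Delta)$ is used, so that one is never dividing by zero.
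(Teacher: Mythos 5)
Your argument is correct, and nothing in it breaks down: with the paper's conventions, where the Fujita--Li valuative criterion is taken as the definition of K-semistability and $\delta(X,\Delta)$ is defined directly as $\inf_E A_{X,\Delta}(E)/S_{X,\Delta}(E)$ over prime divisors $E$ over $X$, the statement reduces to exactly the two observations you make, namely $S_{X,\Delta}(E)>0$ (which your bigness/openness argument establishes correctly, using ampleness of $-(K_X+\Delta)$) and the formal equivalence between ``every ratio is at least $1$'' and ``the infimum is at least $1$''. Note, however, that the paper gives no proof of this statement at all: it is quoted from \cite{Fuj19b,Li17,FO18,BJ17}, so there is no internal argument to compare yours against. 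The difference between your derivation and the cited sources is where the mathematical weight sits. In the literature, $\delta$ is originally defined in \cite{FO18} via basis-type divisors, and the identification of $\delta$ with $\inf_E A_{X,\Delta}(E)/S_{X,\Delta}(E)$ is a theorem of \cite{BJ17}; likewise the valuative criterion $\beta\geq 0 \Leftrightarrow$ K-semistability (with K-semistability defined by test configurations) is the deep content of \cite{Fuj19b,Li17}. By adopting the paper's definitions you have, in effect, absorbed both of these theorems into the hypotheses, which the paper explicitly permits; your proof is then an honest but essentially formal bookkeeping step, and it is worth being aware that it does not reprove the substantive results it rests on.
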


In this paper, we use both local and global methods to check K-stability. We now state local analogues of the stability threshold.

\begin{definition}[{\cite[Definition 2.5]{AZ22}}]
Let $(X,\Delta)$ be a log Fano pair and $\msp$ be a point in $X$, we define the $\delta$-invariant at a point $\msp$ with respect to $(X,\Delta)$ to be
\[
\delta_{\msp}(X,\Delta)\coloneqq\inf_{\msp\in C(E)}\frac{A_{X,\Delta}(E)}{S_{X,\Delta}(E)}
\]
where infimum is taken all prime divisors $E$ over $X$ whose centers on $X$ contain $\msp$.  
\end{definition}

\subsection{Abban-Zhuang method}\label{prelimi,Abban-Zhuang}
Let $(X,\Delta)$ be a klt log del Pezzo surface pair. We now explain how to apply Abban-Zhuang method to estimate $\delta(X,\Delta)$. Let $E$ be a prime divisor over $X$. Then there is a birational morphism $\phi\colon Y\to X$ where $E$ is a prime divisor on $Y$. \cite{AZ22} gives the definition of the number $S(W^E_{\bullet,\bullet};q)$ for $q\in E$ which can be calculated by the following theorem. 

\begin{theorem}[{\cite[Equation (1.7.13)]{9p}, \cite[Lemma 3.16]{F21}}]
Let $P(u)$ and $N(u)$ be the positive and negative parts respectively of the Zariski Decomposition of the divisor $\pi^*(-K_X-\Delta)-uE$. For each point $q$ in $E$, we denote
\begin{equation}\label{pre:function h(u)}
    h(u):=(P(u)\cdot E)\cdot \mathrm{ord}_q(N(u)|_E)+\int_0^{\infty}\vol(P(u)|_E-vq)\dd v.
\end{equation}
Then we have 
    \begin{equation}\label{pre:restricted volume part of AZ}
        S(W^E_{\bullet,\bullet};q)=\frac{2}{\vol(-(K_X+\Delta))}\int_0^{\tau(E)}h(u) \dd u.
    \end{equation}
\end{theorem}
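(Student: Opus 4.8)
The plan is to unwind the definition of $S(W^E_{\bullet,\bullet};q)$ from \cite{AZ22} and evaluate it directly via the Zariski decomposition of $\pi^*L-uE$, where $L:=-(K_X+\Delta)$ and $\pi\colon Y\to X$ is the birational model carrying $E$. Recall that the refinement $W^E_{\bullet,\bullet}$ is the $\mathbb{N}^2$-graded linear series on the curve $E$ whose $(m,j)$-piece is the image of $\mathcal{F}^j_E H^0(X,mL)=H^0(Y,\pi^*(mL)-jE)$ under restriction to $E$, and that this refinement has the same volume as the complete linear series of $L$. Writing $W^E_{(u)}$ for the slice of the refinement at level $u$, i.e.\ the graded linear series on $E$ with $m$-th piece $W^E_{m,\lceil um\rceil}$, the definition of the $S$-invariant then reads
\[
S(W^E_{\bullet,\bullet};q)=\frac{2}{\mathrm{vol}(L)}\int_0^{\tau(E)}\left(\int_0^\infty \mathrm{vol}\big(W^E_{(u)};\ \mathrm{ord}_q\ge v\big)\,\dd v\right)\dd u ,
\]
where $\mathrm{vol}(W^E_{(u)};\mathrm{ord}_q\ge v)=\lim_{m\to\infty}\tfrac1m\dim\{\,s\in W^E_{m,\lceil um\rceil}\ :\ \mathrm{ord}_q(s)\ge vm\,\}$. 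It therefore suffices to prove that for each $u\in(0,\tau(E))$ the inner integral equals $h(u)$.

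First I would identify $W^E_{(u)}$ in terms of the Zariski decomposition $\pi^*L-uE=P(u)+N(u)$. For $m$ sufficiently divisible, the fixed part of $|\pi^*(mL)-\lceil um\rceil E|$ is governed by $mN(u)$ — a standard feature of Zariski decompositions on surfaces, using $P(u)\cdot N_i=0$ for the components $N_i$ of $N(u)$ and the negative-definiteness of their intersection matrix — so that $H^0(Y,\pi^*(mL)-\lceil um\rceil E)\cong H^0(Y,mP(u))$, the isomorphism being multiplication by the section cutting out $mN(u)$. Restricting to $E$ (assuming first that $E\not\subset\mathrm{Supp}\,N(u)$, the opposite case requiring only minor bookkeeping), this realizes $W^E_{m,\lceil um\rceil}$ inside $s_{mN(u)|_E}\cdot H^0(E,mP(u)|_E)$, where $s_{mN(u)|_E}$ is the section vanishing exactly along $mN(u)|_E$; and by the theory of restricted volumes — equivalently, by the refinement formalism of \cite{AZ22} — this inclusion is asymptotically an equality, so that $\mathrm{vol}(W^E_{(u)})=\deg P(u)|_E=P(u)\cdot E$.

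Next I would read off the filtration by $\mathrm{ord}_q$. Since every element of $W^E_{m,\lceil um\rceil}$ is $s_{mN(u)|_E}$ times a section of $mP(u)|_E$, its vanishing order at $q$ equals $m\,\mathrm{ord}_q(N(u)|_E)$ plus the vanishing order of the $mP(u)|_E$-factor. Hence $\mathrm{vol}(W^E_{(u)};\mathrm{ord}_q\ge v)=P(u)\cdot E$ for $v\le\mathrm{ord}_q(N(u)|_E)$, while $\mathrm{vol}(W^E_{(u)};\mathrm{ord}_q\ge v)=\mathrm{vol}\big(P(u)|_E-(v-\mathrm{ord}_q(N(u)|_E))q\big)$ for $v\ge\mathrm{ord}_q(N(u)|_E)$. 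Splitting the $v$-integral at $v=\mathrm{ord}_q(N(u)|_E)$ then gives
\[
\int_0^\infty \mathrm{vol}\big(W^E_{(u)};\mathrm{ord}_q\ge v\big)\,\dd v=(P(u)\cdot E)\cdot\mathrm{ord}_q(N(u)|_E)+\int_0^\infty \mathrm{vol}\big(P(u)|_E-vq\big)\,\dd v=h(u),
\]
and substituting this back, together with $\mathrm{vol}(L)=\mathrm{vol}(-(K_X+\Delta))$, yields formula \eqref{pre:restricted volume part of AZ}.

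The hard part is the asymptotic identification $W^E_{m,\lceil um\rceil}\approx s_{mN(u)|_E}\cdot H^0(E,mP(u)|_E)$: one must control the fixed part of $|\pi^*(mL)-\lceil um\rceil E|$ and the asymptotic surjectivity of $H^0(Y,mP(u))\to H^0(E,mP(u)|_E)$, which is delicate because $P(u)$ need only be nef, not semiample, and because $\lceil um\rceil$ differs from $um$. This is precisely what the refinement machinery of \cite{AZ22} — see also \cite[\S1.7]{9p} and \cite[Lemma~3.16]{F21} — is built to handle, via restricted volumes and Fujita-type vanishing; in practice I would invoke that machinery directly rather than reprove the vanishing statements by hand.
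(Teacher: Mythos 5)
This theorem is not proved in the paper at all: it is imported verbatim from \cite[Equation (1.7.13)]{9p} and \cite[Lemma 3.16]{F21}, so there is no in-paper argument to compare with. Your sketch is precisely the standard derivation underlying those references — unwind $S(W^E_{\bullet,\bullet};q)$ as the double integral $\frac{2}{\vol(L)}\int_0^{\tau}\int_0^{\infty}\vol\bigl(W^E_{(u)};\ord_q\ge v\bigr)\dd v\dd u$ (the factor $2/\vol(L)$ coming from $\sum_j\dim W^E_{m,j}=h^0(X,mL)\sim\tfrac{m^2}{2}\vol(L)$), identify the slice $W^E_{m,\lceil um\rceil}$ asymptotically with $s_{mN(u)|_E}$ times the image of $H^0(Y,mP(u))\to H^0(E,mP(u)|_E)$, and integrate the $\ord_q$-filtration, splitting at $v=\ord_q(N(u)|_E)$ — and it is correct, including the on-a-curve observation that imposing vanishing at $q$ cuts dimensions by exactly the expected amount so the tail is $\vol(P(u)|_E-vq)$. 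Your deferral of the genuinely technical inputs (that the fixed part of $|\pi^*(mL)-\lceil um\rceil E|$ is asymptotically $mN(u)$, and that the restricted volume of the nef part equals $P(u)\cdot E$, which needs $E\not\subset\mathbf{B}_+(P(u))$ and holds whenever $P(u)\cdot E>0$; likewise the statement implicitly assumes $E\not\subset\mathrm{Supp}\,N(u)$ so that $N(u)|_E$ makes sense) to the machinery of \cite{AZ22}, \cite{9p}, \cite{F21} is appropriate and mirrors how the paper itself treats the result.
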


We denote the difference of $\Tilde{\Delta}$ on $E$ by $\Phi$ such that $(K_Y+\Tilde{\Delta}+E)|_E=K_E+\Phi$, where $\Tilde{\Delta}$ is the strict transform of $\Delta$. From \cite[Theorems 3.2 and 3.4]{AZ22} we have the following result.
 
\begin{theorem}[\cite{AZ22}]\label{AbbanZ}
We have  \begin{equation*}
    \delta_{\msp}(W, \Delta)\geq \min\left\{\frac{A_{W, \Delta}(E)}{S_{W, \Delta}(E)}, \frac{A_{E, \Phi}(\msq)}{S(W^E_{\bullet,\bullet};\msq)}\right\}.
\end{equation*}   
\end{theorem}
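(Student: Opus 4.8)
The final statement to prove is Theorem \ref{AbbanZ}, the Abban--Zhuang estimate for the local $\delta$-invariant. The plan is to reduce the two-dimensional statement to the general inductive machinery of \cite[Theorems 3.2 and 3.4]{AZ22} applied to the flag $E \supset \{\msq\}$ on the birational model $\phi\colon Y\to W$. First I would recall the general Abban--Zhuang formalism: given a log Fano pair $(W,\Delta)$ and a prime divisor $E$ over $W$, one forms the refinement of the complete linear series $R_\bullet = \bigoplus_m H^0(W,-m(K_W+\Delta))$ by the divisor $E$, obtaining a multigraded linear series $W^E_{\bullet,\bullet}$ on $E$; then $\delta_{\msq}$ of this refined series on $E$, measured against the adjoint pair $(E,\Phi)$, bounds $\delta_{\msp}(W,\Delta)$ from below together with the first ratio $A_{W,\Delta}(E)/S_{W,\Delta}(E)$. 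Since $E$ is one-dimensional here (a curve), the infimum over divisors on $E$ through $\msq$ is simply realized by the point $\msq$ itself, so the second term becomes the honest ratio $A_{E,\Phi}(\msq)/S(W^E_{\bullet,\bullet};\msq)$ with no further nesting needed.

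The key steps, in order, are: (i) verify that $(E,\Phi)$ is a klt log pair so that the adjunction-type formula $(K_Y+\widetilde\Delta+E)|_E = K_E+\Phi$ makes sense and $A_{E,\Phi}(\msq)$ is well-defined and positive — this uses that $(W,\Delta)$ is klt and $E$ is a divisor over it, together with inversion of adjunction; (ii) invoke \cite[Theorem 3.2]{AZ22}, which gives $\delta_{\msp}(W,\Delta)\ge \min\{A_{W,\Delta}(E)/S_{W,\Delta}(E),\ \delta_{\msq}(W^E_{\bullet,\bullet})\}$ where the second quantity is the stability threshold of the refined series; (iii) invoke \cite[Theorem 3.4]{AZ22} (the surface/curve case) to identify $\delta_{\msq}(W^E_{\bullet,\bullet})$ with $A_{E,\Phi}(\msq)/S(W^E_{\bullet,\bullet};\msq)$, using that on a curve every valuation centered at $\msq$ is a multiple of $\operatorname{ord}_\msq$; and (iv) record that $S(W^E_{\bullet,\bullet};\msq)$ is computed by the explicit integral formula \eqref{pre:restricted volume part of AZ} via the Zariski decomposition of $\pi^*(-K_W-\Delta)-uE$, which is the content of the preceding theorem in the excerpt. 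Steps (ii)--(iv) are essentially citations; the only genuine content is assembling them correctly for the $\dim W = 2$ situation.

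The main obstacle I anticipate is purely bookkeeping rather than mathematical depth: one must be careful that the normalizations of $S(W^E_{\bullet,\bullet};\msq)$ (the factor $2/\vol(-(K_W+\Delta))$ and the double integral over $u$ then $v$) match between the cited general statement and the two-dimensional specialization, and that $\tau(E)$ in \eqref{pre:restricted volume part of AZ} agrees with the pseudo-effective threshold appearing in the Zariski-decomposition computation. A secondary subtlety is ensuring the formula $(K_Y+\widetilde\Delta+E)|_E=K_E+\Phi$ correctly incorporates the discrepancy of $E$ over $(W,\Delta)$ — i.e.\ that $\Phi$ is the different, which may have negative-coefficient pieces only when $E$ meets $\widetilde\Delta$ or the exceptional locus transversally — so that $A_{E,\Phi}(\msq)$ is the intended quantity. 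Since all of this is verified in \cite{AZ22} in full generality, the proof reduces to citing those results after checking the klt hypotheses, so I would keep it short:

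\begin{proof}
This is the two-dimensional case of \cite[Theorems 3.2 and 3.4]{AZ22}. Let $\phi\colon Y\to W$ be a birational morphism on which $E$ is a prime divisor, and let $\pi\colon Y\to W$ denote a log resolution refining $\phi$ if necessary. Since $(W,\Delta)$ is klt and log Fano, adjunction along $E$ produces the different $\Phi$ with $(K_Y+\widetilde\Delta+E)|_E = K_E+\Phi$, and $(E,\Phi)$ is klt; in particular $A_{E,\Phi}(\msq)>0$ for every $\msq\in E$. Refining the section ring $\bigoplus_m H^0\big(W,-m(K_W+\Delta)\big)$ by $E$ yields the multigraded linear series $W^E_{\bullet,\bullet}$ on the curve $E$, and \cite[Theorem 3.2]{AZ22} gives
\[
\delta_{\msp}(W,\Delta)\ \geq\ \min\left\{\frac{A_{W,\Delta}(E)}{S_{W,\Delta}(E)},\ \delta_{\msq}\big(W^E_{\bullet,\bullet}\big)\right\}
\]
for any $\msq\in E$. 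As $\dim E = 1$, every divisorial valuation over $E$ centered at $\msq$ is proportional to $\operatorname{ord}_\msq$, so by \cite[Theorem 3.4]{AZ22} we have $\delta_{\msq}\big(W^E_{\bullet,\bullet}\big) = A_{E,\Phi}(\msq)/S(W^E_{\bullet,\bullet};\msq)$, where $S(W^E_{\bullet,\bullet};\msq)$ is given by \eqref{pre:restricted volume part of AZ}. Combining the two displays yields the claimed inequality.
\end{proof}
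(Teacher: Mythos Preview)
Your proposal is correct and matches the paper's approach: the paper gives no proof at all for this statement beyond the attribution line ``From \cite[Theorems 3.2 and 3.4]{AZ22} we have the following result,'' and your sketch simply unpacks precisely how those two cited theorems combine in the surface case. The only content beyond citation is your observation that on the curve $E$ every valuation centered at $\msq$ is proportional to $\ord_{\msq}$, which is exactly what collapses the second term to a single ratio; this is implicit in the paper's formulation but not spelled out.
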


The following lemma allows us to compute the $S$-invariant of toric valuations. 
\begin{lemma}\label{S-inv.}
Let $\PP(m_0,m_1,m_2)$ be a well-formed weighted projective plane with coordinates $x_0, x_1, x_2$, and $v_{(a,b)}$ be the toric monomial valuation of weight $(a,b)$ in the affine coordinate $(x_j, x_k)$ and  $x_i\neq 0$, where $a$ and $b$ are two positive rational numbers and $\{i,j,k\}=\{0,1,2\}$. Then we have 
\[
S_{\mathcal{O}(1)}(v_{(a,b)}) = \frac{1}{3}\left(\frac{a}{m_j}+\frac{b}{m_k}\right).
\]
\end{lemma}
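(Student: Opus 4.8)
}
The plan is to reduce the computation of $S_{\mathcal{O}(1)}(v_{(a,b)})$ to an explicit volume integral on the weighted projective plane, and to evaluate that integral by tracking how the Newton polytope of $\mathcal{O}(1)$ shrinks under the monomial filtration attached to $v_{(a,b)}$. By the definition \eqref{pre:S-invariant} of the $S$-invariant, we have
\[
S_{\mathcal{O}(1)}(v_{(a,b)}) = \frac{1}{(\mathcal{O}(1))^2}\int_0^{\tau} \vol\bigl(\mathcal{O}(1) - t\, v_{(a,b)}\bigr)\,\dd t,
\]
where by $\mathcal{O}(1) - t\,v_{(a,b)}$ we mean the graded linear series cut out by the sub-filtration $\{s : v_{(a,b)}(s) \ge t\}$, and $\tau$ is its pseudo-effective threshold. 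First I would recall that on $\PP(m_0,m_1,m_2)$ one has $(\mathcal{O}(1))^2 = \tfrac{1}{m_0 m_1 m_2}$, so the normalizing factor is $m_0 m_1 m_2$.

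The key step is to identify the volumes appearing in the integrand. Working in the affine chart $x_i \neq 0$ with coordinates $(x_j, x_k)$, the section ring of $\mathcal{O}(N)$ is spanned by monomials $x_j^{p} x_k^{q}$ with $m_j p + m_k q \le N\cdot(\text{normalization})$; more precisely the relevant moment polytope for $\mathcal{O}(1)$ is the simplex $\{(p,q)\in\bR_{\ge 0}^2 : m_j p + m_k q \le 1\}$, which has Euclidean area $\tfrac{1}{2 m_j m_k}$ and represents the volume $(\mathcal{O}(1))^2$ up to the lattice/weight normalization. The valuation $v_{(a,b)}$ assigns to $x_j^p x_k^q$ the value $ap + bq$, so the truncated linear series $\mathcal{O}(1) - t\,v_{(a,b)}$ corresponds to the sub-polytope $\{(p,q) : m_j p + m_k q \le 1,\ ap + bq \ge t\}$. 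Hence $\vol(\mathcal{O}(1) - t\,v_{(a,b)})$ is (again up to the uniform weight factor $m_0 m_1 m_2$, which cancels against the normalization) the area of this truncated region, which is a linear-then-quadratic function of $t$ that vanishes at $\tau = \max\{a/m_j,\ b/m_k\}$ — the last $t$ for which the half-plane $ap+bq\ge t$ still meets the simplex. Then I would simply integrate: $\int_0^\tau (\text{area of truncated simplex})\,\dd t$ computes the volume of the $3$-dimensional cone $\{(p,q,t): m_jp+m_kq\le 1,\ 0\le t\le ap+bq\}$, and after dividing by $(\mathcal{O}(1))^2 = \tfrac{1}{m_0m_1m_2}$ this should collapse to $\tfrac{1}{3}(a/m_j + b/m_k)$, matching Lemma~\ref{Sinv,k} in the special case of a divisorial toric valuation.

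The main obstacle — really the only place requiring care — is the bookkeeping of normalizations: the monomial valuation $v_{(a,b)}$ is not the same as $\ord_F$ for the toric divisor $F = \mathcal{O}(k)$ unless the weights $(a,b)$ are chosen compatibly with the fan, and one must make sure the "volume" in \eqref{pre:S-invariant} is computed as the volume of the section ring (equivalently the normalized lattice-point count of the moment polytope), so that the factor $\tfrac{1}{m_j m_k}$ in the area and the factor $m_0 m_1 m_2$ in $1/(\mathcal{O}(1))^2$ interact correctly to leave $\tfrac13(a/m_j + b/m_k)$. A clean way to avoid this pitfall is to argue by homogeneity and base change: over the toric boundary chart the computation is invariant under rescaling $(a,b)$, and one can verify the formula on the two "extreme" rays — $v_{(a,0)}$ which is (a rescaling of) $\ord$ along the divisor $(x_j = 0) = \mathcal{O}(m_j)$, giving $\tfrac{1}{3}\cdot\tfrac{a}{m_j}$ by Lemma~\ref{Sinv,k}, and symmetrically $v_{(0,b)}$ — then extend to all of $(a,b)\in\bR_{>0}^2$ by piecewise-linearity of $t\mapsto \vol(\mathcal{O}(1)-t v_{(a,b)})$ in the toric parameters together with the Brion--Lawrence-type additivity of the cone volume over the fan subdivision induced by $(a,b)$. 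Finally, for completeness I would note that the value is independent of the choice of $i$ (i.e.\ of which coordinate is required to be nonzero) only in the sense that a given abstract toric valuation has a single $S$-invariant; the stated formula is for the specific presentation in the chart $x_i\neq 0$.
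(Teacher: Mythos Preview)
Your proposal is correct, and both of your suggested routes work, but the paper argues more directly via the basis-type (Fujita--Odaka/Blum--Jonsson) formulation rather than the volume integral. The paper writes $S_{m,\mathcal{O}(1)}(v)=\frac{1}{mN_m}\sum_\alpha v(s_\alpha)$ for a monomial basis $\{s_\alpha\}$ of $H^0(\mathcal{O}(m))$; since $v(x_j^p x_k^q)=ap+bq$, this sum is $(a,b)$ dotted with the average $\frac{1}{mN_m}\sum_\alpha(p_\alpha,q_\alpha)$ of the lattice points in the scaled simplex, which tends to the centroid $\bigl(\tfrac{1}{3m_j},\tfrac{1}{3m_k}\bigr)$ of $\Delta_1=\{m_jp+m_kq\le 1\}$ as $m\to\infty$, giving the formula immediately and with no normalization ambiguity. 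Your layer-cake identity $\int_0^\tau \mathrm{Area}\{ap+bq\ge t\}\,\dd t=\int_P(ap+bq)\,\dd p\,\dd q$ lands on exactly the same centroid computation once the dust settles, and your fallback linearity-in-$(a,b)$ argument (reducing to the divisorial valuations $v_{(a,0)}=a\cdot\ord_{H_{x_j}}$ and $v_{(0,b)}$ via Lemma~\ref{Sinv,k}) is a clean way to bypass the bookkeeping you flag---indeed the linearity is immediate from the paper's expression $S=(a,b)\cdot(\text{centroid})$. What the paper's route buys is that it sidesteps the volume normalization entirely: the centroid is an affine-invariant quantity, so the factors of $m_0m_1m_2$ versus $2m_jm_k$ never need to be reconciled.
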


\begin{proof}
Let $s_1, \cdots, s_{N_m}$ be a basis of $H^0(\mathcal{O}(m))$. Denote by $v=v_{(a,b)}$. We have
$S_{m,\mathcal{O}(1)}(v)=\mathrm{sup}\frac{1}{mN_m}\sum_{\alpha=1}^{N_m}v(s_\alpha)$, where the supremum is taken over all the bases of $H^0(\mathcal{O}(m))$. The supremum is achieved by taking a basis consisting of monomials as $v$ is a toric valuation.
Let $s_\alpha=x_0^{p_\alpha}x_1^{q_\alpha}x_2^{r_\alpha}$ such that $m_0p_\alpha+m_1q_\alpha+m_2q_\alpha=m$ for $\alpha=1,\cdots,N_m$ be a monomial basis in $H^0(\bP(m_0, m_1, m_2),\mathcal{O}(m))$. We have $S_{m,\mathcal{O}(1)}(v)=\frac{1}{mN_m}\sum_{\alpha=1}^{N_m}v(x_0^{p_\alpha}x_1^{q_\alpha}x_2^{r_\alpha})$. Without loss of generality, we take $(i,j,k)=(2,0,1)$ and consider in the affine chart $x_2=1$. Let $\Delta_{m}\subset\bR^2$ be the triangle with vertices $(0,0),(\frac{m}{m_0} ,0),(\frac{m}{m_1} ,0)$ and $\Delta^{\mathbb{Z}}_{m}=\{p,q\in \mathbb{Z}_{\geq 0}\colon m_0p+m_1q\leq m \text{ and } m_2| (m_0p+m_1q-m)\}$ be a set of a part of lattice points in $\Delta_{m}$. We get $$S_{m,\mathcal{O}(1)}(v)=(\frac{1}{mN_m}\sum_{p,q\in \Delta^{\mathbb{Z}}_{m}}(p,q) )\cdot (a,b).$$
Since $\frac{1}{mN_m}\sum_{p,q\in \Delta^{\mathbb{Z}}_{m}}(p,q)$ approaches to the centroid of $\frac{1}{m}\Delta_m = \Delta_1$ when $m\to \infty$ (see e.g. \cite[Theorem 2.1]{BJ17}), we get $$S_{\mathcal{O}(1)}(v)=\lim\limits_{m\to\infty}S_{m,\mathcal{O}(1)}(v)=( \frac{1}{3m_0},\frac{1}{3m_1} ) \cdot ( a,b)=\frac{a}{3m_0}+\frac{b}{3m_1},$$ where $( \frac{1}{3m_0},\frac{1}{3m_1})$ is the centroid of $\Delta_1$.
\end{proof}

\subsection{K-moduli of log Fano pairs and wall crossing}\label{backgroudmultiple boundaries}
Now we go over some background about the K-moduli space of log Fano pairs where the boundary divisors are proportional to the anti-canonical divisor. The wall crossing for K-moduli spaces of such pairs with a single boundary divisor was first introduced in \cite{ADL19} for $\bQ$-Gorenstein smoothable case. Later, using the advances from the algebraic K-stability theory, \cite{Zh23} generalized \cite{ADL19} to allow arbitrary klt singularities. Recently, \cite{Zh23c} generalizes the description of wall crossing in \cite[Theorem 1.2.]{Zh23} to log pairs with multiple boundaries. The definition of log pairs with multiple boundaries is stated in \cite{LZ23}. 
In our paper, we consider log pairs with two boundary divisors.
Now we state the definition of log pairs used in the paper.

\begin{definition}[\cite{LZ23,Zh23c}]\label{definelogpairswith2}
Fix $n\in \bN$, $v\in \bQ_{>0}$, $h,d\in \bQ_{>0}$, let $\theta\in [0,\max\{1, h\})\cap \bQ$ be a fixed number, we denote $\mathcal{E}:=\mathcal{E}(n, v, h, d,\theta, w)$ to be the set consisting of log pairs $(X, H+D)$ which satisfies the following conditions:
 \begin{enumerate}
\item $X$ is a $n$-dimensional $\bQ$-Fano variety of volume $(-K_X)^n = v$;
\item $H$ and $D$ are two $\bQ$-Cartier Weil divisors on $X$ such that $-K_X\sim_{\bQ} h^{-1}H \sim_{\bQ} d^{-1} D$;
\item there exists a number $w\in \Delta:=\{w\ |\ \textit{$w\in [0,1)\cap \QQ$ and $h\theta + dw < 1$}\}$ such that $(X, \theta H+wD)$ is K-semistable.
\end{enumerate}
The K-semistable domain of $(X,  H+D)\in \mathcal{E}$ is defined to be:
$$Kss(X, \theta H+wD):=\overline{\{w\in \Delta\ |\ \textit{$(X, \theta H+wD)$ is K-semistable}\}}, $$
where the overline means taking the closure.
\end{definition}

From the works of \cite{Jia20, Xu20, BLX22}, for $w\in \Delta$, there is an Artin stack $\mathcal{M}^K_{n, v, h, d,\theta, w}$ of finite type, parametrizing the log pairs $(X,  H+D)$ such that they satisfies the conditions (1), (2) in Definition \ref{definelogpairswith2}, and $(X,\theta H+wD)$ is a K-semistable log Fano pair.

\medskip

 Furthermore, the Artin stack $\mathcal{M}^K_{n, v, h, d,\theta, w}$ descends to a projective scheme $M^K_{n, v, h, d,\theta, w}$ as a good moduli space which parametrizes K-polystable log Fano pairs $(X, \theta H + wD)$ where $h\theta + dw < 1$ \cite{ABHLX20, BHLLX21, CP21, XZ20, LXZ22}. We call $\mathcal{M}^K_{n, v, h, d,\theta, w}$ (resp. $M^K_{n, v, h, d,\theta, w}$) a K-moduli stack (resp. K-moduli space). 

\medskip

In this paper, the description of wall crossing is based on the principle in the following theorem \cite[Theorem 1.2.]{Zh23}, \cite[Theorem 1.6.]{Zh23c}.

\begin{theorem}\label{wallcrossing[0,1]}
We have a list of rational numbers 
$$0=w_0<w_1<w_2<\cdots<w_k<w_{k+1}=1$$ such that 
\begin{enumerate}
\item For any $w\in (w_i,w_{i+1})$ and $0\leq i\leq k$, the Artin stacks $\mathcal{M}^{K}_{n, v, h, d,\theta, w}$ keep the same.
Also there are open immersions
$$\mathcal{M}^{K}_{n, v, h, d,\theta, w_i-\epsilon}\hookrightarrow \mathcal{M}^{K}_{n, v, h, d,\theta, w_i}\hookleftarrow \mathcal{M}^{K}_{n, v, h, d,\theta, w_i+\epsilon}$$
for very small positive rational number $\epsilon\ll1$ and $1\leq i\leq k$.
\item For any $w\in (w_i,w_{i+1})$ and $0\leq i\leq k$, the good moduli space $M^{K}_{n, v, h, d,\theta, w}$ keep the same. Also there are the following projective morphisms among good moduli spaces 
$$M^{K}_{n, v, h, d,\theta, w_i-\epsilon}\stackrel{\phi^-_i}{\rightarrow} M^{K}_{n, v, h, d,\theta, w_i}\stackrel{\phi^+_i}{\leftarrow}M^{K}_{n, v, h, d,\theta, w_i+\epsilon}$$
for very small positive rational number $\epsilon\ll1$ and $1\leq i\leq k$.
which are induced by the following diagram:
\begin{center}
	\begin{tikzcd}[column sep = 2em, row sep = 2em]
	 \mathcal{M}_{n, v, h, d,\theta, w_i-\epsilon}^K \arrow[d,"",swap] \arrow[rr,""]&& \mathcal{M}_{n, v, h, d,\theta, w_i}^K\arrow[d,"",swap] &&\mathcal{M}_{n, v, h, d,\theta, w_i+\epsilon}^K\arrow[d,""]\arrow[ll,""]\\
	 M_{n, v, h, d,\theta, w_i-\epsilon}^K\arrow[rr,"\phi_i^-"]&& M_{n, v, h, d,\theta, w_i}^K&&M_{n, v, h, d,\theta, w_i+\epsilon}^K\arrow[ll,"\phi_i^+", swap].
	 	 	 	\end{tikzcd}
       \end{center}
\end{enumerate}

\end{theorem}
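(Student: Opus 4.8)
The statement is the general wall-crossing package for K-moduli of log Fano pairs whose boundary is proportional to $-K_X$; the plan is to recall how it is proved (following \cite{ADL19,Zh23,Zh23c}): realize every relevant pair inside a single parameter space, observe that the stability invariants depend affinely on $w$, and then read off the wall structure from openness of K-semistability together with the K-moduli theorem.

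First I would fix the parameter space. For $w$ in any compact subinterval of $\{w\in[0,1): h\theta+dw<1\}$, the $\bQ$-Fano varieties $X$ with $(-K_X)^n=v$ admitting a K-semistable pair $(X,\theta H+wD)$ form a bounded family by \cite{Jia20,Xu20,BLX22}; hence there is a finite-type scheme $Z$ with a $\mathrm{PGL}$-action and a universal family of marked pairs $(\mathcal{X},\mathcal{H}+\mathcal{D})\to Z$ such that every pair under consideration occurs as a fiber, and $\mathcal{M}^K_{n,v,h,d,\theta,w}$ is the quotient stack $[Z^{\mathrm{ss}}_w/\mathrm{PGL}]$ of the open locus $Z^{\mathrm{ss}}_w\subseteq Z$ of K-semistable fibers. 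The computation that drives everything is the linearity in $w$: since $-(K_X+\theta H+wD)\sim_{\bQ}(1-h\theta-dw)(-K_X)$, rescaling the integral in \eqref{pre:S-invariant} gives, for every prime divisor $E$ over $X$,
\[
A_{X,\theta H+wD}(E)=A_X(E)-\theta\,\ord_E(H)-w\,\ord_E(D),\qquad S_{X,\theta H+wD}(E)=(1-h\theta-dw)\,S_X(E),
\]
both affine in $w$, so $\beta_{X,\theta H+wD}(E)$ is affine in $w$; consequently, for each fixed pair, the K-semistable set $\{w:\beta_{X,\theta H+wD}(E)\ge 0\ \text{for all }E\}$ is an intersection of closed half-lines, hence a closed interval.

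The decisive step is to produce the finitely many walls. Here I would argue that $\{(z,w): \text{the fiber over }z\text{ with boundary }\theta\mathcal{H}_z+w\mathcal{D}_z\text{ is K-semistable}\}\subseteq Z\times[0,1)$ is constructible and, by the linearity just recorded, has interval slices over $Z$ whose endpoints assume only finitely many values; these values, together with $0$ and $1$, are the walls $0=w_0<\cdots<w_{k+1}=1$ (alternatively, one controls the valuations computing $\delta_w$ uniformly via the finite-generation theorem of \cite{LXZ22}). Granting finiteness, openness of K-semistability \cite{BLX22,Xu20} shows that $Z^{\mathrm{ss}}_w$ — hence $\mathcal{M}^K_{n,v,h,d,\theta,w}$ and its good moduli space $M^K_{n,v,h,d,\theta,w}$, which exists as a projective scheme by the K-moduli theorem \cite{ABHLX20,BHLLX21,CP21,LXZ22,XZ20} — is constant for $w$ in each open chamber $(w_i,w_{i+1})$; this is (1) and (2) away from the walls. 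At a wall $w_i$, the closedness of the K-semistable intervals together with the finiteness of walls forces $Z^{\mathrm{ss}}_{w_i-\epsilon},\,Z^{\mathrm{ss}}_{w_i+\epsilon}\subseteq Z^{\mathrm{ss}}_{w_i}$ for $0<\epsilon\ll1$; these $\mathrm{PGL}$-invariant open inclusions induce open immersions $\mathcal{M}^K_{w_i\mp\epsilon}\hookrightarrow\mathcal{M}^K_{w_i}$, and the universal property of good moduli spaces applied to the composites $\mathcal{M}^K_{w_i\mp\epsilon}\hookrightarrow\mathcal{M}^K_{w_i}\to M^K_{w_i}$ yields the morphisms $\phi^\mp_i$ in the displayed square; since all the $M^K$ are projective, the $\phi^\mp_i$ are projective morphisms.

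The main obstacle is precisely the finiteness of the wall set and the uniformity of $\epsilon$ at each wall: one must exclude walls accumulating in $[0,1)$, and this is exactly where boundedness of the family and the constructibility argument — or the finite-generation input of \cite{LXZ22} — are indispensable. Everything else is the affine dependence of $\beta$ on $w$ combined with the now-standard formalism of good moduli spaces and the projectivity of K-moduli spaces.
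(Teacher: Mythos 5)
Your proposal is correct and follows essentially the same route as the paper, which does not prove this theorem itself but quotes it directly from \cite{Zh23} and \cite{Zh23c}: a bounded parameter space from \cite{Jia20,Xu20,BLX22}, affine dependence of $\beta$ on $w$ giving closed K-semistable intervals, finiteness of walls, openness of K-semistability, and the good moduli space formalism producing the open immersions of stacks and the induced projective morphisms $\phi_i^\pm$. You also correctly locate the only genuinely hard input, the finiteness of the wall set, in the boundedness/constructibility (or finite generation \cite{LXZ22}) arguments of the cited works, which is exactly where the paper defers to \cite{Zh23,Zh23c}.
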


The following lemma displays interpolation of K-stability which is a helpful tool for checking K-stability. See \cite[Proposition 2.13]{ADL19} and \cite[Lemma 2.6]{Der16} for related results on interpolation.

\begin{lemma}\label{theta,w,interpolation,polystable}
    Let $(X,H+D)\in \cE$. Let $\theta\geq 0$ and $0<w_1< w_2 $ be rational numbers such that $h\theta + d w_2\leq 1$. Assume that both $(X, \theta H + w_1 D)$ and $(X,  \theta H + w_2 D)$ are K-semistable. Then $(X, \theta H+ wD)$ is K-semistable for any $w\in [w_1, w_2]$. Moreover, if one of the following conditions holds, then $(X, \theta H+ wD)$ is K-stable for any $w\in (w_1, w_2)$.
    \begin{enumerate}
        \item $(X, \theta H + w_i D)$ is K-stable for some $i\in \{1,2\}$.
        \item $(X,  \theta H + w_1 D)$ has a K-polystable degeneration $(X_1, \theta H_1 + w_1 D_1)$ such that $\mathrm{rank}\, \Aut(X_1, \theta H_1 + D_1)=1$ and $(X_1,  \theta H_1 + w_2 D_1)$ is not K-semistable.
    \end{enumerate}
\end{lemma}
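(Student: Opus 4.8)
The plan is to deduce this interpolation statement directly from the Fujita--Li valuative criterion together with the two invariant formulas for pairs of the shape $(X,\theta H + wD)$, exploiting the fact that both $A$ and $S$ are \emph{affine} (indeed linear) in the coefficient $w$. First I would fix a prime divisor $E$ over $X$ and write down how $\beta_{X,\theta H + wD}(E)$ depends on $w$. Since $-K_X \sim_{\bQ} h^{-1}H \sim_{\bQ} d^{-1}D$, the anticanonical volume $\vol(-(K_X+\theta H + wD))$ equals $(1 - h\theta - dw)^{\dim X}v$, and $A_{X,\theta H + wD}(E) = A_{X,\theta H}(E) - w\,\mathrm{ord}_E(D)$ is affine in $w$, while $S_{X,\theta H+wD}(E)$, after rescaling the divisor by the factor $(1-h\theta-dw)$, is also seen to be a rational function of $w$ that is in fact affine on the relevant range once one normalizes correctly; the cleanest route is to observe that $\beta_{X,\theta H+wD}(E) = A_{X,\theta H+wD}(E) - S_{X,\theta H+wD}(E)$ and that the map $w \mapsto \beta_{X,\theta H+wD}(E)$ is concave (or affine) on $[w_1,w_2]$ — this is exactly the content of \cite[Proposition 2.13]{ADL19} and \cite[Lemma 2.6]{Der16}, cited right before the statement.

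Granting concavity of $w\mapsto \beta_{X,\theta H+wD}(E)$ on $[w_1,w_2]$ for every $E$, the K-semistability claim is immediate: by the valuative criterion, $(X,\theta H+w_iD)$ K-semistable means $\beta_{X,\theta H+w_iD}(E)\ge 0$ for all $E$ and $i=1,2$, and concavity then forces $\beta_{X,\theta H+wD}(E)\ge 0$ for all $w\in[w_1,w_2]$ and all $E$, i.e.\ $(X,\theta H+wD)$ is K-semistable (note $h\theta + dw \le h\theta+dw_2\le 1$, with equality only allowed at the endpoint, so the pair stays log Fano on the open interval, and the boundary case is handled by the usual limiting/closedness argument). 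For part (1), if say $(X,\theta H+w_1D)$ is K-stable, then $\beta_{X,\theta H+w_1D}(E)>0$ for all $E$, and combining strict positivity at $w_1$ with nonnegativity at $w_2$ via strict concavity (or just the affine interpolation $\beta_{X,\theta H+wD}(E)\ge \frac{w_2-w}{w_2-w_1}\beta_{X,\theta H+w_1D}(E) + \frac{w-w_1}{w_2-w_1}\beta_{X,\theta H+w_2D}(E)$) gives $\beta_{X,\theta H+wD}(E)>0$ for $w\in(w_1,w_2)$; the case $i=2$ is symmetric.

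For part (2), the subtlety is that we no longer have strict positivity at either endpoint, only at an interior limit. Here the argument is: since $(X,\theta H+w_1D)$ is strictly K-semistable with K-polystable degeneration $(X_1,\theta H_1+w_1D_1)$, and $\mathrm{rank}\,\Aut(X_1,\theta H_1+D_1)=1$, any test configuration realizing $\beta=0$ for $(X,\theta H+w_1D)$ comes from this degeneration, which is a product-type or special degeneration. One then moves to $(X_1,\theta H_1+wD_1)$: by the already-established part of the lemma applied at the degeneration, $(X_1,\theta H_1+wD_1)$ is K-semistable for $w\in[w_1,w_2]$; if it failed to be K-stable for some $w_0\in(w_1,w_2)$, its K-polystable degeneration would have a strictly larger automorphism group than the (at most one-dimensional) $\Aut(X_1,\theta H_1+D_1)$ allows, or one could propagate the destabilizing valuation back to $w_2$ using concavity to contradict that $(X_1,\theta H_1+w_2D_1)$ is \emph{not} K-semistable. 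Concretely: a destabilizer $E$ at interior $w_0$ with $\beta_{X_1,\theta H_1+w_0D_1}(E)=0$, by strict concavity and $\beta\ge 0$ at $w_1$, must have $\beta_{X_1,\theta H_1+w_1D_1}(E)=0$ too, hence $E$ is one of the finitely many "special" valuations coming from the one-dimensional automorphism torus; but then $\beta_{X_1,\theta H_1+wD_1}(E)$ is affine and vanishes at both $w_1$ and $w_0$, so it vanishes identically, contradicting $\beta_{X_1,\theta H_1+w_2D_1}(E)<0$ (which holds because that pair is not even K-semistable, so \emph{some} divisor has negative $\beta$; one checks the relevant divisor is of this special type). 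Transporting K-stability of $(X_1,\theta H_1+wD_1)$ back along the degeneration via openness (Theorem~\ref{openness}) upgrades $(X,\theta H+wD)$ from K-semistable to K-stable.

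The main obstacle I expect is \textbf{part (2)}: making rigorous the claim that the only way $\beta$ can vanish at $w_1$ is along valuations induced by the one-dimensional automorphism group of the degeneration, and then leveraging $\mathrm{rank}\,\Aut(X_1,\theta H_1+D_1)=1$ to conclude that the same valuation cannot simultaneously destabilize at $w_2$. This requires the structure theory of K-semistable degenerations (uniqueness of the K-polystable degeneration, the correspondence between $\beta=0$ valuations and the special degeneration, and the behavior of the automorphism group under the wall crossing $w_1\rightsquigarrow w_2$) — essentially the finite-automorphism rigidity packaged in the references \cite{LWX21, BX19} and the openness theorem. Parts (1) and the bare K-semistability statement, by contrast, are routine once the concavity of $w\mapsto\beta_{X,\theta H+wD}(E)$ is in hand.
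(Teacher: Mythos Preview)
Your treatment of the K-semistability claim and of part (1) is fine and matches the paper: both follow from the affinity of $w\mapsto\beta_{X,\theta H+wD}(E)$, which is precisely the content of \cite[Proposition 2.13]{ADL19}.

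For part (2), however, your argument has a genuine gap. You assert that ``by the already-established part of the lemma applied at the degeneration, $(X_1,\theta H_1+wD_1)$ is K-semistable for $w\in[w_1,w_2]$''. This is false, and indeed directly contradicts the hypothesis: you are \emph{told} that $(X_1,\theta H_1+w_2D_1)$ is not K-semistable, so interpolation on $(X_1,\ldots)$ is unavailable. The subsequent steps inherit this error: you try to prove K-stability of $(X_1,\theta H_1+wD_1)$ on the open interval and then ``transport it back'' to $(X,\theta H+wD)$ via openness, but the first of these is not true in general and the second does not follow from openness (openness propagates K-semistability from a special fiber to a general fiber, not K-stability). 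Your ``concretely'' paragraph also does not close the circle: even granting that a divisor $E$ with $\beta=0$ at $w_0$ must have $\beta\equiv 0$ by affinity, the divisor witnessing the \emph{failure} of K-semistability of $(X_1,\theta H_1+w_2D_1)$ need not be this same $E$, and ``one checks the relevant divisor is of this special type'' is exactly the step that needs an argument.

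The paper's proof of (2) proceeds quite differently and avoids working with $(X_1,\ldots)$ on the whole interval. It introduces a \emph{second} degeneration: the K-polystable limit $(X_2,\theta H_2+wD_2)$ of $(X,\theta H+wD)$ for $w\in(w_1,w_1+\epsilon)$, which exists and is constant in $w$ by the wall-crossing Theorem~\ref{wallcrossing[0,1]}. Linearity of the \emph{Futaki invariant of the associated test configuration} in $w$ (rather than of $\beta$ for a fixed divisor) then shows $\mathrm{Fut}=0$ for all $w\in[w_1,w_2]$, hence $(X_2,\theta H_2+wD_2)$ is K-semistable on the whole interval by \cite[Lemma 3.1]{LWX21}. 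At $w=w_1$, the pair $(X_2,\ldots)$ is a K-semistable degeneration of $(X,\ldots)$, so by uniqueness of K-polystable limits its K-polystable degeneration is again $(X_1,\theta H_1+w_1D_1)$, giving $\rank\Aut(X_2,\theta H_2+D_2)\le\rank\Aut(X_1,\theta H_1+D_1)=1$. If equality held then $(X_2,\ldots)\cong(X_1,\ldots)$, forcing $(X_1,\theta H_1+w_2D_1)$ to be K-semistable, a contradiction; so $\Aut(X_2,\theta H_2+D_2)$ is finite, whence $(X_2,\theta H_2+wD_2)$ is K-stable on $(w_1,w_2)$, and therefore so is $(X,\theta H+wD)$ by \cite{BX19}. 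The key object you are missing is this intermediate $(X_2,\ldots)$.
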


Here we use the following terminology for log Calabi-Yau pairs according to \cite{Oda13}: it is K-stable if it is klt, and K-semistable if it is log canonical.

\begin{proof}
By interpolation \cite[Proposition 2.13]{ADL19}, we know that $(X, \theta H + wD)$ is K-semistable for any $w\in [w_1, w_2]$. Moreover, (1) also follows from \cite[Proposition 2.13]{ADL19}. We shall focus on (2). 

By Theorem \ref{wallcrossing[0,1]}, there exists $\epsilon >0$ such that the K-polystable degeneration $(X_2, \theta H_2 + w D_2)$ of $(X, \theta H + w D)$ is independent of the choice of $w\in (w_1, w_1+\epsilon)$. Denote the test configuration associated to this degeneration by $(\cX, \theta \cH + w\cD)/\bA^1$. Since $(X_2, \theta H_2 + w D_2)$ is K-polystable for every $w\in (w_1, w_1+\epsilon)$, we know that $\Fut(\cX, \theta \cH + w\cD) = 0$. By linearity of Futaki invariants, we have $\Fut(\cX, \theta \cH + w\cD) = 0$ for every $w\in [w_1, w_2]$. Thus $(X_2, \theta H_2 + w D_2)$ is K-semistable for any $w\in [w_1, w_2]$ by \cite[Lemma 3.1]{LWX21}. 

Since K-semistable is a closed condition for coefficients, we know that $(X_2, \theta H_2 + w_1 D_2)$ is a K-semistable degeneration of $(X, \theta H + w_1 D)$. By \cite[Theorem 1.3]{LWX21} we know that the K-polystable degeneration of $(X_2, \theta H_2 + w_1 D_2)$  is also $(X_1, \theta H_1 + w_1 D_1)$. Thus
\begin{equation}\label{eq:rankaut}
    1 = \rank\Aut(X_1, \theta H_1 + w_1 D_1)\geq \rank\Aut(X_2, \theta H_2 + w_1 D_2).
\end{equation}
If $(X_1, \theta H_1 + D_1)\cong (X_2, \theta H_2 + D_2)$, then $(X_1, \theta H_1 + w_2 D_1)$ is K-semistable, a contradiction to our assumption. Thus  $(X_1, \theta H_1 + D_1)$ is not isomorphic to $(X_2, \theta H_2 + D_2)$, which implies that the inequality in \eqref{eq:rankaut} is strict by \cite[Theorem 1.4]{LWX21}. Thus $\Aut(X_2, \theta H_2 + w D_2)$ is finite for $w\in (w_1, w_2)$ by its rank being zero and reductivity \cite{ABHLX20}. As a result, $(X_2, \theta H_2 + w D_2)$ is K-stable for any $w\in (w_1, w_2)$ and so is $(X, \theta H + wD)$ by \cite{BX19}.
\end{proof}

\section{Variation of GIT}\label{VGIT}
Before depicting the K-moduli spaces of the log Fano pairs in Section \ref{section,nodd} and Section \ref{section,even}, we will describe the corresponding variation of GIT moduli spaces for curves $D$ on $W$ and $W'$. Note that we always consider GIT for reductive groups (indeed $\bG_m^2$) as we use the unipotent subgroup of $\Aut(W)$ or $\Aut(W', H)$ to transform the curve in $W$ or $W'$ to the standard form of $D$ (see \eqref{GIT,even,equationD} and \eqref{GIT,odd,equationD}). Throughout this section, we assume $w\in (0, \frac{n+5}{2n+6})\cap \bQ$.

\subsection{Set-up of $w$-GIT when $n$ is odd}\label{set_up_GIT_odd_affine}
Let $D$ be a curve in $W=\PP(1,2,n+2)_{x,y,z}$ defined by \begin{equation}\label{GIT,even,equationD}
    z^2y + azx^{n+4} + a_0x^{2n+6} + a_1x^{2n+4}y + \cdots + a_{n+3}y^{n+3} = 0.
\end{equation}
We consider an affine space $\mathbb{A}^{n+5}$ with coordinates $a,a_0,\cdots,a_{n+3}$ which parametrizes all curves $D$ of the form \eqref{GIT,even,equationD}.
Consider the action of torus $(\mathbb{G}_m)_{s_1,s_2}^2$ on $W$ given by 
\[
(s_1, s_2) \cdot [x:y:z]:= [x:s_1^{-1} y: s_2^{-1} z].
\]
Then this induces an action of $(\mathbb{G}_m)_{s_1,s_2}^2$ on $\mathbb{A}^{n+5}$ by pushforward of $D$ along the torus action. In particular, we have 
\[
(s_1, s_2) \cdot (a, a_j) = (s_1^{-1}s_2^{-1} a, s_1^{j-1} s_2^{-2} a_j). 
\]

We have a universal family $\phi:(\mathcal{W},w\mathcal{D})\rightarrow \mathbb{A}^{n+5}$ and the fiber over each point $(a,a_0,\cdots,a_{n+3})$ is $(W,wD)$, where $D$ is given by Equation \eqref{GIT,even,equationD}. The CM $\bQ$-line bundle on $(\mathcal{W},w\mathcal{D})$ induces the trivial $\bQ$-line bundle $L_w$ on $\mathbb{A}^{n+5}$ with a $\bG_m^2$-linearization. 
%Let $\lambda: \mathbb{G}_m \rightarrow \bG_m^2$ be a one-parameter subgroup of $\bG_m^2$ mapping $t$ to $(t^d,t^b)$. 

\begin{definition}
    We define the $w$-GIT (semi/poly)stability on $\bA^{n+5}$ by the GIT of the $\bG_m^2$-action on $\bA^{n+5}$ with respect to the linearization $L_{w}$. 
\end{definition}
    
For each $(d,b)\in \bZ^2$ we have a one-parameter subgroup $\lambda_{(d,b)}: \mathbb{G}_m \rightarrow \bG_m^2$ given by $\lambda(t) = (t^d, t^b)$. Moreover, $\lambda$ acts on $\bA^{n+5}$ as 
\begin{equation}\label{act_on_p,odd}
    \lambda(t)\cdot a=t^{-d-b}a \text{ and }
    \lambda(t)\cdot a_j=t^{(j-1)d-2b}a_j \text{ for } j=0,\cdots,n+3.
\end{equation}
Let 
\[
\beta^{w}_{(d,b)}:=\frac{1-n+2nw}{6}d+\frac{2n+1-w(4n+6)}{3(n+2)}b.
\]

\begin{proposition}\label{PropGIT stable_odd}
A point $p=(a,a_0,\cdots,a_{n+3})$ is $w$-GIT semistable if and only if for any one-parameter subgroup $\lambda=\lambda_{(d,b)}$, 
the existence of $\lim\limits_{t \to 0}\lambda(t)\cdot p$  implies $\beta^w_{(d,b)}\geq 0$.
Moreover, a $w$-GIT semistable point $p$ is $w$-GIT polystable if and only if for any one-parameter subgroup $\lambda=\lambda_{(d,b)}$ satisfying that $\lim\limits_{t \to 0}\lambda(t)\cdot p$ exists and $\beta^w_{(d,b)}=0$, the limit point $\lim\limits_{t \to 0}\lambda(t)\cdot p$ and $p$ lie in the same orbit of $\mathbb{G}_m^2$. 
%; and $\beta^w_{(d,b)}=0$ implies the limit point $\lim\limits_{t \to \infty}\lambda(t)p$ and $p$ lie in the same orbit of $(\mathbb{G}_m)_{s_1,s_2}^2$ when $d<0$ and $b<0$.
\end{proposition}

\begin{proof}
By Hilbert-Mumford criterion in the affine case (see e.g. \cite[Proposition 2.5]{King1994}), it suffices to show that $\beta_{(d,b)}^w$ coincides with the GIT weight $\mu^{L_w}(q, \lambda)$ of $\lambda=\lambda_{(d,b)}$ acting on the fiber of $L_w$ at the limit point $q:=\lim_{t\to 0}\lambda(t)\cdot p$ provided the existence of $q$. Since the $\bG_m^2$-action on $\bA^{n+5}$ is linear, we know that the fixed locus of every $\lambda$ is an affine subspace of $\bA^{n+5}$ and hence is connected. Therefore, we have $\mu^{L_w}(q, \lambda) = \mu^{L_w}(0, \lambda)$. Since the weight of the CM $\bQ$-line bundle coincides with the Futaki invariant, which also coincides with the $\beta$-invariant for product test configurations, we have that 
\[
\mu^{L_w}(q, \lambda_{(d,b)}) = \mu^{L_w}(0, \lambda_{(d,b)}) = \beta_{(W,wD_0)}(v_{(d,b)}),
\]
where $v_{(d,b)}$ is the monomial valuation on $W$ where in the affine chart $x=1$ we have $v_{(d,b)}(y) = d$ and $v_{(d,b)}(z) = b$. 

Let us assume $d,b\geq 0$ for a moment. Then we can compute $\beta_{(W,wD_0)}(v_{(d,b)})$ as follows.  By Lemma \ref{S-inv.}, we get the $S$-invariant for $v=v_{(d,b)}$:
\begin{equation*}
        S_{\mathcal{O}(1)}(v)=\frac{1}{3}\big(\frac{d}{2}+\frac{b}{n+2}\big).
\end{equation*}
We have $-K_{W}-wD_0=\mathcal{O}(n+5-w(2n+6))=r\mathcal{O}(1)$ and $S_{(W,wD_0)}(v)=\mathrm{deg}(-K_{W}-wD_0)\cdot S_{\mathcal{O}(1)}(v)=rS_{\mathcal{O}(1)}(v)$, where $r=n+5-w(2n+6)$. The log discrepancy $A_{W}(v)=d+b$ and $A_{(W,wD_0)}(v)=A_{W}(v)-wv(D_0)=d+b-w(d+2b)$. Then we have $\beta$-invariant 
\begin{equation}\label{betav,GIT,odd}
    \beta_{(W,wD_0)}(v_{(d,b)})=A_{(W,wD_0)}(v)-S_{(W,wD_0)}(v)=\frac{1-n+2nw}{6}d+\frac{2n+1-w(4n+6)}{3(n+2)}b.
\end{equation}
Thus this implies that $\mu^{L_w}(0, \lambda_{(d,b)})  = \beta^w_{(d,b)}$ whenever $d,b\geq 0$. 
By the standard representation theory of torus we know that $\mu^{L_w}(0, \lambda_{(d,b)}) $ is linear in $(d,b)$. Since $\beta^w_{(d,b)}$ is linear in $(d,b)$ by definition, we conclude that 
\[
\mu^{L_w}(q, \lambda_{(d,b)}) = \mu^{L_w}(0, \lambda_{(d,b)}) = \beta^w_{(d,b)}
\]
for every $(d,b)\in \bZ^2$. Thus the proof is finished. 
\end{proof}

\subsection{Set-up of $w$-GIT when $n$ is even}\label{set_up_GIT_even_affine} Let $n=2l$, where $l$ is a positive integer.
Let $D$ be a curve in $W'=\PP(1,1,l+1)_{u,y,z}$ defined by \begin{equation}\label{GIT,odd,equationD}
    z^2y + azu^{l+2} + a_0u^{2l+3} + a_1u^{2l+2}y + \cdots + a_{2l+3}y^{2l+3} = 0.
\end{equation}
We consider an affine space $\mathbb{A}^{2l+5}$ with coordinates $a,a_0,\cdots,a_{2l+3}$ which parametrizes all curves $D$ of the form \eqref{GIT,odd,equationD}. We consider the action of torus $(\mathbb{G}_m)_{s_1,s_2}^2$ on $W'$ given by 
\[
(s_1, s_2) \cdot [u:y:z]:= [u:s_1^{-1} y: s_2^{-1} z].
\]Then this induces an action of $(\mathbb{G}_m)_{s_1,s_2}^2$ on $\mathbb{A}^{2l+5}$ by pushforward of $D$ along the torus action. In particular, we have 
\[
(s_1, s_2) \cdot (a, a_j) = (s_1^{-1}s_2^{-1} a, s_1^{j-1} s_2^{-2} a_j)
\] for $j=0,1,\cdots,2l+3.$

%\smallskip

We have a universal family $\phi:(\mathcal{W}',\frac{1}{2}\mathcal{H}+w\mathcal{D})\rightarrow \mathbb{A}^{2l+5}$ and the fiber over each point $(a,a_0,\cdots,a_{2l+3})$ is $(W',\frac{1}{2}H+wD)$, where $D$ is given by Equation \eqref{GIT,odd,equationD}. The CM $\bQ$-line bundle on $(\mathcal{W}',\frac{1}{2}\mathcal{H}+w\mathcal{D})$ induces the trivial $\bQ$-line bundle $L_w$ on $\mathbb{A}^{2l+5}$ with a $\mathbb{G}_m^2$-linearization.

\begin{definition}
   We define the $w$-GIT (semi/poly)stability on $\mathbb{A}^{2l+5}$ by the GIT of the $\mathbb{G}_m^2$-action on $\mathbb{A}^{2l+5}$ with respect to the linearization $L_w$.
\end{definition}

For each $(d,b)\in \bZ^2$ we have a one-parameter subgroup $\lambda_{(d,b)}: \mathbb{G}_m \rightarrow \bG_m^2$ given by $\lambda(t) = (t^d, t^b)$. Moreover, $\lambda$ acts on $\bA^{2l+5}$ as 
\begin{equation}\label{act_on_p,even}
    \lambda(t)\cdot a=t^{-d-b}a \text{ and }
    \lambda(t)\cdot a_j=t^{(j-1)d-2b}a_j \text{ for } j=0,\cdots,2l+3.
\end{equation}
Let \[\beta_{(d,b)}^w:=\frac{1-2l+4lw}{6}d+\frac{4l+1-w(8l+6)}{6(l+1)}b.\]

\begin{proposition}\label{PropGIT stable_even}
    A point $p=(a,a_0,\cdots,a_{2l+3})$ is $w$-GIT semistable if and only if for any one-parameter subgroup $\lambda=\lambda_{(d,b)}$, 
the existence of $\lim\limits_{t \to 0}\lambda(t)\cdot p$  implies $\beta^w_{(d,b)}\geq 0$. Moreover, a $w$-GIT semistable point $p$ is $w$-GIT polystable if and only if for any one-parameter subgroup $\lambda=\lambda_{(d,b)}$ satisfying that $\lim\limits_{t \to 0}\lambda(t)\cdot p$ exists and $\beta^w_{(d,b)}=0$, the limit point $\lim\limits_{t \to 0}\lambda(t)\cdot p$ and $p$ lie in the same orbit of $\mathbb{G}_m^2$. 
\end{proposition}

\begin{proof}
    By Hilbert-Mumford criterion in the affine case (see e.g. \cite[Proposition 2.5]{King1994}), it suffices to show that $\beta_{(d,b)}^w$ coincides with the GIT weight $\mu^{L_w}(q, \lambda)$ of $\lambda=\lambda_{(d,b)}$ acting on the fiber of $L_w$ at the limit point $q:=\lim_{t\to 0}\lambda(t)\cdot p$ provided the existence of $q$. Since the $\bG_m^2$-action on $\bA^{2l+5}$ is linear, we know that the fixed locus of every $\lambda$ is an affine subspace of $\bA^{2l+5}$ and hence is connected. Therefore, we have $\mu^{L_w}(q, \lambda) = \mu^{L_w}(0, \lambda)$. Since the weight of the CM $\bQ$-line bundle coincides with the Futaki invariant, which also coincides with the $\beta$-invariant for product test configurations, we have that 
\[
\mu^{L_w}(q, \lambda_{(d,b)}) = \mu^{L_w}(0, \lambda_{(d,b)}) = \beta_{(W',\frac{1}{2}H+wD_0)}(v_{(d,b)}),
\]
where $D_0$ is given by $z^2y=0$, $v_{(d,b)}$ is the monomial valuation on $W'$ where in the affine chart $u=1$ we have $v_{(d,b)}(y) = d$ and $v_{(d,b)}(z) = b$. 

Let us assume $d,b\geq 0$ for a moment. Then we can compute $\beta_{(W',\frac{1}{2}H+wD_0)}(v_{(d,b)})$ as follows.
By Lemma \ref{S-inv.}, we get the $S$-invariant for $v=v_{(d,b)}$:
\begin{equation*}
        S_{\mathcal{O}(1)}(v)=\frac{1}{3}\big(d+\frac{b}{l+1}\big).
\end{equation*}
We have $-K_{W'}-\frac{1}{2}H-wD_0=r\mathcal{O}(1)$ and $S_{(W',\frac{1}{2}H+wD_0)}(v)=rS_{\mathcal{O}(1)}(v)$, where $r=l+3-\frac{1}{2}-w(2l+3)$. The log discrepancy $A_{W'}(v)=d+b$ and $A_{(W',\frac{1}{2}H+wD_0)}(v)=A_{W'}(v)-v(\frac{1}{2}H+wD_0)=d+b-w(d+2b)$ since $v(H)=0$. Then we have $\beta$-invariant $\beta_{(W',\frac{1}{2}H+wD_0)}(v_{(d,b)})$ equals
\begin{equation}\label{betav,GIT,even}
    A_{(W',\frac{1}{2}H+wD_0)}(v)-S_{(W',\frac{1}{2}H+wD_0)}(v)=\frac{1-2l+4lw}{6}d+\frac{4l+1-w(8l+6)}{6(l+1)}b.
\end{equation}
Thus this implies that $\mu^{L_w}(0, \lambda_{(d,b)})  = \beta^w_{(d,b)}$ whenever $d,b\geq 0$. 
By the standard representation theory of torus we know that $\mu^{L_w}(0, \lambda_{(d,b)}) $ is linear in $(d,b)$. Since $\beta^w_{(d,b)}$ is linear in $(d,b)$ by definition, we conclude that 
\[
\mu^{L_w}(q, \lambda_{(d,b)}) = \mu^{L_w}(0, \lambda_{(d,b)}) = \beta^w_{(d,b)}
\]
for every $(d,b)\in \bZ^2$. Thus the proof is finished. 
\end{proof}

\subsection{$w$-GIT moduli spaces}
%By  \eqref{betav,GIT,odd} and \eqref{betav,GIT,even}, for any one-parameter subgroup $\lambda_{(d,b)}$ of $\bG_m^2$, we have $\beta_{(W,wD_0)}(v_{(d,b)})=\beta_{(W',\frac{1}{2}H+wD_0)}(v_{(d,b)})$ which we denote by $\beta^w_{(d,b)}$.
%We consider the conditions of $w$-GIT stability as follows.

%\smallskip

%\medskip

In Sections \ref{set_up_GIT_odd_affine} and \ref{set_up_GIT_even_affine}, we set up $w$-GIT in the affine case. The goal of this section is to describe the $w$-GIT moduli spaces $M_w^{GIT}$. We first give an explicit criterion for $w$-GIT stability. 

Let $(d,b)$, $u=( \frac{1-n+2nw}{6},\frac{2n+1-w(4n+6)}{3(n+2)})$, $v=(-1,-1)$, $v_j=(j-1,-2)$ for $j=0,\cdots,n+3$ be vectors in $\mathbb{R}^2$.
For a point $p=(a,a_0,\cdots,a_{n+3})$, we denote $T_p$ as a subset of $\{v,v_0,\cdots,v_{n+3}\}$ such that $v\in T_p$ if and only if $a\neq 0$, and $v_j\in T_p$ if and only if $a_j\neq 0$ for $j=0,\cdots,n+3$. That is to say, %for a point $p=(a,a_0,\cdots,a_{n+3})$, 
\[
T_p:=\begin{cases}
\{v_j\mid a_j\neq 0 \text{ for } j=0,\cdots,n+3\} & \textrm{ if } a=0;\\
\{v\}\cup\{v_j\mid a_j\neq 0 \text{ for } j=0,\cdots,n+3\} & \textrm{ if }a\neq 0.
\end{cases}
\]
%
%if $a=0$, $$T_p:=\{v_j|a_j\neq 0 \text{ for } j=0,\cdots,n+3\};$$ if $a\neq 0$, $$T_p:=\{v\}\cup\{v_j|a_j\neq 0 \text{ for } j=0,\cdots,n+3\}.$$
%
Let $\Gamma_p$ be the cone generated by all the vectors in $T_p$.
We denote the dual cone of $\Gamma_p$ to be $$\Gamma_p^{\vee}=\{r\in\mathbb{R}^2\mid  r\cdot r'\geq 0 \text{ for any } r'\in \Gamma_p\}.$$ 

Now we can reinterpret the conditions for $w$-GIT stability as follows.

\begin{lemma}\label{GITstable,condition,odd}
    Let $n$ be a positive integer.
    A point $p=(a,a_0,\cdots,a_{n+3})$ is $w$-GIT semistable if and only if $u\in \Gamma_p$.
%    for any one-parameter subgroup $\lambda=\lambda_{(d,b)}$.
%\begin{enumerate}
    %\item vector $\langle d,b\rangle$ is not in $\Gamma_p^{\vee}$;
    %\item if $\langle d,b\rangle \in \Gamma_p^{\vee}$, then $u\in \Gamma_p$.
%\end{enumerate}
Moreover, if $\Gamma_p$ has dimension one, then a $w$-GIT semistable point $p$ is $w$-GIT polystable. 
If $\Gamma_p$ has dimension two, then 
 $w$-GIT semistable point $p$ is $w$-GIT polystable if for any one-parameter subgroup $\lambda=\lambda_{(d,b)}$ such that $\beta^w_{(d,b)}=0$, then $u$ is in the interior of $\Gamma_p$.
\end{lemma}

\begin{proof}
Let $J=\{j\in \{0,\cdots,n+3\} \mid a_j\neq 0\}$. Let $\lambda=\lambda_{(d,b)}$ be a one-parameter subgroup.
Note \eqref{act_on_p,odd} and \eqref{act_on_p,even}. When $a=0$, we have $\lim\limits_{t \to 0}\lambda(t)p$ exists if and only if \begin{equation*}
    v_j\cdot (d,b)=(j-1)d-2b\geq 0\text{ for } j\in J.
    \end{equation*}
When $a\neq0$, we have $\lim\limits_{t \to 0}\lambda(t)p$ exists if and only if \begin{equation*}
    v\cdot (d,b) =-d-b\geq 0\text{ and }
     v_j\cdot (d,b)=(j-1)d-2b\geq 0\text{ for } j\in J.
    \end{equation*}
Then the existence of $\lim\limits_{t \to 0}\lambda(t)p$ is equivalent to that the vector $(d,b) \in \Gamma_p^{\vee}$. 
By Propositions \ref{PropGIT stable_odd} and \ref{PropGIT stable_even}, if $(d,b)\in \Gamma_p^{\vee}$, then $u\cdot (d,b)=\beta^w_{(d,b)} \geq 0$, 
which is equivalent to $u\in \Gamma_p$.
%Hence (1) in Definition \ref{defineGIT stable} is equivalent to say that if $(b,d)\in \Gamma_p^{\vee}$, 
%then $u\cdot r \geq 0$ for all $r \in \Gamma_p^{\vee}$, 
%then $u\cdot (b,d) \geq 0$, 
%i.e., $u\in \Gamma_p$.

\smallskip

\begin{comment}
Assume $d<0$ and $b<0$. \eqref{act_on_p,odd_d<0_b<0} and \eqref{act_on_p,even_d<0_b<0}. When $a=0$, we have $\lim\limits_{t \to \infty}\lambda(t)p$ exists if and only if \begin{equation*}
    v_j\cdot (d,b)=-((1-j)d+2b)\geq 0\text{ for } j\in J.
    \end{equation*}
When $a\neq0$, we have $\lim\limits_{t \to \infty}\lambda(t)p$ exists if and only if \begin{equation*}
    v\cdot (d,b) =-(d+b)\geq 0\text{ and }
     v_j\cdot (d,b)=(-((1-j)d+2b\geq 0\text{ for } j\in J.
    \end{equation*}
Then the existence of $\lim\limits_{t \to \infty}\lambda(t)p$ is equivalent to that the vector $(d,b) \in \Gamma_p^{\vee}$. 
Hence (2) in Definition \ref{defineGIT stable} is equivalent to say that if $(b,d) \in \Gamma_p^{\vee}$, 
%then $u\cdot r \geq 0$ for all $r \in \Gamma_p^{\vee}$, 
then $u\cdot (b,d) \geq 0$,
i.e., $u\in \Gamma_p$.
\end{comment}
\smallskip

Let $p$ be a $w$-GIT semistable point. If $\Gamma_p$ has dimension one, then $T_p$ contains only one vector which is denoted by $r$, i.e., $p$ has only one nonzero coordinate corresponding to $r$. Since $p$ is $w$-GIT semistable, we have $u\in \Gamma_p$ is proportional to $r$. By $(d,b) \cdot u=\beta^w_{(d,b)}=0$, we have $ (d,b) \cdot r=0$. Thus $\lim\limits_{t \to 0}\lambda(t)p$
is in the same orbit as $p$. Hence $p$ is $w$-GIT polystable.

\smallskip

Assume $\Gamma_p$ has dimension two, that is to say, $p$ has at least two nonzero coordinates. By Propositions \ref{PropGIT stable_odd} and  \ref{PropGIT stable_even}, the point $p$ is $w$-GIT polystable if $u$ is in the interior of the cone $\Gamma_p$. Otherwise, we suppose $u$ is on a bound of $\Gamma_p$. Then $u$ is proportional to a vector $r\in T_p$ generating the bound. By $( d,b) \cdot u=\beta^w_{(d,b)}=0$, we have $(d,b) \cdot r=0$ and $ (d,b) \cdot r'>0$ for all $r'\in T_p$ such that $r'\neq r$. Under the action of the one-parameter group $\lambda_{(d,b)}$, as $t$ goes to zero, the coordinate corresponding to $r$ goes to a nonzero number, and other nonzero coordinates go to zero. Thus $\lim\limits_{t \to 0}\lambda(t)p$ is not in the same orbit as $p$, which leads to a contradiction. 
\end{proof}

Now we describe the \emph{$w$-GIT moduli spaces} $M^{GIT}_w:= \bA^{n+5}\sslash_{L_w} \bG_m^2$ parameterizing  $w$-GIT polystable points. Recall that we always assume $w\in (0, \frac{n+5}{2n+6})\cap \bQ$. %Since our main interest is to compare our GIT moduli to K-moduli, we restrict to the case when $w\in (0, \frac{n+5}{2n+6})$. 

\begin{theorem}\label{describe,GITmoduli,odd}
 Let $n$ be an odd positive integer. %For $w\in (0, \frac{n+5}{2n+6})\cap \bQ$,  let $M^{GIT}_w$ be the $w$-GIT moduli spaces parameterizing $w$-GIT polystable points $(a,a_0,\cdots,a_{n+3})$.
Then the walls for $M^{GIT}_w$ are given by 
  \[
  w_{i}:=\frac{(n+2)^2 - (2n+1)i}{(n+2)(2n+6)-(4n+6)i}, \quad \textrm{ where }0\leq i\leq \frac{n+1}{2}.
  \]
Moreover, we have the following explicit description of $M^{GIT}_w$.
 \begin{enumerate}
    \item When $w\in (0, w_{0}=\frac{n+2}{2n+6})$, the $w$-GIT moduli space $M^{GIT}_w$ is empty.
    \item The $w$-GIT moduli space $M^{GIT}_{w_0}$ consists a single point parameterizing the $w$-GIT polystable points $(a,a_0,\cdots,a_{n+3})$ such that $a=a_0=\cdots=a_{n+2}=0$ and $a_{n+3}\neq 0$.
     \item The $w$-GIT moduli space $M^{GIT}_{w_i}$ parameterizes all points $(a,a_0,\cdots,a_{n+3})$ from one of the following two cases:
     \begin{itemize}
         \item $\{a,a_0,\cdots,a_{n+2-i}\}$ are not all zero, $a_{n+3-i}$ is arbitrary and $\{a_{n+4-i},\cdots,a_{n+3}\}$ are not all zero;
         \item $a=a_0=\cdots=a_{n+2-i}=0$, $a_{n+3-i}\neq0$, and $a_{n+4-i}=\cdots=a_{n+3}=0$. 
     \end{itemize}
     \item If $w\in (w_{i},w_{i+1})$ where $0\leq i\leq \frac{n+1}{2}$, then $M^{GIT}_w$ parameterizing all points $(a,a_0,\cdots,a_{n+3})$ such that $\{a,a_0,\cdots,a_{n+2-i}\}$ are not all zero, and $\{a_{n+3-i},\cdots,a_{n+3}\}$ are not all zero. Note that $w_{\frac{n+3}{2}}=\frac{n+5}{2n+6}$.
 \end{enumerate}      
\end{theorem}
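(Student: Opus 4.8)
The plan is to work directly with the combinatorial description of $w$-GIT stability provided by Lemma \ref{GITstable,condition,odd}. Everything is governed by the position of the vector $u = u(w) = \langle \frac{1-n+2nw}{6}, \frac{2n+1-w(4n+6)}{3(n+2)}\rangle$ relative to the cones $\Gamma$ generated by subsets of $\{v, v_0, \dots, v_{n+3}\}$, where $v = \langle -1,-1\rangle$ and $v_j = \langle j-1, -2\rangle$. First I would record the basic geometry of these vectors in $\mathbb{R}^2$: the $v_j$ all lie on the horizontal line $\{b = -2\}$ and are ordered left to right by increasing $j$, while $v = \langle -1,-1\rangle$ sits above that line; and I would compute the slopes of the rays $\mathbb{R}_{\geq 0} v$ and $\mathbb{R}_{\geq 0} v_j$ so as to understand which pairs $(v_i, v_j)$ or $(v, v_j)$ span a cone containing a given ray. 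In particular the extreme generators of the cone generated by all of $\{v, v_0, \dots, v_{n+3}\}$ are $v_0 = \langle -1,-2\rangle$ (or rather $v$, whichever is more counterclockwise) on one side and $v_{n+3} = \langle n+2, -2\rangle$ on the other; I should check whether $v$ or $v_0$ is extreme (comparing slopes $\frac{-1}{-1}=1$ versus $\frac{-2}{-1}=2$ shows $v_0$ is "more clockwise" / lower, so $v$ is not on the boundary on that side — actually both have negative coordinates; I would pin this down carefully since it determines the $a=0$ locus analysis).

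**Finding the walls.** The walls $w_i$ are exactly the values of $w$ at which $u(w)$ crosses a ray spanned by one of the $v_j$ (or by $v$). Concretely, $u(w)$ is proportional to $v_j = \langle j-1, -2\rangle$ precisely when $\langle d,b\rangle \cdot u = 0$ for $\langle d, b \rangle$ orthogonal to $v_j$, which gives a linear equation in $w$; solving $\det\begin{pmatrix} \frac{1-n+2nw}{6} & \frac{2n+1-w(4n+6)}{3(n+2)} \\ j-1 & -2 \end{pmatrix} = 0$ yields $w = w_{n+2-j}$ with the stated formula (after reindexing $i = n+2-j$), and one checks these are increasing in $i$ for $0 \le i \le \frac{n+1}{2}$ and that $w_{\frac{n+3}{2}} = \frac{n+5}{2n+6}$ is the right endpoint. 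I would also verify $u(w)$ never crosses the ray through $v$ inside the relevant range, and that $u(w)$ enters the "feasible" region (the cone of all the $v$'s) exactly at $w = w_0$, which gives part (1): for $w < w_0$, $u(w)$ lies outside every possible $\Gamma$, so by Lemma \ref{GITstable,condition,odd}(2) no point is semistable, hence $M^{GIT}_w = \emptyset$.

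**Reading off the moduli descriptions.** For $w$ in an open chamber $(w_i, w_{i+1})$, $u(w)$ lies in the interior of a fixed two-dimensional cone $\Gamma_0$ whose extreme rays are (the rays through) $v_{n+2-i}$ and $v_{n+3}$ — this is the content of "$\{a,\dots,a_{n+2-i}\}$ not all zero and $\{a_{n+3-i},\dots,a_{n+3}\}$ not all zero": a point $p$ is semistable iff $u(w) \in \Gamma(p)$, and by convexity $u(w) \in \Gamma(p)$ iff $\Gamma(p)$ contains both a vector from $\{v, v_0, \dots, v_{n+2-i}\}$ (forcing the first non-vanishing condition) and a vector from $\{v_{n+3-i}, \dots, v_{n+3}\}$ (forcing the second) — here I must argue that $v_{n+3-i}, \dots, v_{n+2}$ all lie "below" the relevant boundary ray so that including any one of them suffices, while $v_0, \dots, v_{n+2-i}$ and $v$ lie "above" the other boundary ray; this is the careful slope bookkeeping. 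Since $u(w)$ is then in the \emph{interior}, the polystability criterion of Lemma \ref{GITstable,condition,odd} says every semistable point is polystable, so $M^{GIT}_w$ is exactly this locus (as a set; it is a point of the GIT quotient with the $\bG_m^2$-action on the weighted projective parameter space). At a wall $w = w_i$, $u(w_i)$ lies \emph{on} the ray through $v_{n+3-i}$, i.e., on a boundary of the relevant cones; the polystable points are then those $p$ with $u(w_i)$ in the interior of $\Gamma(p)$, which — by the same slope analysis, now allowing equality — forces the two listed cases in part (3): either $a_{n+3-i}$ is unconstrained but there is mass strictly on both sides (the cone is genuinely 2-dimensional with $u(w_i)$ interior, possible only when there is a $v_j$ strictly above and a $v_k$ strictly below the $v_{n+3-i}$-ray), or all the mass is concentrated on $v_{n+3-i}$ itself (the 1-dimensional case, which is automatically polystable by the lemma). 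Part (2), $M^{GIT}_{w_0}$, is the degenerate wall case where the only polystable point is $a_{n+3}\neq 0$, everything else zero — here $u(w_0)$ is on the extreme ray through $v_{n+3}$ (equivalently $v_0$; I'd double-check which), so the only way to have $u(w_0)$ in the interior of $\Gamma(p)$ is impossible and the only semistable-hence-polystable orbit is the one-dimensional cone on that extreme ray.

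**Main obstacle.** The genuinely fiddly part is the slope bookkeeping in part (3): establishing that for $w$ at a wall $w_i$, the vector $u(w_i)$ lies on the ray through $v_{n+3-i}$ and is separated from \emph{all} of $v_{n+4-i}, \dots, v_{n+3}$ on one side and from $v, v_0, \dots, v_{n+2-i}$ on the other, so that the two-case description is exhaustive and correct — and similarly that in the open chambers the extreme rays of the semistable cone are precisely $v_{n+2-i}$ and $v_{n+3}$ and not some intermediate $v_j$. This requires checking a family of strict inequalities between $w$ and the "critical values" $\frac{(n+2)^2-(2n+1)j'}{\cdots}$ for all relevant $j'$ simultaneously, i.e., verifying monotonicity of $j \mapsto w_{n+2-j}$ and that no $v_j$ with $j$ outside the claimed range accidentally becomes extreme; once the ordering of these critical slopes is pinned down the rest is a direct translation through Lemma \ref{GITstable,condition,odd}.
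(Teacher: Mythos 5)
Your proposal follows essentially the same route as the paper's proof: translate everything through Lemma \ref{GITstable,condition,odd}, track the position of $u(w)$ relative to the rays through $v, v_0, \dots, v_{n+3}$, locate the walls at the crossings $u(w)\in\mathbb{R}_{\geq 0}v_{n+3-i}$, and read off semistability/polystability in each chamber and on each wall exactly as you describe; the paper merely streamlines your ``slope bookkeeping'' by rescaling $u$ (for $w>\tfrac{2n+1}{4n+6}$) onto the line $b=-2$ and comparing the single function $h(w)=\frac{(n+2)(n-1)-2n(n+2)w}{2n+1-(4n+6)w}$ with the integers $j-1$. Just fix the indexing slip in your wall computation --- $u\propto v_j$ occurs at $w=w_{n+3-j}$, not $w_{n+2-j}$, consistent with your later (correct) statement that $u(w_i)$ lies on the ray through $v_{n+3-i}$ --- and note that $u(w_0)$ lies on the extreme ray through $v_{n+3}$, not $v_0$.
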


\begin{proof} 
    If $w\in (0,\frac{2n+1}{4n+6}]$, then $u=(\frac{1-n+2nw}{6},\text{ }\frac{2n+1-w(4n+6)}{3(n+2)})$ is not in the lower half plane of $\mathbb{R}^2$. For any nonzero point $p=(a,a_0,\cdots,a_{n+3})$, cone $\Gamma_p$ is in the lower half plane of $\mathbb{R}^2$. So $u$ is not in $\Gamma_p$, which implies $p$ is not $w$-GIT semistable by Lemma \ref{GITstable,condition,odd}. Now we consider $w>\frac{2n+1}{4n+6}$.
    Multiplying by $-2\frac{3(n+2)}{2n+1-w(4n+6)}>0$, the vector $u$ becomes $$(\frac{(n+2)(n-1)-2n(n+2)w}{2n+1-(4n+6)w}, \text{ } -2)$$ which we still denote by $u$. For $0\leq i\leq \frac{n+1}{2}$, we have
    \[\frac{(n+2)(n-1)-2n(n+2)w}{2n+1-(4n+6)w}=n+2-i \text{ if and only if } w=w_i.\]
When $w\in (\frac{2n+1}{4n+6},w_0)$, we have $\frac{(n+2)(n-1)-2n(n+2)w}{2n+1-(4n+6)w}>n+2$. Thus $u$ is not in $\Gamma_p$ for any nonzero point $p=(a,a_0,\cdots,a_{n+3})$. Hence (1) holds.
    
    If $w=w_0$, then $u=(n+2, -2)=v_{n+3}$.
    By (2) of Lemma \ref{GITstable,condition,odd}, 
    a point $p=(a,a_0,\cdots,a_{n+3})$ is $w$-GIT semistable if $T_p$ contains $v_{n+3}$. 
   If $T_p=\{v_{n+3}\}$, then $\Gamma_p$ has dimension one and contains $u$. So $p$ is $w$-GIT polystable. 
    If $T_p$ contains a vector more than $v_{n+3}$, then $u$ is not in the interior of $T_p$. Thus $p$ is not $w$-GIT polystable. Hence $p$ is $w$-GIT polystable iff $T_p=\{v_{n+3}\}$, i.e., $a=a_0=\cdots=a_{n+2}=0$ and $a_{n+3}\neq 0$. Therefore (2) holds.

    If $w=w_i$ for $1\leq i\leq \frac{n+1}{2}$, then $u=(n+2-i, -2)=v_{n+3-i}$. By (2) of Lemma \ref{GITstable,condition,odd}, a point $p=(a,a_0,\cdots,a_{n+3})$ is $w$-GIT semistable if $T_p$ contains $v_{n+3}$, i.e., $\{a,a_0,\cdots,a_{n+3-i}\}$ are not all zero and $\{a_{n+3-i},\cdots,a_{n+3}\}$ are not all zero. If $p$ is polystable, then $v_{n+3}$ cannot lie on a bound of $\Gamma_p$, i.e., $T_p$ must contains a vector in $\{v,v_0,\cdots,v_{n+2-i}\}$ and a vector in $\{v_{n+4-i},\cdots,v_{n+3}\}$. Thus (3) holds.

If $w\in(w_{i},w_{i+1})$ for $0\leq i\leq \frac{n+1}{2}$, then $u$ is between vector $v_{n+3-i}$ and $v_{n+2-i}$. By Lemma \ref{GITstable,condition,odd}, a point $p=(a,a_0,\cdots,a_{n+3})$ is $w$-GIT semistable if $T_p$ contains a vector in $\{v,v_0,\cdots,v_{n+2-i}\}$ and a vector in $\{v_{n+3-i},\cdots,v_{n+3}\}$. We also have that $u$ is in the interior of $\Gamma_p$. So $p$ is $w$-GIT polystable. Hence (4) holds.
\end{proof}

With similar arguments, we can describe the $w$-GIT moduli spaces $M^{GIT}_w$ when $n$ is an even positive integer as follows.
\begin{theorem}\label{describe,GITmoduli,even}
  Let $n=2l$ be an even positive integer for $l\leq 1$. %For $w\in (0, \frac{1}{2})\cap \bQ$,  let $M^{GIT}_w$ be the $w$-GIT moduli spaces  parameterizing $w$-GIT polystable points $(a,a_0,\cdots,a_{n+3})$.
Then the walls for $M^{GIT}_w$ are given by 
  \[
  w_{i}:=\frac{(n+2)^2 - (2n+1)i}{(n+2)(2n+6)-(4n+6)i}, \quad \textrm{ where }0\leq i\leq l+1.
  \]
  Moreover, we have the following explicit description of $M^{GIT}_w$.
 \begin{enumerate}
     \item When $w\in (0, w_{0}=\frac{l+1}{2l+3})$, the $w$-GIT moduli space $M^{GIT}_w$ is empty.
     \item The $w$-GIT moduli space $M^{GIT}_{w_0}$ consists a single point parameterizing the $w$-GIT polystable points $(a,a_0,\cdots,a_{2l+3})$ such that $a=a_0=\cdots=a_{2l+2}=0$ and $a_{2l+3}\neq 0$.
     \item  The $w$-GIT moduli space $M^{GIT}_{w_i}$ parameterizes all points $(a,a_0,\cdots,a_{2l+3})$ from one of the following two cases:
     \begin{itemize}
         \item $\{a,a_0,\cdots,a_{2l+2-i}\}$ are not all zero, $a_{2l+3-i}$ is arbitrary, and $\{a_{2l+4-i},\cdots,a_{2l+3}\}$ are not all zero;
         \item $a=a_0=\cdots=a_{2l+2-i}=0$, $a_{2l+3-i}\neq0$ and $a_{{2l+4-i}}=\cdots=a_{2l+3}=0$.
     \end{itemize}
     \item If $w\in (w_{i},w_{i+1})$, where $0\leq i\leq l$, then the $w$-GIT moduli space $M^{GIT}_{w}$ parameterizes all points $(a,a_0,\cdots,a_{2l+3})$ such that $\{a,a_0,\cdots,a_{2l+2-i}\}$ are not all zero, and $\{a_{2l+3-i},\cdots,a_{2l+3}\}$ are not all zero. Note that $w_{l+1}=\frac{1}{2}$.
     \item If $w\in (w_{l+1},\frac{2l+5}{4l+6})$, then the $w$-GIT moduli space $M^{GIT}_{w}$ parameterizes all points $(a,a_0,\cdots,a_{2l+3})$ such that $\{a,a_0,\cdots,a_{l+1}\}$ are not all zero, and $\{a_{l+2},\cdots,a_{2l+3}\}$ are not all zero. 
     \end{enumerate}    
\end{theorem}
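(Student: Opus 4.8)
My plan is to prove this exactly as Theorem~\ref{describe,GITmoduli,odd}, transcribing that argument under the substitution $n=2l$. The point is that, as already recorded above, the $\beta$-invariant $\beta^w_{(d,b)}=\beta_{(W',\frac12 H+wD_0)}(v_{(d,b)})$ is given by the \emph{same} rational expression \eqref{betav,GIT,even} one obtains from \eqref{betav,GIT,odd} by setting $n=2l$, and the $\bG_m^2$-weights on the coordinates $a,a_0,\dots,a_{2l+3}$ are literally the same as in the odd case. Hence Lemma~\ref{GITstable,condition,odd}, whose statement and proof are unchanged for $n=2l$, applies verbatim, along with all of its notation: the vectors $u=u(w)$, $v=\langle-1,-1\rangle$, $v_j=\langle j-1,-2\rangle$ for $0\le j\le 2l+3$, and the cone $\Gamma(p)\subset\mathbb{R}^2$ generated by those of $v,v_j$ that correspond to nonvanishing coordinates of $p$. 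So all that remains is to track the position of $u(w)$ relative to the $v_j$ as $w$ ranges over the log Fano locus $\big(0,\tfrac{2l+5}{4l+6}\big)$; here $w=\tfrac{2l+5}{4l+6}$ is the log Calabi--Yau threshold, the value at which $r=l+3-\tfrac12-w(2l+3)=0$ and $-K_{W'}-\tfrac12 H-wD\equiv 0$.

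First I would settle emptiness. From \eqref{betav,GIT,even}, $u(w)=\big\langle \tfrac{1-2l+4lw}{6},\ \tfrac{4l+1-w(8l+6)}{6(l+1)}\big\rangle$. If $w\le \tfrac{4l+1}{8l+6}$ the second coordinate of $u(w)$ is $\ge 0$, whereas every cone $\Gamma(p)$ of a nonzero $p$ lies in the open lower half plane (all of $v,v_0,\dots,v_{2l+3}$ do); so $u(w)\notin\Gamma(p)$ and no $p$ is $w$-GIT semistable. For $w>\tfrac{4l+1}{8l+6}$, rescaling $u(w)$ by the positive scalar $-\tfrac{12(l+1)}{4l+1-w(8l+6)}$ gives $\langle g(w),-2\rangle$ with $g(w)=\tfrac{2(l+1)(2l-1-4lw)}{4l+1-w(8l+6)}$; a one-line computation gives $g'(w)=-\tfrac{12(l+1)}{(4l+1-w(8l+6))^2}<0$, and one also checks $g\to+\infty$ as $w\to(\tfrac{4l+1}{8l+6})^+$, $g(w_0)=2l+2$, and $g(w)=2l+2-i\iff w=w_i$. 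Hence $w_0<w_1<\dots<w_{l+1}$, one verifies $w_{l+1}=\tfrac12$, and on $(\tfrac{4l+1}{8l+6},w_0)$ one has $g(w)>2l+2$, larger than the first coordinate of every $v_j$, so $u(w)\notin\Gamma(p)$ and (1) follows. Finally, unlike the odd case where $w_{\frac{n+3}{2}}=\tfrac{n+5}{2n+6}$ equals the log Calabi--Yau threshold, here $\tfrac{2l+3}{2}\notin\bZ$, so $\tfrac{2l+5}{4l+6}$ is not among the $w_i$; instead $g(w)\to l+\tfrac12$ as $w\to(\tfrac{2l+5}{4l+6})^-$, so on the entire interval $(w_{l+1},\tfrac{2l+5}{4l+6})$ the vector $u(w)$ stays strictly between $v_{l+1}$ and $v_{l+2}$. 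Thus the last genuine wall is $w_{l+1}=\tfrac12$, with one extra chamber beyond it up to the log Calabi--Yau threshold.

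With the walls in hand, I would read off (2)--(5) from Lemma~\ref{GITstable,condition,odd} exactly as in the proof of Theorem~\ref{describe,GITmoduli,odd}. At $w=w_0$, $u=v_{2l+3}$, so polystability forces $\Gamma(p)$ to be the ray through $v_{2l+3}$, i.e.\ $a=a_0=\cdots=a_{2l+2}=0$, $a_{2l+3}\ne0$; this is (2). At $w=w_i$ with $1\le i\le l+1$, $u=v_{2l+3-i}$, so $p$ is $w_i$-GIT semistable iff among its nonzero coordinates there is one from $\{a,a_0,\dots,a_{2l+3-i}\}$ and one from $\{a_{2l+3-i},\dots,a_{2l+3}\}$; such a $p$ is moreover polystable iff either $\Gamma(p)$ is one-dimensional (the bullet $a=\cdots=a_{2l+2-i}=0$, $a_{2l+3-i}\ne0$, $a_{2l+4-i}=\cdots=a_{2l+3}=0$) or $u$ is interior to $\Gamma(p)$ (the bullet with $\{a,\dots,a_{2l+2-i}\}$ and $\{a_{2l+4-i},\dots,a_{2l+3}\}$ each not all zero, $a_{2l+3-i}$ arbitrary); this is (3). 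For $w\in(w_i,w_{i+1})$ with $0\le i\le l$, resp.\ $w\in(w_{l+1},\tfrac{2l+5}{4l+6})$, the vector $u(w)$ lies strictly between $v_{2l+2-i}$ and $v_{2l+3-i}$, resp.\ strictly between $v_{l+1}$ and $v_{l+2}$, so $u(w)$ is interior to $\Gamma(p)$ --- equivalently $p$ is $w$-GIT polystable --- iff $p$ has a nonzero coordinate in $\{a,a_0,\dots,a_{2l+2-i}\}$ and one in $\{a_{2l+3-i},\dots,a_{2l+3}\}$ (resp.\ one in $\{a,a_0,\dots,a_{l+1}\}$ and one in $\{a_{l+2},\dots,a_{2l+3}\}$); this is (4) and (5).

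The hard part here is purely bookkeeping: there is no conceptual obstacle, since the argument is essentially a transcription of the odd case. What needs care is keeping the index shifts $i\leftrightarrow 2l+3-i$ straight and carrying out the elementary but slightly lengthy rational-function computations that pin down the $w_i$ and the limit of $g(w)$ at the log Calabi--Yau threshold; in particular one must not overlook the extra chamber $(w_{l+1},\tfrac{2l+5}{4l+6})$ that the even case acquires precisely because $\tfrac{2l+3}{2}$ is not an integer.
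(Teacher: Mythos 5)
Your proposal is correct and follows essentially the same route as the paper: the paper's proof simply carries over the argument of Theorem \ref{describe,GITmoduli,odd} via Lemma \ref{GITstable,condition,odd} for parts (1)--(4), and for part (5) rescales $u(w)$ to $\langle h(w),-2\rangle$ with the same function you call $g(w)$, checks $h(w_{l+1})=l+1$, $h(\tfrac{2l+5}{4l+6})=l+\tfrac12$, and monotonicity to place $u$ strictly between $v_{l+1}$ and $v_{l+2}$ on the extra chamber. Your computations (the rescaling factor, $g'(w)<0$, $g(w)=2l+2-i \iff w=w_i$, $w_{l+1}=\tfrac12$) all check out, so no gap.
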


\begin{proof}
The proofs for (1)-(4) are the same as the proofs for (1)-(4) in Theorem \ref{describe,GITmoduli,odd}.

Now we consider (5). 
When $w\in (w_{l+1},\frac{2l+5}{4l+6})$, we have $w>\frac{2n+1}{4n+6}$. Similar as the proof for Theorem \ref{describe,GITmoduli,odd},
    multiplying by $-2\frac{3(n+2)}{2n+1-w(4n+6)}>0$, the vector $u$ becomes $$(\frac{(n+2)(n-1)-2n(n+2)w}{2n+1-(4n+6)w}, \text{ } -2)$$ which we still denote by $u$. Let $h(w)= \frac{(n+2)(n-1)-2n(n+2)w}{2n+1-(4n+6)w}$. 
    Since $h(w_{l+1})=l+1$, $h(\frac{2l+5}{4l+6})=l+\frac{1}{2}$ and $h(w)$ decreases when $w\in (w_{l+1},\frac{2l+5}{4l+6})$, we have $h(w)\in(l+\frac{1}{2},l+1)$ for $w\in (w_{l+1},\frac{2l+5}{4l+6})$. Thus $u$ is between vectors $v_{l+1}=(l,-2)$ and $v_{l+2}=(l+1,-2)$. By Lemma \ref{GITstable,condition,odd}, a point $p=(a,a_0,\cdots,a_{n+3})$ is $w$-GIT semistable if $T_p$ contains a vector in $\{v,v_0,\cdots,v_{l+1}\}$ and a vector in $\{v_{l+2},\cdots,v_{n+3}\}$. We also have $u$ is in the interior of $\Gamma_p$, so $p$ is $w$-GIT polystable. Hence (5) holds.
\end{proof}

%{\bf Projective $w$-GIT.}

To conclude this section, we introduce the relevant projective GIT following similar approach as \cite[Definition 5.7 and Theorem 5.8]{ADL19}. Let $n$ be a positive integer. Let $\lambda_{(0,-1)}$ be the one-parameter subgroup of $\mathbb{G}_m^2$ acting on coordinates $a,a_1,\cdots,a_{n+3}$ with positive weights $1,2,\cdots,2$. Taking quotient of $\mathbb{A}^{n+5}\setminus \{0\}$ by the action of $\lambda_{(0,-1)}$, we get the weight projective stack $\cP(1,2^{n+4}):= [(\mathbb{A}^{n+5}\setminus \{0\})/\mathbb{G}_m]$ whose coarse moduli space $\bP(1,2^{n+4})$ is isomorphic to $\bP^{n+4}$ via $[a:a_0:\cdots:a_{n+3}]\mapsto [a^2:a_0:\cdots:a_{n+3}]$. Recall that $\phi: (\cW, w\cD)\to \bA^{n+5}$ (resp. $\phi: (\cW', \frac{1}{2}\cH + w\cD)\to \bA^{n+5}$) is the universal family of pairs when $n$ is odd (resp. even). After quotienting out the diagonal action of $\lambda_{(0,-1)}$ we can descend $\phi$ to a universal family over $\cP(1, 2^{n+4})$, where the CM $\bQ$-line bundle $L_w$ over $\bA^{n+5}$ also descends to a CM $\bQ$-line bundle $\Lambda_w$ over $\bP(1, 2^{n+4})$. By standard GIT we know that the degree of $\Lambda_w$ is equal to the GIT weight $\mu^{L_w}(0, \lambda_{(0,-1)})$ which coincides with $\beta_{(0,-1)}^w$ by the proof of Propositions \ref{PropGIT stable_odd} and  \ref{PropGIT stable_even}. Thus we know that $\Lambda_w$ is ample if and only if $\beta_{(0,-1)}^w > 0$ which is equivalent to $w> \frac{2n+1}{4n+6}$. Thus we can define the projective GIT for the $\bG_m^2/\langle\lambda_{(0,-1)}\rangle$-action on $\bP(1,2^{n+4})$ with respect to $\Lambda_w$.

\begin{proposition}\label{prop:projGIT}
Let $w\in (\frac{2n+1}{4n+6}, \frac{n+5}{2n+6})\cap \bQ$. Then a point $p = (a,a_0, \cdots, a_{n+5})\in \bA^{n+5}$ is $w$-GIT (semi/poly)stable if and only if $p\neq 0$ and $[p]$ is GIT (semi/poly)stable for the $\bG_m^2/\langle\lambda_{(0,-1)}\rangle$-action on $\bP(1,2^{n+4})$ with respect to the ample linearization $\Lambda_w$. In particular, we have 
\[
M_w^{GIT}\cong \bP(1,2^{n+4})\sslash_{\Lambda_w} (\bG_m^2/\langle\lambda_{(0,-1)}\rangle)
\]
is a normal projective variety.
\end{proposition}

\begin{proof}
This follows directly from the standard GIT theory relating affine GIT and projective GIT; see e.g. \cite{MFK94, King1994}. The normality of the GIT quotient follows from the smoothness of $\bP(1,2^{n+4})\cong \bP^{n+4}$. 
\end{proof}

\begin{remark}
We note that  an approach of GIT for non-reductive groups has been developed to construct moduli spaces of various kinds of objects (including weighted projective hypersurfaces); see e.g. \cite{BDHK16, BDHK18, BJK18, Bun24}. It is an interesting question to compare this approach to our more traditional approach of  reductive GIT.    
\end{remark}

\section{Log canonical thresholds}\label{section5:section number}
We consider pairs consisting of a hypersurface defined by a homogeneous polynomial and its ambient weighted projective plane. 
In this section, we compute the log canonical threshold of the pairs for different cases of coefficients in the polynomial. This will be used to investigate K-semistability and K-polystability in Section \ref{section,nodd} and Section \ref{section,even}.

\medskip

Let $(X, L)$ be a normal polarized variety, $B$ an effective boundary such that $-(K_X + B) \equiv \lambda L$ where $\lambda$ is a constant. The log canonicity of the log pair $(X,B)$ is related to the K-semistability of the log pair $(X, B;L)$ (see \cite[Section 9]{BHJ17} and \cite{Oda12}). Thus we need to calculate the log canonical threshold of the log pair $(X, B)$.

\subsection{Odd cases}
Let $W= \PP(1,2,n+2)_{x,y,z}$ be the weighted projective plane where $n$ is a positive odd integer. And let $D$ be an effective divisor on $W$ defined by quasi-homogeneous polynomial
\begin{equation}\label{lct-D}
    z^2y + azx^{n+4} + a_0x^{2n+6} + a_1x^{2n+4}y + \cdots + a_{n+3}y^{n+3} = 0
\end{equation}
of degree $2n+6$, where $a$ is a constant and $a_0,\ldots, a_{n+3}$ are not all zero constants. We set $t = \max\{i\in \ZZ\mid a_i \neq 0\}$. Then the quasi-homogeneous polynomial of the curve $D$ is represented by
\begin{equation*}
    z^2y + azx^{n+4} + a_0x^{2n+6} + a_1x^{2n+4}y + \cdots + a_tx^{2n+6-2t}y^t = 0.
\end{equation*}
To calculate the log canonical threshold of the log pair $(W, D)$ we need to divide it into two cases whether the constant $a$ is zero or not. We first consider the case that $a$ is nonzero. We have $H_y\cap D = \{\msp_z, [1:0:-\frac{a_0}{a}]\}$. On the affine open set $U_z$ of $W$ given by $z=1$, the divisor $D$ is locally defined by the equation
\begin{equation*}
    y + ax^{n+4} + a_0x^{2n+6} + a_1x^{2n+4}y + \cdots + a_tx^{2n+6-2t}y^t = 0.
\end{equation*}
The curve in $\mathbb{C}^2 = \mathrm{Spec}(\mathbb{C}[x,y])$ defined by this equation is smooth at the origin. Since there is an orbifold chart $\mathbb{C}^2\to U_z$ on $W$, it follows that the log pair $(W, D)$ is log canonical at the point $\msp_z$. Similarly, on the affine open set $U_x$ of $W$ defined by $x=1$, the local equation of $D$ is given by
\begin{equation*}
    z^2y + az + a_0 + a_1y + \cdots + a_ty^t = 0.
\end{equation*}
Since the curve in $\mathbb{C}^2=\mathrm{Spec}(\mathbb{C}[y, z])$ defined by this equation is smooth at the point $(y,z) = (0,-\frac{a_0}{a})$, it follows that the log pair $(W, D)$ is log canonical at the corresponding point $[1:0:-\frac{a_0}{a}]$. Therefore, the log pair $(W, D)$ is log canonical along $H_y$.

On the affine open set $U_y$ of $W$ defined by $y=1$ the curve $D$ is defined by
\begin{equation*}
    \begin{split}
        & \left(z+\frac{a}{2}x^{n+4}\right)^2 - \frac{a^2}{4}x^{2n+8} + a_0x^{2n+6} + a_1x^{2n+4} + \cdots + a_{t}x^{2n+6-2t}\\
        =& \left(z+\frac{a}{2}x^{n+4}\right)^2 - \frac{a^2}{4} x^{2n+6-2t}(x^2 - b_1)^{m_1}\cdots (x^2 - b_k)^{m_k} = 0\\
    \end{split}
\end{equation*}
where $b_1,\ldots , b_k$ are nonzero constants. We set 
\begin{equation}\label{section5:higher multiplicity}
    m = \max\{m_1,\ldots , m_k\}.
\end{equation}
By the definition of the log canonical threshold and \cite[Theorem 1.1]{Ku99}, we have
\begin{equation}\label{eq:lct-computation}
    \lct(W, D) = 
    \begin{dcases}
        \min\left\{\frac{m+2}{2m}, 1\right\} &\text{ for $t\geq n+2$}\\
        \min\left\{\frac{n+4-t}{2n+6-2t}, \frac{m+2}{2m}, 1\right\} &\text{ for $t<n+2$}
    \end{dcases}
\end{equation}

\begin{definition}\label{def:Dss}
Let $D_{\ss}$ be the curve in $W$ defined by 
\begin{equation}\label{eq:Dss}
D_{\ss}: z^2 y - 2 z x^{n+4} + \sum_{i=0}^{n+3} (-1)^{i} \binom{n+4}{i+1} x^{2n+6-2i} y^i =0.
\end{equation}
Then in the affine chart $U_y = \{y=1\}$ the curve $D_{\ss}$ can be expressed as 
$$\left(z-x^{n+4}\right)^2 - (x^2 -1)^{n+4} =0.$$ 
\end{definition}

We choose the notation $D_{\ss}$ based on the observation that the pair $(W, D_{\ss})$ will be replaced (at the last wall) by pairs coming from  Weierstra{\ss} double marked hyperelliptic curves (see Section \ref{sec:hyperelliptic}).

From the above discussions we obtain the following lemma.

\begin{lemma}\label{section 5:a is nonzero Lemma}
    Assume that $a\neq 0$. The log pair $(W, \frac{n+5}{2n+6}D)$ is log canonical if and only if $\frac{n+3}{2}\leq t\leq n+3$ and $m\leq n+3$. Moreover, if $m \geq n+4$ then $t=n+3$, $m=n+4$, $D$ has an $A_{n+3}$-singularity at a non-toric point and is projectively equivalent to $D_{\ss}$, and $\lct(W,D) = \frac{n+6}{2n+8}$.
\end{lemma}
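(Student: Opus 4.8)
The plan is to deduce everything from the log canonical threshold formula \eqref{eq:lct-computation} established just above. Since log canonicity is a closed condition on the coefficient, $(W,cD)$ is log canonical exactly when $0\le c\le\lct(W,D)$, so $(W,\tfrac{n+5}{2n+6}D)$ is log canonical if and only if $\tfrac{n+5}{2n+6}\le\lct(W,D)$. The first step is purely numerical: substitute the two branches of \eqref{eq:lct-computation}. An elementary cross--multiplication gives $\tfrac{n+5}{2n+6}\le\tfrac{m+2}{2m}\iff m\le n+3$ and, in the range $t<n+2$, $\tfrac{n+5}{2n+6}\le\tfrac{n+4-t}{2n+6-2t}\iff t\ge\tfrac{n+3}{2}$; also $\tfrac{n+5}{2n+6}<1$ always. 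Hence in the branch $t\ge n+2$ (where $\tfrac{n+3}{2}\le t\le n+3$ is automatic) the condition becomes $m\le n+3$, and in the branch $t<n+2$ it becomes $t\ge\tfrac{n+3}{2}$ together with $m\le n+3$; combining the two branches yields precisely ``$\tfrac{n+3}{2}\le t\le n+3$ and $m\le n+3$''. The only care needed here is to keep weak inequalities throughout, so that the boundary values $m=n+3$, $t=\tfrac{n+3}{2}$, $t=n+3$ lie in the log canonical locus (legitimate since that locus is closed), and to use that $n$ odd makes $\tfrac{n+3}{2}$ an integer.

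For the ``moreover'' statement, first note that the multiplicities in the factorization on $U_y$ satisfy $m\le m_1+\cdots+m_k=t+1\le n+4$ (the last inequality because $t\le n+3$). Thus $m\ge n+4$ forces $m=n+4$, $t=n+3$, and a single factor, so on $U_y$ the curve is $(z+\tfrac a2 x^{n+4})^2=\tfrac{a^2}{4}(x^2-b_1)^{n+4}$ with $b_1\ne0$. Next comes a local singularity computation: with $u=x-\sqrt{b_1}$, near $\msp_0:=[\sqrt{b_1}:1:-\tfrac a2 b_1^{(n+4)/2}]$ the factor $(x+\sqrt{b_1})^{n+4}$ is a unit, so $D$ is locally $\big(z+\tfrac a2 x^{n+4}\big)^2=(\text{unit})\cdot u^{n+4}$, i.e.\ an $A_{n+3}$ singularity; and since $a,b_1\ne0$, all three homogeneous coordinates of $\msp_0$ are nonzero, so $\msp_0$ lies on the dense torus orbit of $W$, in particular it is not a torus--fixed point.

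To get projective equivalence with $D_{\ss}$ I would exhibit the graded automorphism $\phi\colon[x:y:z]\mapsto[\sqrt{b_1}\,x:y:cz]$ of $\PP(1,2,n+2)$ with $c:=-\tfrac a2 b_1^{(n+4)/2}$ and verify, after restricting to $U_y$, that $F(\sqrt{b_1}\,x,1,cz)=c^2\big((z-x^{n+4})^2-(x^2-1)^{n+4}\big)$, where $F$ is the defining polynomial of $D$; the right side is $c^2$ times the $U_y$--equation of $D_{\ss}$ from Definition \ref{def:Dss}, and since both are weight--$(2n+6)$ homogeneous they agree, so $\phi(D_{\ss})=D$. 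Finally $\lct(W,D)=\lct(W,D_{\ss})$ is read off from \eqref{eq:lct-computation} applied to $D_{\ss}$ (which has $t=n+3\ge n+2$, $m=n+4$): $\min\{\tfrac{n+6}{2n+8},1\}=\tfrac{n+6}{2n+8}$. The main thing to be careful about is this last normalization step --- since $x^{n+4}$ does not have the weight of $z$, the $zx^{n+4}$ term cannot be removed by a coordinate change and must instead be matched by the scaling, and one checks that the two requirements $\tfrac{ab_1^{(n+4)/2}}{2c}=-1$ and $\tfrac{a^2b_1^{n+4}}{4c^2}=1$ are satisfied by the same $c$.
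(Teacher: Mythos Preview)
Your proof is correct and follows essentially the same approach as the paper. The paper's argument is terser --- it simply says the first statement follows from the lct formula \eqref{eq:lct-computation}, observes $m\le\sum m_j=t+1\le n+4$, and for the ``moreover'' just says a rescaling of coordinates lets one take $a=-2$, $b_1=1$ --- whereas you spell out the numerical cross-multiplications and exhibit the explicit automorphism $\phi$. The substance is identical.
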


\begin{proof}
From the above discussion we see that $m\leq \sum_{j=1}^k m_j = t+1 \leq n+4$. The first statement follows from the lct computations \eqref{eq:lct-computation}. For the second statement, if $m\geq n+4$ then combining with the previous inequality we get $m = n+4$, $k = 1$, and $t = n+3$. In particular, in the affine open chart $U_y$ one has 
\[
D: \left(z+\frac{a}{2}x^{n+4}\right)^2 - \frac{a^2}{4} (x^2 - b_1)^{n+4} = 0,
\]
where $a$ and $b_1$ are both non-zero. Thus by a rescaling of the coordinates we may assume $ a= -2$ and $b_1=1$ which shows that $D$ and $D_{\ss}$ are projectively equivalent. Clearly $D_{\ss}$ has an $A_{n+3}$-singularity at the non-toric point $[1, 1 ,  1]$. The last statement on lct again follows from \eqref{eq:lct-computation}.
\end{proof}

We next consider the case that $a$ is zero. Then the curve $D$ is defined by
\begin{equation*}
    z^2y + a_0x^{2n+6} + a_1x^{2n+4}y + \cdots + a_{t}x^{2n+6-2t}y^{t} = 0.
\end{equation*}
Under this condition we first consider the case that $a_0$ is not zero. Then $D$ is irreducible. We have $H_y\cap D = \{\msp_z\}$. Since the equation of the curve $D$ has the monomial $z^2y$, the log pair $(W, D)$ is log canonical at the point $\msp_z$. On the affine open set $U_y$ the the curve $D$ is defined by
\begin{equation*}
    z^2 + a_0x^{2n+6-2t}(x^2 - b_1)^{m_1}\cdots (x^2 - b_k)^{m_k} = 0
\end{equation*}
where $b_1,\ldots, b_k$ are distinct nonzero constants. We set $m' = \max\{m_1,\ldots, m_k\}$. Then we have
\begin{equation*}
    \lct(W, D) = 
    \begin{dcases}
        \min\left\{\frac{m' + 2}{2m'}, 1\right\} &\text{ for $t\geq n+2$}\\
        \min\left\{\frac{n+4-t}{2n+6-2t}, \frac{m' + 2}{2m'}, 1\right\} &\text{ for $t<n+2$}
    \end{dcases}
\end{equation*}

\begin{lemma}\label{section5:a zero a_0 nonzero}
    Assume that $a=0$ and $a_0\neq 0$ ($D$ is irreducible). Then the log pair $(W, \frac{n+5}{2n+6}D)$ is log canonical if and only if $\frac{n+3}{2}\leq t\leq n+3$.
\end{lemma}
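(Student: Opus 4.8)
The plan is to reduce the log canonicity of $(W,\frac{n+5}{2n+6}D)$ to the single inequality $\lct(W,D)\geq \frac{n+5}{2n+6}$ and to read off the answer from the explicit lct formula derived just above for the case $a=0$, $a_0\neq 0$. First I would note that the upper bound $t\leq n+3$ is automatic: in the weights $(1,2,n+2)$ the monomial $a_i x^{2n+6-2i}y^i$ exists only for $i\leq n+3$, so by construction $t\leq n+3$. Hence the genuine content of the statement is the equivalence between $\lct(W,D)\geq \frac{n+5}{2n+6}$ and the lower bound $t\geq \frac{n+3}{2}$, which is an integer since $n$ is odd.

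Next I would dispose of the two terms in the lct formula that never create an obstruction. Clearing denominators gives the elementary equivalence $\frac{m'+2}{2m'}\geq \frac{n+5}{2n+6}\iff m'\leq n+3$. Since the relevant polynomial in $x^2$ has degree $t$, its factorization satisfies $\sum_j m_j=t$ and therefore $m'\leq t\leq n+3$; thus the multiplicity term always meets the bound. The constant term $1$ likewise always exceeds $\frac{n+5}{2n+6}$. Consequently, in the branch $t\geq n+2$ the formula yields $\lct(W,D)=\min\{\frac{m'+2}{2m'},1\}\geq \frac{n+5}{2n+6}$ unconditionally, which is consistent because $t\geq n+2$ already forces $t\geq \frac{n+3}{2}$.

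The decisive computation occurs in the branch $t<n+2$, where the binding contribution is $\frac{n+4-t}{2n+6-2t}$ (both numerator and denominator are positive in this range). Clearing denominators and simplifying, I expect to reach $\frac{n+4-t}{2n+6-2t}\geq \frac{n+5}{2n+6}\iff 4t\geq 2n+6\iff t\geq \frac{n+3}{2}$. Combining the two branches then shows that $(W,\frac{n+5}{2n+6}D)$ is log canonical precisely when $t\geq \frac{n+3}{2}$, which together with the automatic bound $t\leq n+3$ is exactly the claimed equivalence.

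The main obstacle is bookkeeping rather than conceptual: I must make sure the multiplicity contribution $\frac{m'+2}{2m'}$ genuinely never drops below $\frac{n+5}{2n+6}$, since otherwise an irreducible $D$ carrying a high-multiplicity factor could fail to be log canonical even for large $t$. This rests squarely on the identity $\sum_j m_j=t$ together with $t\leq n+3$. Once that bound is secured, the remaining work is the two sign-checked inequality manipulations above, with extra care only at the boundary values $t=n+2,n+3$ (to confirm they fall under the unconditional branch) and at the threshold $t=\frac{n+3}{2}$, where equality $\lct(W,D)=\frac{n+5}{2n+6}$ holds so that the pair is log canonical but not klt.
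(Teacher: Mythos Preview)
Your proposal is correct and follows the same approach as the paper, which simply reads off the conclusion from the displayed lct formula; you have filled in the elementary inequality checks (in particular the bound $m'\leq \sum_j m_j=t\leq n+3$ making the $\frac{m'+2}{2m'}$ term harmless, and the cross-multiplication showing $\frac{n+4-t}{2n+6-2t}\geq \frac{n+5}{2n+6}\iff t\geq \frac{n+3}{2}$) that the paper leaves implicit.
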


Finally we consider the case $a = a_0 = 0$. Then there is a nonzero nonnegative integer $m_0$ such that $a_{m_0+1}\neq 0$ and $a_0=\ldots  =a_{m_0}=0$. The curve $D$ is defined by
\begin{equation*}
    \begin{split}
        &y(z^2 + x^{2n+6-2t}y^{m_0}(a_{m_0+1}x^{2t-2m_0-2}+a_{m_0+2}x^{2t-2m_0-4}y+\cdots + a_ty^{t-m_0})) \\
        =&y(z^2 + a_t x^{2n+6-2t}y^{m_0}(y - b_1x^2)^{m_1}\cdots (y - b_kx^2)^{m_k} = 0.
    \end{split}
\end{equation*}
Since the equation of $D$ is divided by $y$ the curve $D$ is reducible. We set $m'' = \max\{m_1,\ldots , m_k\}$. Then we have
\begin{equation*}
    \lct(W, D) = 
    \begin{dcases}
        \min\left\{\frac{m_0 + 2}{2m_0+2}, \frac{m'' + 2}{2m''}, 1\right\} &\text{ for $t\geq n+2$}\\
        \min\left\{\frac{m_0 + 2}{2m_0+2}, \frac{n+4-t}{2n+6-2t}, \frac{m'' + 2}{2m''}, 1\right\} &\text{ for $t<n+2$}
    \end{dcases}
\end{equation*}
Then we have the following lemma:
\begin{lemma}\label{section5:a a_0 zeros}
    Assume that $a=a_0=0$  ($D$ is reducible). The log pair $(W, \frac{n+5}{2n+6}D)$ is log canonical if and only if $\frac{n+3}{2}\leq t\leq n+3$ and $m_0\leq\frac{n+1}{2}$.
\end{lemma}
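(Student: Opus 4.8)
By definition, $(W,\tfrac{n+5}{2n+6}D)$ is log canonical precisely when $\tfrac{n+5}{2n+6}\le \lct(W,D)$, and $\lct(W,D)$ has just been computed above as a minimum of the explicit rational functions $\tfrac{m_0+2}{2m_0+2}$, $\tfrac{m''+2}{2m''}$, $\tfrac{n+4-t}{2n+6-2t}$ and the constant $1$, the precise shape of the minimum depending only on whether $t\ge n+2$ or $t<n+2$. So the proof is just the task of solving this inequality. I would first note that $\tfrac{n+5}{2n+6}<1$, so the constant ``$1$'' never obstructs, and it remains to compare $\tfrac{n+5}{2n+6}$ with $\tfrac{m_0+2}{2m_0+2}$, with $\tfrac{m''+2}{2m''}$, and — only when $t<n+2$ — with $\tfrac{n+4-t}{2n+6-2t}$.

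All the quantities involved are positive (for the last ratio because $t<n+2$ gives $2n+6-2t>0$), so in each comparison I would simply cross-multiply; after simplification this yields
\[
\tfrac{m_0+2}{2m_0+2}\ge \tfrac{n+5}{2n+6}\iff m_0\le \tfrac{n+1}{2},\qquad
\tfrac{m''+2}{2m''}\ge \tfrac{n+5}{2n+6}\iff m''\le n+3,
\]
\[
\tfrac{n+4-t}{2n+6-2t}\ge \tfrac{n+5}{2n+6}\iff t\ge \tfrac{n+3}{2}\quad(\text{in the range }t<n+2),
\]
and one notes that $\tfrac{n+1}{2},\tfrac{n+3}{2}$ are integers since $n$ is odd, so no rounding occurs. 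The key point is that the middle condition is automatic: $m''$ is bounded by the degree of the homogeneous polynomial that gets factored into the $(y-b_jx^2)$'s, which is at most $t-m_0\le t\le n+3$, the bound $t\le n+3$ being forced by $\deg D=2n+6$. Hence $m''$ plays no role in the final answer.

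It remains to glue the two cases. When $t\ge n+2$ (so $t\in\{n+2,n+3\}$), log canonicity of $(W,\tfrac{n+5}{2n+6}D)$ is equivalent to $m_0\le \tfrac{n+1}{2}$ alone, and then $\tfrac{n+3}{2}\le n+2\le t\le n+3$ holds automatically. When $t<n+2$, it is equivalent to $m_0\le\tfrac{n+1}{2}$ together with $t\ge\tfrac{n+3}{2}$, i.e.\ $\tfrac{n+3}{2}\le t<n+2$. The union of the two admissible $t$-ranges is exactly $\tfrac{n+3}{2}\le t\le n+3$ (with no gap, since $\tfrac{n+3}{2}\le n+2$), and the $m_0$-condition is the same in both, which is precisely the claimed equivalence. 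Given the $\lct$-formula, this last step is pure bookkeeping; the only care is to check that the $m''$-term is inactive and that the two $t$-ranges meet. The genuine content has already been absorbed into the $\lct$-formula: it rests on the local analysis at $\msp_y$ (where $W$ has a $\tfrac12(1,n+2)$ singularity and the branch of $D$ other than $\{y=0\}$ carries an $A$-type curve singularity, handled by passing to the smooth orbifold double cover) and at the smooth point $\msp_x$ (where $D$ is locally $\{y(z^2+y^{m_0})=0\}$, giving the contribution $\tfrac{m_0+2}{2m_0+2}$ after a short weighted blow-up), together with the easy verification that all other points of $D$, in particular $\msp_z$, impose no condition.
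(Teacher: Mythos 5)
Your proposal is correct and follows essentially the same route as the paper: the paper derives the displayed formula for $\lct(W,D)$ in the case $a=a_0=0$ and then states the lemma as its immediate consequence, and your argument just makes the cross-multiplication bookkeeping explicit, including the (correct) observations that the $m''$-term is inactive since $m''\le t-m_0-1\le n+2$ and that the two $t$-ranges glue to $\frac{n+3}{2}\le t\le n+3$. Your closing sketch of where the $\lct$ formula itself comes from (the $\frac{m_0+2}{2m_0+2}$ contribution at $\msp_x$, the $\frac{n+4-t}{2n+6-2t}$ contribution at the orbifold point $\msp_y$, and the $A$-type points giving $\frac{m''+2}{2m''}$) also matches the paper's computation.
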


\subsection{Even cases}
Let $W' = \PP(1,1,l+1)_{u,y,z}$ be the weighted projective plane where $l$ is a positive integer. And Let $H$ and $D$ be the effective divisors on $W'$ defined by $u=0$ and $z^2y + azu^{l+2} + a_0u^{2l+3} + a_1u^{2l+2}y + \cdots + a_{2l+3}y^{2l+3} = 0$, respectively, where $a$ is a constant and $a_0,\ldots , a_{2l+3}$ are not all zero constants. We set $t = \max\{i\in \ZZ\mid a_i \neq 0\}$. Then the quasi-homogeneous polynomial of the curve $D$ is represented by
\begin{equation}\label{section 5:even case equation}
    z^2y + azu^{l+2} + a_0u^{2l+3} + a_1u^{2l+2}y + \cdots + a_tu^{2l+3-t}y^t = 0.
\end{equation}

In this subsection we calculate the log canonical threshold of the divisor $D$ with respect to the log pair $(W', \frac{1}{2}H)$. We first consider the locus $H\cap D$ which is defined by $u = z^2y + a_{2l+3}y^{2l+3} = 0$ on $W'$. The point $\msp_z$ is contained in $H\cap D$. Since the equation of the effective divisor $H+D$ has the monomial $uz^2y$, the log pair $(W', H+D)$ is log canonical at $\msp_z$. It implies that the log pair $(W', \frac{1}{2}H+D)$ is log canonical at $\msp_z$. We next consider two cases whether $a_{2l+3}$ is zero or not. If $a_{2l+3}$ is not zero then we can assume that $a_{2l+3} = 1$. It implies that $H\cap D = \{\msp_z, \msp_1, \msp_2\}$ where $\msp_1 = [0:1:i]$ and $\msp_2 = [0:1:-i]$.  Since the intersection number $H\cdot D$ is $\frac{2l+3}{l+1}$, we have the local intersection numbers $(H\cdot D)_{\msp_1} = (H\cdot D)_{\msp_2} = 1$ and $(H\cdot D)_{\msp_z} = \frac{1}{l+1}$. These imply that $H$ and $D$ intersect transversely. Thus the log pair $(W', \frac{1}{2}H+D)$ is log canonical along $H\cap D$. If $a_{2l+3}$ is zero then $H\cap D = \{\msp_y, \msp_z\}$. On the affine open set $U_y$ defined by $y=1$ the equation (\ref{section 5:even case equation}) is represented by
\begin{equation*}
    \left(z + \frac{a}{2}u^{l+2}\right)^2 + u^{2l+3-t}\left(-\frac{a^2}{4}u^{1+t} + a_0u^t + a_1u^{t-1} + \cdots + a_t\right) = 0.
\end{equation*}
We use the local coordinates $(u',z')=(u, z+ \frac{a}{2}u^{l+2})$ at $\msp_y$. Let $\phi\colon Y\to W'$ be the weighted blow-up at $\msp_y$ with weights $\wt(u') = 2$ and $\wt(z') = 2l+3-t$. Then we have the equation
\begin{equation*}
    K_Y + \frac{1}{2}\widetilde{H} + c\widetilde{D} \equiv \phi^*\left(K_W + \frac{1}{2}H + cD\right) + (2l+4-t - 1 - c(4l+6-2t))E,
\end{equation*}
where $\widetilde{H}$ and $\widetilde{D}$ are the strict transform of $H$ and $D$, respectively, $E$ is the $\phi$-exceptional and $c$ is a positive rational number. From this equation we have the following lemma:
\begin{lemma}\label{section 5 even:lct along H}
    The log canonical threshold of the curve $D$ with respect to the log pair $(W', \frac{1}{2}H)$ along $H$ is either $1$ for $t \geq 2l+2$ or $\frac{2l+4-t}{4l+6-2t}$ for $t < 2l+2$. 
\end{lemma}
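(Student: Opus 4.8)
The plan is to localize: since $H$ is not a component of $D$, is smooth, and appears in the boundary with coefficient $\frac12<1$, the pair $(W',\frac12 H+cD)$ is automatically log canonical at the generic point of $H$ and at every point of $H$ disjoint from $D$. Hence the log canonical threshold of $D$ with respect to $(W',\frac12 H)$ along $H$ equals $\min_{\msp\in H\cap D}\lct_{\msp}(W',\frac12 H;D)$, and it suffices to compute each of these finitely many local thresholds, which was exactly the point of the analysis in the paragraphs preceding the lemma.

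First I would dispose of $t\ge 2l+2$. If $t=2l+3$, then $\msp_y\notin D$ and $H\cap D=\{\msp_z,\msp_1,\msp_2\}$; the pair is log canonical at $\msp_z$ because the equation of $H+D$ contains the monomial $uz^2y$, and $H,D$ meet transversally at the smooth points $\msp_1,\msp_2$, so every local threshold is $1$ and the answer is $1$. If $t=2l+2$, then $m:=2l+3-t=1$, and after the coordinate change $z'=z+\frac a2 u^{l+2}$ the germ of $D$ at $\msp_y$ is $(z')^2=(\mathrm{unit})\,u'$, a smooth curve tangent to $H=\{u'=0\}$ with $(H\cdot D)_{\msp_y}=2$; by inversion of adjunction along $D$ (on $D\cong\bA^1$ the different $\frac12 H|_D$ is the point $\msp_y$ with coefficient $1$) one gets $\lct_{\msp_y}(W',\frac12 H;D)=1$, which is also the value of $\frac{2l+4-t}{4l+6-2t}$ at $t=2l+2$.

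For $t<2l+2$ set $m=2l+3-t\ge 2$. At $\msp_z$ the pair remains log canonical for all $c\le 1$. At $\msp_y$ I would use the weighted blow-up $\phi\colon Y\to W'$ already set up, with $\wt(u')=2$ and $\wt(z')=m$: its discrepancy identity gives $A_{W',\frac12 H+cD}(E)=(2l+4-t)-c(4l+6-2t)$, which is negative once $c>\frac{2l+4-t}{4l+6-2t}$, so $\lct_{\msp_y}(W',\frac12 H;D)\le\frac{2l+4-t}{4l+6-2t}<1$, giving the upper bound for the threshold along $H$. For the reverse inequality it is enough to show $(W',\frac12 H+c_0 D)$ is log canonical near $\msp_y$ for $c_0=\frac{2l+4-t}{4l+6-2t}$. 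Since the germ of $D$ at $\msp_y$ is $(z')^2=(\mathrm{unit})(u')^m$, an $A_{m-1}$ curve singularity, with $H$ the coordinate line $\{u'=0\}$, this is the purely local identity $\lct_0\big(\bA^2_{x,y},\tfrac12\{y=0\};\{x^2=y^m\}\big)=\frac{m+1}{2m}$. I would obtain it either by noting that $x^2-y^m$ is nondegenerate with respect to its Newton polyhedron and $\{y=0\}$ is a coordinate hyperplane, so the threshold is computed by monomial valuations $v$, and then minimizing $\big(\tfrac12 v(y)+v(x)\big)/\min(2v(x),m\,v(y))$, whose minimum $\frac{m+1}{2m}$ is attained at $v(y)=2,\ v(x)=m$; or by working on $Y$ directly, where for $c=c_0$ the exceptional $E$ has coefficient exactly $1$ in $\phi^*(K_{W'}+\frac12 H+c_0 D)$, so adjunction to $E$ reduces log canonicity near $E$ to checking that every coefficient of the different on $E\cong\PP^1$ is $\le 1$: this holds because $\tilde D$ meets $E$ with coefficient $c_0<1$, $\tilde H$ with coefficient $\frac12$, and the cyclic quotient singularities of $Y$ on $E$ contribute coefficients $<1$.

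The main obstacle is precisely this last lower bound at $\msp_y$: one must keep track of how the strict transform $\tilde D$ sits on the exceptional $\PP^1$ — in particular whether $D$ splits into two smooth branches, which happens exactly when $m$ is even — and of the orbifold structure of $Y$ along $E$. Everything else is either transversality bookkeeping at $\msp_z,\msp_1,\msp_2$ or a direct reading of the discrepancy identity recorded just before the lemma.
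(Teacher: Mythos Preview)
Your argument is correct and follows exactly the same path as the paper: the discussion preceding the lemma handles $\msp_z$ (and $\msp_1,\msp_2$ when $t=2l+3$) and then performs the weighted blow-up at $\msp_y$ with weights $(\wt(u'),\wt(z'))=(2,2l+3-t)$, from whose discrepancy identity the threshold $\frac{2l+4-t}{4l+6-2t}$ is read off. Your extra justification of the lower bound at $\msp_y$ (via Newton non-degeneracy of $(z')^2-(u')^m$ or adjunction to $E$) is more detail than the paper supplies, but the method is identical.
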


Finding the log canonical threshold $\lct_{\msp}(W', \frac{1}{2}H ; D)$ where $\msp\in W'\setminus H$ is similar to the odd cases. Since $\lct_{\msp}(W', D) = \lct_{\msp}(W', \frac{1}{2}H ; D)$ we calculate the log canonical threshold $\lct_{\msp}(W', D)$ instead of $\lct_{\msp}(W', \frac{1}{2}H ; D)$. We first consider the case that $a$ is not zero. We have $H_y\cap D = \{\msp_z, [1:0:-\frac{a_0}{a}]\}$. Since the curve in $\mathbb{C}^2$ defined by the local equation of $D$ on the affine open set $U_z$ of  $W'$ defined by $z=1$ is smooth at the origin and there is an orbifold chart $\mathbb{C}^2\to U_z$ on $W'$, the log pair $(W', D)$ is log canonical at the point $\msp_z$. Similarly, the curve in $\mathbb{C}^2$ defined by the local equation of $D$ on the affine open set $U_u$ of $W'$ defined by $u=1$ is smooth at the point $(y,z) = (0, -a_0/a)$, the log pair $(W', D)$ is log canonical at the corresponding point $[1:0:-\frac{a_0}{a}]$. Therefore, $\lct_{\msp}(W', \frac{1}{2}H ; D) = 1$ where $\msp\in H_y\setminus H$. On the affine open set $U_y$ defined by $y=1$ the equation (\ref{section 5:even case equation}) of $D$ is represented by
\begin{equation*}
    \left(z + \frac{a}{2}u^{l+2}\right)^2 - \frac{a^2}{4}u^{2l+3-t}(u - b_1)^{m_1}\cdots (u - b_k)^{m_k} = 0
\end{equation*}
where $b_1\ldots , b_k$ are nonzero constants. We set
\begin{equation}\label{section 5 even:maximal m}
    m = \max\{m_1,\ldots , m_k\}\leq 2l+4.
\end{equation}
We can obtain the following:
\begin{equation*}
    \inf\left\{\lct_{\msp}(W', D) ~\middle|~ \msp\in W'\setminus H\right\} = \min\left\{\frac{m+2}{2m}, 1\right\}
\end{equation*}

\begin{definition}\label{def:Dss-even}
    Let $D_{\ss}$ be the curve in $W'$ defined by 
\begin{equation}\label{eq:Dss-even}
D_{\ss}: z^2 y - 2 z u^{l+2} + \sum_{i=0}^{2l+3} (-1)^{i} \binom{2l+4}{i+1} u^{2l+3-i} y^i =0.
\end{equation}
Then in the affine chart $U_y = \{y=1\}$ the curve $D_{\ss}$ can be expressed as 
$$\left(z-u^{l+2}\right)^2 - (u -1)^{2l+4} =0.$$
\end{definition}

\begin{lemma}\label{section5 even:a nonzero}
    Assume that $a\neq 0$. The log pair $(W', \frac{1}{2}H + \frac{2l+5}{4l+6}D)$ is log canonical if and only if $\frac{2l+3}{2}\leq t\leq 2l+3$ and $m\leq 2l+3$. Moreover, if $m \geq 2l +4$ then $t= 2l+3$, $m=2l+4$, $\frac{1}{2}H+D$ is projectively equivalent to $\frac{1}{2}H+D_{\ss}$, $D$ has an $A_{2l+3}$-singularity at a non-toric point, and $\lct(W', \frac{1}{2}H;D) = \frac{l+3}{2l+4}$.
\end{lemma}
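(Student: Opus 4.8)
The strategy mirrors that of Lemma~\ref{section 5:a is nonzero Lemma}: the global threshold $\lct(W', \frac{1}{2} H; D)$ is the minimum of the local thresholds, all of which are already available --- namely $\lct_{W'\setminus H}(W', D) = \min\{\frac{m+2}{2m}, 1\}$ on the complement of $H$, Lemma~\ref{section 5 even:lct along H} for the threshold along $H$, and the remark preceding that lemma that $(W', \frac{1}{2} H + D)$ is always log canonical at $\msp_z$ since $H+D$ is nodal there. Writing $c = \frac{2l+5}{4l+6}$, the pair $(W', \frac{1}{2} H + cD)$ is log canonical if and only if each of these local thresholds is at least $c$, so the whole argument is a comparison of $c$ against explicit quantities, entirely parallel to the odd case.

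First I would record the chain $m \le \sum_{j=1}^{k} m_j = t+1 \le 2l+4$, which holds because $a_t \ne 0$ forces the polynomial $-\frac{a^2}{4}u^{t+1} + a_0 u^t + \cdots + a_t$ (whose nonzero roots define $m$ in \eqref{section 5 even:maximal m}) to be nonzero at $u=0$, so those multiplicities sum to $t+1$. Next, for the threshold along $H$: it equals $1 > c$ when $t \ge 2l+2$, and equals $\frac{2l+4-t}{4l+6-2t}$ when $t < 2l+2$; clearing denominators gives $\frac{2l+4-t}{4l+6-2t} \ge c$ if and only if $t \ge l + \frac{3}{2}$, i.e. $t \ge \frac{2l+3}{2}$. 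For the threshold away from $H$: clearing denominators gives $\frac{m+2}{2m} \ge c$ if and only if $m \le 2l+3$. Since $t \le 2l+3$ automatically (it is at most the degree of $D$), since the pair is log canonical at $\msp_z$ for every coefficient, and since the finitely many points of $H_y \setminus H$ cause no trouble (there $a \ne 0$ makes $D$ smooth, the $\partial/\partial z$-derivative being $a$), combining the three comparisons yields exactly the stated criterion: $(W', \frac{1}{2} H + cD)$ is log canonical if and only if $\frac{2l+3}{2} \le t \le 2l+3$ and $m \le 2l+3$.

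For the ``moreover'' statement, assume $m \ge 2l+4$. The chain above then forces $m = 2l+4$, $t = 2l+3$, and $k = 1$ with $m_1 = 2l+4$, so in the chart $U_y$ the curve is $D : (z + \frac{a}{2}u^{l+2})^2 - \frac{a^2}{4}(u-b_1)^{2l+4} = 0$ with $a, b_1 \ne 0$. Rescaling $u$ by $b_1$ and then rescaling $z$ --- both diagonal automorphisms of $W'$, hence fixing $H = (u=0)$ --- brings the equation to $(z - u^{l+2})^2 - (u-1)^{2l+4} = 0$, so $\frac{1}{2} H + D$ is projectively equivalent to $\frac{1}{2} H + D_{\ss}$. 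The unique singular point of $D$ inside $U_y$ is $[b_1 : 1 : -\frac{a}{2}b_1^{l+2}]$, which is non-toric, and in the local coordinates $(u - b_1,\, z + \frac{a}{2}u^{l+2})$ the equation reads $(z')^2 - \frac{a^2}{4}(u')^{2l+4} = 0$, an $A_{2l+3}$-singularity. Finally, $\lct(W', \frac{1}{2} H; D)$ is the minimum of the local thresholds: the one at the $A_{2l+3}$-point is $\frac{1}{2} + \frac{1}{2l+4} = \frac{l+3}{2l+4}$, which is strictly smaller than the threshold $1$ along $H$ (as $t = 2l+3 \ge 2l+2$) and than the thresholds at $\msp_z$ and at the remaining smooth points of $D$; hence $\lct(W', \frac{1}{2} H; D) = \frac{l+3}{2l+4}$.

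The main obstacle I anticipate is bookkeeping rather than conceptual: verifying that the list of points needing a separate log canonicity check ($\msp_z$, the points of $H_y \setminus H$, and the singularities of $D$ in $U_y$) is exhaustive, and being careful that the normalization identifying $D$ with $D_{\ss}$ uses only automorphisms of $W'$ preserving $H$ --- in particular, that the root $b_1$ is moved to $1$ by the scaling $u \mapsto b_1 u$ rather than by an illegitimate translation of $u$. With those points settled, the proof is the mechanical comparison of the explicit thresholds against $c = \frac{2l+5}{4l+6}$, just as in Lemma~\ref{section 5:a is nonzero Lemma}.
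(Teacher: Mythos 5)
Your proposal is correct and follows exactly the argument the paper intends: it is the even-case analogue of the proof of Lemma \ref{section 5:a is nonzero Lemma}, assembling the local thresholds (Lemma \ref{section 5 even:lct along H} along $H$, the chart computation $\min\{\frac{m+2}{2m},1\}$ off $H$, transversality at $\msp_z$, smoothness along $H_y\setminus H$) and comparing them with $\frac{2l+5}{4l+6}$, together with the chain $m\le t+1\le 2l+4$ and the rescaling identification with $D_{\ss}$ — which is precisely why the paper omits the proof as "very similar" to the odd case. The only cosmetic points are that at $\msp_z$ the pair $(W',H+D)$ is log canonical because $H$ and $D$ meet transversally in the orbifold chart (rather than "nodal"), which bounds coefficients by $1$, but this does not affect the argument.
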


\begin{proof}
The proof is very similar to Lemma \ref{section 5:a is nonzero Lemma} so we omit it.
\end{proof}

If $a=0$ and $a_0\neq 0$ then the equation (\ref{section 5:even case equation}) is presented by
\begin{equation*}
    z^2 + a_tu^{2l+3-t}(u - b_1)^{m_1}\cdots (u - b_k)^{m_k} = 0
\end{equation*}
on the affine open set $U_y$, where $b_1,\ldots , b_k$ are nonzero constants. We have the following:
\begin{equation*}
    \inf\left\{\lct_{\msp}(W', D) ~\middle|~ \msp\in W'\setminus H\right\}
    = \min\left\{\frac{m'+2}{2m'}, 1\right\}\geq \frac{2l+5}{4l+6}
\end{equation*}
where $m' = \max\{m_1,\ldots , m_k\}\leq 2l+3$.
\begin{lemma}\label{section5 even:a zero a_0 nonzero}
    Assume that $a=0$ and $a_0\neq 0$ ($D$ is irreducible). Then the log pair $(W', \frac{1}{2}H + \frac{2l+5}{4l+6}D)$ is log canonical if and only if $\frac{2l+3}{2}\leq t\leq 2l+3$.
\end{lemma}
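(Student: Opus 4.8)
The plan is to assemble the two local computations already at hand. Away from $H$, the discussion immediately preceding the lemma gives $\lct_{W'\setminus H}(W', \frac{1}{2}H; D) = \min\{\frac{m'+2}{2m'}, 1\}$, and since $\frac{m'+2}{2m'}$ is decreasing in $m'$ and $m'\le 2l+3$, this quantity is always $\ge \frac{2l+5}{4l+6}$; hence the pair $(W', \frac{1}{2}H + \frac{2l+5}{4l+6}D)$ is automatically log canonical on $W'\setminus H$, and the whole question is about log canonicity along $H$. Because $\lct(W', \frac{1}{2}H; D)$ is the minimum of its values at points of $H$ and at points of $W'\setminus H$, it suffices to decide when the log canonical threshold along $H$ is $\ge \frac{2l+5}{4l+6}$.

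For that I would invoke Lemma~\ref{section 5 even:lct along H} directly. Under the present hypotheses $a = 0$, $a_0\ne 0$, the local equation of $D$ at $\msp_y$ is still of the form $(z')^2 + u^{2l+3-t}\cdot(\text{unit})$, the unit being $a_0u^t + \cdots + a_t$, which is nonzero at $u = 0$ because $a_t\ne 0$; so Lemma~\ref{section 5 even:lct along H} applies without change and the threshold along $H$ equals $1$ when $t\ge 2l+2$ and equals $\frac{2l+4-t}{4l+6-2t}$ when $t<2l+2$. When $t\ge 2l+2$ we have $1 > \frac{2l+5}{4l+6}$, so the pair is log canonical — this includes the case $t = 2l+3$, where $\msp_y\notin D$ and $H\cap D = \{\msp_z,\msp_1,\msp_2\}$ consists only of points where $H+D$ is snc/transverse, and note $2l+3\ge l+2$. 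When $t < 2l+2$, log canonicity is equivalent to the inequality $\frac{2l+4-t}{4l+6-2t}\ge \frac{2l+5}{4l+6}$, which after clearing denominators (both positive) simplifies to $4t\ge 4l+6$, i.e.\ to $t\ge l+2$ for integral $t$.

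Putting the cases together, $(W', \frac{1}{2}H + \frac{2l+5}{4l+6}D)$ is log canonical exactly when $t\ge l+2$; since the coefficients only run $a_0,\dots,a_{2l+3}$ we always have $t\le 2l+3$, so this is the same as $\frac{2l+3}{2}\le t\le 2l+3$, which is the claim.

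I do not expect any real obstacle here. The geometric content — the weighted blow-up at $\msp_y$ and the verification that $\msp_z$ (via the monomial $uz^2y$) and $\msp_1,\msp_2$ (via transversality of $H$ and $D$ at smooth points) cause no trouble — has already been carried out in the general discussion and in Lemma~\ref{section 5 even:lct along H}; what remains is bookkeeping together with the single numerical comparison above. The only thing worth double-checking is that the $W'\setminus H$ bound genuinely never dips below $\frac{2l+5}{4l+6}$, which it does not, precisely because $m'\le 2l+3$.
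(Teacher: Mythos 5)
Your argument is correct and is essentially the paper's own: the lemma is stated there as a direct consequence of the preceding computations, namely the formula $\lct_{W'\setminus H}(W',\tfrac12 H;D)=\min\{\tfrac{m'+2}{2m'},1\}\ge\tfrac{2l+5}{4l+6}$ (since $m'\le 2l+3$) together with Lemma \ref{section 5 even:lct along H} for the threshold along $H$, which is exactly how you assemble it. Your reduction of $\frac{2l+4-t}{4l+6-2t}\ge\frac{2l+5}{4l+6}$ to $t\ge l+2$ for integral $t$, i.e.\ $\frac{2l+3}{2}\le t\le 2l+3$, matches the intended bookkeeping, so nothing is missing.
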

Finally we consider the case $a=a_0=0$. Then there is a nonzero nonnegative integer $m_0$ such that $a_{m_0+1}\neq 0$ and $a_0=\ldots  =a_{m_0}=0$. The curve $D$ is defined by
\begin{equation*}
    \begin{split}
        &y(z^2 + u^{2l+3-t}y^{m_0}(a_{m_0+1}u^{t-m_0-1}+a_{m_0+2}x^{t-m_0-2}y+\cdots + a_ty^{t-m_0})) \\
        =&y(z^2 + a_t u^{2l+3-t}y^{m_0}(y + b_1u)^{m_1}\cdots (y + b_ku)^{m_k}) = 0,
    \end{split}
\end{equation*}
where $b_1,\ldots , b_k$ are nonzero constants. We have the following:
\begin{equation*}
    \inf\left\{\lct_{\msp}(W', D) ~\middle|~ \msp\in W'\setminus H\right\}
    = \min\left\{\frac{m_0+2}{2m_0 + 2}, \frac{m''+2}{2m''}, 1\right\},
\end{equation*}
where $m'' = \max\{m_1,\ldots , m_k\}\leq 2l+3$. Then we have the following lemma:
\begin{lemma}\label{section5 even:a and a_0 zero}
    Assume that $a=a_0=0$  ($D$ is reducible). The log pair $(W', \frac{1}{2}H + \frac{2l+5}{4l+6}D)$ is log canonical if and only if $\frac{2l+3}{2}\leq t\leq 2l+3$ and $m_0\leq\frac{2l+1}{2}$.
\end{lemma}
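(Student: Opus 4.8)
The plan is to mimic the structure of Lemma~\ref{section 5:a a_0 zeros} from the odd case, now working on $W' = \PP(1,1,l+1)$ with the extra boundary $\frac{1}{2}H$. Since $a = a_0 = 0$, the polynomial defining $D$ is divisible by $y$, so $D = H_y + D'$ where $D'$ is defined by $z^2 + a_t u^{2l+3-t} y^{m_0}(y + b_1 u)^{m_1}\cdots(y+b_k u)^{m_k} = 0$ in the chart $U_y$, with $m_0 \geq 1$ and $a_0 = \cdots = a_{m_0} = 0$, $a_{m_0+1}\neq 0$. First I would invoke the reduction already carried out in the excerpt: log canonicity at $\msp_z$ holds automatically because $uz^2y$ appears in the equation of $H+D$, and away from $H$ one has $\lct_{\msp}(W', \frac{1}{2}H; D) = \lct_{\msp}(W', D)$, which reduces everything to the affine computation in $U_y$ together with the already-established lct formula displayed just before the statement.

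The second step is to read off the three local contributions to the lct from that displayed formula, $\min\{\frac{m_0+2}{2m_0+2}, \frac{2l+4-t}{4l+6-2t}, \frac{m''+2}{2m''}, 1\}$ for $t < 2l+2$ (and the analogous expression without the middle term for $t\geq 2l+2$), and to determine exactly when each term is $\geq \frac{2l+5}{4l+6}$. For the $\frac{m''+2}{2m''}$ term: since $m'' = \max\{m_i\} \leq \sum m_i \leq t - m_0 \leq 2l+3$, one checks $\frac{m''+2}{2m''} \geq \frac{2l+5}{4l+6}$ always holds (the bound $m''\leq 2l+3$ is exactly what is needed), so this term never obstructs log canonicity. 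For the $\frac{2l+4-t}{4l+6-2t}$ term (relevant only when $t < 2l+2$): one solves $\frac{2l+4-t}{4l+6-2t} \geq \frac{2l+5}{4l+6}$ and finds it is equivalent to $t \geq l + \tfrac{3}{2}$, i.e. $t \geq \tfrac{2l+3}{2}$ since $t$ is an integer; note this is automatically satisfied when $t \geq 2l+2$, so in all cases the condition is $t \geq \tfrac{2l+3}{2}$, and one must also have $t\leq 2l+3$ by definition of $t$. For the $\frac{m_0+2}{2m_0+2}$ term: solving $\frac{m_0+2}{2m_0+2} \geq \frac{2l+5}{4l+6}$ gives $m_0 \leq \tfrac{2l+1}{2}$. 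Combining, the pair is log canonical iff $\tfrac{2l+3}{2}\leq t\leq 2l+3$ and $m_0 \leq \tfrac{2l+1}{2}$, which is the claim.

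The routine but necessary part is the blow-up computation at $\msp_y$ (using the weighted blow-up with $\wt(u')=2$, $\wt(z')=2l+3-t$ as in Lemma~\ref{section 5 even:lct along H}) which justifies the $\frac{2l+4-t}{4l+6-2t}$ contribution, and a parallel weighted blow-up separating the $y$-divisor from $D'$ along $H_y$ which yields the $\frac{m_0+2}{2m_0+2}$ contribution; both are already implicit in the displayed lct formula preceding the statement, so I would simply cite that formula rather than redo them. The main thing to be careful about is that the two inequalities $t\geq \tfrac{2l+3}{2}$ and $m_0\leq \tfrac{2l+1}{2}$ come from genuinely different geometric sources (the singularity of $D'$ at $\msp_y$ versus the multiplicity of the $y$-component inside $D$ away from $\msp_y$), so one must verify that no other point of $W'\setminus H$ contributes a worse lct — but this is exactly what the case analysis producing the displayed formula already accounts for, so there is no real obstacle; the proof is a short bookkeeping argument, and I would phrase it as "This follows immediately from the lct computation above," mirroring the proof of Lemma~\ref{section5:a a_0 zeros}.
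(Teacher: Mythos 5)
Your proposal is correct and follows essentially the same route as the paper: the paper's (implicit) proof is exactly the bookkeeping you describe, combining the along-$H$ threshold $\frac{2l+4-t}{4l+6-2t}$ from the weighted blow-up at $\msp_y$ (Lemma \ref{section 5 even:lct along H}) with the displayed away-from-$H$ formula $\min\{\frac{m_0+2}{2m_0+2},\frac{m''+2}{2m''},1\}$, and solving the same three inequalities against $\frac{2l+5}{4l+6}$. The only cosmetic difference is that you merge the two sources into a single four-term minimum, whereas the paper keeps the along-$H$ and off-$H$ contributions in separate statements; the numerics ($t\ge\frac{2l+3}{2}$, $m_0\le\frac{2l+1}{2}$, and $m''\le 2l+3$ never obstructing) agree with the paper.
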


\section{K-moduli space when $n$ is odd}\label{section,nodd}
Let $\mathbb{P}:=\mathbb{P}(1,2,n+2,n+3)$
be a weighted projective space with coordinates $x,y,z,t$ of weights $1,2,n+2,n+3$ respectively and $H_{2(n+2)}$ be a hypersurface of degree $2(n+2)$ in  $\mathbb{P}$.
The group of automorphisms of $\mathbb{P}$ preserving $H_{2(n+2)}$ is generated by scalings of $x$, $y$ or $z$ and unipotent elements mapping $y$ to $y$ plus a degree $2$ polynomial of $x$, mapping $z$ to $z$ plus a degree $n+2$ polynomial of $x$ and $y$, or mapping $t$ to $t$ plus a degree $n+3$ polynomial of $x,y$ and $z$. 
Note that $\dim {H}^0(W,\mathcal{O}_W(2(n+3)))=2(n+3)+5$. Hence the moduli space parametrizing the degree $2(n+3)$ hypersurfaces in $\mathbb{P}$ has dimension $n+9$.

By Theorem 1.0.6 in \cite{KVNW}, a quasi-smooth, well-formed hypersurface of degree $2(n+3)$ in $\mathbb{P}$ is K-polystable.
We are interested in the K-moduli space $\mathcal{F}_n$ parameterizing these K-stable quasi-smooth hypersurfaces $H_{2(n+3)}$ and their K-polystable limits. 
By Lemma \ref{reduce1/2},  
it reduces to the K-moduli of log Fano pairs $(W:=\mathbb{P}(1,2,n+2), \frac{1}{2}D)$ by double cover constructions, where $D$ is given by $f:=z^2y + azx^{n+4} + \nu(x,y)$ and $\nu(x,y)$ is a polynomial of degree $2n+6$.

\begin{lemma}\label{reduce1/2}
Let $H_{2(n+3)}$ be the hypersurface of degree $2(n+3)$ %in  $\mathbb{P}:=\mathbb{P}(1,2,n+2,n+3)$ with coordinates $x,y,z,t$ of weight $1,2,n+2,n+3$ respectively. 
%$H_{2(n+3)}$ is 
given by a homogenous polynomial $$\Tilde{f}={t}^2+t xz+t g(x,y)+z^2x^2+z^2y+zh(x,y)+\nu(x,y),$$ where $g(x,y), h(x,y)$ and $\nu(x,y)$ are homogenous polynomials of degree $n+3$, $n+4$ and $2(n+3)$ respectively.
Then $H_{2(n+3)}$ is K-semistable iff $(W,\frac{1}{2}D)$ is K-semistable, where $W=\mathbb{P}(1,2,n+2)$ and $D$ is the curve given by $f=z^2y + zx^{n+4} + \nu(x,y)$ for some homogenous polynomial $\nu(x,y)$ of degree $2n+6$.
\end{lemma}
\begin{proof}
%We assume the hypersurface $H_{2(n+3)}$ is given by a homogenous polynomial $$\Tilde{f}={t}^2+t xz+t g(x,y)+z^2x^2+z^2y+zh(x,y)+\nu(x,y),$$ where $g(x,y), h(x,y)$ and $\nu(x,y)$ are homogenous polynomials of degree $n+3$, $n+4$ and $2(n+3)$ respectively. 
 Under automorphisms of $\mathbb{P}$ preserving $H_{2(n+3)}$, the polynomial $\Tilde{f}$ can be rewrite as ${t}^2+z^2y + zx^{n+4} + \nu(x,y)$.
We consider the double cover $$\sigma:H_{2(n+3)}\rightarrow W=\mathbb{P}(1,2,n+2)$$ mapping $[x;y;z;t]$ to $[x;y;z]$ with branch locus $D$ defined by $f:=z^2y + zx^{n+4} + \nu(x,y)=0$.
We have $K_{H}=\sigma^*(K_W+\frac{1}{2}D)$.
Then by Theorem 1.2. in \cite{LiuZhu}, the hypersurface $H_{2(n+3)}$ is K-semistable if and only if the pair $(W,\frac{1}{2}D)$ is K-semistable.
\end{proof}

\begin{definition}\label{defineK-moduli,odd}
Let $D$ be a curve in $W=\mathbb{P}(1,2,n+2)$ given by 
\begin{equation}\label{equation of D}
f=z^2y+azx^{n+4}+a_0x^{2n+6}+a_1x^{2n+4}y+a_2x^{2n+2}y^2+\cdots+a_{n+2}x^2y^{n+2}+a_{n+3}y^{n+3},\end{equation}
where $n\geq 1$. Let $r=\frac{2n+6}{n+5}$. We have 
 $D\in|\mathcal{O}(2n+6)|$ and $D=-rK_W$. By the set-up in Section \ref{backgroudmultiple boundaries}, we let $v=\frac{(n+5)^2}{2(n+2)}$ and $\theta=0$, then the parameter $h$ does not affect the moduli space. We have $M^{K}_{2,v,h,r,0,w}$ is the K-moduli space parametrizing all K-polystable log Fano pairs $(W, wD)$ for $0\leq w<\frac{n+5}{2n+6}$. Denote $M_w$ to be the closure of the locus in $M^{K}_{2,v,h,r,0,w}$ consisting of pairs of which the surfaces are isomorphic to $W$. We know $M_w$ is a closed subscheme of $M^{K}_{2,v,h,r,0,w}$.
\end{definition}

\begin{definition}\label{define w_n,i,replace c_n,e}
 For an integer $0\leq i \leq \frac{n+3}{2}$, we denote $$w_{i}:=\frac{(n+2+i)(n+5)-3(n+2)(1+i)}{(n+2+i)(2n+6)-6i(n+2)}$$
 and $\xi_n:= \frac{n^3 + 11n^2 + 31n + 23}{2n^3 + 18n^2 + 50n + 42}$.
 Note that $w_{0}=\frac{n+2}{2n+6}$, $w_{\frac{n+3}{2}}=\frac{n+5}{2n+6}>\xi_n>\frac{1}{2}$ and $w_{i}<\frac{1}{2}$ for $0\leq i \leq \frac{n+3}{2}-1$.
\end{definition}

Now we relate the GIT moduli spaces $M^{GIT}_w$ defined in section \ref{VGIT} with $M_w$. 
Theorem \ref{isomorphism,GITandK} implies that they are isomorphic as long as $w<\xi_n$.

\begin{theorem}\label{isomorphism,GITandK}
For $w\in (0,\xi_n)$, the GIT moduli spaces $M^{GIT}_w$ are isomorphic to the K-moduli spaces $M_w$ by the morphism 
\[
\begin{split}
\Phi_w: M^{GIT}_w &\longrightarrow M_w \\(a,a_0,\cdots,a_{n+3}) & \mapsto (W,wD),
\end{split}
\]
where $D$ is given by the equation
\begin{equation*}
    z^2y + azx^{n+4} + a_0x^{2n+6} + a_1x^{2n+4}y + \cdots + a_{n+3}y^{n+3} = 0.
\end{equation*}
\end{theorem}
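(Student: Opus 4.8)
The plan is to establish the isomorphism $\Phi_w\colon M^{GIT}_w \to M_w$ in three stages: (1) show that $\Phi_w$ is a well-defined morphism, (2) show it is bijective on closed points, and (3) upgrade bijectivity to an isomorphism using normality/separatedness of the target. For the first step, I would build everything from the universal family $\phi\colon (\mathcal{W}, w\mathcal{D}) \to \bA^{n+5}$ from Section \ref{VGIT}. By Definition \ref{defineK-moduli,odd} the K-moduli space $M_w$ receives a map from the K-semistable locus of this family via the universal property of the good moduli space, while $M^{GIT}_w$ is the GIT quotient of the $w$-semistable locus. The key observation is that the CM line bundle descends (on both sides) from the \emph{same} linearized line bundle $L_w$ on $\bA^{n+5}$; combined with the fact (from the $w$-GIT stability computations, Lemma \ref{GITstable,condition,odd}) that the $\beta$-invariants of the toric valuations $v_{(d,b)}$ literally compute the Hilbert--Mumford weights, the $w$-GIT semistable locus is contained in the K-semistable locus once we know that every $w$-GIT polystable pair $(W, wD)$ is actually K-polystable. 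This last point is the crux and requires the log canonical threshold computations of Section \ref{section5:section number}: by the interpolation Lemma \ref{theta,w,interpolation,polystable} together with the fact that $(W, w_0 D)$-type pairs are K-(semi)stable and that the log Calabi--Yau pair $(W, \frac{n+5}{2n+6}D)$ is log canonical precisely in the stable range (Lemmas \ref{section 5:a is nonzero Lemma}, \ref{section5:a zero a_0 nonzero}, \ref{section5:a a_0 zeros}), K-semistability of $(W, wD_0)$ follows along all relevant valuations, and then one uses the complexity-one torus criterion (Theorem \ref{Tone}) to promote to K-polystability for $w < \xi_n$. I expect that for $w<\xi_n$ the only candidate destabilizing valuations are the toric ones plus the valuation at a type $A$-singularity, which is exactly why the wall $\xi_n$ is where the behavior changes.

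The second step is to construct the inverse map on closed points. Here I would use the explicit description of K-polystable pairs: given a K-polystable $(W', wD')$ in $M_w$ whose surface is isomorphic to $W = \bP(1,2,n+2)$, I need to show that after applying automorphisms of $W$ (in particular the unipotent part of $\Aut(W)$, which as noted in Section \ref{VGIT} lets one eliminate the $zx^{n+2}$-type terms and bring $D'$ into the normal form \eqref{GIT,even,equationD}) and then a torus action, $D'$ is $w$-GIT polystable with coordinates $(a, a_0, \ldots, a_{n+3})$. The point is that K-polystability of $(W, wD)$ forces the Futaki-type vanishing $\beta^w_{(d,b)} = 0 \Rightarrow$ the limit lies in the same orbit, which is exactly the $w$-GIT polystability condition of Definition \ref{defineGIT stable}. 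Crucially, for $w < \xi_n$ the description of $M_w$ in Theorem \ref{intro,K-moduliparameterize,i}(1)--(5) matches term-by-term the description of $M^{GIT}_w$ in Theorem \ref{describe,GITmoduli,odd}; so it suffices to check that no K-polystable pair with surface $\cong W$ lies outside the $w$-GIT semistable image, which again reduces (via the valuative criterion and the lct lemmas) to the same numerical inequalities. This shows $\Phi_w$ is a bijection on closed points, and moreover injective on orbits by construction.

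Finally, to conclude that a bijective morphism is an isomorphism, I would invoke that both $M^{GIT}_w$ and $M_w$ are projective, and that $M^{GIT}_w$ is a normal variety (it is a GIT quotient of an affine space, indeed of weighted projective space with a torus action, hence normal), together with the separatedness of the K-moduli space; Zariski's main theorem then gives that $\Phi_w$ is an isomorphism. Alternatively, and perhaps more cleanly, one notes that $\Phi_w$ is induced at the stack level by an open immersion of the $w$-GIT semistable locus into the K-semistable locus of the universal family (they are literally the same locus in $\bA^{n+5}$ for $w < \xi_n$ by the argument above), and isomorphisms of good moduli spaces descend from equalities of the quotient stacks.

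\textbf{Main obstacle.} The hard part will be step one's crux: proving that $w$-GIT polystability implies K-polystability of $(W, wD)$ for all $w < \xi_n$, i.e. controlling \emph{all} valuations over $W$, not merely the torus-invariant ones. The torus-invariant valuations are handled by the explicit $\beta$-computation \eqref{betav,GIT,odd}, but one must rule out destabilization by non-toric divisors --- this is where the log canonical threshold computations of Section \ref{section5:section number} and the Abban--Zhuang estimates (Theorem \ref{AbbanZ}) enter, via the interpolation trick: K-semistability of the log CY endpoint $(W, \frac{n+5}{2n+6}D)$ (equivalently log canonicity, by the lct lemmas) plus K-semistability at a small coefficient $w_0$ gives K-semistability in between for \emph{all} valuations simultaneously, and the rank-one automorphism input in Lemma \ref{theta,w,interpolation,polystable}(2) upgrades this to K-stability/polystability away from the special locus. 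Making this interpolation argument uniform in $w$ across all the wall chambers $(w_i, w_{i+1})$, and checking it does \emph{not} extend past $\xi_n$, is the technical heart of the proof.
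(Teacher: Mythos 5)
Your proposal follows essentially the same route as the paper: the Paul--Tian/CM-line-bundle argument for the K-to-GIT direction, degeneration to the complexity-one pairs $(W, w D_e)$ combined with the lct computations at the log Calabi--Yau coefficient and the Abban--Zhuang estimate for the $A_{n+3}$-curve up to $\xi_n$, interpolation (with the rank-one automorphism clause) to obtain K-polystability, then matching the chamber descriptions and concluding from injectivity plus properness of the GIT quotient. The only slip is attributing the promotion to K-polystability of general pairs to Theorem \ref{Tone}, which applies only to the torus-invariant degenerations; the paper --- and your own closing paragraph --- correctly uses Lemma \ref{theta,w,interpolation,polystable}(2) for that step.
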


In order to prove Theorem \ref{isomorphism,GITandK}, we need several Lemmas and Propositions as follows to describe an open subset of $M_w$.

We first state a relatively easy direction, that is, K-stability implies GIT as long as the surface does not change.

\begin{theorem}\label{thm:K-imply-GIT}
Let $D$ be a curve in $W$ given by the equation \eqref{equation of D}. Let $w\in (0, \frac{n+5}{2n+6})\cap \bQ$.  If $(W,wD)$ is K-semistable (resp. K-polystable), then $(a, a_0, \cdots, a_{n+3})$ is $w$-GIT semistable (resp. $w$-GIT polystable).
\end{theorem}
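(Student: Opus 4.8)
The plan is to run the Hilbert--Mumford numerical criterion for $w$-GIT stability, in the reformulation of Lemma~\ref{GITstable,condition,odd}, against the Fujita--Li valuative criterion for K-stability. The bridge is the observation that a $w$-GIT destabilizing one-parameter subgroup $\lambda_{(d,b)}$ produces exactly the (quasi-)monomial valuation $v_{(d,b)}$ over $W$ whose $\beta$-invariant relative to the boundary $D_0=(z^2y=0)$ was computed in \eqref{betav,GIT,odd}, and that for the relevant $(d,b)$ this $\beta$-invariant in fact coincides with $\beta_{(W,wD)}(v_{(d,b)})$. The key point making the whole identification work is the arithmetic identity $v_{(d,b)}(D)=v_{(d,b)}(D_0)$ whenever $\langle d,b\rangle\in\Gamma^{\vee}$.

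For the semistability statement I would argue by contraposition. Suppose $p=(a,a_0,\dots,a_{n+3})$ is not $w$-GIT semistable. By Lemma~\ref{GITstable,condition,odd} there is a one-parameter subgroup $\lambda_{(d,b)}$ with $\langle d,b\rangle\in\Gamma^{\vee}$ and $\beta^w_{(d,b)}<0$. The defining inequalities of $\Gamma^{\vee}$, namely $(j-1)d-2b\ge 0$ for all $j$ with $a_j\neq 0$ together with $-d-b\ge 0$ when $a\neq 0$, say precisely that, in the chart $x=1$, the monomial $z^2y$ of the equation \eqref{equation of D} has $v_{(d,b)}$-value $2b+d$ no larger than the $v_{(d,b)}$-value of any other monomial occurring in that equation. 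Hence $v_{(d,b)}(D)=2b+d=v_{(d,b)}(D_0)$. Consequently $A_{(W,wD)}(v_{(d,b)})=A_W(v_{(d,b)})-w\,v_{(d,b)}(D)=A_{(W,wD_0)}(v_{(d,b)})$, while $S_{(W,wD)}(v_{(d,b)})=S_{(W,wD_0)}(v_{(d,b)})$ since both equal $(n+5-w(2n+6))\,S_{\cO(1)}(v_{(d,b)})$ by Lemma~\ref{Sinv,k}. Therefore $\beta_{(W,wD)}(v_{(d,b)})=\beta_{(W,wD_0)}(v_{(d,b)})=\beta^w_{(d,b)}<0$, the middle equality being \eqref{betav,GIT,odd}, and this contradicts the K-semistability of $(W,wD)$. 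Equivalently, one can package this as: the degeneration along $\lambda_{(d,b)}$ gives a test configuration of $(W,wD)$ whose normalized Donaldson--Futaki invariant equals the Hilbert--Mumford weight of the CM line bundle $L_w$, which is $\beta^w_{(d,b)}$ by the set-up of Section~\ref{VGIT}.

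For the polystability statement, assume $(W,wD)$ is K-polystable; then it is K-semistable, so $p$ is $w$-GIT semistable by the previous step. If $p$ were not $w$-GIT polystable, then by Lemma~\ref{GITstable,condition,odd} the cone $\Gamma$ is two-dimensional and $u$ lies on its boundary; choose $\lambda_{(d,b)}$ with $\langle d,b\rangle\in\Gamma^{\vee}$, perpendicular to $u$, and strictly positive on every generator of $\Gamma$ other than the one proportional to $u$. Then $\beta^w_{(d,b)}=0$, hence $\beta_{(W,wD)}(v_{(d,b)})=0$ by the identity above. By the valuative characterization of K-polystability (via the K-moduli and finite-generation results of the cited references), the valuation $v_{(d,b)}$ must be induced by a one-parameter subgroup $\sigma$ of $\Aut(W,wD)$. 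Since $\sigma$ preserves $D$, the polynomial \eqref{equation of D} is a $\sigma$-semiinvariant, hence equals its own $v_{(d,b)}$-initial form; that is, $D$ is $v_{(d,b)}$-homogeneous, which (since $d\neq 0$) forces its equation to consist of $z^2y$ and at most one further monomial. But then $T$ contains at most one vector, so $\Gamma$ is at most one-dimensional, contradicting that $\Gamma$ is two-dimensional. Hence $p$ is $w$-GIT polystable.

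The hard part is not the bookkeeping but making sure the two technical inputs are legitimate in the required generality: first, that $v_{(d,b)}$ is a genuine divisorial (or quasi-monomial) valuation over $W$ for every $\langle d,b\rangle\in\Gamma^{\vee}$ --- including those with $d$ or $b$ negative, whose centre lies on $H_x$ rather than in the chart $x=1$ --- so that the Fujita--Li criterion and the formula \eqref{betav,GIT,odd} apply to it verbatim; and second, in the polystable step, the precise form of the valuative characterization of K-polystability, namely that for a K-polystable log Fano pair equality $\beta(v)=0$ forces $v$ to be of product type. Both are consequences of the foundational material recalled in Section~\ref{Prelimi}, and no genuinely new idea is needed beyond organizing these compatibilities; the same argument, mutatis mutandis, handles the even case in Section~\ref{section,even}.
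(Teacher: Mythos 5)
Your reduction to the Fujita--Li criterion hinges on the identity $\beta_{(W,wD)}(v_{(d,b)})=\beta^w_{(d,b)}$, and you only justify it for weights $(d,b)$ in the closed positive quadrant: Lemma~\ref{S-inv.} is stated and proved for \emph{positive} weights, and the formula $A_W(v_{(d,b)})=d+b$ is likewise only valid there. For the toric valuation attached to an arbitrary vector of $N_{\bR}$, both $A_W$ and $S$ are merely piecewise linear (linear on each cone of the fan; e.g.\ $A_W$ equals $1$ on every primitive ray generator), so the linear expression \eqref{betav,GIT,odd} does not compute $\beta_{(W,wD)}(v_{(d,b)})$ once $d$ or $b$ is negative. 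This is not a removable technicality, because the destabilizing one-parameter subgroups produced by Lemma~\ref{GITstable,condition,odd} are often forced to have negative entries: if, say, only $a_0\neq 0$, then $\Gamma^{\vee}=\{(d,b)\colon -d-2b\geq 0\}$ meets the positive quadrant only at the origin; and in your polystability step, when $u$ is proportional to $v_{n+3}=\langle n+2,-2\rangle$ and $a\neq 0$, the only direction (up to positive scaling) perpendicular to $u$ and nonnegative on $\Gamma$ is $\langle -2,-(n+2)\rangle$. In all such cases the contradiction you draw from $\beta^w_{(d,b)}<0$ (resp.\ $=0$) via the valuative criterion is unsupported, so the gap sits exactly at the point you deferred to ``foundational material recalled in Section~\ref{Prelimi}'' --- that material does not cover it.

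The sentence you offer only as an equivalent repackaging --- that the degeneration along $\lambda_{(d,b)}$ is a test configuration of $(W,wD)$ whose Donaldson--Futaki invariant is the Hilbert--Mumford weight of the CM linearization $L_w$ --- is in fact the correct argument and is the paper's entire proof: Theorem~\ref{thm:K-imply-GIT} follows directly from the Paul--Tian criterion \cite{PT06} in the precise form of \cite[Theorem 2.22]{ADL19}, since the $w$-GIT linearization is the CM line bundle of the universal family. That criterion yields both the semistable and the polystable implications at once, for every one-parameter subgroup regardless of the sign of $(d,b)$, and with no need for the deep ``product-type'' characterization of valuations with $\beta=0$ that your polystability step invokes (and which, as written, you would still be applying to a vector outside the positive quadrant, where the identification of $\beta^w_{(d,b)}$ with a $\beta$-invariant has not been established). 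If you want to keep a valuation-theoretic proof, you must either show that a destabilizing $(d,b)$ can always be chosen in the positive quadrant (false, as the examples above show) or redo the $A$- and $S$-computations cone by cone; the cleaner route is to carry out the test-configuration/CM-weight argument as the main proof rather than as an aside.
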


\begin{proof}
When $w> \frac{2n+1}{4n+6}$, this follows directly from the Paul--Tian criterion \cite{PT06} (see \cite[Theorem 2.22]{ADL19} for a precise version) applied to the universal family over $\cP(1, 2^{n+4})$ and Proposition \ref{prop:projGIT} as the linearization $\Lambda_w$ is the CM $\bQ$-line bundle for the universal family of $(W, wD)$. When $w< \frac{2n+1}{4n+6}$,  we know that $\lambda_{(0,-1)}$ induces a test configuration degenerating $(W,wD)$ to $(W,w D_0)$ whose Futaki invariant is equal to $\beta^w_{(0,-1)}<0$, thus $(W,wD)$ is always K-unstable. When $w = \frac{2n+1}{4n+6}$, the same argument implies that either $(W,wD)$ is K-unstable or it is K-semistable and $\lambda_{(0,-1)}$ has Futaki invariant  $\beta^w_{(0,-1)}=0$. Since $(W,wD)$ is K-semistable, we are in the latter case. Then by \cite[Lemma 3.1]{LWX21} we know that $(W, wD_0)$ is K-semistable which implies that $\beta_{(d,b)}^w = 0$ for any $(d,b)\in \bZ^2$. This is absurd as $1-n + 2nw \neq 0$. Thus the proof is finished.
\end{proof}

The pairs in Proposition \ref{evenpoly+polyanyn} with a $\bG_m$ action have complexity one. 

\begin{proposition}\label{evenpoly+polyanyn}
For any integer $\frac{n+3}{2}\leq e\leq n+3$, where $n\geq 1$, then $(W,wD_e)$ is K-polystable if and only if $w=w_{n+3-e}$, where $D_e$ is given by $z^2y+x^{2n+6-2e}y^e=0$ and $\frac{n+2}{2n+6}\leq w_{n+3-e}\leq \frac{n+5}{2n+6}$. Moreover, when  $w\neq w_{n+3-e}$, $(W,wD_e)$ is K-unstable.
\end{proposition}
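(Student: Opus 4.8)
The plan is to identify $(W,wD_e)$ as a complexity one $\mathbb{T}$-pair and apply the K-polystability criterion of Theorem~\ref{Tone}. The key observation is that the two monomials $z^2y$ and $x^{2n+6-2e}y^e$ of the defining equation of $D_e$ acquire the same weight $2e$ under the one-parameter subgroup $\lambda=\lambda_{(2,\,e-1)}$, i.e.\ the $\bG_m$-action $\lambda(t)\cdot[x:y:z]=[x:t^2y:t^{e-1}z]$ preserves $D_e$. Hence $\lambda\in\Aut(W,D_e)\subseteq\Aut(W,wD_e)$, and since $W$ is a surface $(W,wD_e)$ is a complexity one $\mathbb{T}$-pair with $\mathbb{T}=\bG_m$; it is not toric because $D_e$ is not invariant under the full two-dimensional torus of $W$. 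Throughout, klt log Fano-ness of $(W,wD_e)$ in the relevant range is furnished by the log canonical threshold computations of Section~\ref{section5:section number} (for $e=\tfrac{n+3}{2}$ the coefficient $w_{n+3-e}=\tfrac{n+5}{2n+6}$ is the Calabi--Yau threshold and the pair is log Calabi--Yau; that endpoint is handled directly by Lemma~\ref{section5:a a_0 zeros}).

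First I would settle the ``only if'' direction together with the unstable claim. Since $\lambda$ and $\lambda^{-1}$ both generate product test configurations of $(W,wD_e)$, K-semistability forces the Futaki invariant along $\lambda$ to vanish, which by Remark~\ref{remarkTone} means $\beta_{W,wD_e}(v)=0$ for the monomial valuation $v=v_{(2,e-1)}$. Now $v(D_e)=2e=v(z^2y)=v(D_0)$ for $D_0=(z^2y=0)$ (the common $v$-weight of the two monomials), so $A_{W,wD_e}(v)=A_{W,wD_0}(v)$, and the two pairs have the same anticanonical class; hence the computation \eqref{betav,GIT,odd} applies verbatim and gives $\beta_{W,wD_e}(v_{(2,e-1)})=\beta^w_{(2,e-1)}$, an affine function of $w$ whose slope $\tfrac{2\,[\,n(n+2)-(e-1)(2n+3)\,]}{3(n+2)}$ is nonzero for every $n\geq 1$ (otherwise $2n+3$ would divide $n$). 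So $\beta^w_{(2,e-1)}=0$ has a unique root, which a direct solve identifies with $w=w_{n+3-e}$ of Definition~\ref{define w_n,i,replace c_n,e}. Thus $(W,wD_e)$ is K-unstable for every $w\neq w_{n+3-e}$, and in particular K-polystability forces $w=w_{n+3-e}$. The bounds $\tfrac{n+2}{2n+6}\leq w_{n+3-e}\leq\tfrac{n+5}{2n+6}$ for $\tfrac{n+3}{2}\leq e\leq n+3$ are then just the monotonicity of $w_i$ recorded in Definition~\ref{define w_n,i,replace c_n,e}, with index $i=n+3-e\in\{0,\dots,\tfrac{n+3}{2}\}$.

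For the ``if'' direction I would verify conditions (1)--(3) of Theorem~\ref{Tone} at $w=w_{n+3-e}$ (in the log Fano range $w_{n+3-e}<\tfrac{n+5}{2n+6}$); condition (3) is exactly the vanishing just proved. For (1) and (2) one lists the $\mathbb{T}$-invariant prime divisors of $W$: apart from the coordinate curves $H_x\sim\mathcal{O}(1)$, $H_y\sim\mathcal{O}(2)$, $H_z\sim\mathcal{O}(n+2)$, an eigen-monomial bookkeeping shows the only others are the curves $(\gamma z^2+\alpha x^{2(n+3-e)}y^{e-1}=0)$, each $\sim\mathcal{O}(2n+4)$, together with (when $e$ is odd) their linear factors $(z=c\,x^{n+3-e}y^{(e-1)/2})$, each $\sim\mathcal{O}(n+2)$. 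Because $\mathrm{Cl}(W)\cong\ZZ$ is generated by $\mathcal{O}(1)$, for any prime divisor $F\sim k\,\mathcal{O}(1)$ one has $S_{W,wD_e}(F)=\tfrac{r}{3k}$ with $r=n+5-w(2n+6)$ (Lemma~\ref{Sinv,k}), depending only on $k$, while $A_{W,wD_e}(F)=1-w\cdot\ord_F(D_e)$ equals $1$ unless $F$ is a component of $D_e=H_y+D_e'$, in which case it is $1-w$. A short analysis of the $\lambda$-action shows all these divisors are vertical except $H_x$, which is horizontal exactly when $e=n+3$; in that case $A_{W,w_0D_{n+3}}(H_x)=1=S_{W,w_0D_{n+3}}(H_x)$, so $\beta_{W,w_0D_{n+3}}(H_x)=0$, giving (2), while (1) reduces to the finite list of inequalities $1-\tfrac{r}{3k}>0$ and $1-w-\tfrac{r}{3k}>0$ for $k\in\{1,2,n+2,2n+4\}$ at $w=w_{n+3-e}$. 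With (1)--(3) verified, Theorem~\ref{Tone} yields that $(W,w_{n+3-e}D_e)$ is K-polystable.

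The main obstacle is the ``if'' direction: correctly enumerating the $\mathbb{T}$-invariant prime divisors (the eigen-monomial count is made fiddly by the parity of $e$ and the possible splitting of $D_e'$) and then checking the $\beta$-inequalities uniformly in $e$ and $n$, all the while tracking the quotient singularity of $W$ at $\msp_z$ in the log-discrepancy terms. The verification is elementary but case-laden: the coefficient $w_{n+3-e}$ is arranged precisely so that $\beta_{W,wD_e}(v_{(2,e-1)})=0$, and the real content is that every other $\mathbb{T}$-invariant $\beta$ is then strictly positive, with the single exception $\beta(H_x)=0$ in the boundary case $e=n+3$. Should the direct computation become unwieldy, one could instead combine the $w_{n+3-e}$-GIT polystability of the point corresponding to $D_e$ (Theorem~\ref{thm:K-imply-GIT} together with Theorem~\ref{describe,GITmoduli,odd}) with the uniqueness of K-polystable degenerations, but I expect the direct route through Theorem~\ref{Tone} to be the cleanest here given how constrained the pair is.
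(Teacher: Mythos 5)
Your proposal is correct and takes essentially the same route as the paper: both identify $(W,wD_e)$ as a complexity one $\mathbb{T}$-pair, pin down $w=w_{n+3-e}$ by the vanishing of $\beta$ along the torus-induced monomial valuation (yours is just the inverse one-parameter subgroup, recycled from \eqref{betav,GIT,odd}, while the paper recomputes with $v_{(1,n+3-e)}$ via Lemma \ref{S-inv.}), and then verify the remaining $\beta$-positivity for the coordinate lines and the pencil $\gamma z^2+\alpha x^{2(n+3-e)}y^{e-1}$ (with its linear factors for $e$ odd) exactly as in the paper's case check. In fact your explicit observation that $H_x$ becomes a horizontal divisor with $\beta=0$ precisely when $e=n+3$ is a more careful reading of Theorem \ref{Tone} than the paper's blanket assertion that no horizontal invariant divisor exists (whose accompanying strict inequality $w>\frac{n+2}{2n+6}$ fails, harmlessly, exactly at that boundary case).
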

\begin{proof}
We can check that $$w_{n+3-e}\leq \frac{n+5}{2n+6} \text{ iff } \frac{n+3}{2}\leq e, \text{ and } \frac{n+2}{2n+6}\leq w_{n+3-e} \text{ iff } e\leq n+3 .$$
In the affine chart $y=1$, we have $D_e$ is given by $z^2+x^{2n+6-2e}$.
We use Theorem \ref{Tone} and Remark \ref{remarkTone} to check K-polysablity. The pair $(W, wD_0)$ is a complexity one $\mathbb{T}$-pair with an action by $\mathbb{T}=\mathbb{G}_m$: $\lambda[x,y,z]=[\lambda x, y, \lambda^{n+3-e} z]$ for $\lambda \in \mathbb{G}_m$

The monomial valuation $v_{(1,n+3-e)}$ such that $v(x)=1$ and $v(z)=n+3-e$ is induced by the $\mathbb{G}_m$-action.  
We first compute the $\beta$-invariant for $v=v_{(1,n+3-e)}$. 
Then by Lemma \ref{S-inv.}, we get the S-invariant for $v$: $$
S_{\mathcal{O}(1)}(v)=\frac{1}{3}(1+\frac{n+3-e}{n+2})=\frac{2n+5-e}{3(n+2)}.$$ We also have $-K_W-wD_e=\mathcal{O}(n+5-(2n+6)w)$ and $S_{(W,wD_e)}(v)=\mathrm{deg}(-K_W-wD_e)\cdot S_{\mathcal{O}(1)}(v)=(n+5-(2n+6)w)\frac{2n+5-e}{3(n+2)}$.
Also, the log discrepancy $A_W(v)=n+4-e$ and $A_{(W,wD_e)}(v)=A_W(v)-wv(D_e)=n+4-e-w(2n+6-2e)$. 
Hence we have $\beta$-invariant
\begin{equation}\label{horizontal-c-D_e}
    \beta_{(W,wD_e)}(v)=A_{(W,wD_e)}(v)-S_{(W,wD_e)}(v)=0 \text{ if and only if }w=w_{n+3-e}.
\end{equation}

By Remark \ref{remarkTone}, there is no horizontal $\mathbb{T}$-invariant prime divisor.
If $e$ is odd, We have vertical
$\mathbb{T}$-invariant prime divisors 
$\{[x;0;z]\}, \{[0;y;z]\}, \{[x;y;0]\}, \{z+ax^{n+3-e}y^{\frac{e-1}{2}}=0,a=\pm i\}, \{z+ax^{n+3-e}y^{\frac{e-1}{2}}=0,a\neq \pm i\}$ under the $\bG_m$ action $\lambda[x,y,z]=[\lambda x, y, \lambda^{n+3-e} z]$, where $i^2=-1$.
If $e$ is even, the vertical
$\mathbb{T}$-invariant prime divisors are
$\{[x;0;z]\}, \{[0;y;z]\}, \{[x;y;0]\}, \{z^2+x^{2n-6-2e}y^{e-1}=0\}, \{z+ax^{2n-6-2e}y^{e-1}=0,a\neq 1\}$. 
Next, we check $\beta(E)> 0$ for every vertical $\mathbb{T}$-invariant prime divisor $E$. 
We now deal with the situation $e$ is odd. The same arguments work for the situation that $e$ is even.

Since $-K_W-wD_0=(n+5-w(2n+6))\mathcal{O}(1)$, by Lemma \ref{Sinv,k}, we have $S_{W,wD_0}(F)=\frac{n+5}{3k}-w\frac{2n+6}{3k}$ for $F\in|\mathcal{O}(k)|$.

If $E$ is $\{[0;y;z]\}$, then $E\in|\mathcal{O}(1)|$. We have $S_{(W,wD_e)}(E)=\frac{n+5}{3}-\frac{2n+6}{3}w$. Also $A_{(W,wD_e)}(E)=1-\mathrm{Coeff}_E(wD_e)=1$. Then $\beta_{(W,wD_e)}(E)>0$ iff $w>\frac{n+2}{2n+6}$, which is satisfied by $w_{n+3-e}$.

If $E$ is $\{[x;y;0]\}$, then $E\in|\mathcal{O}(n+2)|$. We have $S_{(W,wD_e)}(E)=\frac{n+5}{3(n+2)}-\frac{2n+6}{3(n+2)}w$. Also $A_{(W,wD_e)}(E)=1-\mathrm{Coeff}_E(wD_e)=1-0=1$. Then $\beta_{(W,wD_e)}(E)=A_{(W,wD_e)}(E)-S_{(W,wD_e)}(E)>0$ iff $1> \frac{n+5}{3(n+2)}-\frac{2n+6}{3(n+2)}w$ which is satisfied by any $w>0$, so is the constant $w_{n+3-e}$.

If $E$ is $\{[x;0;z]\}$, then $E\in|\mathcal{O}(2)|$. We have $S_{(W,wD_e)}(E)=\frac{n+5}{6}-\frac{2n+6}{6}w$. Since $D_e$ is given by $y(z^2+x^{2n+6-2e}y^{e-1})=0$, so $A_{(W,wD_e)}(E)=1-\mathrm{Coeff}_E(wD_e)=1-w$. Then $\beta_{(W,wD_e)}(E)=A_{(W,wD_e)}(E)-S_{(W,wD_e)}(E)>0$ iff $w> \frac{n-1}{2n}$.
We have $w_{n+3-e}>\frac{n+2}{2n+6}>\frac{n-1}{2n}$.
So the constant $w_{n+3-e}$ satisfies this inequality.

If $E$ is $\{z+ax^{n+3-e}y^{\frac{e-1}{2}}=0\}$, then $E\in|\mathcal{O}(n+2)|$. We have $S_{(W,wD_e)}(E)=\frac{n+5}{3(n+2)}-\frac{2n+6}{3(n+2)}w$. If $a=\pm i$, the log discrepancy $A_{(W,wD_e)}(E)$ is $1-w$. Then $\beta_{(W,wD_e)}(E)=A_{(W,wD_e)}(E)-S_{(W,wD_e)}(E)>0$ iff $\frac{2n+1}{n}>w$. Since $w_{n+3-e}\leq \frac{n+5}{2n+6}<\frac{2n+1}{n}$, the constant $w_{n+3-e}$ satisfies this inequality. If $a\neq\pm i$, the log discrepancy $A_{(W,wD_e)}(E)=1$. Then $\beta_{(W,wD_e)}(E)=A_{(W,wD_e)}(E)-S_{(W,wD_e)}(E)>0$ iff $1>\frac{n+5}{3(n+2)}-\frac{2n+6}{3(n+2)}w$ which is satisfied by any $w>0$, so is the constant $w_{n+3-e}$.

Therefore, the pair $(W, w_{n+3-e}D_e)$ is K-polystable by Theorem \ref{Tone}. 

Now we show $(W, wD_e)$ is K-semistable only when $w=w_{n+3-e}$. By Theorem \ref{Tone}, if $(W, wD_e)$ is K-semistable, we have $\beta_{(W,wD_e)}(v)=0$. By \eqref{horizontal-c-D_e}, $\beta_{(W,wD_e)}(v)=0$ is equivalent to $w=w_{n+3-e}$. Hence the conclusion holds.
\end{proof}

\begin{remark}\label{degenrates to K-poly}\
We will use the following degenerations to show K-semistability.
\begin{enumerate}
\item Let $\frac{n+3}{2}+1\leq e\leq n+3$. Assume $D$ is defined by Equation \ref{equation of D} such that $a_e\neq 0$, $a_{e+1}=\cdots=a_{n+3}=0$ and $\{a,a_0,\cdots,a_{e-1}\}$ are not all zero. Then the pair $(W,D)$ degenerate to $(W,\{z^2y+x^{2n+6-2e}y^{e}=0\})$ by $\bG_m$ action defined by $\lambda[x:y:z]\mapsto[ x:\lambda^{-1}y:\lambda^{\frac{1-e}{2}}z]$, as $\lambda \rightarrow 0$.
\item Let $\frac{n+3}{2}\leq e\leq n+2$. Assume $D$ is defined by Equation \ref{equation of D} such that $a=a_0=\cdots=a_{e-1}=0$, $a_e\neq 0$ and $\{a_{e+1},\cdots,a_{n+3}\}$ are not all zero. Then the pair $(W,D)$ degenerate to $(W,\{z^2y+x^{2n+6-2e}y^{e}=0\})$ by $\bG_m$ action
 defined by $\lambda[x:y:z]\mapsto[\lambda^{-1}x: y:\lambda^{-(n+3-e)}z]$, as $\lambda \rightarrow 0$.
\end{enumerate}  
\end{remark}

Let $n\geq 1$ be an odd integer.
We consider the weighted projective plane $W \coloneqq \PP(1,2,n+2)_{x,y,z}$. Let $D$ be an effective divisor on $W$ defined by a quasi-homogeneous polynomial
\begin{equation}\label{equation-D-odd}
    z^2y + azx^{n+4} + a_0x^{2n+6} + a_1x^{2n+4}y + \cdots + a_{n+3}y^{n+3} = 0
\end{equation}
of degree $2n+6$, where $a, a_0,\ldots , a_{n+3}$ are constants. When $a_0,\ldots , a_{n+3}$ are not all zero, let $t=\max\{i|a_i\neq 0\}$. 
When $a=a_0=0$ and $a_1,\ldots , a_{n+3}$ are not all zero, we denote $m_0$ such that $a_1,\ldots , a_{m_0}$ are zero and $a_{m_0+1}$ is not zero. That is to say
$m_0+1=\min\{i|a_i\neq 0\}$.
Then the polynomial for $D$ is \begin{equation*}
    z^2y + a_{m_0+1}x^{2n+6-2(m_0+1)}y^{m_0+1} + \cdots + a_{t}x^{2n+6-2t}y^{t} = 0.
\end{equation*}

\medskip

\begin{proposition}\label{t<=(n+3)/2}
Assume $a$ is any integer.
\begin{enumerate}
    \item If $a_0=\cdots=a_{n+3}=0$, then $(W,wD)$ is not K-semistable for $\frac{n+2}{2n+6}\leq w\leq \frac{n+5}{2n+6}$.
    \item If $0\leq t\leq\frac{n+3}{2}$, then $(W,wD)$ is not K-semistable for $\frac{n+2}{2n+6}\leq w< \frac{n+5}{2n+6}$.
\end{enumerate}
\end{proposition}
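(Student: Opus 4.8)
The plan is to reduce K-instability to GIT-instability via Theorem~\ref{thm:K-imply-GIT}: it suffices to show that in both cases the associated point $p=(a,a_0,\dots,a_{n+3})\in\bA^{n+5}$ is \emph{not} $w$-GIT semistable for the stated $w$. The feature common to (1) and (2) is that $a_j=0$ for every $j>\tfrac{n+3}{2}$ --- in case (1) all $a_j$ vanish, and in case (2) the top index $t$ with $a_t\neq0$ satisfies $t\le\tfrac{n+3}{2}$. Since $\tfrac{2n+1}{4n+6}<\tfrac{n+2}{2n+6}=w_0$ (a one-line cross-multiplication), for every $w$ in the stated range the vector $u$ of Lemma~\ref{GITstable,condition,odd} lies in the open lower half-plane and, after the positive rescaling used in the proof of Theorem~\ref{describe,GITmoduli,odd}, equals $\langle h(w),-2\rangle$ with $h(w)=\tfrac{(n+2)(n-1)-2n(n+2)w}{2n+1-(4n+6)w}$.

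The heart of the argument is $u\notin\Gamma$. By hypothesis every generator of $\Gamma$ is $v=\langle-1,-1\rangle$ (present only when $a\neq0$) or some $v_j=\langle j-1,-2\rangle$ with $j\le\tfrac{n+3}{2}$, and each such $v_j$ is a nonnegative combination of $v$ and $v_{(n+3)/2}=\langle\tfrac{n+1}{2},-2\rangle$; hence $\Gamma\subseteq C:=\mathrm{cone}\bigl(\langle-1,-1\rangle,\langle\tfrac{n+1}{2},-2\rangle\bigr)$. A direct check with a general nonnegative combination shows that any vector of $C$ whose second coordinate is $-2$ has first coordinate $\le\tfrac{n+1}{2}$. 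On the other hand $h$ is strictly decreasing on $[w_0,\tfrac{n+5}{2n+6}]$ (it is a M\"obius function of $w$ with no pole there), with $h(w_0)=n+2$ and $h\bigl(\tfrac{n+5}{2n+6}\bigr)=\tfrac{n+1}{2}$, so $h(w)>\tfrac{n+1}{2}$ for all $w\in[w_0,\tfrac{n+5}{2n+6})$; therefore $u=\langle h(w),-2\rangle\notin C\supseteq\Gamma$. As all generators of $\Gamma$ lie in the lower half-plane, $\Gamma\neq\bR^2$, so $\Gamma^\vee\neq\{0\}$ and Lemma~\ref{GITstable,condition,odd} gives that $p$ is not $w$-GIT semistable; then $(W,wD)$ is not K-semistable by Theorem~\ref{thm:K-imply-GIT}. (Equivalently this is visible in Theorem~\ref{describe,GITmoduli,odd}: every $w$-GIT polystable point with $w\in[w_0,\tfrac{n+5}{2n+6})$ has some $a_j\neq0$ with $j>\tfrac{n+3}{2}$, and $\bG_m^2$-orbit closures create no new nonzero coordinates.)

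Two boundary situations call for a separate word. If $a=a_0=\cdots=a_{n+3}=0$, then $D=2D_z+D_y$ with $D_z=(z=0)$, $D_y=(y=0)$, the point $p$ is the origin, and $\Gamma=\{0\}$; here one argues straight from Definition~\ref{defineGIT stable} --- for every one-parameter subgroup the limit exists while $\beta^w_{(d,b)}=u\cdot\langle d,b\rangle<0$ for suitable $(d,b)$ because $u\neq0$ --- or simply notes that $(W,wD)$ fails to be klt for $w\ge\tfrac12$. The right endpoint $w=\tfrac{n+5}{2n+6}$ occurs only in case (1): there $-(K_W+wD)\sim_{\bQ}\mathcal{O}(0)$ is not ample, so $(W,wD)$ is not a log Fano pair, and since the coefficient $\tfrac{n+5}{n+3}>1$ along $D_z$ it is not even log canonical, hence not K-semistable in the log Calabi--Yau sense.

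I do not expect a serious obstacle: the content is essentially packaged already in Theorems~\ref{thm:K-imply-GIT} and~\ref{describe,GITmoduli,odd}, and what remains is the elementary inequality $h(w)>\tfrac{n+1}{2}$ on $[w_0,\tfrac{n+5}{2n+6})$ together with the cone bound $\Gamma\subseteq C$ and the careful handling of the non-reduced divisor $(z^2y=0)$ and of the half-open versus closed coefficient ranges. If one wished to avoid the GIT input, the same normal-vector computation produces an explicit monomial valuation $v_{(d,b)}$ with $\beta_{(W,wD)}(v_{(d,b)})<0$, but that would merely reprove Theorem~\ref{thm:K-imply-GIT} here.
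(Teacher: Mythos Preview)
Your approach via Theorem~\ref{thm:K-imply-GIT} and the cone criterion of Lemma~\ref{GITstable,condition,odd} is sound and genuinely different from the paper's. The paper instead defers to Lemmas~\ref{alla,a_izero,notK-s.s}, \ref{all,a_izero,notK-s.s} and~\ref{0<=t<=(n+3)/2notk-s.s.}, each of which exhibits an explicit monomial valuation (respectively $H_z$, $v_{(1,n+4)}$, $v_{(1,n+3-t)}$) with negative $\beta$-invariant, case by case. Your route is more uniform---a single geometric argument that $u\notin\Gamma$---but it leans on the Paul--Tian input already packaged in Theorem~\ref{thm:K-imply-GIT}; the paper's route is self-contained and hands you the destabilizing valuation directly. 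As you yourself note, the two are dual: unpacking ``$u\notin\Gamma$'' amounts to choosing $(d,b)\in\Gamma^\vee$ with $u\cdot\langle d,b\rangle<0$, which is exactly a monomial valuation with $\beta<0$.

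There is one genuine small gap, at the right endpoint $w=\frac{n+5}{2n+6}$ in case~(1). Your claim that the coefficient of $D_z$ in $wD$ is $\frac{n+5}{n+3}>1$ presumes $D=2D_z+D_y$, i.e.\ $a=0$. When $a\neq0$ the equation factors as $z(zy+ax^{n+4})=0$, so $D_z$ has coefficient~$1$ in $D$ and hence coefficient $w<1$ in $wD$; the non-lc argument as written does not apply. The repair is immediate: in the chart $y=1$ one has $z^2+azx^{n+4}=(z+\tfrac{a}{2}x^{n+4})^2-\tfrac{a^2}{4}x^{2n+8}$, an $A_{2n+7}$-singularity at $\msp_y$, giving $\lct(W,D)=\frac{n+5}{2n+8}<\frac{n+5}{2n+6}$, so the log Calabi--Yau pair is not log canonical. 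Alternatively, the $\beta$-computation in Lemma~\ref{all,a_izero,notK-s.s} is valid at the endpoint and gives $\beta<0$ there as well.
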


\begin{proof}
    The result is implied by Lemmas \ref{alla,a_izero,notK-s.s},  \ref{all,a_izero,notK-s.s} and  \ref{0<=t<=(n+3)/2notk-s.s.}.
\end{proof}

\begin{remark}
    The lemmas in Section \ref{NonCYcase} do not rely on the other sections of this paper.
\end{remark}

\begin{lemma}\label{notK-s.s.}
The log Fano pair $(W,wD)$ is not K-semistable if $w<\frac{n+2}{2n+6}$.  
\end{lemma}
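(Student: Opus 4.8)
The plan is to destabilize $(W,wD)$ by a single explicit divisor, namely the coordinate line $E:=H_x=(x=0)\subset W$, which is a prime divisor with $E\sim_{\mathbb Q}\mathcal O_W(1)$. Heuristically, for small $w$ the polarization $-(K_W+wD)$ is too positive relative to $E$, so $\beta_{(W,wD)}(E)<0$; the vanishing threshold turns out to be exactly $w_0=\frac{n+2}{2n+6}$, matching the first GIT wall in Theorem \ref{describe,GITmoduli,odd}(1) (indeed $E$ plays the role, on the K-side, of the one-parameter subgroup that witnesses emptiness of $M^{GIT}_w$ there).

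First I would compute the log discrepancy. The monomial $z^2y$ occurs with unit coefficient in the defining equation \eqref{equation of D} of $D$, so $f$ is not divisible by $x$; concretely $f|_{x=0}=y(z^2+a_{n+3}y^{n+2})$, which cuts out only a finite subscheme of $H_x$. Hence $D$ never contains $E$ as a component, $\operatorname{Coeff}_E(D)=0$, and $A_{(W,wD)}(E)=1$ for \emph{every} choice of $a,a_0,\dots,a_{n+3}$. Next I would compute the $S$-invariant: since $-K_W\sim\mathcal O_W(n+5)$ and $D\sim\mathcal O_W(2n+6)$, we have $-(K_W+wD)\sim_{\mathbb Q}\mathcal O_W(n+5-w(2n+6))$, which is ample for $w<\frac{n+5}{2n+6}$; applying Lemma \ref{Sinv,k} with $k=1$ gives $S_{(W,wD)}(E)=\frac{n+5-w(2n+6)}{3}$. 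Therefore
\[
\beta_{(W,wD)}(E)=1-\frac{n+5-w(2n+6)}{3}=\frac{w(2n+6)-(n+2)}{3},
\]
which is negative precisely when $w<\frac{n+2}{2n+6}$. By the Fujita--Li valuative criterion, $(W,wD)$ is not K-semistable.

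There is no serious obstacle here: the whole argument is a one-divisor $\beta$-computation, and the only point that requires a moment's care is the uniform statement that $D$ contains $H_x$ for no value of the coefficients, which is exactly what makes $A_{(W,wD)}(E)$ (and hence the destabilization) independent of $(a,a_i)$. The even case (Lemma in Section \ref{section,even}) would be handled identically using $E=(u=0)\subset W'$.
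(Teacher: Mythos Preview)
Your proof is correct and follows essentially the same approach as the paper: both destabilize with the coordinate divisor $H_x=(x=0)$, compute $S_{(W,wD)}(H_x)=\frac{n+5-w(2n+6)}{3}$ via Lemma \ref{Sinv,k}, note $A_{(W,wD)}(H_x)=1$, and conclude $\beta<0$ for $w<\frac{n+2}{2n+6}$. Your write-up is slightly more careful in justifying that $H_x\not\subset D$ (the paper simply asserts $A=1$); one small caveat on your closing remark: in the even case the destabilizing divisor $H=(u=0)$ already appears in the boundary with coefficient $\tfrac12$, so there $A_{(W',\frac12 H+wD)}(H)=\tfrac12$ rather than $1$, but the computation still goes through.
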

\begin{proof}
Let $H_x$ be the prime divisior given by $x=0$. 
Since $-K_W-wD=(n+5-w(2n+6))\mathcal{O}(1)$, by Lemma \ref{Sinv,k}, we have $S_{(W,wD)}(H_x)=\frac{n+5}{3}-w\frac{2n+6}{3}$, where $D$ is given by $f=0$. Since $A_{(W,wD)}(H_x)=1$, we have $\beta_{(W,wD)}(H_x)=A_{(W,wD)}(H_x)-S_{(W,wD)}(H_x)\geq 0$ iff $w\geq \frac{n+2}{2n+6}$.
\end{proof}

We recall that $\xi_n:= \frac{n^3 + 11n^2 + 31n + 23}{2n^3 + 18n^2 + 50n + 42}$. Assume $a_{n+3}$ is not zero, i.e., $t=n+3$. By Lemma \ref{section 5:a is nonzero Lemma}, if $a$ is not zero, then the log canonical threshold of the log pair $(W, D)$ is greater than or equal to $\frac{n+5}{2n+6}$ except $m = n+4$ case, where $m$ is the number defined in (\ref{section5:higher multiplicity}). The curve $D$ has a type $A_{n+3}$-singularity iff $m = n+4$.

\begin{proposition}\label{ksemistable,poly-allcases}
The K-semistability (K-polystability) of $(W,wD)$ depends on whether the coefficients in the polynomial \eqref{equation-D-odd} are zero or not. We list all the cases as follows (Table \ref{table: oddcoefficicentlist}):
 \begin{enumerate}
 \item Assume $a_{n+3}\neq 0$ and $a\neq 0$. If the curve $D$ has a type $A_{n+3}$-singularity, then the log pair $(W, wD)$ is K-semistable if and only if $w\in[\frac{n+2}{2n+6}, \xi_n]$; is K-polystable if and only if $w\in(\frac{n+2}{2n+6}, \xi_n)$.
 \item  Assume $a_t\neq 0$, $a_{t+1}=\cdots=a_{n+3}=0$ and $\{a,a_0,\cdots,a_{m_0+1}\}$ are not all zero, where $\frac{n+3}{2}+1\leq t\leq n+3$ and $0\leq m_0\leq \frac{n+1}{2}$. When $t=n+3$ and $a\neq 0$, we assume that the curve $D$ does not has a type $A_{n+3}$-singularity. Then the log pair $(W, wD)$ is K-semistable if and only if $w\in[w_{n+3-t},\frac{n+5}{2n+6}]$; is K-polystable if and only if $w\in(w_{n+3-t},\frac{n+5}{2n+6})$. 
 \item Assume $a=a_0=\cdots=a_{m_0}=0$, $a_{m_0+1}\neq 0$, $a_t\neq 0$ and $a_{t+1}=\cdots=a_{n+3}=0$, where $\frac{n+3}{2}+1\leq t\leq n+3$ and $\frac{n+3}{2}\leq m_0\leq t-1$. Then the log pair $(W, wD)$ is K-semistable if and only if $w\in[w_{n+3-t},w_{n+3-(m_0+1)}]$; is K-polystable if and only if $w\in(w_{n+3-t},w_{n+3-(m_0+1)})$.
 \end{enumerate}
\end{proposition}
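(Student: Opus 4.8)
The plan is to handle all three cases by a common four-step scheme: establish K-semistability at the two endpoints of the asserted interval, propagate it to the whole closed interval by interpolation, upgrade it to K-stability on the open interval, and finally rule out K-semistability outside.

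\textbf{Step 1: K-semistability at the endpoints.} In every case the lower endpoint is $w_{n+3-t}$ (in case (1) this reads $w_0$, since $D\cong D_{\ss}$ has $t=n+3$). By Remark \ref{degenrates to K-poly}(1) the pair $(W,D)$ specially degenerates to $(W,D_t)$, using $a_t\neq 0$, $a_{t+1}=\cdots=a_{n+3}=0$ and the fact that $\{a,a_0,\dots,a_{t-1}\}$ is not identically zero, which holds under the hypotheses of all three cases (in case (2) because $m_0+1\le\frac{n+3}{2}\le t-1$). Since $(W,w_{n+3-t}D_t)$ is K-polystable by Proposition \ref{evenpoly+polyanyn}, openness of K-semistability under special degeneration (Theorem \ref{openness}) gives that $(W,w_{n+3-t}D)$ is K-semistable. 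For the upper endpoint I would argue case by case. In case (3) the upper endpoint $w_{n+3-(m_0+1)}$ is handled symmetrically via Remark \ref{degenrates to K-poly}(2): $(W,D)$ degenerates to $(W,D_{m_0+1})$, which is K-polystable at that coefficient by Proposition \ref{evenpoly+polyanyn}, so Theorem \ref{openness} applies again (if $m_0+1=t$ then $D$ is projectively equivalent to $D_t$ and the statement reduces to Proposition \ref{evenpoly+polyanyn}). In case (2) the upper endpoint is the log Calabi--Yau coefficient $\frac{n+5}{2n+6}=w_{(n+3)/2}$, where $-K_W-\frac{n+5}{2n+6}D\equiv 0$; here $(W,\frac{n+5}{2n+6}D)$ is log canonical by Lemmas \ref{section 5:a is nonzero Lemma}, \ref{section5:a zero a_0 nonzero}, \ref{section5:a a_0 zeros} — the hypotheses $\frac{n+3}{2}\le t\le n+3$, $m_0\le\frac{n+1}{2}$, and the exclusion of an $A_{n+3}$-singularity when $t=n+3$, $a\ne 0$, are exactly what those lemmas require — so it is K-semistable as a log Calabi--Yau pair. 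In case (1) the upper endpoint is $\xi_n$ with $D\cong D_{\ss}$; K-semistability of $(W,\xi_n D_{\ss})$ is the genuinely hard input, and I would defer it to the Abban--Zhuang computation of Section \ref{NonCYcase}.

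\textbf{Step 2: interpolation and K-stability on the open interval.} With K-semistability at both endpoints, Lemma \ref{theta,w,interpolation,polystable} immediately gives K-semistability of $(W,wD)$ for every $w$ in the closed interval. To upgrade to K-stability on the open interval I would apply Lemma \ref{theta,w,interpolation,polystable}(2) with $w_1$ the lower endpoint, $w_2$ the upper endpoint, and $(X_1,D_1)=(W,D_t)$: by Proposition \ref{evenpoly+polyanyn}, $(W,w_1D)$ degenerates to the K-polystable pair $(W,w_1D_t)$, which carries a $\mathbb{G}_m$-action of complexity one and is not toric, hence $\rank\Aut(W,D_t)=1$; moreover $(W,w_2D_t)$ is not K-semistable because $w_2\neq w_{n+3-t}$. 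Thus $(W,wD)$ is K-stable, in particular K-polystable, for $w$ in the open interval. For the remaining (strict) K-polystability claim I would note that at the lower endpoint $(W,wD)$ specially degenerates to $(W,D_t)$ and, in case (3), at the upper endpoint to $(W,D_{m_0+1})$; since the hypotheses on $m_0$ force $D$ to be non--projectively-equivalent to $D_t$ (resp.\ $D_{m_0+1}$), uniqueness of the K-polystable degeneration prevents $(W,wD)$ from being K-polystable at the endpoints. In case (2) the upper endpoint lies outside the log Fano range, so nothing further needs checking there.

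\textbf{Step 3: instability outside the interval.} For cases (2) and (3) the ``only if'' direction for K-semistability should follow at once from the VGIT picture: by the Paul--Tian type statement (Theorem \ref{thm:K-imply-GIT}), K-semistability of $(W,wD)$ forces $w$-GIT semistability of $(a,a_0,\dots,a_{n+3})$, and the explicit description in Theorem \ref{describe,GITmoduli,odd} shows that a point with the coefficient pattern of case (2) (resp.\ (3)) is $w$-GIT semistable exactly for $w\in[w_{n+3-t},\frac{n+5}{2n+6})$ (resp.\ $w\in[w_{n+3-t},w_{n+3-(m_0+1)}]$). In case (1) the same input only yields the lower bound $w\ge w_0$, since $D_{\ss}$ remains $w$-GIT semistable on the larger interval $[w_0,\frac{n+5}{2n+6})$; to get the sharp bound $w\le\xi_n$ I would exhibit an explicit destabilizing divisorial valuation $E$ over $W$ for $w>\xi_n$ — the one whose weighted blow-up extracts $W_0=\PP(1,n+2,\frac{(n+3)^2}{2})$ — and compute $\beta_{(W,wD_{\ss})}(E)$, which vanishes precisely at $w=\xi_n$ and is negative beyond it (this computation is also what pins down the value of $\xi_n$). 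For $w\ge\frac{n+5}{2n+6}$ the pair $(W,wD_{\ss})$ is no longer log Fano, and in fact already fails to be klt once $w>\lct(W,D_{\ss})=\frac{n+6}{2n+8}$ by Lemma \ref{section 5:a is nonzero Lemma}.

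\textbf{Main obstacle.} The hard part is the upper endpoint $w=\xi_n$ of case (1): proving $(W,\xi_n D_{\ss})$ K-semistable requires the full Abban--Zhuang estimate of Section \ref{NonCYcase}, and matching it with the destabilizing valuation for $w>\xi_n$ requires the explicit $\beta$-computation that produces $\xi_n$. The other endpoints, and both interpolation steps, are comparatively routine once the log canonical thresholds of Section \ref{section5:section number} and the complexity-one analysis of Proposition \ref{evenpoly+polyanyn} are in hand.
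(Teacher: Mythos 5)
Your proposal is correct and follows essentially the same route as the paper: K-semistability at the lower endpoint via the $\bG_m$-degeneration to $D_t$ (Remark \ref{degenrates to K-poly}, Proposition \ref{evenpoly+polyanyn}, Theorem \ref{openness}), at the log Calabi--Yau endpoint via the lct computations (Lemmas \ref{section 5:a is nonzero Lemma}, \ref{section5:a zero a_0 nonzero}, \ref{section5:a a_0 zeros}), at $\xi_n$ via the Abban--Zhuang input (Proposition \ref{section5:main lemma}), and then Lemma \ref{theta,w,interpolation,polystable} -- part (2) with the complexity-one degeneration $(W,D_t)$, $\rank\Aut=1$, unstable at the upper coefficient -- to get K-(poly)stability on the open interval; this is exactly the paper's argument. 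The one place you genuinely diverge is the ``only if'' for K-semistability in cases (2) and (3): you make it explicit via the Paul--Tian criterion (Theorem \ref{thm:K-imply-GIT}) together with the VGIT description (Theorem \ref{describe,GITmoduli,odd}/Lemma \ref{GITstable,condition,odd}), whereas the paper leaves this direction essentially implicit in its terse ``iff''; your version is a legitimate and arguably cleaner justification, and your case-(1) destabilization for $w>\xi_n$ is precisely the weighted blow-up divisor of Lemma \ref{specialpoint}, i.e.\ the ``moreover'' part of Proposition \ref{section5:main lemma}. One point neither you nor the paper's proof of this proposition settles is why $(W,\xi_n D_{\ss})$ fails to be K-\emph{poly}stable in case (1) (your uniqueness-of-polystable-degeneration remark covers the lower endpoints and the case-(3) upper endpoint, but not $\xi_n$); in the paper this is only established later, in Section \ref{moduliafterxi_n}, where the divisor of Lemma \ref{specialpoint} with $A=S$ at $w=\xi_n$ induces the non-isomorphic degeneration to $(W_0,D_0)$ (Proposition \ref{prop:Dss-degenerate}, Lemma \ref{polystable pair at xi}), so if you want a self-contained proof of the stated equivalence you should add that step.
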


\begin{center}
    \begin{longtable}{|l|l|l|l|l|l|l|l|}
 			\caption{List of cases of coefficients}\label{table: oddcoefficicentlist}\\
		\hline
		Case & $t$ & $a$ & $a_0$ & $m_0$ & $w$ & Proof  \\
				\hline
(a.) & $n+3$  & $\neq 0$ &  & &  $(\frac{n+2}{2n+6}, \xi_n)$ & Proposition \ref{ksemistable,poly-allcases}(1) \\
\hline
(b.) & $n+3$ & $\neq 0$ & &  &  $(\frac{n+2}{2n+6}, \frac{n+5}{2n+6})$ & Proposition \ref{ksemistable,poly-allcases}(2)\\
		\hline
(c.) & $n+3$ &  $0$ & $\neq 0$ &  & $(\frac{n+2}{2n+6}, \frac{n+5}{2n+6})$ & Proposition \ref{ksemistable,poly-allcases}(2) \\
\hline
(d.) & $n+3$ & $0$ & $0$ & $[0,\frac{n+1}{2}]$  &$(\frac{n+2}{2n+6}, \frac{n+5}{2n+6})$ & Proposition \ref{ksemistable,poly-allcases}(2)\\
		\hline	
(e.) & $n+3$ & $0$ & $0$ & $(\frac{n+1}{2},n+2]$  &$(\frac{n+2}{2n+6}, w_{n+3-(m_0+1)})$ & Proposition \ref{ksemistable,poly-allcases}(3)\\
		\hline	
(f.) & $(\frac{n+3}{2}, n+2]$ & $\neq 0$ &  &   &$(w_{n+3-t}, \frac{n+5}{2n+6})$ & Proposition \ref{ksemistable,poly-allcases}(2)\\
		\hline	
(g.) & $(\frac{n+3}{2}, n+2]$ & $0$ & $\neq 0$  &  &$(w_{n+3-t}, \frac{n+5}{2n+6})$ & Proposition \ref{ksemistable,poly-allcases}(2)\\
		\hline	
(h.) & $(\frac{n+3}{2}, n+2]$ & $0$ & $0$  & $[0,\frac{n+1}{2}]$  &$(w_{n+3-t}, \frac{n+5}{2n+6})$ & Proposition \ref{ksemistable,poly-allcases}(2)\\
		\hline
(i.) & $(\frac{n+3}{2}, n+2]$ & $0$ & $0$  & $(\frac{n+1}{2},t-1]$  &$(w_{n+3-t}, w_{n+3-(m_0+1)})$ & Proposition \ref{ksemistable,poly-allcases}(3)\\
		\hline	
(j.) & $[\frac{n+3}{2}, n+3]$ & $0$ & $0$  & $t-1$  &$w_{n+3-t}$ & Proposition \ref{evenpoly+polyanyn}\\
		\hline	
\end{longtable}
\end{center}

\begin{proof}
    Under the $\bG_m$ action $\lambda[x:y:z]=[x:\lambda^{-1}y: \lambda^{\frac{1-(n+3)}{2}}z],$ the curve $D$ degenerates to $D_0$ as $\lambda \rightarrow 0,$ where $D_0$ is defined by $z^2y + y^{n+3} = 0$.
    By Proposition \ref{evenpoly+polyanyn}, the pair $(W,\frac{n+2}{2n+6}D_0)$ is K-polystable. Then $(W,\frac{n+2}{2n+6}D)$ is K-semistable by Theorem \ref{openness}.

We first show (1).
By Proposition \ref{section5:main lemma} we see that the log pair $(W, \xi_n D)$ is K-semistable.  Therefore, by the first claim of Lemma 
\ref{theta,w,interpolation,polystable}, the pair $(W,wD)$ is K-semistable iff $w\in[\frac{n+2}{2n+6}, \xi_n]$. We have rank $\mathrm{Aut}(W,D_0)=1$ since the pair $(W,D_0)$ has complexity one. By Proposition \ref{evenpoly+polyanyn}, the pair $(W,\xi_nD_0)$ is not K-semistable. Then we have $(W,wD)$ is K-polystable iff $w\in(\frac{n+2}{2n+6}, \xi_n)$ by (2) in Lemma \ref{theta,w,interpolation,polystable}.

We now prove (2). Assume $t=n+3$, $a\neq 0$, and the curve $D$ does not has a type $A_{n+3}$-singularity. This corresponding to $t=n+3$ and $m\leq n+3$ in Lemma \ref{section 5:a is nonzero Lemma}.
So $(W,\frac{n+5}{2n+6}D)$ is log canonical. Assume $t=n+3$, $a=0$ and $a_0\neq 0$. This corresponding to $t=n+3$ in Lemma \ref{section5:a zero a_0 nonzero}. So $(W,\frac{n+5}{2n+6}D)$ is log canonical. Assume $\frac{n+3}{2}< t\leq n+2$, $\{a,a_0\}$ are not all zero, by Lemma \ref{section 5:a is nonzero Lemma} and Lemma \ref{section5:a zero a_0 nonzero}, the pair $(W,\frac{n+5}{2n+6}D)$ is log canonical. Assume $\frac{n+3}{2}< t\leq n+3$ and 
$a=a_0=0$ and $0\leq m_0\leq\frac{n+1}{2}$, by Lemma \ref{section5:a a_0 zeros}, the log pair $\left(W, \frac{n+5}{2n+6}D\right)$ is log canonical. Therefore, for each curve $D$ satisfying the condition in (2), we have $(W,\frac{n+5}{2n+6}D)$ is K-semistable.

On the other hand,
under the $\bG_m$ action $\lambda[x:y:z]=[x:\lambda^{-1}y:\lambda^{\frac{1-t}{2}}z],$ the curve $D$ degenerates to $D_t:z^2y+y^{2n+6-2t}y^t=0$ as $\lambda \rightarrow 0.$
By Proposition \ref{evenpoly+polyanyn},
the pair $(W, w_{n+3-t}D_t)$ is K-polystable. Then $(W,w_{n+3-t}D)$ is K-semistable by Theorem \ref{openness}. Therefore, by the first claim of Lemma \ref{theta,w,interpolation,polystable}, the pair $(W,wD)$ is K-semistable iff $w\in[w_{n+3-t},\frac{n+5}{2n+6}]$. We have rank $\mathrm{Aut}(W,D_t)=1$ since the pair $(W,D_t)$ has complexity one. By Proposition \ref{evenpoly+polyanyn}, the pair $(W,\frac{n+5}{2n+6}D_t)$ is not K-semistable. Then we have $(W,wD)$ is K-polystable iff $w\in(w_{n+3-t},\frac{n+5}{2n+6})$ by (2) in Lemma \ref{theta,w,interpolation,polystable}.

We now prove (3). Under the $\bG_m$ action $\lambda[x:y:z]\mapsto[\lambda^{-1}x: y:\lambda^{-(n+3-(m_0+1))}z]$, the curve $D$ degenerates to $D_{m_0+1}$ given by $z^2y+a_{m_0+1}x^{2n+6-2(m_0+1)}y^{m_0+1}=0$. On the other hand, under the $\bG_m$ action $\lambda[x:y:z]=[x:\lambda^{-1}y:\lambda^{\frac{1-t}{2}}z],$ the curve $D$ degenerates to $D_t:z^2y+y^{2n+6-2t}y^t=0$ as $\lambda \rightarrow 0.$ Proposition \ref{evenpoly+polyanyn} shows that
the pairs $(W, w_{n+3-t}D_t)$ and $(W, w_{n+3-(m_0+1)}D_{m_0+1})$ are K-polystable, so are K-semistable by Theorem \ref{openness}. Therefore, by the first claim of Lemma \ref{theta,w,interpolation,polystable}, the pair $(W,wD)$ is K-semistable iff $w\in[w_{n+3-t},w_{n+3-(m_0+1)}]$. We have rank $\mathrm{Aut}(W,D_t)=1$ since the pair $(W,D_t)$ has complexity one. By Proposition \ref{evenpoly+polyanyn}, the pair $(W,w_{n+3-(m_0+1)}D_t)$ is not K-semistable. Then we have $(W,wD)$ is K-polystable iff $w\in(w_{n+3-t},w_{n+3-(m_0+1)})$ by (2) in Lemma \ref{theta,w,interpolation,polystable}.
\end{proof}

\begin{proof}[\bf Proof of Theorem \ref{isomorphism,GITandK}]
By Proposition \ref{ksemistable,poly-allcases}, we get (2)-(5) as follows.
\begin{enumerate}
\item If $w\in(0,\frac{n+2}{2n+6})$, the K-moduli space $M_w$ is empty. This is implied by Lemma \ref{notK-s.s.}.
     \item If $w=w_0=\frac{n+2}{2n+6}$, then the K-moduli space $M_w$ contains the K-polystable pair $(W,wD)$ such that $a=a_0=\cdots=a_{n+2}=0$ and $a_{n+3}\neq 0$, i.e., $D:z^2y+a_{n+3}y^{n+3}=0$.
     \item If $w=w_{n+3-e}$, where $\frac{n+3}{2}+1\leq e\leq n+2$, then $M_w$ contains all pairs $(W,wD)$ from one of the following two cases:
     \begin{itemize}
         \item $\{a,a_0,\cdots,a_{e-1}\}$ are not all zero, $a_e$ is arbitrary, and $\{a_{e+1},\cdots,a_{n+3}\}$ are not all zero;
         \item $a=a_0=\cdots=a_{e-1}=0$, $a_e\neq0$ and $a_{e+1}=\cdots=a_{n+3}=0$ (K-polystable).
     \end{itemize}
     \item If $w\in(w_{n+3-e},w_{n+3-(e-1)})$, where $\frac{n+3}{2}+2\leq e\leq n+3$, then $M_w$ contains all pairs $(W,wD)$ such that $\{a,a_0,\cdots,a_{e-1}\}$ are not all zero, and $\{a_e,a_{e+1},\cdots,a_{n+3}\}$ are not all zero.
     \item If $c\in(w_{\frac{n+1}{2}},\xi_n)$,
     then $M_w$ contains all pairs $(W,wD)$ such that $\{a,a_0,\cdots,a_{\frac{n+3}{2}}\}$ are not all zero, and $\{a_{\frac{n+3}{2}+1},\cdots,a_{n+3}\}$ are not all zero.
     \end{enumerate}
     We replace $n+3-e$ by $i$. Then we know $\Phi_{w_0}$ maps the GIT moduli space $M^{GIT}_{w_0}$ described in Theorem \ref{describe,GITmoduli,odd}(2) onto the open subset of $M_{w_0}$ described in (2). Note that $i=n+3-e$. For $1\leq i\leq \frac{n+1}{2}$, the morphism $\Phi_{w_i}$ maps the GIT moduli space $M^{GIT}_{w_i}$ described in Theorem \ref{describe,GITmoduli,odd}(3) onto the open subset of $M_{w_i}$ described in (3). If $w\in(w_i,w_{i+1})$, where $0\leq i\leq \frac{n-1}{2}$, the morphism $\Phi_{w}$ maps the GIT moduli space $M^{GIT}_{w}$ described in Theorem \ref{describe,GITmoduli,odd}(4) onto the open subset of $M_{w}$ described in (4). When $w\in(w_{\frac{n+1}{2}},\xi_n)$, the morphism $\Phi_{w}$ maps the GIT moduli space $M^{GIT}_{w}$ described in Theorem \ref{describe,GITmoduli,odd}(4) onto the open subset of $M_{w}$ described in (5).

     We have $\Phi_w$ is a birational morphism since it is also injective. Since GIT moduli space is compact, the image of $\Phi_w$ is also compact, so equals the whole K-moduli space. Thus $\Phi_w$ gives the isomorphism.
\end{proof}

Then we descibe the K-moduli spaces $M_w$ in Theorem \ref{intro,K-moduliparameterize,i}.
Example \ref{examplen=3} gives the description when $n=3$. 

\begin{proof}[{\bf Proof of Theorem \ref{intro,K-moduliparameterize,i}(1)-(5)}]
Theorem \ref{intro,K-moduliparameterize,i}(1)-(5) are consequences of Theorem \ref{isomorphism,GITandK}.
\end{proof}

Now we can describe the K-moduli space $\mathcal{F}_n$ 
parameterizing these K-stable quasi-smooth hypersurfaces $H_{2(n+3)}$ in $\mathbb{P}(1,2,n+2,n+3)_{t,x,y,z}$ and their K-polystable limits. By the double cover construction in Lemma \ref{reduce1/2} and its generalization to families, we have $\mathcal{F}_n$ is isomorphic to $M_{\frac{1}{2}}$. Since $w_{\frac{n+3}{2}-1}<\frac{1}{2}< \xi_n$, then by Theorem \ref{intro,K-moduliparameterize,i} (5), we get the following corollary.

\begin{corollary}[=Theorem \ref{thm:main}(1)]\label{moduli,hypersurface}
Let $n$ be a positive odd integer. 
Then the K-polystable members in the K-moduli space $\mathcal{F}_n$ are precisely all the hypersurfaces $H_{2(n+3)}$ in $\mathbb{P}(1,2,n+2,n+3)_{t,x,y,z}$ given by 
\begin{equation*}
  \tilde{f}= t^2 + z^2y + azx^{n+4} + a_0x^{2n+6} + a_1x^{2n+4}y + \cdots + a_{n+3}y^{n+3}=0
\end{equation*}
    such that $\{a,a_0,a_1,\cdots,a_{\frac{n+3}{2}}\}$ are not all zero and $\{a_{\frac{n+3}{2}+1},\cdots,a_{n+3}\}$ are not all zero.
\end{corollary}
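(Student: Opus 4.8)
The plan is to deduce this corollary directly from Theorem \ref{intro,K-moduliparameterize,i}(5), once the pointwise double-cover reduction of Lemma \ref{reduce1/2} has been upgraded to an isomorphism of moduli spaces.

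First I would make precise the family version of Lemma \ref{reduce1/2}. Given a flat family of hypersurfaces $H_{2(n+3)}\subset \mathbb{P}(1,2,n+2,n+3)$ over a base $S$ (with $\tilde f$ normalized as in the proof of Lemma \ref{reduce1/2}), the fiberwise projection $[x:y:z:t]\mapsto[x:y:z]$ assembles into a family of $\mu_2$-covers $\sigma\colon \mathcal{H}\to \mathcal{W}=\mathbb{P}(1,2,n+2)\times S$ branched along a relative divisor $\mathcal{D}$ of relative degree $2n+6$, with $K_{\mathcal{H}/S}=\sigma^*(K_{\mathcal{W}/S}+\tfrac12\mathcal{D})$. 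By \cite{LiuZhu}, K-semistability (resp. K-polystability) of a fiber $H_{2(n+3)}$ is equivalent to that of the corresponding $(W,\tfrac12 D)$, so the universal families match; since this cyclic-cover construction identifies the CM line bundles, the induced map on good moduli spaces is an isomorphism $\mathcal{F}_n\xrightarrow{\ \sim\ }M_{1/2}$ compatible with the identification of closed points. That no extraneous surface degenerations appear is already built into the definition of $M_w$ as the closure of the locus where the surface is $W$.

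Next I would locate $w=\tfrac12$ in the wall structure: a direct computation gives $w_{\frac{n+1}{2}}=\frac{5n+7}{10n+18}<\tfrac12$ and $\xi_n-\tfrac12=\frac{4n^2+12n+4}{2(2n^3+18n^2+50n+42)}>0$, so $\tfrac12\in(w_{\frac{n+1}{2}},\xi_n)$. Hence Theorem \ref{intro,K-moduliparameterize,i}(5) applies and describes $M_{1/2}$ as parameterizing exactly the pairs $(W,\tfrac12 D)$ with $D=(f=0)$, $f=z^2y+azx^{n+4}+a_0x^{2n+6}+\cdots+a_{n+3}y^{n+3}$, for which $\{a,a_0,\dots,a_{\frac{n+3}{2}}\}$ are not all zero and $\{a_{\frac{n+3}{2}+1},\dots,a_{n+3}\}$ are not all zero. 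Transporting this along the isomorphism of the first step — each such $(W,\tfrac12 D)$ being the quotient of $H_{2(n+3)}\colon t^2+f=0$ — yields precisely the claimed family of hypersurfaces.

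I expect the only non-formal point to be the first step: checking flatness and the relative branch degree in the family version, ruling out extra limits of the surface, and confirming that the CM line bundle of the hypersurface family pulls back correctly so that the moduli morphism is an isomorphism rather than merely a bijection. Everything after that is an immediate application of Theorem \ref{intro,K-moduliparameterize,i} and the elementary inequalities above.
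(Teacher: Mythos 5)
Your proposal follows essentially the same route as the paper: the paper also deduces the corollary by invoking the double-cover construction of Lemma \ref{reduce1/2} ``and its generalization to families'' to identify $\mathcal{F}_n\cong M_{1/2}$, and then observes $w_{\frac{n+1}{2}}<\tfrac12<\xi_n$ so that Theorem \ref{intro,K-moduliparameterize,i}(5) gives the description; your numerical checks $w_{\frac{n+1}{2}}=\frac{5n+7}{10n+18}$ and $\xi_n>\tfrac12$ are correct. The only difference is that you spell out the family/CM-line-bundle details that the paper leaves implicit, which is a reasonable elaboration rather than a different argument.
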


We descibe the wall crossing of K-moduli space $M_w$ for $w_{0}=\frac{n+2}{2n+6}\leq w< \xi_n$.

\begin{figure}
    \centering
    \caption{Wall crossing for K-moduli spaces at $w_{n,i}$}
    \label{Wall crossing at $c_{n,e}$}
\[
\xymatrix{
M_{w_{n,i}-\epsilon} \ar[dr]^{\psi^-_i} \ar@{-->}[rr] & & M_{w_{n,i}+\epsilon} \ar[dl]^{\psi^+_i} \\
& M_{w_{n,i}} & & 
} 
\]
\end{figure}
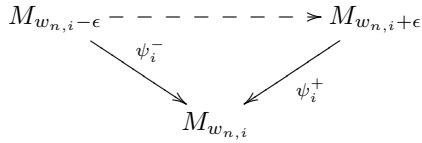

\begin{theorem}\label{wall crossings}\
    When $w\in(w_0,\xi_n)$, the surface of each pair in $M_w$ is isomorphic to $W$. The wall crossing of moduli spaces happens at $w_i$ for $1\leq i \leq \frac{n+3}{2}-1$. (Figure \ref{Wall crossing at $c_{n,e}$}).
    \begin{enumerate}
        \item We look at the first wall $w_{0}=\frac{n+2}{2n+6}$. Let $\Sigma_{\frac{n+2}{2n+6}+\epsilon}$ be the locus parameterizing the pairs $(W,(\frac{n+2}{2n+6}+\epsilon)D)$ such that $a_{n+3}\neq 0$ and  $\{a,a_0,\cdots,a_{n+2}\}$ are not all zero. The morphism $\psi^+_{0}: M_{\frac{n+2}{2n+6}+\epsilon} \rightarrow M_{\frac{n+2}{2n+6}}$ contracts $\Sigma_{\frac{n+2}{2n+6}+\epsilon}$ to a K-polystable point $(W,\frac{n+2}{2n+6}\{z^2y+y^{n+3}=0\})$.

    \item For $1\leq i \leq \frac{n+3}{2}-1$, the birational morphism $\psi^-_i: M_{w_{i}-\epsilon} \rightarrow M_{w_{i}}$ is isomorphic away from the locus $\Sigma_{w_{i}-\epsilon}$ parameterizing the pairs $(W,(w_{i}-\epsilon)D)$ such that $a=a_0=\cdots=a_{n+2-i}=0$, $a_{n+3-i}\neq0$, and $\{a_{n+4-i},\cdots,a_{n+3}\}$ are not all zero. Moreover, the morphism $\psi^-_i$ contracts $\Sigma_{w_{i}-\epsilon}$ to a point $(W,w_{i}\{z^2y+x^{2i}y^{n+3-i}=0\})$.
    \item For $1\leq i \leq \frac{n+3}{2}-1$, the birational morphism $\psi^+_i: M_{w_{i}+\epsilon} \rightarrow M_{w_{i}}$ is isomorphic away from the locus $\Sigma_{w_{i}+\epsilon}$
    parameterizing the pairs $(W,(w_{i}+\epsilon)D)$ such that $\{a,a_0,\cdots,a_{n+2-i}\}$ are not all zero, $a_{n+3-i}\neq0$ and $a_{n+4-i}=\cdots=a_{n+3}=0$. Moreover, the morphism $\psi^+_i$ contracts $\Sigma_{w_{i}+\epsilon}$ to a point $(W,w_{i}\{z^2y+x^{2i}y^{n+3-i}=0\})$.
\end{enumerate} 
\end{theorem}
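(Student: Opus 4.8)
The plan is to reduce the whole statement to the GIT picture already established. By Theorem~\ref{isomorphism,GITandK}, for every $w\in(0,\xi_n)$ the morphism $\Phi_w$ is an isomorphism $M^{GIT}_w\cong M_w$ sending each $w$-GIT-polystable point to a pair $(W,wD)$; hence every member of $M_w$ has underlying surface $W$ --- the first assertion --- and the chamber structure of $M_w$ on $(w_0,\xi_n)$ is exactly that of $M^{GIT}_w$ in Theorem~\ref{describe,GITmoduli,odd}. I would first note that the walls of this chamber structure inside $(w_0,\xi_n)$ are $w_1<\cdots<w_{\frac{n+1}{2}}$, using $w_0<w_i$ for $i\ge1$, $w_i<\xi_n$ precisely for $i\le\frac{n+3}{2}-1=\frac{n+1}{2}$, and $w_{\frac{n+3}{2}}=\frac{n+5}{2n+6}>\xi_n$. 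The morphisms $\psi^\pm_i\colon M_{w_i\pm\epsilon}\to M_{w_i}$ are then obtained either by restricting the morphisms of Theorem~\ref{wallcrossing[0,1]}(2) to the closed subschemes $M_w\subset M^{K}_{2,v,h,r,0,w}$, or directly on the GIT side since the chambers of Theorem~\ref{describe,GITmoduli,odd} nest, and the two descriptions agree under $\Phi$; in either case they are projective and birational. At the first wall $M_{w_0-\epsilon}=\varnothing$ by Lemma~\ref{notK-s.s.} (equivalently Theorem~\ref{describe,GITmoduli,odd}(1)), so there only $\psi^+_0$ is present.

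Next I would identify the exceptional locus and its image at each wall, one at a time, straight from Theorem~\ref{describe,GITmoduli,odd}; write $D_e\colon z^2y+x^{2n+6-2e}y^e=0$. Fix $1\le i\le\frac{n+1}{2}$. Matching the open-chamber description of $M^{GIT}_w$ for $w\in(w_{i-1},w_i)$ against the list at $w_i$, a $(w_i-\epsilon)$-polystable orbit stays $w_i$-polystable unless it lies in $\Sigma_{w_i-\epsilon}=\{\,a=a_0=\cdots=a_{n+2-i}=0,\ a_{n+3-i}\ne0,\ \{a_{n+4-i},\dots,a_{n+3}\}\ \text{not all }0\,\}$, and $M_{w_i-\epsilon}\setminus\Sigma_{w_i-\epsilon}$ maps bijectively onto $M_{w_i}$ minus one closed point. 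By Remark~\ref{degenrates to K-poly}(2) with $e=n+3-i$, every pair in $\Sigma_{w_i-\epsilon}$ degenerates $\bG_m$-equivariantly to $(W,D_{n+3-i})$, and by Proposition~\ref{evenpoly+polyanyn} the pair $(W,w_iD_{n+3-i})$ is the unique K-polystable member of that $S$-equivalence class at $w=w_i$ (note $w_{n+3-(n+3-i)}=w_i$). Since $\psi^-_i$ is proper, birational, and --- by the GIT comparison --- bijective over the complement of this point, Zariski's main theorem together with normality of the GIT quotients will give that $\psi^-_i$ is an isomorphism there and contracts $\Sigma_{w_i-\epsilon}$ to $[(W,w_iD_{n+3-i})]$, with $D_{n+3-i}\colon z^2y+x^{2i}y^{n+3-i}=0$. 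The treatment of $\psi^+_i$ is symmetric, using Remark~\ref{degenrates to K-poly}(1) with $e=n+3-i$ to collapse $\Sigma_{w_i+\epsilon}=\{\,\{a,a_0,\dots,a_{n+2-i}\}\ \text{not all }0,\ a_{n+3-i}\ne0,\ a_{n+4-i}=\cdots=a_{n+3}=0\,\}$ onto the same point; and at $w_0$ one applies Remark~\ref{degenrates to K-poly}(1) with $e=n+3$, so $\Sigma_{w_0+\epsilon}$ (where $a_{n+3}\ne0$ and $\{a,a_0,\dots,a_{n+2}\}$ are not all $0$) contracts to $(W,\frac{n+2}{2n+6}\{z^2y+y^{n+3}=0\})=(W,w_0D_{n+3})$, K-polystable by Proposition~\ref{evenpoly+polyanyn}.

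The step I expect to need the most care is the bijectivity claim above: that $\psi^\pm_i$ really is bijective off $\Sigma_{w_i\mp\epsilon}$ and that \emph{all} of $\Sigma_{w_i\mp\epsilon}$ collapses to a \emph{single} point. For the first half, away from $\Sigma_{w_i\mp\epsilon}$ the set of GIT-polystable orbits is unchanged as $w$ crosses $w_i$, as one sees by matching parts~(3) and~(4) of Theorem~\ref{describe,GITmoduli,odd}, so $\Phi$ gives an isomorphism there; for the second, Remark~\ref{degenrates to K-poly} exhibits one and the same closed orbit --- that of $D_{n+3-i}$ --- in the closure of \emph{every} orbit in $\Sigma_{w_i\mp\epsilon}$ at $w=w_i$, which Proposition~\ref{evenpoly+polyanyn} certifies is K-polystable, hence a single point of $M_{w_i}$. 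No K-stability input beyond Propositions~\ref{evenpoly+polyanyn} and~\ref{ksemistable,poly-allcases} should be needed; the rest is bookkeeping of GIT chambers, after which the $\psi^\pm_i$ fit into the diagram of Figure~\ref{Wall crossing at $c_{n,e}$}.
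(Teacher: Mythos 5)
Your proposal is correct and follows essentially the same route as the paper: the paper's own proof of Theorem \ref{wall crossings} simply cites the explicit chamber/wall descriptions of Theorem \ref{intro,K-moduliparameterize,i} (equivalently the GIT comparison of Theorems \ref{isomorphism,GITandK} and \ref{describe,GITmoduli,odd}) together with the $\bG_m$-degenerations of Remark \ref{degenrates to K-poly} and the polystability of $(W,w_iD_{n+3-i})$ from Proposition \ref{evenpoly+polyanyn}. You merely make explicit some details the paper leaves implicit (existence of $\psi^\pm_i$ from Theorem \ref{wallcrossing[0,1]} and the Zariski-main-theorem step for the isomorphism off the exceptional locus), which is consistent with, not different from, the paper's argument.
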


\begin{proof}
Then (1) is implied by Remark \ref{degenrates to K-poly} and Theorem \ref{intro,K-moduliparameterize,i}.
And (2) and (3) are implied by Remark \ref{degenrates to K-poly} and Theorem \ref{intro,K-moduliparameterize,i}. 
\end{proof}

\medskip
\begin{example}\label{examplen=3}
We consider the case of $n=3$. Note that $W=\mathbb{P}(1,2,5)$, $D$ is given by
\begin{equation}\label{Dequation,n=3}
f=z^2y+azx^7+a_0x^{12}+a_1x^{10}y+a_2x^8y^2+a_3x^6y^3+a_4x^4y^4+a_5x^2y^5+a_6y^6,
\end{equation}
and $\xi_3=\frac{121}{204}$. Before $\frac{1}{2}$, all the walls are $w_{0},w_{1}, w_{2}$. And wall $\xi_3$ and $w_{3}$ are behind $\frac{1}{2}$. We have $$w_{0}=\frac{5}{12}<w_{1}=\frac{3}{7}<w_{2}=\frac{11}{24}<\frac{1}{2}<\xi_3=\frac{121}{204}<\frac{2}{3}=w_{3}.$$

We have the following description of Moduli spaces by Theorem \ref{intro,K-moduliparameterize,i}. 
When $\frac{5}{12}\leq w\leq\xi_n$, each pair in $M_w$ has surface isomorphic to $W$ (Table \ref{K-modulin=3}). When $\xi_3\leq w<\frac{2}{3}$, some pairs are replaced by new pairs of which the surface are not isomorphic to $W$ and with worse singularity than $W$; see Theorem \ref{intro,K-moduliparameterize,i}(6) or Section \ref{moduliafterxi_n}.

\begin{center}
    \begin{longtable}{|l|l|l|l|}
 			\caption{Explicit description of $M_w$ for $w<\xi_3$ when $n=3$}\label{K-modulin=3}\\
		\hline
		$M_w$  & coefficients   \\
				\hline

 $w=w_{0}=\frac{5}{12}$  &  $a=a_0=\cdots=a_5=0$, $a_6\neq 0$   \\
\hline

$\frac{5}{12}<w<\frac{3}{7}$  &  $\{a,a_0,\cdots,a_5\}$ not all zero, $a_6\neq 0$  \\
\hline

$w=w_{1}=\frac{3}{7}$  &   $\{a,a_0,\cdots,a_4\}$ not all zero, $a_5$ arbitrary, $a_6\neq 0$. \\
&   or $a=a_0=\cdots=a_4=0$, $a_5\neq 0$, $a_6=0$.  \\
\hline

$\frac{3}{7}<w<\frac{11}{24}$  &   $\{a,a_0,\cdots,a_4\}$ not all zero, $\{a_5,a_6\}$ not all zero \\
\hline

$w=w_{2}=\frac{11}{24}$  &   $\{a,a_0,\cdots,a_3\}$ not all zero, $a_4$ arbitrary, $\{a_5,a_6\}$ not all zero  \\
&   or $a=a_0=\cdots=a_3=0$, $a_4\neq 0$, $a_5=a_6=0$  \\
\hline

$\frac{11}{24}<w<\xi_3$  &  $\{a,a_0,\cdots,a_3\}$ not all zero, $\{a_4,a_5,a_6\}$ not all zero  \\
\hline

\end{longtable}
\end{center}
\end{example}

\section{K-moduli space when $n$ is even}\label{section,even}

Let $S\subset \PP(1, 2, n+2, n+3)_{x,y,z,t}$ be a singular del Pezzo hypersurface of degree $2n+6$ defined by a quasi-homogeneous polynomial
\begin{equation*}
    t^2 + z^2y + azx^{n+4} + a_0x^{2n+6} + a_1x^{2n+4}y + \cdots + a_{n+3}y^{n+3} = 0,
\end{equation*}
where $n$ is an even integer and $a$, $a_0,\ldots , a_{n+3}$ are constants, and let $T\subset \PP(1, 1, l+1, n+3)_{u,y,z,v}$ be the hypersurface defined by the quasi-homogeneous polynomial
\begin{equation*}
    v + z^2y + azu^{l+2} + a_0u^{2l+3} + a_1u^{2l+2}y + \cdots + a_{2l+3}y^{2l+3}  = 0
\end{equation*}
with $2l = n$. Then there is the double covering $\phi\colon S\to T$ defined by $[x:y:z:t]\mapsto  [x^2:y:z:t^2]$ with the ramification divisors $H_r$ and $D_r$ defined by $x=0$ and $t=0$, respectively. We have
\begin{equation*}
    K_S \equiv \phi^*(K_T) + H_r + D_r \equiv \phi^*\left(K_T + \frac{1}{2}H_b + \frac{1}{2}D_b\right),
\end{equation*}
where $H_b$ and $D_b$ the branch divisors defined by $u=0$ and $v=0$ in $T$, respectively.

By the isomorphism $\psi\colon\PP(1,1,l+1)_{u, y, z}\to T$ defined by $[u:y:z]\mapsto [u:y:z:z^2y + azu^{l+2} + a_0u^{2l+3} + a_1u^{2l+2}y + \cdots + a_{2l+3}y^{2l+3}]$ we can see that the log Fano pair $(T, \frac{1}{2}(H_b + D_b))$ is isomorphic to the log Fano pair $(\PP(1,1,l+1), \frac{1}{2}(H+ D))$ where $H$ and $D$ are the divisors defined by $u=0$ and $z^2y + azu^{l+2} + a_0u^{2l+3} + a_1u^{2l+2}y + \cdots + a_{2l+3}y^{2l+3} = 0$ in $\PP(1,1,l+1)$, respectively. Then by \cite[Theorem 1.2]{LiuZhu}, we have the following result.
\begin{lemma}\label{reduce1/2,even}
    For $n=2l$, where $l$ is a positive integer. The singular del Pezzo hypersurface $S$ is K-semistable if and only if the log Fano pair $(\PP(1,1,l+1), \frac{1}{2}(H+ D))$ is K-semistable.
\end{lemma}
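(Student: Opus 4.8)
The plan is to combine the crepant double cover $\phi\colon S\to T$ constructed above with \cite[Theorem~1.2]{LiuZhu}, exactly as in the proof of Lemma~\ref{reduce1/2}; the only point that really needs care is that $\phi$ is genuinely a degree-$2$ Galois cover, and this is where the evenness of $n$ gets used.

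First I would check that $\phi\colon[x:y:z:t]\mapsto[x^2:y:z:t^2]$ is a well-defined finite morphism of degree $2$: a rescaling $\mu$ of the source coordinates induces the rescaling $\nu=\mu^2$ of the target coordinates, so $\phi$ is a morphism, and over a general point of $T$ its fiber is recovered by extracting $x=\pm\sqrt u$ and $t=\pm\sqrt v$, the residual source scaling $\mu=-1$ identifying $(x,t)$ with $(-x,(-1)^{n+3}t)=(-x,-t)$ and thereby collapsing the four choices to two. Since $n$ is even, $(-1)^{n+2}=1$, so on all of $S$ one has $[x:y:z:t]=[-x:y:z:-t]$; hence the deck involution $\sigma\colon[x:y:z:t]\mapsto[x:y:z:-t]$ coincides with $[x:y:z:t]\mapsto[-x:y:z:t]$, it generates the full deck group $\langle\sigma\rangle\cong\ZZ/2$, and $\phi$ is Galois. (For $n$ odd this fails, which is why a different double cover is used in Lemma~\ref{reduce1/2}.)

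Next, $\phi^*H_b=2H_r$ and $\phi^*D_b=2D_r$ because $u=x^2$ and $v=t^2$, so the Hurwitz formula gives the crepancy relation $K_S\equiv\phi^*\!\bigl(K_T+\tfrac12 H_b+\tfrac12 D_b\bigr)$ already recorded before the lemma. As $S$ carries only the cyclic quotient singularities inherited from the ambient weighted projective space and $-K_S=\cO_S(2)$ is ample, $S$ is a klt Fano; by crepancy and the isomorphism $\psi$, both $\bigl(T,\tfrac12(H_b+D_b)\bigr)$ and $\bigl(\PP(1,1,l+1),\tfrac12(H+D)\bigr)$ are klt log Fano pairs, so all hypotheses of \cite[Theorem~1.2]{LiuZhu} are in place.

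Applying that theorem to the crepant finite Galois cover $\phi$ shows $S$ is K-semistable if and only if $\bigl(T,\tfrac12(H_b+D_b)\bigr)$ is K-semistable, and the isomorphism of log Fano pairs $\psi\colon\bigl(\PP(1,1,l+1),\tfrac12(H+D)\bigr)\xrightarrow{\;\sim\;}\bigl(T,\tfrac12(H_b+D_b)\bigr)$ then yields the assertion. The main — and essentially the only nontrivial — obstacle is the parity-sensitive bookkeeping in the first two steps, i.e.\ confirming that $\phi$ is $\ZZ/2$-Galois with reduced branch divisor $H_b+D_b$; once this is done, the result is a direct invocation of \cite{LiuZhu}.
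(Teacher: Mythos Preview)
Your proposal is correct and follows exactly the paper's approach: the paper's proof consists solely of the sentence ``Then by \cite[Theorem~1.2]{LiuZhu}, we have the following result,'' applied to the crepant cover $\phi$ and the isomorphism $\psi$ set up in the preceding paragraph. You have simply supplied the verification (that $\phi$ is a well-defined degree-$2$ Galois cover with the stated ramification) that the paper leaves implicit, together with the parity remark explaining why the construction differs from Lemma~\ref{reduce1/2}.
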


 \begin{definition}\label{defineK-moduli,even}
Recall that $D$ is a curve in $W'=\mathbb{P}(1,1,l+1)$ given by 
\begin{equation}\label{defineequationD,even}
f=z^2y + azu^{l+2} + a_0u^{2l+3} + a_1u^{2l+2}y + \cdots + a_{2l+3}y^{2l+3} = 0,
\end{equation}
where $l\geq 1$. Let $r=\frac{4l+6}{2l+5}$. We have $D=\mathcal{O}(2l+3)$ and $\frac{1}{r}D+\frac{1}{2}H=K_{W'}$. We use the set-ups in section \ref{backgroudmultiple boundaries}. We fix a fano variety $W'$ and two boundary divisors $D_1=D$ and $D_2=H$. Let $d=2$, $v=(-K_{W'})^2=\frac{(l+3)^2}{l+1}$, $I=\{1\}$ and $\vec{w}=(\frac{1}{2},w)$.
We have $M_{2,v,I,\vec{w}}$ is the K-moduli space parametrizing all K-polystable log pairs $(W', \frac{1}{2}H+wD)$ for $0\leq w<\frac{2l+5}{4l+6}$.
Denote $M_w$ to be the closure of the locus in $M_{2,v,I,\vec{w}}$ consisting of pairs of which the surfaces are isomorphic to $W'$. We know $M_w$ is a closed subscheme of $M_{2,v,I,\vec{w}}$.
\end{definition}

\begin{definition}\label{definew_l,i}
Let $l\geq 1$ be an integer, for any integer $0\leq i \leq l+1$, we denote $$w_{i}:=\frac{(l+3-\frac{1}{2})(2l+2+i)-3(l+1)(1+i)}{(2l+3)(2l+2+i)-6i(l+1)},$$
and $\xi_{2l}:=\frac{2l^2+8l+3}{4l^2+12l+6}$.
Note that $w_{0}=\frac{l+1}{2l+3}$, $w_{l+1}=\frac{1}{2}$, and $\frac{l+1}{2l+3}\leq w_{i}\leq\frac{1}{2}<\xi_{2l}<\frac{2l+5}{4l+6}$ for any integer $0\leq i \leq l+1$.
\end{definition}

Now we relate the GIT moduli spaces $M^{GIT}_w$ defined in section \ref{VGIT} with $M_w$. Theorem \ref{isomorphism,GITandK,even} implies that they are isomorphic as long as $w<\xi_{2l}$. 

\begin{theorem}\label{isomorphism,GITandK,even}
For $w\in (0,\xi_n)$, the GIT moduli spaces $M^{GIT}_w$ are isomorphic to the K-moduli spaces $M_w$ by the morphism 
\[
\begin{split}
\Phi_w: M^{GIT}_w &\longrightarrow M_w \\(a,a_0,\cdots,a_{2l+3}) & \mapsto (W',\frac{1}{2}H+wD),
\end{split}
\]
where $D$ is given by the equation
\begin{equation*}
     z^2y + azu^{l+2} + a_0u^{2l+3} + a_1u^{2l+2}y + \cdots + a_{2l+3}y^{2l+3} = 0.
\end{equation*}
\end{theorem}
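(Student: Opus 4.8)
The plan is to mirror the proof of Theorem~\ref{isomorphism,GITandK} in the odd case, replacing the pair $(W, wD)$ by $(W', \tfrac12 H + wD)$ throughout and using the even-case auxiliary results already assembled in Sections~\ref{VGIT}--\ref{section,nodd}. First I would establish the ``easy direction'': the analogue of Theorem~\ref{thm:K-imply-GIT} for the even case, namely that if $(W', \tfrac12 H + wD)$ is K-semistable (resp.\ K-polystable), then $(a, a_0, \dots, a_{2l+3})$ is $w$-GIT semistable (resp.\ $w$-GIT polystable). This follows verbatim from the Paul--Tian criterion \cite{PT06} as soon as one knows that the $w$-GIT linearization is the descent of the CM line bundle of the universal family $(\cW', \tfrac12\cH + w\cD)$ over $\bA^{2l+5}$, which is exactly how $L_w$ was set up in Section~\ref{VGIT}. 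This shows the morphism $\Phi_w$ is well-defined and injective.

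Next I would need the even-case analogue of the K-(semi/poly)stability classification, i.e.\ a proposition parallel to Proposition~\ref{ksemistable,poly-allcases} but for $(W', \tfrac12 H + wD)$, together with the building blocks: (i) the even analogue of Proposition~\ref{evenpoly+polyanyn} computing the unique weight $w_{n+3-e}$ at which the complexity-one pair $(W', \tfrac12 H + \{z^2 y + u^{2(2l+3-e)} y^e = 0\})$ is K-polystable, via Theorem~\ref{Tone} and Remark~\ref{remarkTone}, checking $\beta \geq 0$ on all vertical $\bG_m$-invariant divisors and $\beta = 0$ on the horizontal one; (ii) the even analogue of Lemma~\ref{notK-s.s.} forcing emptiness for $w < w_0$ using $\beta_{(W', \tfrac12 H + wD)}(H_u)$; (iii) the degeneration statements parallel to Remark~\ref{degenrates to K-poly}; and (iv) the log-canonicity computations of Section~\ref{section,even} (Lemmas~\ref{section5 even:a nonzero}--\ref{section5 even:a and a_0 zero} and Lemma~\ref{section 5 even:lct along H}), which give that $(W', \tfrac12 H + \tfrac{2l+5}{4l+6}D)$ is log canonical (hence K-semistable) in the relevant boundary cases. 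Feeding these into the interpolation Lemma~\ref{theta,w,interpolation,polystable} with $\theta = \tfrac12$ yields, for each stratum of coefficients, the precise interval of $w$ for which $(W', \tfrac12 H + wD)$ is K-(semi/poly)stable, and hence an explicit description of each $M_w$ for $w < \xi_{2l}$ matching the strata of $M^{GIT}_w$ in Theorem~\ref{describe,GITmoduli,even}.

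Finally, I would assemble the isomorphism exactly as in the odd case: reindexing by $i = n+3-e$, the morphism $\Phi_{w_i}$ carries $M^{GIT}_{w_i}$ (described in Theorem~\ref{describe,GITmoduli,even}(2)--(4)) onto the open locus of $M_{w_i}$ where the surface is $W'$, and for $w$ in an open chamber $\Phi_w$ carries $M^{GIT}_w$ (Theorem~\ref{describe,GITmoduli,even}(4)--(5)) onto the corresponding open locus of $M_w$. Since $\Phi_w$ is a birational morphism (injective on closed points by the classification, and a morphism by the moduli property of the CM line bundle) and the source $M^{GIT}_w$ is projective, its image is closed; being dense it is all of $M_w$, so $\Phi_w$ is an isomorphism. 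The main obstacle is the careful bookkeeping in the even-case analogue of Proposition~\ref{ksemistable,poly-allcases}: the presence of the fixed boundary $\tfrac12 H$ shifts the log discrepancies and volumes (one has $-K_{W'} - \tfrac12 H - wD_0 = r\cO(1)$ with $r = l + 3 - \tfrac12 - w(2l+3)$, from \eqref{betav,GIT,even}), so every $\beta$-computation and every lct threshold must be redone with the $\tfrac12 H$ term, and one must verify that the resulting walls $w_i$ and the last wall $\xi_{2l}$ coincide with the ones stated in Definition~\ref{definew_l,i}; the rest is a routine transcription of the odd-case argument, with the caveat that when $a \neq 0$, $a_{2l+3} \neq 0$, and $D$ has an $A_{2l+3}$-singularity one uses the Abban--Zhuang computation of Section~\ref{NonCYcase} to get K-semistability of $(W', \tfrac12 H + wD)$ up to $\xi_{2l}$, just as Proposition~\ref{section5:main lemma} is invoked in the odd case.
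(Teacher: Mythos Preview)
Your proposal is correct and follows essentially the same approach as the paper's own proof, which likewise proceeds by establishing the even-case K-stability classification (Proposition~\ref{ksemistable,allcases,even}, built from Proposition~\ref{evenpoly+polyanyn,even}, Lemma~\ref{notK-s.s.even}, Remark~\ref{degenrates to K-poly,even}, the lct lemmas of Section~\ref{section5:section number}, Proposition~\ref{section5:main lemma}, and the interpolation Lemma~\ref{theta,w,interpolation,polystable}), matching strata with Theorem~\ref{describe,GITmoduli,even} via $i=2l+3-e$, and then concluding that $\Phi_w$ is an isomorphism from injectivity plus properness of the GIT quotient. Two minor slips worth correcting: the curve $D_e$ has exponent $u^{2l+3-e}$ (not $u^{2(2l+3-e)}$), and in the complexity-one check there is \emph{no} horizontal $\bG_m$-invariant divisor here---the ``$\beta=0$'' condition you want is condition~(3) of Theorem~\ref{Tone}, i.e.\ the vanishing of $\beta$ on the monomial valuation induced by the $\bG_m$-action as in Remark~\ref{remarkTone}.
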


In order to prove Theorem \ref{isomorphism,GITandK,even}, we need several Lemmas and Propositions as follows to describe an open subset of $M_w$.

We first state a relatively easy direction, that is, K-stability implies GIT as long as the surface $W'$ together with the curve $H$ does not change.

\begin{theorem}\label{thm:K-imply-GIT-even}
Let $D$ be a curve in $W'$ given by the equation \eqref{defineequationD,even}. Let $w\in (0, \frac{2l+5}{4l+6})\cap \bQ$. If $(W',\frac{1}{2}H + wD)$ is K-semistable (resp. K-polystable), then $(a, a_0, \cdots, a_{2l+3})$ is $w$-GIT semistable (resp. $w$-GIT polystable).
\end{theorem}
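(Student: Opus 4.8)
The plan is to mirror exactly the argument for Theorem \ref{thm:K-imply-GIT} in the odd case, which in turn rests on the Paul--Tian criterion \cite{PT06} (see \cite[Theorem 2.22]{ADL19}). The essential point is that the $w$-GIT linearization we set up in Section \ref{VGIT} for the even case is, by construction, precisely the CM line bundle of the universal family $\phi\colon (\mathcal{W}', \frac{1}{2}\mathcal{H} + w\mathcal{D}) \to \mathbb{A}^{2l+5}$ restricted to the parameter space. First I would recall that, since $W'$ and $H$ are held fixed throughout the family and only $D$ varies, the fibers of $\phi$ are all of the form $(W', \frac{1}{2}H + wD)$ with $-K_{W'} - \frac{1}{2}H - wD \sim_{\bQ} r\mathcal{O}(1)$ ample for $w < \frac{2l+5}{4l+6}$; hence every fiber is a genuine log Fano pair and the CM line bundle is defined and descends along the $\bG_m^2$-action to the line bundle $L_w$ on $\mathbb{A}^{2l+5}$, which (by the Hilbert--Mumford reinterpretation already given after Definition \ref{defineGIT stable}) is the polarization defining $w$-GIT stability on the weighted projective parameter space.

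The key steps, in order, would be: (i) verify that the CM line bundle of $\phi$ coincides with $L_w$ — this is immediate from the CM line bundle's definition via Knudsen--Mumford expansion applied to a family with fixed $W'$ and boundary $\frac{1}{2}H$ proportional to $-K_{W'}$, plus fixed volume and degree data, exactly as in the odd case; (ii) invoke Paul--Tian: if a fiber $(W', \frac{1}{2}H + wD)$ is K-semistable (resp. K-polystable), then for any one-parameter subgroup $\lambda$ of $\bG_m^2$ the Donaldson--Futaki / CM-weight inequality forces the corresponding Hilbert--Mumford weight of the point $(a, a_0, \dots, a_{2l+3})$ with respect to $L_w$ to be $\geq 0$ (resp. the orbit to be closed among semistable points); (iii) translate this back into Definition \ref{defineGIT stable} / Lemma \ref{GITstable,condition,odd}, using the identification $\beta^w_{(d,b)}$ of the $\beta$-invariant of the toric valuation $v_{(d,b)}$ with the GIT weight — which we already recorded in \eqref{betav,GIT,even}. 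Since \eqref{betav,GIT,odd} and \eqref{betav,GIT,even} give the same function $\beta^w_{(d,b)}$, the bookkeeping is literally identical to the odd case.

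I would therefore write the proof in one or two sentences: \emph{This follows directly from the Paul--Tian criterion \cite{PT06} (see \cite[Theorem 2.22]{ADL19}), since the $w$-GIT linearization is the CM line bundle for the universal family of $(W', \frac{1}{2}H + wD)$.} The only point that genuinely needs a remark is why the fixed divisor $\frac{1}{2}H$ causes no trouble: because $H = (u=0)$ is $\bG_m^2$-invariant and proportional to $-K_{W'}$, so the pair $(W', \frac{1}{2}\mathcal{H})$ contributes only a fixed twist to the CM line bundle and does not alter its descent to the parameter space. The main (and only) obstacle is this verification that the CM line bundle of the two-boundary family is compatible with the $w$-GIT polarization; once that is granted — and it is granted in the same way as for Theorem \ref{thm:K-imply-GIT}, using the setup of \cite{Zh23c, LZ23} for pairs with multiple boundaries — the statement is a formal consequence of Paul--Tian. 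No hard estimate is involved.
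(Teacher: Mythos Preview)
Your proposal is correct and matches the paper's own proof essentially verbatim: the paper also writes a one-sentence proof invoking the Paul--Tian criterion \cite{PT06} (via \cite[Theorem 2.22]{ADL19}) together with the fact that the $w$-GIT linearization is the CM line bundle of the universal family $(\mathcal{W}', \frac{1}{2}\mathcal{H} + w\mathcal{D})$. Your additional remarks about why the fixed $\bG_m^2$-invariant divisor $\frac{1}{2}H$ causes no difficulty are sound and go slightly beyond what the paper records, but they do not change the argument.
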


\begin{proof}
When $w> \frac{2n+1}{4n+6}$, this follows directly from the Paul--Tian criterion \cite{PT06} (see \cite[Theorem 2.22]{ADL19} for a precise version) applied to the universal family over $\cP(1, 2^{n+4})$ and Proposition \ref{prop:projGIT} as the linearization $\Lambda_w$ is the CM $\bQ$-line bundle for the universal family of $(W', \frac{1}{2}H+wD)$. When $w\leq \frac{2n+1}{4n+6}$,  we know that $\lambda_{(0,-1)}$ induces a test configuration degenerating $(W',\frac{1}{2}H + wD)$ to $(W',\frac{1}{2}H + w D_0)$ whose Futaki invariant is equal to $\beta^w_{(0,-1)}<0$, thus $(W',\frac{1}{2}H + wD)$ is always K-unstable. When $w = \frac{2n+1}{4n+6}$, the same argument implies that either $(W',\frac{1}{2}H+wD)$ is K-unstable or it is K-semistable and $\lambda_{(0,-1)}$ has Futaki invariant  $\beta^w_{(0,-1)}=0$. Since $(W',\frac{1}{2}H+wD)$ is K-semistable, we are in the latter case. Then by \cite[Lemma 3.1]{LWX21} we know that $(W',\frac{1}{2}H+wD_0)$ is K-semistable which implies that $\beta_{(d,b)}^w = 0$ for any $(d,b)\in \bZ^2$. This is absurd as $1-n + 2nw \neq 0$. Thus the proof is finished.
\end{proof}

The pairs in Proposition \ref{evenpoly+polyanyn,even} with a $\bG_m$ action have complexity one.

\begin{proposition}\label{evenpoly+polyanyn,even}
For any integer $l+2\leq e \leq 2l+3$, where $l\geq 1$,
the pair $(W',\frac{1}{2}H+wD_e)$ is K-polystable if and only if $w=w_{2l+3-e}$, where $D_e$ is given by $z^2y+u^{2l+3-e}y^e=0$, $H:=\{u=0\}$, 
and $\frac{l+1}{2l+3}\leq w_{2l+3-e}\leq \frac{1}{2}<\frac{2l+5}{4l+6}$. Moreover, when $w\neq w_{2l+3-e}$, $(W',\frac{1}{2}H+wD_e)$ is K-unstable.
\end{proposition}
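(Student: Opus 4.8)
The plan is to prove Proposition~\ref{evenpoly+polyanyn,even} by mimicking almost verbatim the proof of Proposition~\ref{evenpoly+polyanyn}, since the even case differs from the odd case only in the presence of the extra fixed boundary $\tfrac12 H$ with $H=\{u=0\}$ and in the slightly different weights of $W'=\PP(1,1,l+1)$. First I would observe that the pair $(W',\tfrac12 H + wD_e)$ carries a $\bG_m$-action $\lambda\cdot[u:y:z] = [u:\lambda^{-1}y:\lambda^{-(2l+3-e)/2}z]$ (equivalently, work in the affine chart $u=1$ where $D_e$ is $z^2 + y^{e}\cdot(\text{after clearing})$, giving the monomial valuation $v=v_{(1,\,2l+3-e)}$ with $v(y)=\cdots$ — I would set up the weights exactly as in Proposition~\ref{evenpoly+polyanyn} adapting $n+2\rightsquigarrow l+1$). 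Since $H$ is $\bG_m$-invariant and $v(H)=0$, the valuation $v$ does not see $H$, so the Futaki-type computation is identical to the odd case with $-K_{W'}-\tfrac12 H - wD_e = r\mathcal O(1)$ where $r = 2l+\tfrac52 - w(2l+3)$.

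The key steps, in order: (1) compute $S_{\mathcal O(1)}(v)$ via Lemma~\ref{S-inv.} with $(m_0,m_1,m_2)=(1,1,l+1)$, getting $S_{\mathcal O(1)}(v) = \tfrac13\big((2l+3-e) + \tfrac{2l+3-e}{l+1}\big)$ or the appropriate combination; (2) compute $A_{W'}(v)$ and $A_{(W',\frac12 H + wD_e)}(v) = A_{W'}(v) - \tfrac12 v(H) - w\,v(D_e)$; (3) set $\beta_{(W',\frac12 H + wD_e)}(v) = A - rS = 0$ and check this is equivalent to $w = w_{2l+3-e}$, where $w_{2l+3-e}$ is the quantity from Definition~\ref{definew_l,i} — this is the analogue of \eqref{horizontal-c-D_e}; (4) verify the two numerical constraints $\tfrac{l+1}{2l+3}\le w_{2l+3-e}\le\tfrac12$ correspond to $l+2\le e$ and $e\le 2l+3$ respectively, by direct manipulation; (5) enumerate all vertical $\bG_m$-invariant prime divisors on $W'$ — namely $\{u=0\}=H$, $\{y=0\}$, $\{z=0\}$, and the components of $D_e$ (splitting into $e$ even vs.\ $e$ odd exactly as in the odd case, $\{z+au^{(2l+3-e)/2}y^{(e-1)/2}=0\}$ for the various values of $a$ when $e$ is odd, and $\{z^2+u^{2l+3-e}y^{e-1}=0\}$, $\{z+\cdots\}$ when $e$ is even) — and check $\beta > 0$ for each at $w=w_{2l+3-e}$ using Lemma~\ref{Sinv,k} for the toric divisors and direct log-discrepancy computations for the others, being careful that for $E=H$ one has $A_{(W',\frac12 H + wD_e)}(H) = 1 - \tfrac12 = \tfrac12$ (this is the one genuinely new coefficient check compared to the odd case); (6) conclude K-polystability by Theorem~\ref{Tone} and Remark~\ref{remarkTone} (noting there are no horizontal invariant divisors, and the Futaki condition is exactly $\beta(v)=0$); (7) for the ``only if'' and the K-unstable statement, invoke $\beta(v)=0 \iff w = w_{2l+3-e}$ together with Theorem~\ref{Tone}, so any $w\ne w_{2l+3-e}$ forces $\beta(v)\ne 0$ hence K-instability.

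The main obstacle I anticipate is purely bookkeeping rather than conceptual: correctly tracking the half-integer contributions from $\tfrac12 H$ throughout, and correctly handling the fact that $W' = \PP(1,1,l+1)$ has two weight-one coordinates (so $\mathcal O(1)$ behaves differently than in $\PP(1,2,n+2)$), which changes the exact form of $S_{\mathcal O(1)}(\mathcal O(k))$ and the degrees $E=\mathcal O(k)$ of the various invariant divisors (e.g. $\{y=0\}$ and $\{u=0\}$ are both $\mathcal O(1)$, while $\{z=0\}$ is $\mathcal O(l+1)$). I would need to make sure the final inequalities in step~(5) all hold for $w = w_{2l+3-e}\in[\tfrac{l+1}{2l+3},\tfrac12]$; given the parallel with Proposition~\ref{evenpoly+polyanyn} and the fact that the relevant thresholds scale the same way, I expect each to reduce to a comparison like ``$w_{2l+3-e} > \tfrac{l}{2l+2}$'' or similar, which follows from $w_{2l+3-e}\ge \tfrac{l+1}{2l+3}$. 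Since the proof is so close to that of Proposition~\ref{evenpoly+polyanyn}, in the write-up I would likely carry out steps (1)--(4) and the $E=H$ check in full, and then say ``the remaining verifications of $\beta(E)>0$ for the vertical invariant divisors are entirely analogous to the proof of Proposition~\ref{evenpoly+polyanyn}'' to avoid repeating the near-identical case analysis.
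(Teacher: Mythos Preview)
Your overall strategy---apply Theorem~\ref{Tone} to the complexity-one pair, compute $\beta$ of the torus-induced valuation to pin down $w$, then check $\beta>0$ on all vertical invariant prime divisors---is exactly what the paper does. However, your setup contains a concrete error that you should fix before carrying out the computation.

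The $\bG_m$-action you write, $\lambda\cdot[u:y:z]=[u:\lambda^{-1}y:\lambda^{-(2l+3-e)/2}z]$, does \emph{not} preserve $D_e$ unless $e=l+2$: the two monomials $z^2y$ and $u^{2l+3-e}y^e$ acquire weights $-(2l+4-e)$ and $-e$ respectively. The correct analogue of the odd case is the action $\lambda\cdot[u:y:z]=[\lambda^2 u:y:\lambda^{2l+3-e}z]$, and one works in the chart $y=1$ (not $u=1$), with the monomial valuation $v(u)=2$, $v(z)=2l+3-e$ centered at $\msp_y$. Consequently your claim that $v(H)=0$ is false: since $H=(u=0)$ one has $v(H)=v(u)=2$, so the $\tfrac12 H$ term contributes $-1$ to $A_{(W',\frac12 H+wD_e)}(v)$. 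This is precisely the ``genuinely new'' contribution from the even case, and it enters the Futaki-type computation, not only the vertical-divisor check for $E=H$. (It happens that $A_{W'}(v)=2l+5-e$ is one larger than the odd-case value with $n=2l$, and the $-1$ from $\tfrac12 v(H)$ restores the match; but this is an identity to be verified, not a consequence of $v(H)=0$.)

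Once the chart and valuation are corrected, the rest of your plan---including the enumeration of vertical invariant divisors and the special treatment of $E=H$ with $A(H)=\tfrac12$---lines up with the paper's proof.
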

\begin{proof}
We can check that $$w_{2l+3-e}\leq\frac{1}{2} \text{ iff } l+2\leq e, \text{ and } \frac{l+1}{2l+3}\leq w_{2l+3-e} \text{ iff } e \leq 2l+3.$$
The pair $(W', \frac{1}{2}H+wD_e)$ is a complexity one $\mathbb{T}$-pair with an action by $\mathbb{T}=\mathbb{G}_m$: $\lambda[u,y,z]=[\lambda^2 u, y, \lambda^{2l+3-e} z]$ for $\lambda \in \mathbb{G}_m$. In the affine open set $U_y$ defined by $y=1$, we have $D_e$ is given by $z^2+u^{2l+3-e}=0$.
We use Theorem \ref{Tone} and Remark \ref{remarkTone} to check K-polysablity. 

The monomial valuation $v_{(2,2l+3-e)}$ such that $v(u)=2$ and $v(z)=2l+3-e$ is induced by the $\mathbb{G}_m$-action. 
We first compute the $\beta$-invariant for $v=v_{(2,2l+3-e)}$. 
Then by Lemma \ref{S-inv.}, we get the S-invariant for $v$: $$S_{\mathcal{O}(1)}(v)=\frac{1}{3}(2+\frac{2l+3-e}{l+1})=\frac{4l+5-e}{3(l+1)}.$$ We also have $-K_W'-(\frac{1}{2}H+wD_e)=\mathcal{O}(l+3-\frac{1}{2}-w(2l+3))$ and $S_{(W',\frac{1}{2}H+wD_e)}(v)=\mathrm{deg}(-K_{W'}-(\frac{1}{2}H+wD_e))\cdot S_{\mathcal{O}(1)}(v)=(l+3-\frac{1}{2}-w(2l+3))\frac{4l+5-e}{3(l+1)}.$
We also have the log discrepancies $A_{W'}(v)=2l+5-e$ and 
\begin{equation*}
    A_{(W',\frac{1}{2}H+wD_e)}(v)=A_{W'}(v)-v(\frac{1}{2}H+wD_e)=2l+4-e-2w(2l+3-e).
\end{equation*}
Then the $\beta$-invariant 
\begin{equation}\label{horizontal-c-D_e,even}
\beta_{(W',\frac{1}{2}H+wD_e)}(v)=A_{(W',\frac{1}{2}H+wD_e)}(v)-S_{(W',\frac{1}{2}H+wD_e)}(v)=0 \text{ if and only if } w=w_{2l+3-e}.
\end{equation}

By Remark \ref{remarkTone}, there is no horizontal $\mathbb{T}$-invariant prime divisor.
If $e$ is even, We have vertical
$\mathbb{T}$-invariant prime divisors 
$\{[u;0;z]\}, \{[0;y;z]\}, \{[u;y;0]\}, \{z^2+u^{2l+3-e}y^{e-1}=0\}, \{z^2+au^{2l+3-e}y^{e-1}=0,a\neq 1\}$ under the $\bG_m$ action $\lambda[u,y,z]=[\lambda^2 u, y, \lambda^{2l+3-e} z]$.
If $e$ is odd, the vertical
$\mathbb{T}$-invariant prime divisors are
$\{[u;0;z]\}, \{[0;y;z]\}, \{[u;y;0]\}, \{z+ au^{\frac{2l+3-e}{2}}y^{\frac{e-1}{2}}=0,a=\pm i\}, \{z+au^{\frac{2l+3-e}{2}}y^{\frac{e-1}{2}}=0,a\neq \pm i\}$, where $i^2=-1$.
Next, we check $\beta(E)> 0$ for every vertical $\mathbb{T}$-invariant prime divisor $E$. 

Since $-K_{W'}-\frac{1}{2}H-wD_0=\big(l+3-(\frac{1}{2}+w(2l+3))\big)\mathcal{O}(1)$, by Lemma \ref{Sinv,k}, we have $$S_{(W',\frac{1}{2}H+wD_0)}(F)=\frac{l+3}{3k}-\frac{\frac{1}{2}+w(2l+3)}{3k}$$ for $F\in|\mathcal{O}(k)|$.

If $E$ is $\{[u;0;z]\}$, then $E\in|\mathcal{O}(1)|$. We have $S_{(W',\frac{1}{2}H+wD_e)}(E)=\frac{l+3}{3}-\frac{1}{3}(\frac{1}{2}+w(2l+3))$. Since $D_e$ is given by $y(z^2+u^{2l+3-e}y^{e-1})=0$, so $A_{(W',\frac{1}{2}H+wD_e)}(E)=1-\mathrm{Coeff}_E(\frac{1}{2}H+wD_e)=1-w$. Then $\beta_{(W',\frac{1}{2}H+wD_e)}(E)=A_{(W',\frac{1}{2}H+wD_e)}(E)-S_{(W',\frac{1}{2}H+wD_e)}(E)>0$ iff $w> \frac{l-\frac{1}{2}}{2l}$. Since $w_{2l+3-e}>\frac{l+1}{2l+3}>\frac{l-\frac{1}{2}}{2l}$, the constant $w_{2l+3-e}$ satisfies this inequality.

If $E$ is $\{[0;y;z]\}$, then $E\in|\mathcal{O}(1)|$. We have $S_{(W',\frac{1}{2}H+wD_e)}(E)=\frac{l+3}{3}-\frac{1}{3}(\frac{1}{2}+w(2l+3))$. We have $A_{(W',\frac{1}{2}H+wD_e)}(E)=1-\mathrm{Coeff}_E(\frac{1}{2}H+wD_e)=1-\frac{1}{2}$. Then $\beta_{(W',\frac{1}{2}H+wD_e)}(E)>0$ iff $w> \frac{l+1}{2l+3}$. So $w_{2l+3-e}$ satisfies this inequality.

If $E$ is $\{[u;y;0]\}$, then $E\in|\mathcal{O}(l+1)|$. We have $S_{(W',\frac{1}{2}H+wD_e)}(E)=\frac{l+3}{3(l+1)}-\frac{1}{3(l+1)}(\frac{1}{2}+w(2l+3))$. We have $A_{(W',\frac{1}{2}H+wD_e)}(E)=1-\mathrm{Coeff}_E(\frac{1}{2}H+wD_e)=1$. Then we get $\beta_{(W',\frac{1}{2}H+wD_e)}(E)>0$ iff $(2l+3)w>-2l-\frac{1}{2}$ which is satisfied by any $w>0$, so is the constant $w_{2l+3-e}$.

When $e$ is even, we consider $E:z^2+au^{2l+3-e}y^{e-1}=0$. We have $E\in|\mathcal{O}(2l+2)|$ and  $S_{(W',\frac{1}{2}H+wD_e)}(E)=\frac{l+3}{6(l+1)}-\frac{1}{6(l+1)}(\frac{1}{2}+w(2l+3))$. If $a=1$, then $A_{(W',\frac{1}{2}H+wD_e)}(E)=1-\mathrm{Coeff}_E(\frac{1}{2}H+wD_e)=1-w$. 
Then $\beta_{(W',\frac{1}{2}H+wD_e)}(E)>0$ iff $w<\frac{5l+3+\frac{1}{2}}{4l+3}$ which is satisfied by $w_{2l+3-e}$ since $w_{2l+3-e}<\frac{2l+5}{4l+6}<\frac{5l+3+\frac{1}{2}}{4l+3}$.
If $a\neq 1$, then $A_{(W',\frac{1}{2}H+wD_e)}(E)=1-\mathrm{Coeff}_E(\frac{1}{2}H+wD_e)=1$. Then $\beta_{(W',\frac{1}{2}H+wD_e)}(E)>0$ iff $(2l+3)w>-5l-\frac{1}{2}-3$ which is satisfied by any $w>0$, so is the constant $w_{2l+3-e}$.

When $e$ is odd, we consider $E:z+au^{\frac{2l+3-e}{2}}y^{\frac{e-1}{2}}=0$.
We have $E\in|\mathcal{O}(l+1)|$
and $S_{(W',\frac{1}{2}H+wD_e)}(E)=\frac{l+3}{3(l+1)}-\frac{1}{3(l+1)}(\frac{1}{2}+w(2l+3))$. If $a=\pm i$, then $A_{(W',\frac{1}{2}H+wD_e)}(E)=1-\mathrm{Coeff}_E(\frac{1}{2}H+wD_e)=1-w$. 
Then $\beta_{(W',\frac{1}{2}H+wD_e)}(E)>0$ iff $w<\frac{2l+\frac{1}{2}}{l}$ which is satisfied by $w_{2l+3-e}$ since $w_{2l+3-e}<\frac{2l+5}{4l+6}<\frac{2l+\frac{1}{2}}{l}$.
If $a\neq \pm i$, then $A_{(W',\frac{1}{2}H+wD_e)}(E)=1-\mathrm{Coeff}_E(\frac{1}{2}H+wD_e)=1$. Then $\beta_{(W',\frac{1}{2}H+wD_e)}(E)>0$ iff $(2l+3)w>-2l-\frac{1}{2}$ which is satisfied by any $w>0$, so is the constant $w_{2l+3-e}$.

Thus $(W',\frac{1}{2}H+w_{2l+3-e}D_e)$ is K-polystable by Theorem \ref{Tone}.

Now we show $(W',\frac{1}{2}H+wD_e)$ is K-semistable only when $w=w_{2l+3-e}$. By Theorem \ref{Tone}, if $(W',\frac{1}{2}H+wD_e)$ is K-semistable, we have $\beta_{(W',\frac{1}{2}H+wD_e)}(v)=0$. By \eqref{horizontal-c-D_e,even}, $\beta_{(W',\frac{1}{2}H+wD_e)}(v)=0$ is equivalent to $w=w_{2l+3-e}$. Hence the conclusion holds.

\end{proof}

\begin{remark}\label{degenrates to K-poly,even}\
We will use the following degenerations to show K-semistability.
\begin{enumerate}
    \item Let $l+2\leq e\leq 2l+3$. Assume $D$ is defined by Equation \ref{defineequationD,even} such that $a_{e}\neq 0$, $a_{e+1}=\cdots=a_{2l+3}=0$ and $\{a,a_0,\cdots,a_{e-1}\}$ are not all zero. Then the pairs $(W',\frac{1}{2}H+D)$ degenerate to $(W',\frac{1}{2}H+\{z^2y+u^{2l+3-e}y^{e}=0\})$ under the $\bG_m$ action defined by $\lambda[u:y:z]=[u:\lambda^{-1}y:\lambda^{\frac{1-t}{2}}z]$, as $\lambda \rightarrow 0$.
    \item Let $l+1\leq e\leq 2l+2$. Assume $D$ is defined by Equation \ref{defineequationD,even} such that $a=a_0=\cdots=a_{e-1}=0$, $a_{e}\neq 0$ and $\{a_{e+1},\cdots,a_{2l+3}\}$ are not all zero. Then the pairs $(W',\frac{1}{2}H+D)$ degenerate to $(W',\frac{1}{2}H+\{z^2y+u^{2l+3-e}y^{e}=0\})$ under the $\bG_m$ action defined by $\lambda[u:y:z]\mapsto[\lambda^{-1}u:y:\lambda^{\frac{-(2l+3-e)}{2}}z]$, as $\lambda \rightarrow 0$.
\end{enumerate}
\end{remark}

We set $W' = \PP(1,1,l+1)_{u,y,z}$, where $l\geq 1$. Let $H$ and $D$ be the effective divisors on $W'$ defined by $u=0$ and 
\begin{equation}\label{eveneuation2}
    z^2y + azu^{l+2} + a_0u^{2l+3} + a_1u^{2l+2}y + \cdots + a_{2l+3}y^{2l+3} = 0,
\end{equation} 
respectively, where $a, a_0,\ldots , a_{2l+3}$ are constants. Finding the log canonical threshold $\lct(W', \frac{1}{2}H ; D)$ is similar to the odd case. When $a_0,\ldots , a_{2l+3}$ are not all zero, let $t=\max\{i|a_i\neq 0\}$. 
When $a=a_0=0$ and $a_1,\ldots , a_{2l+3}$ are not all zero, we denote $m_0$ such that $a_1,\ldots , a_{m_0}$ are zero and $a_{m_0+1}$ is not zero. That is to say
$m_0+1=\min\{i|a_i\neq 0\}$.
Then the polynomial for $D$ is \begin{equation*}
    z^2y + a_{m_0+1}u^{2l+3-(m_0+1)}y^{m_0+1} + \cdots + a_{t}u^{2l+3-t}y^{t} = 0.
\end{equation*}

\begin{proposition}\label{t<=(n+3)/2,even}
Assume $a$ is any integer.
\begin{enumerate}
    \item If $a_0=\cdots=a_{2l+3}=0$, then $(W',\frac{1}{2}H+wD)$ is not K-semistable for $\frac{l+1}{2l+3}\leq w\leq \frac{2l+5}{4l+6}$.
    \item If $0\leq t\leq l+1$, then $(W',\frac{1}{2}H+wD)$ is not K-semistable for $\frac{l+1}{2l+3}\leq w \leq \frac{2l+5}{4l+6}$.
\end{enumerate}
\end{proposition}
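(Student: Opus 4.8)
The plan is to establish non-K-semistability in both cases by producing, for each curve $D$ in question, a single prime divisor $v$ over $W'$ with $\beta_{(W',\frac{1}{2}H+wD)}(v)<0$ for every $w\in[\frac{l+1}{2l+3},\frac{2l+5}{4l+6})$; by the Fujita--Li valuative criterion this already forces $(W',\frac{1}{2}H+wD)$ to fail K-semistability on that interval, and the remaining endpoint $w=\frac{2l+5}{4l+6}$ (where $-(K_{W'}+\frac{1}{2}H+wD)$ is numerically trivial) will be handled by observing that $A_{W',\frac{1}{2}H+wD}(v)<0$ there, so the pair is not even log canonical --- alternatively one may quote Lemmas \ref{section5 even:a nonzero}, \ref{section5 even:a zero a_0 nonzero} and \ref{section5 even:a and a_0 zero}. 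In each case $v$ will be a weighted blow-up of $W'$ at the coordinate point $\msp_y=[0:1:0]$, which is a smooth point of $W'$ (it lies in the affine chart $U_y=\{y=1\}\cong\mathbb{A}^2_{u,z}$) and which lies on $H=\{u=0\}$ and on $D$; the weights are those dictated by the Newton polygon of $D$ at $\msp_y$. I expect $\beta_{(W',\frac{1}{2}H+wD)}(v)$ to be affine-linear in $w$ with negative leading coefficient, so that it is enough to check its value at the left endpoint $w_0=\frac{l+1}{2l+3}$, where $-(K_{W'}+\frac{1}{2}H+wD)=\frac{3}{2}\mathcal{O}(1)$ and the arithmetic stays simple.

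For part (1): when $a_0=\dots=a_{2l+3}=0$ the curve is $D=\{z=0\}+\{zy+au^{l+2}=0\}$ (with $\{z=0\}$ doubled and $\{y=0\}$ adjoined when $a=0$), so on $U_y$ it is $z^2+azu^{l+2}=0$; in particular $D$ is singular at $\msp_y$, of multiplicity $2$ but with high contact order. I would take $v$ to be the monomial valuation with $v(u)=1$, $v(z)=l+2$. Then $A_{W'}(v)=l+3$, $v(H)=v(u)=1$, and $v(D)=2(l+2)$ regardless of whether $a=0$. Using $-(K_{W'}+\frac{1}{2}H+wD)=(l+\frac{5}{2}-w(2l+3))\mathcal{O}(1)$ and, from Lemma \ref{S-inv.}, $S_{\mathcal{O}(1)}(v)=\frac{1}{3}(1+\frac{l+2}{l+1})=\frac{2l+3}{3(l+1)}$, one gets $\beta_{(W',\frac{1}{2}H+wD)}(v)=(l+\frac{5}{2}-2(l+2)w)-(l+\frac{5}{2}-w(2l+3))\frac{2l+3}{3(l+1)}$, whose $w$-coefficient is $-\frac{2l^2+6l+3}{3(l+1)}<0$; evaluating at $w=w_0$ and using the identity $(2l+1)(2l+3)-4(l+1)^2=-1$, the value is $-\frac{l+2}{2(l+1)(2l+3)}<0$. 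This gives (1).

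For part (2): write $t=\max\{i:a_i\ne0\}$, with $0\le t\le l+1$; on $U_y$ the curve is $z^2+azu^{l+2}+a_0u^{2l+3}+\dots+a_tu^{2l+3-t}=0$. I would take $v$ to be the monomial valuation with $v(u)=2$, $v(z)=2l+3-t$ --- the weighted blow-up adapted to the Newton-polygon edge joining $z^2$ to $a_tu^{2l+3-t}$ (equivalently, the valuation degenerating $D$ to $D_t=\{z^2y+u^{2l+3-t}y^t=0\}$). The hypothesis $t\le l+1$ is used first to confirm that $v(D)=2(2l+3-t)$ for every choice of $a,a_0,\dots$, since $v(azu^{l+2})=4l+7-t>4l+6-2t$ and $v(a_ju^{2l+3-j})=2(2l+3-j)\ge 2(2l+3-t)$ for $j\le t$. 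Then $A_{W'}(v)=2l+5-t$, $v(H)=v(u)=2$, and Lemma \ref{S-inv.} gives $S_{\mathcal{O}(1)}(v)=\frac{1}{3}(2+\frac{2l+3-t}{l+1})=\frac{4l+5-t}{3(l+1)}$, so $\beta_{(W',\frac{1}{2}H+wD)}(v)=(2l+4-t-2w(2l+3-t))-(l+\frac{5}{2}-w(2l+3))\frac{4l+5-t}{3(l+1)}$. This is again affine-linear in $w$, with $w$-coefficient $\frac{(4l+3)t-(2l+1)(2l+3)}{3(l+1)}$, which is negative precisely because $t\le l+1<\frac{(2l+1)(2l+3)}{4l+3}$. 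Evaluating at $w=w_0$ and using the same identity $(2l+1)(2l+3)-4(l+1)^2=-1$, the value is $-\frac{2l+3-t}{2(l+1)(2l+3)}<0$. This gives (2).

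The part I expect to be most delicate is pinning down these two weighted blow-ups and, in part (2), verifying that the sign of the $w$-slope of $\beta$ is governed exactly by the bound $t\le l+1$ --- this is what makes the hypothesis sharp for the method and what must be compatible with the wall list $w_0<w_1<\dots<w_{l+1}$ from Theorem \ref{intro,K-moduliparameterize,even,i}. The other ingredients are routine: that $\msp_y$ is a smooth point of $W'$, that $v(D)$ does not depend on the constant $a$, that $A_{W'}(v)$, $v(H)$, $v(D)$ and $S_{\mathcal{O}(1)}(v)$ have the stated closed forms (the last by Lemma \ref{S-inv.}), and that the half-integer weight $2l+3-t$ occurring for odd $t$ is immaterial, since rescaling $v$ rescales $\beta$ by a positive factor. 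As in the odd case (Proposition \ref{t<=(n+3)/2} and the lemmas it cites), one could split this into a short string of lemmas, but the two valuations above already treat all sub-cases uniformly.
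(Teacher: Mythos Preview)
Your argument is correct and matches the paper's proof, which defers to Lemmas~\ref{alla,a_izero,notK-s.s,even}, \ref{all,a_izero,notK-s.s,even}, and \ref{0<=t<=(n+3)/2notk-s.s.,even}: the same monomial valuations $v_{(1,l+2)}$ and $v_{(2,2l+3-t)}$ centered at $\msp_y$ are used there, the only difference being that the paper handles the sub-case $a=0$ of part~(1) separately via the divisor $H_z$, whereas you observe (correctly) that $v_{(1,l+2)}$ already covers both $a=0$ and $a\neq 0$ since $v(z^2)=v(zu^{l+2})=2(l+2)$. Your direct evaluation of $\beta$ at $w_0=\frac{l+1}{2l+3}$ together with the sign of the $w$-slope is a cleaner endgame than the paper's case-by-case sign analysis; one inconsequential slip: $2l+3-t$ is always an integer, so no half-integer weight actually arises.
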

\begin{proof}
    The conclusion is implied by Lemma \ref{alla,a_izero,notK-s.s,even}, Lemma \ref{all,a_izero,notK-s.s,even} and Lemma \ref{0<=t<=(n+3)/2notk-s.s.,even}.
\end{proof}

\begin{lemma}\label{notK-s.s.even}
The log Fano pair $(W', \frac{1}{2}H+wD)$ is not K-semistable if $w<\frac{l+1}{2l+3}$.  
\end{lemma}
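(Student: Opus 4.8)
The plan is to follow the proof of Lemma~\ref{notK-s.s.} for the odd case, destabilizing by a single explicit divisor on $W'$. The natural choice is $H=(u=0)\sim\mathcal{O}(1)$ itself; the only feature absent from the odd case is that $H$ now sits in the boundary with coefficient $\tfrac{1}{2}$, which only affects the log-discrepancy term.

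First I would record that $H$ is never a component of $D$: the defining polynomial in \eqref{eveneuation2} always contains the monomial $z^2y$ with coefficient $1$, so it is not divisible by $u$, and hence $D$ and $H$ share no common component. Therefore $A_{(W',\frac{1}{2}H+wD)}(H)=1-\tfrac{1}{2}=\tfrac{1}{2}$.

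Next I would compute the $S$-invariant. From $-K_{W'}\sim_{\mathbb{Q}}\mathcal{O}(l+3)$, $H\sim_{\mathbb{Q}}\mathcal{O}(1)$ and $D\sim_{\mathbb{Q}}\mathcal{O}(2l+3)$ we get $-K_{W'}-\tfrac{1}{2}H-wD\sim_{\mathbb{Q}}\bigl(l+3-\tfrac{1}{2}-w(2l+3)\bigr)\mathcal{O}(1)$, a class that is ample for $w<\frac{2l+5}{4l+6}$, hence on the range of interest. Lemma~\ref{Sinv,k} with $E=\mathcal{O}(1)$ then gives
\[
S_{(W',\frac{1}{2}H+wD)}(H)=\frac{l+3-\tfrac{1}{2}-w(2l+3)}{3},
\]
so that
\[
\beta_{(W',\frac{1}{2}H+wD)}(H)=\frac{1}{2}-\frac{l+\tfrac{5}{2}-w(2l+3)}{3},
\]
which is $\ge 0$ exactly when $w\ge\frac{l+1}{2l+3}$. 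Thus $\beta<0$ whenever $w<\frac{l+1}{2l+3}$, and $(W',\tfrac{1}{2}H+wD)$ is not K-semistable by the Fujita--Li valuation criterion.

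There is no real obstacle here: this is a one-valuation $\beta$-computation. The only points needing a little care are the coefficient $\tfrac{1}{2}$ entering $A_{(W',\frac{1}{2}H+wD)}(H)$ and the verification $H\not\subset D$, both handled above; one may also note as a consistency check that the threshold $\frac{l+1}{2l+3}$ produced here coincides with $w_{0}$ of Definition~\ref{definew_l,i}, which is exactly why $H$ is the right divisor to test.
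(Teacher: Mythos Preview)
Your proof is correct and essentially identical to the paper's: both destabilize with the divisor $H=(u=0)$, compute $A_{(W',\frac{1}{2}H+wD)}(H)=\tfrac{1}{2}$ and $S_{(W',\frac{1}{2}H+wD)}(H)=\tfrac{1}{3}\bigl(l+\tfrac{5}{2}-w(2l+3)\bigr)$ via Lemma~\ref{Sinv,k}, and conclude $\beta<0$ for $w<\tfrac{l+1}{2l+3}$. Your added remark that $H\not\subset D$ is a small extra bit of care the paper leaves implicit.
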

\begin{proof}
Since $-K_{W'}-\frac{1}{2}H-wD=\big(l+3-(\frac{1}{2}+w(2l+3))\big)\mathcal{O}(1)$, by Lemma \ref{Sinv,k}, we have $S_{(W',\frac{1}{2}H+wD)}(H)=\frac{1}{3}(l+3-\frac{1}{2}-w(2l+3))$. Thus the $\beta$-invariant 
    \begin{equation*}
        \beta_{(W',\frac{1}{2}H+wD)}(H) = A_{(W',\frac{1}{2}H+wD)}(H)-S_{(W',\frac{1}{2}H+wD)}(H)= \frac{1}{2} - \frac{1}{3}\left(l+3-\frac{1}{2}-w(2l+3)\right).
    \end{equation*}
    We have $\beta_{(W',\frac{1}{2}H+wD)}(H)\geq 0$ iff $w\geq \frac{l+1}{2l+3}$.
\end{proof}

We recall that $\xi_{2l}:=\frac{2l^2+8l+3}{4l^2+12l+6}$.
Assume $a_{2l+3}$ is not zero, i.e., $t=2l+3$. By Lemma \ref{section5 even:a nonzero}, when $a$ is not zero, then the log Calabi-Yau pair $(W', \frac{1}{2}H + \frac{2l+5}{4l+6}D)$ is log canonical if and only if $m\leq 2l+3$, where $m$ is the number defined in 
(\ref{section 5 even:maximal m}). The curve $D$ has a type $A_{2l+3}$-singularity iff $m = 2l+4$.

\begin{proposition}\label{ksemistable,allcases,even}
The K-stability of $(W',\frac{1}{2}H+wD)$ depends on whether the coefficients in the polynomial \eqref{eveneuation2} are zero or not. We list all the cases as follows (Table \ref{table: oddcoefficicentlist,even}):    
\begin{enumerate}
    \item Assume $a_{2l+3}\neq 0$ and $a\neq 0$. if the curve $D$ has a type $A_{2l+3}$-singularity, then the log pair $(W',\frac{1}{2}H+ wD)$ is K-semistable if and only if $w\in[\frac{l+1}{2l+3},\xi_{2l}]$; is K-polystable if and only if $w\in(\frac{l+1}{2l+3},\xi_{2l})$.
    \item Assuem $a_t\neq 0$, $a_{t+1}=\cdots=a_{2l+3}=0$ and $\{a,a_0,\cdots,a_{m_0+1}\}$ are not all zero, where $l+2\leq t\leq 2l+3$ and $0\leq m_0\leq l$. When $t=2l+3$ and $a\neq 0$, we assume that the curve $D$ does not has a type $A_{2l+3}$-singularity. Then $(W',\frac{1}{2}H+ wD)$ is K-semistable if and only if $w\in[w_{2l+3-e},\frac{2l+5}{4l+6}]$; is K-polystable if and only if $w\in(w_{2l+3-e},\frac{2l+5}{4l+6})$.
    \item Assume $a=a_0=\cdots=a_{m_0}=0$, $a_{m_0+1}\neq0$, $a_t\neq$ and $a_{t+1}=\cdots=a_{2l+3}=0$, where $l+2\leq t\leq 2l+3$ and $l+1\leq m_0\leq t-1$. Then $(W',\frac{1}{2}H+wD)$ is K-semistable for $w\in[w_{2l+3-t},w_{2l+3-(m_0+1)}]$; is K-polystable for $w\in(w_{2l+3-t},w_{2l+3-(m_0+1)})$.
\end{enumerate}
\end{proposition}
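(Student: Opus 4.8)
The plan is to follow the proof of Proposition~\ref{ksemistable,poly-allcases} from the odd case almost verbatim, combining three inputs: the log canonical threshold computations of Section~\ref{section5:section number}, the K-polystability of the complexity-one special curves $D_e=\{z^2y+u^{2l+3-e}y^e=0\}$ from Proposition~\ref{evenpoly+polyanyn,even}, and the interpolation and degeneration package of Lemma~\ref{theta,w,interpolation,polystable} together with openness of K-semistability (Theorem~\ref{openness}).

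First I would pin down the left endpoint of each interval. Using the $\bG_m$-degenerations recorded in Remark~\ref{degenrates to K-poly,even}, in case~(1) the curve $D$ degenerates to $D_{2l+3}=\{z^2y+y^{2l+3}=0\}$, while in cases~(2) and (3) it degenerates to $D_t$, and in case~(3) additionally to $D_{m_0+1}$. By Proposition~\ref{evenpoly+polyanyn,even} each pair $(W',\frac{1}{2}H+w_{2l+3-e}D_e)$ is K-polystable, hence K-semistable, so Theorem~\ref{openness} yields K-semistability of $(W',\frac{1}{2}H+w_{2l+3-e}D)$. Applied to $D_{2l+3}$ this gives the left endpoint $w_0=\frac{l+1}{2l+3}$ of case~(1); applied to $D_t$ it gives the left endpoint $w_{2l+3-t}$ of cases~(2) and (3); applied to $D_{m_0+1}$ it gives the right endpoint $w_{2l+3-(m_0+1)}$ of case~(3).

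Next I would handle the remaining right endpoints. In cases~(2) and (3) the right endpoint $\frac{2l+5}{4l+6}$ is the log Calabi--Yau coefficient: $-(K_{W'}+\frac{1}{2}H+\frac{2l+5}{4l+6}D)\equiv 0$. By \cite[Section~9]{BHJ17} and \cite{Oda12}, K-semistability there is equivalent to log canonicity of $(W',\frac{1}{2}H+\frac{2l+5}{4l+6}D)$, which is precisely what Lemmas~\ref{section 5 even:lct along H}, \ref{section5 even:a nonzero}, \ref{section5 even:a zero a_0 nonzero} and \ref{section5 even:a and a_0 zero} establish under the stated hypotheses on $t$, $m$ and $m_0$ (the $A_{2l+3}$ locus of case~(1) being deliberately excluded from (2)). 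In case~(1) the right endpoint is $\xi_{2l}$, where the pair is log Fano rather than log Calabi--Yau, so log canonicity is unavailable; here I would instead invoke the K-semistability of $(W',\frac{1}{2}H+\xi_{2l}D_{\ss})$ proved by the Abban--Zhuang method in Section~\ref{NonCYcase}, and note that by Lemma~\ref{section5 even:a nonzero} every $D$ with $a\ne0$, $t=2l+3$ and an $A_{2l+3}$-singularity is projectively equivalent to $D_{\ss}$, so K-semistability of $(W',\frac{1}{2}H+\xi_{2l}D)$ follows for all such $D$.

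With both endpoints of each interval in hand, Lemma~\ref{theta,w,interpolation,polystable} gives K-semistability of $(W',\frac{1}{2}H+wD)$ on the closed interval; for K-polystability on the open interval I would apply Lemma~\ref{theta,w,interpolation,polystable}(2) with the left-endpoint degeneration $(W',\frac{1}{2}H+w_{2l+3-e}D_e)$, which is a complexity-one pair so $\mathrm{rank}\,\Aut=1$, and which by the last sentence of Proposition~\ref{evenpoly+polyanyn,even} is K-unstable at every other coefficient, in particular at the right endpoint; hence $(W',\frac{1}{2}H+wD)$ is K-stable, and so K-polystable, for interior $w$. The main obstacle is the right endpoint of case~(1): K-semistability of $(W',\frac{1}{2}H+\xi_{2l}D_{\ss})$ at the genuinely log Fano wall cannot be reduced to an lct computation and requires the Abban--Zhuang $\delta$-estimate carried out in Section~\ref{NonCYcase}. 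The rest is bookkeeping with $\bG_m$-degenerations and interpolation, the one point needing care being to match the ranges of $t$ and $m_0$ and the $A_{2l+3}$-genericity in each case with the exact hypotheses of the Section~\ref{section5:section number} lemmas, and to check that the degenerations of Remark~\ref{degenrates to K-poly,even} land in the family $\{D_e\}$ for those ranges.
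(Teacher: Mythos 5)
Your proposal is correct and follows essentially the same route as the paper's own proof: degenerate to the complexity-one pairs $D_e$ (Remark \ref{degenrates to K-poly,even}, Proposition \ref{evenpoly+polyanyn,even}) and use openness for the endpoints $w_{2l+3-e}$, use the lct lemmas of Section \ref{section5:section number} at the log Calabi--Yau coefficient $\frac{2l+5}{4l+6}$ and Proposition \ref{section5:main lemma} (Abban--Zhuang) at $\xi_{2l}$, then conclude with both parts of Lemma \ref{theta,w,interpolation,polystable}. The only (harmless) difference is that you make explicit the projective equivalence of $D$ with $D_{\ss}$ via Lemma \ref{section5 even:a nonzero} in case (1), which the paper leaves implicit.
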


 \begin{center}
    \begin{longtable}{|l|l|l|l|l|l|l|l|}
 			\caption{List of cases of coefficients}\label{table: oddcoefficicentlist,even}\\
		\hline
		Case & $t$ & $a$ & $a_0$ & $m_0$ & $w$ & Proof  \\
				\hline
(a.) & $2l+3$  & $\neq 0$ &  & &  $(\frac{l+1}{2l+3}, \xi_{2l})$ & Proposition \ref{ksemistable,allcases,even}(1) \\
\hline
(b.) & $2l+3$ & $\neq 0$ & &  &  $(\frac{l+1}{2l+3}, \frac{2l+5}{4l+6})$ & Proposition \ref{ksemistable,allcases,even}(2)\\
		\hline
(c.) & $2l+3$ &  $0$ & $\neq 0$ &  & $(\frac{l+1}{2l+3}, \frac{2l+5}{4l+6})$ & Proposition \ref{ksemistable,allcases,even}(2) \\
\hline
(d.) & $2l+3$ & $0$ & $0$ & $[0,l]$  &$(\frac{l+1}{2l+3}, \frac{2l+5}{4l+6})$ & Proposition \ref{ksemistable,allcases,even}(2)\\
		\hline	
(e.) & $2l+3$ & $0$ & $0$ & $[l+1,2l+2]$  &$(\frac{l+1}{2l+3}, w_{2l+3-(m_0+1)})$ & Proposition \ref{ksemistable,allcases,even}(3)\\
		\hline	
(f.) & $[l+2, 2l+2]$ & $\neq 0$ &  &   &$(w_{2l+3-t},\frac{2l+5}{4l+6})$ & Proposition \ref{ksemistable,allcases,even}(2)\\
		\hline	
(g.) & $[l+2, 2l+2]$ & $0$ & $\neq 0$  &  &$(w_{2l+3-t}, \frac{2l+5}{4l+6})$ & Proposition \ref{ksemistable,allcases,even}(2)\\
		\hline	
(h.) & $[l+2, 2l+2]$ & $0$ & $0$  & $[0,l]$  &$(w_{2l+3-t}, \frac{2l+5}{4l+6})$ & Proposition \ref{ksemistable,allcases,even}(2)\\
		\hline
(i.) & $[l+2, 2l+2]$ & $0$ & $0$  & $[l+1,t-1]$  &$(w_{2l+3-t}, w_{2l+3-(m_0+1)})$ & Proposition \ref{ksemistable,allcases,even}(3)\\
		\hline	
(j.) & $[l+2, 2l+3]$ & $0$ & $0$  & $t-1$  &$w_{2l+3-t}$ & Proposition \ref{evenpoly+polyanyn,even}\\
		\hline	
\end{longtable}
\end{center}

\begin{proof}
    $\lambda[u:y:z]=[u:\lambda^{-1}y:\lambda^{\frac{1-(2l+3)}{2}}z],$ the curve $D$ degenerates to $D_0$ as $\lambda \rightarrow 0$, where $D_0$ is defined by $z^2y + y^{2l+3} = 0$.
    By Proposition \ref{ksemistable,allcases,even}, the log pair $(W',\frac{1}{2}H+\frac{l+1}{2l+3}D_0)$ is K-polystable. Then $(W',\frac{1}{2}H + \frac{l+1}{2l+3}D)$ is K-semistable by Theorem \ref{openness}.

    We first show (1). By Proposition \ref{section5:main lemma}, the log pair $(W', \frac{1}{2}H+\xi_{2l} D)$ is K-semistable.  Therefore by the first claim of Lemma \ref{theta,w,interpolation,polystable}, the pair $(W', \frac{1}{2}H+wD)$ is K-semistable iff $w\in[\frac{l+1}{2l+3},\xi_{2l}].$ We have rank $\mathrm{Aut}(W',\frac{1}{2}H+D_0)=1$ since the pair $(W',\frac{1}{2}H+D_0)$ has complexity one. By Proposition \ref{ksemistable,allcases,even}, the pair $(W',\frac{1}{2}H+\xi_{2l}D_0)$ is not K-semistable. Then we have $(W',\frac{1}{2}H+wD)$ is K-polystable iff $w\in(\frac{l+1}{2l+3},\xi_{2l})$ by (2) in Lemma \ref{theta,w,interpolation,polystable}. 

    We now prove (2). Assume $t=2l+3$, $a\neq 0$ and the curve $D$ does not has a type $A_{2l+3}$-singularity. This corresponding to $t=2l+3$ and $m\leq 2l+3$ in Lemma \ref{section5 even:a nonzero}.
So $(W', \frac{1}{2}H+\frac{2l+5}{4l+6}D)$ is log canonical. Assume $t=2l+3$, $a=0$ and $a_0\neq 0$.
This corresponding to $t=2l+3$ in Lemma \ref{section5 even:a zero a_0 nonzero}. So $(W', \frac{1}{2}H+\frac{2l+5}{4l+6}D)$ is log canonical.
Assume $l+2\leq t\leq 2l+2$ and $\{a,a_0\}$ are not all zero, by Lemma \ref{section5 even:a nonzero} and Lemma \ref{section5 even:a zero a_0 nonzero}, the pair $(W',\frac{1}{2}H+\frac{2l+5}{4l+6}D)$ is log canonical. Assume $l+2\leq t\leq 2l+3$, $a=a_0=0$ and $0\leq m_0\leq l$,  by Lemma \ref{section5 even:a and a_0 zero}, the log pair $\left(W',\frac{1}{2}H+\frac{2l+5}{4l+6}D\right)$ is log canonical. Therefore, for each curve $D$ satisfying the condition in (2), we have 
$(W',\frac{1}{2}H+\frac{2l+5}{4l+6}D)$ is K-semistable.

On the other hand, under the $\bG_m$ action $\lambda[u:y:z]=[u:\lambda^{-1}y:\lambda^{\frac{1-t}{2}}z],$ the curve $D$ degenerates to $D_t:z^2y+u^{2l+3-t}y^t$ as $\lambda \rightarrow 0.$
By Proposition \ref{evenpoly+polyanyn,even},
the pair $(W',\frac{1}{2}H+w_{2l+3-t}D_t)$ is K-polystable. Then $(W',\frac{1}{2}H+w_{2l+3-t}D)$ is K-semistable by Theorem \ref{openness}. Therefore, by the first claim of Lemma \ref{theta,w,interpolation,polystable}, the pair $(W', \frac{1}{2}H+wD)$ is K-semistable iff $w\in[w_{2l+3-t},\frac{2l+5}{4l+6}].$ We have rank $\mathrm{Aut}(W',\frac{1}{2}H+D_t)=1$ since the pair $(W',\frac{1}{2}H+D_t)$ has complexity one. By Proposition \ref{evenpoly+polyanyn,even}, the pair $(W',\frac{1}{2}H+\frac{2l+5}{4l+6}D_t)$ is not K-semistable. Then we have $(W',\frac{1}{2}H+wD)$ is K-polystable iff $w\in(w_{2l+3-t},\frac{2l+5}{4l+6})$ by (2) in Lemma \ref{theta,w,interpolation,polystable}. 

We now prove (3). Under the $\bG_m$ action $\lambda[u:y:z]=[\lambda^{-1}u: y:\lambda^{\frac{-(2l+3-(m_0+1))}{2}}z]$, the curve $D$ degenerates to $D_{m_0+1}$ given by $z^2y+a_{m_0+1}x^{2l+3-(m_0+1)}y^{m_0+1}=0$ as $\lambda \rightarrow 0.$ On the other hand, under the $\bG_m$ action $\lambda[u:y:z]=[u:\lambda^{-1}y:\lambda^{\frac{1-t}{2}}z],$ the curve $D$ degenerates to $D_t:z^2y+u^{2l+3-t}y^t=0$ as $\lambda \rightarrow 0.$ Proposition \ref{evenpoly+polyanyn,even} shows that the pairs $(W',\frac{1}{2}H+w_{2l+3-t}D_t) $ and $(W',\frac{1}{2}H+w_{2l+3-(m_0+1)}D_{m_0+1})$ are K-polystable, so are K-semistable by Theorem \ref{openness}. Therefore, by the first claim of Lemma \ref{theta,w,interpolation,polystable}, the pair $(W', \frac{1}{2}H+wD)$ is K-semistable iff $w\in[w_{2l+3-t},w_{2l+3-(m_0+1)}].$ We have rank $\mathrm{Aut}(W',\frac{1}{2}H+D_t)=1$ since the pair $(W',\frac{1}{2}H+D_t)$ has complexity one. By Proposition \ref{evenpoly+polyanyn,even}, the pair $(W',\frac{1}{2}H+w_{2l+3-(m_0+1)}D_t)$ is not K-semistable. Then we have $(W',\frac{1}{2}H+wD)$ is K-polystable iff $w\in(w_{2l+3-t},w_{2l+3-(m_0+1)})$ by (2) in Lemma \ref{theta,w,interpolation,polystable}. 
\end{proof}

\begin{proof}[\bf Proof of Theorem \ref{isomorphism,GITandK,even}]
    By Proposition \ref{ksemistable,allcases,even}, we get (2)-(5) as follows.
    \begin{enumerate}
        \item If $w\in (0,\frac{l+1}{2l+3})$, the K-moduli space $M_w$ is empty. This is implied by Lemma \ref{notK-s.s.even}.
\item If $w=w_0=\frac{l+1}{2l+3}$, then $M_w$ parameterizing the K-polystable pair $(W',\frac{1}{2}H+wD)$ such that $a=a_0=\cdots=a_{2l+2}=0$ and $a_{2l+3}\neq 0$, i.e., $D:z^2y+a_{2l+3}y^{2l+3}=0$.
     \item If $w=w_{2l+3-e}$, where $l+2\leq e\leq 2l+2$, then $M_w$ contains all pairs $(W',\frac{1}{2}H+wD)$ such that 
     \begin{itemize}
         \item $\{a,a_0,\cdots,a_{e-1}\}$ are not all zero, $a_e$ is arbitrary, and $\{a_{e+1},\cdots,a_{2l+3}\}$ are not all zero;
         \item $a=a_0=\cdots=a_{e-1}=0$, $a_e\neq0$ and $a_{e+1}=\cdots=a_{2l+3}=0$ (K-polystable).
     \end{itemize}
     \item If $w\in(w_{2l+3-e},w_{2l+3-(e-1)})$, where $l+3\leq e\leq 2l+3$, then $M_w$ contains all pairs $(W',\frac{1}{2}H+wD)$ such that $\{a,a_0,\cdots,a_{e-1}\}$ are not all zero, $\{a_e,a_{e+1},\cdots,a_{2l+3}\}$ are not all zero.
     \item If $w\in(w_{l+1},\xi_{2l})$,
     then $M_w$ contains all pairs $(W',\frac{1}{2}H+wD)$ such that $\{a,a_0,\cdots,a_{l+1}\}$ are not all zero, $\{a_{l+2},\cdots,a_{2l+3}\}$ are not all zero.
\end{enumerate}
We replace $2l+3-e$ by $i$. Then we know $\Phi_{w_0}$ maps the GIT moduli space $M^{GIT}_{w_0}$ described in Theorem \ref{describe,GITmoduli,even}(2) onto the open subset of $M_{w_0}$ described in (2). Note that $i=2l+3-e$. For $1\leq i\leq l+1$, the morphism $\Phi_{w_i}$ maps the GIT moduli space $M^{GIT}_{w_i}$ described in Theorem \ref{describe,GITmoduli,even}(3) onto the open subset of $M_{w_i}$ described in (3). If $w\in (w_i,w_{i+1})$, where $0\leq i\leq l$, the morphism $\Phi_{w}$ maps the GIT moduli space $M^{GIT}_{w}$ described in Theorem \ref{describe,GITmoduli,even}(4) onto the open subset of $M_{w}$ described in (4). 
When $w\in(w_{l+1},\xi_{2l})$, the morphism $\Phi_{w}$ maps the GIT moduli space $M^{GIT}_{w}$ described in Theorem \ref{describe,GITmoduli,even}(5) onto the open subset of $M_{w}$ described in (5).

We have $\Phi_{w}$ is a birational morphism since it is also injective. Since GIT moduli space is compact, the image of $\Phi_w$ is also compact, so equals the whole K-moduli space. Thus $\Phi_w$ gives the isomorphism.
\end{proof}

Now we describe the K-moduli space $M_w$ in Theorem \ref{intro,K-moduliparameterize,even,i}. Example \ref{l=1,example} gives the description when $l=1$.

\begin{proof}[\bf Proof of Theorem \ref{intro,K-moduliparameterize,even,i}(1)-(5)]
 Theorem \ref{intro,K-moduliparameterize,even,i}(1)-(5) are consequences of Theorem \ref{isomorphism,GITandK,even}.  
\end{proof}

Now for any even positive integer $n=2l$, 
we can describe the K-moduli space $\mathcal{F}_{2l}$ 
parameterizing these K-stable quasi-smooth hypersurfaces $H_{2(n+3)}$ in $\mathbb{P}(1,2,n+2,n+3)_{t,x,y,z}$ and their K-polystable limits. By the double cover construction in Lemma \ref{reduce1/2,even} and its generalization to families, we have $\mathcal{F}_{2l}$ is isomorphic to $M_{\frac{1}{2}}$. Since $w_{l+1}=\frac{1}{2}< \xi_{2l}$, then by Theorem \ref{intro,K-moduliparameterize,even,i} (3), we get the following corollary.

\begin{corollary}[=Theorem \ref{thm:main}(2)]\label{moduli,hypersurface,even}
Let $n=2l$ be an even positive integer.
Then the K-polystable members in the K-moduli space $\mathcal{F}_n$ are precisely all the hypersurfaces $H_{2(n+3)}$ in $\mathbb{P}(1,2,n+2,n+3)_{t,x,y,z}$ given by 
\begin{equation*}
  t^2 + z^2y + azx^{n+4} + a_0x^{2n+6} + a_1x^{2n+4}y + \cdots + a_{n+3}y^{n+3}=0
\end{equation*}
such that one of the following holds\begin{itemize}
         \item $\{a,a_0,\cdots,a_{l+1}\}$ are not all zero, $a_{l+2}$ is arbitrary, and $\{a_{l+3},\cdots,a_{2l+3}\}$ are not all zero;
         \item $a=a_0=\cdots=a_{l+1}=0$, $a_{l+2}\neq0$ and $a_{l+3}=\cdots=a_{2l+3}=0$.
     \end{itemize}
\end{corollary}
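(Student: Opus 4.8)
The plan is to transport Theorem \ref{intro,K-moduliparameterize,even,i}(3) through the double cover construction of Lemma \ref{reduce1/2,even}. First I would upgrade Lemma \ref{reduce1/2,even} to a statement about families: the double cover construction there works relatively, realizing the universal family of hypersurfaces $H_{2(n+3)}$ over $\bA^{2l+5}$ (with each equation normalized, via the subgroup of $\Aut\bP(1,2,n+2,n+3)$ preserving the hypersurface, to the shape $t^2 + z^2 y + azx^{n+4} + \nu(x,y) = 0$) as a relative $(\bZ/2)^2$-cover of the constant family $W' = \bP(1,1,l+1)$ branched along $\frac{1}{2}\cH + \frac{1}{2}\cD$, where $\cD$ is cut out by $z^2 y + azu^{l+2} + a_0 u^{2l+3} + \cdots + a_{2l+3}y^{2l+3} = 0$ and the coefficients $a, a_0,\cdots,a_{2l+3}$ coincide with those of the hypersurface equation. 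Applying \cite[Theorem 1.2]{LiuZhu} to each fiber, fiberwise K-semistability and K-polystability agree on the two sides; since the cover construction and its inverse (the quotient by the Galois action) are functorial in families and mutually inverse, this yields an isomorphism of K-moduli stacks, hence of their good moduli spaces, identifying $\cF_{2l}$ with $M_{\frac{1}{2}}$ of Definition \ref{defineK-moduli,even}.

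Next I would observe that $w = \frac{1}{2}$ is exactly the wall $w_{l+1}$ of Definition \ref{definew_l,i} and that $w_{l+1} = \frac{1}{2} < \xi_{2l}$. Theorem \ref{intro,K-moduliparameterize,even,i}(3), applied with $i = l+1$ so that $2l+2-i = l+1$, $2l+3-i = l+2$ and $2l+4-i = l+3$, then describes $M_{\frac{1}{2}}$ as the moduli of pairs $(W', \frac{1}{2}H + \frac{1}{2}D)$ for which either $\{a, a_0,\cdots,a_{l+1}\}$ are not all zero, $a_{l+2}$ is arbitrary and $\{a_{l+3},\cdots,a_{2l+3}\}$ are not all zero, or $a = a_0 = \cdots = a_{l+1} = 0$, $a_{l+2} \neq 0$ and $a_{l+3} = \cdots = a_{2l+3} = 0$. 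Since $\frac{1}{2} < \xi_{2l}$, all these pairs have surface $W'$ with $H = (u=0)$, so each is the double cover of a hypersurface $H_{2(n+3)}$ of the form $t^2 + z^2 y + azx^{n+4} + a_0 x^{2n+6} + \cdots + a_{2l+3}y^{2l+3} = 0$; transporting back through the isomorphism of the first step and using $n+3 = 2l+3$ gives exactly the asserted description of the K-polystable members of $\cF_{2l}$, with the converse inclusion following from the same isomorphism.

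The only substantive point is the family version of the double cover in the first step: one must check that the relative $(\bZ/2)^2$-cover is $\bQ$-Gorenstein over an arbitrary (possibly non-reduced) base, that the pair $(W', \frac{1}{2}H + wD)$ it produces lies in the class $\cE$ of Definition \ref{definelogpairswith2}, and that the equivalence of K-stability is genuinely fiberwise, so that one obtains an isomorphism of K-moduli stacks rather than merely a bijection on closed points. This is handled exactly as in the odd case of Lemma \ref{reduce1/2} and in the generalization to families used for Corollary \ref{moduli,hypersurface}; everything else is re-indexing.
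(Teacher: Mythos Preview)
Your proposal is correct and follows essentially the same approach as the paper: identify $\cF_{2l}\cong M_{1/2}$ via the family version of the $(\bZ/2)^2$-cover in Lemma \ref{reduce1/2,even}, observe $w_{l+1}=\tfrac12<\xi_{2l}$, and read off the description from Theorem \ref{intro,K-moduliparameterize,even,i}(3) with $i=l+1$. The paper's proof is just the sentence preceding the corollary; your write-up simply unpacks the re-indexing and the family-level details more carefully.
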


We descibe the wall crossing of K-moduli space $M_w$ for $w_{0}=\frac{l+1}{2l+3}\leq w< \xi_{2l}$.

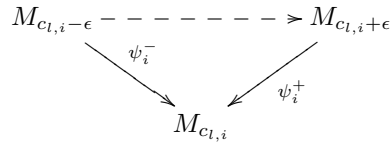
\begin{figure}
    \centering
    \caption{Wall crossing for K-moduli spaces at $c_{l,e}$}
    \label{Wall crossing at $c_{l,e}$,even}
\[
\xymatrix{
M_{c_{l,i}-\epsilon} \ar[dr]^{\psi^-_i} \ar@{-->}[rr] & & M_{c_{l,i}+\epsilon} \ar[dl]^{\psi^+_i} \\
& M_{c_{l,i}} & & 
} 
\]
\end{figure}

\begin{theorem}\label{wall crossings,even}\
When $w\in(w_0,\xi_{2l})$, the surface of each pair in $M_w$ is isomorphic to $W'$. 
    The wall crossing happens at $w_{i}$ for $1\leq 
    i\leq l+1$ (Figure \ref{Wall crossing at $c_{l,e}$,even}).
    \begin{enumerate}
        \item We look at the first wall $w_{0}=\frac{l+1}{2l+3}$. Let $\Sigma_{\frac{l+1}{2l+3}+\epsilon}$ be the locus parameterizing the pairs $(W',\frac{1}{2}H+(\frac{l+1}{2l+3}+\epsilon)D)$ such that $a_{2l+3}\neq 0$ and  $\{a,a_0,\cdots,a_{2l+2}\}$ are not all zero. The morphism $\psi^+_{0}: M_{\frac{l+1}{2l+3}+\epsilon} \rightarrow M_{\frac{l+1}{2l+3}}$ contracts $\Sigma_{\frac{l+1}{2l+3}+\epsilon}$ to a K-polystable point $(W',\frac{1}{2}H+\frac{l+1}{2l+3}\{z^2y+y^{2l+3}=0\})$.
    \item The birational morphism $\psi^-_i: M_{w_{i}-\epsilon} \rightarrow M_{w_{i}}$ is isomorphic away from the locus $\Sigma_{w_{i}-\epsilon}$ parameterizing the pairs $(W',\frac{1}{2}H+(w_{i}-\epsilon)D)$ such that $a=a_0=\cdots=a_{2l+2-i}=0$, $a_{2l+3-i}\neq0$, $\{a_{2l+4-i},\cdots,a_{2l+3}\}$ are not all zero. Moreover, the morphism $\psi^-_i$ contracts $\Sigma_{w_{i}-\epsilon}$ to a K-polystable point $(W',\frac{1}{2}H+w_{i}\{z^2y+u^iy^{2l+3-i}=0\})$.
    \item The birational morphism $\psi^+_i: M_{w_{i}+\epsilon} \rightarrow M_{w_{i}}$ is isomorphic away from the locus $\Sigma_{w_{i}+\epsilon}$
    parameterizing the pairs $(W',\frac{1}{2}H+(w_{i}+\epsilon)D)$ such that $\{a,a_0,\cdots,a_{2l+2-i}\}$ are not all zero, $a_{2l+3-i}\neq0$ and $a_{2l+4-i}=\cdots=a_{2l+3}=0$. Moreover, the morphism $\psi^+_i$ contracts $\Sigma_{w_{i}+\epsilon}$ to a K-polystable point $W',\frac{1}{2}H+w_{i}\{z^2y+u^{i}y^{2l+3-i}=0\}$.
    \end{enumerate}
\end{theorem}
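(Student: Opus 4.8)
The plan is to deduce the theorem from the explicit coefficient descriptions of the K-moduli spaces in Theorem \ref{intro,K-moduliparameterize,even,i}, combined with the $\bG_m$-degenerations recorded in Remark \ref{degenrates to K-poly,even}; this mirrors the (one-line) argument behind Theorem \ref{wall crossings} in the odd case. First, for $w\in(w_0,\xi_{2l})$ the isomorphism $\Phi_w$ of Theorem \ref{isomorphism,GITandK,even} identifies $M_w$ with the GIT moduli space $M^{GIT}_w$, and $\Phi_w$ sends every GIT point to a pair of the shape $(W',\frac{1}{2}H+wD)$; hence the surface underlying any point of $M_w$ is literally $W'$. The projective morphisms $\psi^-_i$, $\psi^+_i$, and the fact that each is an isomorphism over the open locus where no degeneration occurs, come from the general wall-crossing package (Theorem \ref{wallcrossing[0,1]}) applied with the parameters fixed in Definition \ref{defineK-moduli,even}: on good moduli spaces $\psi^{\mp}_i$ is induced by an open immersion of stacks $\mathcal{M}^K_{w_i\mp\epsilon}\hookrightarrow\mathcal{M}^K_{w_i}$, so it is proper, birational, and restricts to an isomorphism away from the locus it contracts; the walls inside $(w_0,\xi_{2l})$ being exactly $w_1,\dots,w_{l+1}$ is part of the same statement.

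It then remains to identify the contracted loci and their images. Comparing Theorem \ref{intro,K-moduliparameterize,even,i}(2)--(5) for the coefficients $w_0$, $w_0+\epsilon$, and for $w_i-\epsilon$, $w_i$, $w_i+\epsilon$, shows that the pairs present in $M_{w_0+\epsilon}$ resp.\ $M_{w_i\pm\epsilon}$ but absent from $M_{w_0}$ resp.\ $M_{w_i}$ are exactly those with the coefficient patterns $\Sigma_{w_0+\epsilon}$ resp.\ $\Sigma_{w_i\pm\epsilon}$ in the statement; everything outside these loci matches on both sides, so $\psi^{\pm}_i$ is an isomorphism there. A pair in $\Sigma_{w_0+\epsilon}$ or $\Sigma_{w_i+\epsilon}$ has $t=\max\{j:a_j\neq 0\}$ equal to $2l+3$ resp.\ $2l+3-i$, so Remark \ref{degenrates to K-poly,even}(1) with $e=t$ degenerates $(W',\frac{1}{2}H+D)$ to $(W',\frac{1}{2}H+\{z^2y+u^{\,2l+3-t}y^{t}=0\})$, i.e.\ to $(W',\frac{1}{2}H+\{z^2y+y^{2l+3}=0\})$ when $i=0$ and to $(W',\frac{1}{2}H+\{z^2y+u^{i}y^{2l+3-i}=0\})$ in general; a pair in $\Sigma_{w_i-\epsilon}$ satisfies $a=a_0=\cdots=a_{2l+2-i}=0$ with $a_{2l+3-i}\neq 0$, so Remark \ref{degenrates to K-poly,even}(2) with $e=2l+3-i$ degenerates it to the same pair $(W',\frac{1}{2}H+\{z^2y+u^{i}y^{2l+3-i}=0\})$. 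By Proposition \ref{evenpoly+polyanyn,even} this limit, taken with coefficient $w_i=w_{2l+3-e}$, is K-polystable, hence is a single closed point of $M_{w_i}$; since the K-polystable degeneration is precisely the image under $\psi^{\pm}_i$, the whole locus $\Sigma_{w_i\pm\epsilon}$ is contracted to that point, which yields (1)--(3). (For (1) one also uses that $M_{w_0}$ is a single point by Theorem \ref{intro,K-moduliparameterize,even,i}(2), so $\psi^+_0$ collapses all of $M_{w_0+\epsilon}=\Sigma_{w_0+\epsilon}$ onto it.)

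The step demanding the most care is checking that $\psi^{\pm}_i$ is genuinely an \emph{isomorphism} (not merely a bijection on closed points) over the complement of $\Sigma_{w_i\pm\epsilon}$, and that no point outside $\Sigma_{w_i\pm\epsilon}$ gets contracted. Both follow from openness of K-semistability (Theorem \ref{openness}), applied in both directions across the wall, together with the fact that Theorem \ref{isomorphism,GITandK,even} already identifies $M_{w_i\pm\epsilon}$ and $M_{w_i}$ with the corresponding GIT moduli spaces, leaving no spurious components or non-reduced structure to track. The remaining content is pure bookkeeping with the coefficient patterns tabulated in Theorem \ref{intro,K-moduliparameterize,even,i}.
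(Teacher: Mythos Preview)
Your proposal is correct and follows essentially the same approach as the paper's own proof, which simply cites Remark \ref{degenrates to K-poly,even} and Theorem \ref{intro,K-moduliparameterize,even,i} (with the substitution $e=2l+3-i$) for all three parts. Your version is a more detailed expansion of that one-line argument, additionally invoking Theorem \ref{isomorphism,GITandK,even}, Theorem \ref{wallcrossing[0,1]}, and Proposition \ref{evenpoly+polyanyn,even} to justify the steps that the paper leaves implicit.
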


\begin{proof}
We have $e=2l+3-i$. Then (1) is implied by Remark \ref{degenrates to K-poly,even} and Theorem \ref{intro,K-moduliparameterize,even,i}. And (2) and (3) are implied by Remark \ref{degenrates to K-poly,even} and Theorem \ref{intro,K-moduliparameterize,even,i}.
\end{proof}

\begin{example}\label{l=1,example}
    We consider the case $l=1$. Note that $W'=\mathbb{P}(1,1,l+1)$ and $D$ is a curve in $W'$ given by $$f=z^2y + azu^3 + a_0u^5 + a_1u^4y + a_2u^3y^2+a_3u^2y^3+a_4uy^4 + a_5y^5 = 0.$$ 
    We have $\frac{2l+5}{4l+6}=\frac{7}{10}$ and $\xi_{2l}=\frac{13}{22}$.
    Before $\xi_{2l}$, all the walls are $w_{0}$, $w_{1}$ and $w_{2}$. We have
  $$\frac{2}{5}=w_{0}<w_{1}=\frac{11}{26}<w_{2}=\frac{1}{2}<\xi_1<\frac{7}{10}.$$

We have the following description of Moduli spaces by Theorem \ref{intro,K-moduliparameterize,even,i}. 
When $\frac{2}{5}\leq c<\xi_{2l}$, each pair in $M_w$ has surface isomorphic to $W'$ (Table \ref{K-modulil=3}). When $\xi_{2l}\leq w<\frac{7}{10}$, we still have the same surface $W'$ but different curve equations; see Theorem \ref{intro,K-moduliparameterize,even,i}(6) or Section \ref{moduliafterxi_n}.

  \begin{center}
    \begin{longtable}{|l|l|l|l|}
 			\caption{Explicit description of $M_w$ for $w<\xi_{2l}$ when $l=1$}\label{K-modulil=3}\\
		\hline
		$M_w$   &coefficients   \\
				\hline

 $w=w_{0}=\frac{2}{5}$   & $a=a_0=\cdots=a_4=0$, $a_5\neq0$  \\
\hline

$\frac{2}{5}<w<\frac{11}{26}$   & $\{a,a_0,\cdots,a_4\}$ not all zero, $a_5\neq0$ \\
\hline

$w=w_{1}=\frac{11}{26}$  &  $\{a,a_0,\cdots,a_3\}$ not all zero, $a_4$ arbitrary, $a_5\neq0$ \\
&  or $a=a_0=\cdots=a_3=0$, $a_4\neq 0$ $a_5=0$.  \\
\hline

$\frac{11}{26}<w<\frac{1}{2}$  &  $\{a,a_0,\cdots,a_3\}$ not all zero, $\{a_4, a_5\}$ not all zero\\
\hline

$w=w_{2}=\frac{1}{2}$  &  $\{a,a_0,a_1,a_2\}$ not all zero, $a_3$ arbitrary, $\{a_4,a_5\}$ not all zero  \\
&  or $a=a_0=a_1=a_2=0$, $a_3\neq0$, $a_4=a_5=0$. \\
\hline

$\frac{1}{2}<w<\xi_2$  &  $\{a,a_0,a_1,a_2\}$ not all zero, $\{a_3,a_4,a_5\}$ not all zero \\
\hline
\end{longtable}
\end{center} 

\end{example}

\section{The last wall crossing for K-moduli spaces
}\label{moduliafterxi_n}

In this section we will describe the moduli spaces $M_w$ for $w\geq \xi_n$ where the last wall crossing happens.

\subsection{$n$ is odd}    
Let $(W,D)$ be a pair such that the Equation \eqref{defineK-moduli,odd} defining $D$ satisfies $a_{n+3}\neq 0$, $a\neq 0$ and curve $D$ has a type $A_{n+3}$-singularity. By Lemma \ref{section 5:a is nonzero Lemma}, such a curve $D$ is projectively equivalent to the curve $D_{\ss}$ as in Definition \ref{def:Dss}. 
By Proposition \ref{section5:main lemma}, the pairs $(W,\xi_n D_{\ss})$ is K-semistable, but it is not K-polystable. As we shall see in Proposition \ref{prop:xi_n-wall}, the K-polystable degeneration of $(W, \xi_n D_{\ss})$ is $(W_0, \xi_n D_0)$ where $W_0=\PP(1,n+2,
\frac{(n+3)^2}{2})_{x_0,x_1,x_2}$ and $D_0: x^2_2-x_0x^{n+4}_1=0$.

\begin{proposition}\label{prop:Dss-degenerate}
$W$ admits a $\bQ$-Gorenstein isotrivial degeneration to $W_0:=\PP(1,n+2,
\frac{(n+3)^2}{2})$.
\end{proposition}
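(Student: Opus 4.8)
The plan is to realize the degeneration as the one‑parameter family attached to a divisorial valuation over $W$, and to identify its central fibre with $W_0$ by an explicit associated‑graded computation. Let $\msp=[1:1:1]\in W$, which by Lemma~\ref{section 5:a is nonzero Lemma} is the (smooth) point of $W$ supporting the $A_{n+3}$‑singularity of $D_{\ss}$. On the affine chart $U_x\cong\bA^2$ with coordinates $\eta=y/x^2-1$, $\zeta=z/x^{n+2}-1$, let $\mathfrak v$ be the monomial valuation with $\mathfrak v(\eta)=p$ and $\mathfrak v(\zeta)=q$ for a pair of coprime positive integers $(p,q)$ to be determined; up to rescaling this is the valuation of a single weighted blow‑up $\pi\colon Y\to W$ at $\msp$. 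Since $W$ is a $\bQ$‑Fano surface of Picard number one, the exceptional divisor $F$ of $\pi$ is dreamy, so the extended Rees algebra of the $\mathfrak v$‑filtration on the anticanonical ring $R:=\bigoplus_{m\ge 0}H^0(W,-mK_W)$ defines a flat projective family $\mathcal W\to\bA^1$ whose fibre over $t\ne 0$ is $W$, whose restriction over $\bG_m$ is isotrivial, and whose central fibre is $\mathcal W_0=\Proj\bigl(\operatorname{gr}_{\mathfrak v}R\bigr)$.

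The core of the argument is to identify $\mathcal W_0$ with $W_0=\PP(1,n+2,\frac{(n+3)^2}{2})$. The choice of $(p,q)$ is forced by requiring that the induced degeneration of $D_{\ss}$ be the quasi‑smooth curve $D_0\colon x_2^2=x_0x_1^{n+4}$: note $D_0$ again carries an $A_{n+3}$‑singularity, now at the smooth point $\msp_{x_0}$ of $W_0$, which matches the expectation that $(W_0,\xi_n D_0)$ is the K‑polystable replacement of $(W,\xi_n D_{\ss})$ (cf. Proposition~\ref{prop:xi_n-wall}), and the normal form of $D_{\ss}$ in Definition~\ref{def:Dss} together with the volume identity $(-K_W)^2=(-K_{W_0})^2=\frac{(n+5)^2}{2(n+2)}$ (Definition~\ref{defineK-moduli,odd}) pins down $(p,q)$. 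With $(p,q)$ fixed, one computes $\operatorname{gr}_{\mathfrak v}$ of each graded piece of $R$: the leading $\mathfrak v$‑form of $x$, together with the leading forms of two further carefully chosen anticanonical sections, produce generators $x_0,x_1,x_2$ whose internal degrees are in the ratios $1:(n+2):\frac{(n+3)^2}{2}$, and one checks that the only surviving relations are the Veronese relations, so that $\operatorname{gr}_{\mathfrak v}R$ is the anticanonical ring of $\PP(1,n+2,\frac{(n+3)^2}{2})$ and hence $\mathcal W_0\cong W_0$.

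Flatness and isotriviality of $\mathcal W\to\bA^1$ over $\bG_m$ are built into the extended Rees construction, so it only remains to see that the degeneration is $\bQ$‑Gorenstein. This follows because $W$ and the irreducible normal central fibre $W_0$ are both klt $\bQ$‑Fano surfaces and the total space carries a two‑torus action with at worst quotient singularities, so $K_{\mathcal W/\bA^1}$ is $\bQ$‑Cartier; alternatively one writes $\mathcal W$ explicitly as a three‑dimensional toric variety whose fan in $N\times\bZ$ interpolates the complete fans of $W$ (primitive rays $(1,0),(0,1),(-2,-(n+2))$, using that $n$ is odd so $\gcd(2,n+2)=1$) and of $W_0$ (primitive rays $(1,0),(0,1),(-(n+2),-\frac{(n+3)^2}{2})$, using $\gcd(n+2,\frac{(n+3)^2}{2})=1$), and reads off the $\bQ$‑Gorenstein property from the toric structure.

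The main obstacle is the central‑fibre computation above: determining the weights $(p,q)$ and then verifying that the associated graded algebra is exactly the Veronese of the polynomial ring with weights $(1,n+2,\frac{(n+3)^2}{2})$ — i.e. that no unexpected generators or relations appear. This is precisely where the very particular shape of $D_{\ss}$, and its $A_{n+3}$‑singularity, is used; the flatness, isotriviality, and $\bQ$‑Gorenstein statements are then formal.
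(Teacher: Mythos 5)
Your strategy (realize the degeneration as the associated-graded/extended-Rees family of a weighted blow-up valuation at the singular point $[1:1:1]$ of $D_{\ss}$, then identify $\Proj\bigl(\operatorname{gr}_{\mathfrak v}R\bigr)$ with $W_0$) is genuinely different from the paper's proof, which is a two-line citation: $W=\PP(1,2,n+2)$ admits a one-step mutation with $(\lambda_0,\lambda_1,\lambda_2)=(2,1,n+2)$ replacing the weight $2$ by $\frac{(1+(n+2))^2}{2}=\frac{(n+3)^2}{2}$, this mutation is legitimate because the $\frac{1}{2}(1,n+2)=A_1$ point is a $T$-singularity, and then \cite[Theorem 4.2]{Por19} produces the $\bQ$-Gorenstein isotrivial degeneration. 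Your route could in principle work (the valuation you want is essentially the one with weights $(2,n+4)$ in the coordinates $(x-1,\,z-x^{n+4})$ used in Lemma \ref{specialpoint}), but as written it is not a proof: the weights $(p,q)$ are never determined, the ``two further carefully chosen anticanonical sections'' are never exhibited, and the assertion that $\operatorname{gr}_{\mathfrak v}R$ is exactly the weighted polynomial ring with weights $(1,n+2,\frac{(n+3)^2}{2})$ with only Veronese relations is precisely the content of the proposition and is left as ``one checks.'' The preliminary claim that $F$ is dreamy ``since $W$ is a $\bQ$-Fano surface of Picard number one'' is also unjustified: finite generation of section rings can fail on surfaces (Zariski's examples), and dreaminess of an arbitrary weighted blow-up is not a formal consequence of $\rho=1$.

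The more serious gap is the $\bQ$-Gorenstein step, which you treat as ``formal'' when it is the delicate point (it is exactly what the $T$-singularity condition controls). That both fibers are klt $\bQ$-Fano does not imply $K_{\mathcal W/\bA^1}$ is $\bQ$-Cartier (e.g.\ a cubic scroll degenerates flatly to the cone $\PP(1,1,3)$ with $K^2$ jumping), and you give no argument that the total space has quotient singularities or carries a two-torus action---the valuation is centered at a non-toric point, so the only evident torus on the total space is the Rees $\bG_m$ together with the $\bG_m$ of the central fiber. Your proposed ``alternative,'' a toric total space whose fan in $N\times\bZ$ interpolates the fans of $W$ and $W_0$, cannot exist: a degeneration over $\bA^1$ that is equivariant for the full fiberwise two-torus and has reduced, irreducible, normal central fiber has exactly one ray at height one, whose star is canonically identified with the fan of the general fiber, so the central fiber would be isomorphic to $W\not\cong W_0$. (Consistently, mutation families such as this one have complexity-one, non-toric total space.) So either carry out the associated-graded computation and then prove $\bQ$-Cartierness of $K_{\mathcal W/\bA^1}$ directly (e.g.\ by exhibiting the family explicitly, as in the $A_1$ versus $\frac{1}{(n+3)^2/2}(1,n+2)$ local picture), or use the mutation/$T$-singularity machinery as the paper does.
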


\begin{proof}
  By \cite{AK16}, there exists a one-step mutation from $W$ to $W_0$ by taking $(\lambda_0, \lambda_1, \lambda_2):=(2,1,n+2)$. Here we use the fact that the cyclic quotient singularity of type $\frac{1}{2}(1, n+2)$ is an $A_1$-singularity and hence a $T$-singularity. Then by \cite[Theorem 4.2]{Por19} we know that $W$ admits a $\bQ$-Gorenstein isotrivial degeneration to $W_0$. 
\end{proof}

 Recall that $$\xi_n=\frac{n^3 + 11n^2 + 31n + 23}{2n^3 + 18n^2 + 50n + 42}.$$

\begin{lemma}\label{polystable pair at xi}
Let $n>0$ be an odd integer, then $(W_0,wD_0)$ is K-polystable if and only if $w=\xi_n$, where $W_0=\PP(1,n+2,
\frac{(n+3)^2}{2})_{x_0,x_1,x_2}$ and $D_0$ is defined by $x^2_2-x_0x^{n+4}_1=0$. Moreover, when  $w\neq \xi_n$, $(W_0,wD_0)$ is K-unstable.
\end{lemma}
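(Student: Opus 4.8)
The plan is to establish K-polystability of $(W_0, \xi_n D_0)$ using the complexity one criterion (Theorem \ref{Tone}), and then use the $\beta$-invariant of the torus-induced valuation to rule out all other values of $w$.

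First I would set up the torus action. The surface $W_0 = \PP(1, n+2, \tfrac{(n+3)^2}{2})_{x_0,x_1,x_2}$ together with $D_0: x_2^2 - x_0 x_1^{n+4} = 0$ carries a $\bG_m$-action: after rescaling, $\lambda[x_0:x_1:x_2] = [x_0 : \lambda x_1 : \lambda^{(n+4)/2} x_2]$ (note $x_1$ has weight $n+2$ and $x_2$ has weight $\tfrac{(n+3)^2}{2}$, and the curve equation is $\bG_m$-semi-invariant since $2\cdot\tfrac{n+4}{2} = (n+4)$ matches the $x_0 x_1^{n+4}$ term). This is a complexity one pair, so $\rank \Aut = 1$, and it is not toric since $D_0$ is not a union of torus-invariant curves. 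I would compute the $\beta$-invariant of the monomial valuation $v$ induced by $\lambda$, using Lemma \ref{S-inv.} for the $S$-invariant and the adjunction formula for the log discrepancy, expecting $\beta_{(W_0, wD_0)}(v) = 0$ precisely at $w = \xi_n$ — this is the computation that \emph{defines} the number $\xi_n$ and is the arithmetic heart of the argument (though routine).

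Next I would verify condition (1) of Theorem \ref{Tone}: $\beta_{(W_0, \xi_n D_0)}(F) > 0$ for each vertical $\bG_m$-equivariant prime divisor $F$. The candidates are the three torus-invariant coordinate curves $H_{x_0}, H_{x_1}, H_{x_2}$ together with the components of $D_0$ that are $\bG_m$-invariant; in fact $D_0: x_2^2 = x_0 x_1^{n+4}$ is irreducible (it is a ``double cover'' type curve), so $D_0$ itself is the relevant extra divisor. Each is a multiple of $\cO(k)$ for an explicit $k$, so by Lemma \ref{Sinv,k} the $S$-invariant is $\tfrac{\deg(-K_{W_0} - \xi_n D_0)}{3k}$, the log discrepancy is $1$ minus the coefficient of $F$ in $\xi_n D_0$ (where I must account for the quotient singularities of $W_0$ when computing $A_{W_0}(F)$ — e.g. $H_{x_0}$ passes through the $\tfrac{1}{2}$-type singularity), and I check the resulting inequality at $w = \xi_n$. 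This is a finite check of a handful of explicit rational inequalities. Condition (3) (vanishing of the Futaki invariant on the cocharacter lattice) is equivalent by Remark \ref{remarkTone} to $\beta_{(W_0, \xi_n D_0)}(v) = 0$, which is exactly the computation from the previous paragraph.

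Finally, for the ``only if'' and K-instability claims: by Theorem \ref{Tone}(3) and Remark \ref{remarkTone}, any K-semistable $(W_0, wD_0)$ must satisfy $\beta_{(W_0, wD_0)}(v) = 0$, which forces $w = \xi_n$; hence for $w \neq \xi_n$ the pair is not even K-semistable, i.e. K-unstable. The main obstacle I anticipate is purely bookkeeping: correctly computing the log discrepancies $A_{W_0}(F)$ in the presence of the cyclic quotient singularities of $\PP(1, n+2, \tfrac{(n+3)^2}{2})$ (the weights are not pairwise coprime in the naive sense — one needs well-formedness, and $\gcd(n+2, \tfrac{(n+3)^2}{2})$ as well as $\gcd(2, \text{stuff})$ must be tracked), and making sure the normalization of the valuation $v$ and the degree $r = \deg(-K_{W_0} - \xi_n D_0)$ are consistent throughout. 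Once the arithmetic is organized, the structure of the proof is the same three-condition verification as in Proposition \ref{evenpoly+polyanyn}.
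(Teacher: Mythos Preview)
Your approach is correct and essentially identical to the paper's: apply Theorem~\ref{Tone} to the complexity-one pair, compute $\beta$ of the torus-induced monomial valuation (this is what pins down $\xi_n$), check $\beta>0$ on each vertical $\bG_m$-invariant prime divisor via Lemma~\ref{Sinv,k}, and use Remark~\ref{remarkTone} for the converse.

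Two minor corrections to your bookkeeping. First, since $n$ is odd, $(n+4)/2\notin\ZZ$, so the action must be written with integer weights as $\lambda\cdot[x_0:x_1:x_2]=[x_0:\lambda^2 x_1:\lambda^{n+4}x_2]$ (this is what the paper does, working in the chart $x_0\neq 0$ with $v(x_1)=2$, $v(x_2)=n+4$). Second, the vertical $\bG_m$-invariant prime divisors are not just the three coordinate lines and $D_0$: they comprise the entire pencil $\{x_2^2-ax_0x_1^{n+4}=0\}$ for $a\in\bC$, and the paper checks these too---though for $a\neq 1$ one has $A=1$ with the same $S$-invariant, so that case is automatic once $a=1$ is done. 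Finally, your anticipated obstacle about well-formedness does not arise: $n+2$ is odd and coprime to $n+3$, hence $\gcd(n+2,\tfrac{(n+3)^2}{2})=1$, so $W_0$ is well-formed and $A_{W_0}(H_{x_i})=1$ for each coordinate line.
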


\begin{proof}
We use Theorem \ref{Tone} to check K-polystability and look at curve $D_0$ in the affine chart $x_0\neq 0$ in $W_0$. The pair $(W_0, \xi_nD_0)$ is a complexity one $\mathbb{T}$-pair with an action by  $\mathbb{T}=\mathbb{G}_m$: $\lambda[x_0,x_1,x_2]=[x_0,\lambda^2x_1,\lambda^{n+4}x_2]$ for $\lambda \in \mathbb{G}_m$. 

The monomial valuation $v_{(2,n+4)}$ such that $v(x_1)=2$ and $v(x_2)=n+2$ is induced by the $\mathbb{T}$-action.
We first compute the $\beta$-invariant for $v_{(2,n+4)}$. 
Then by Lemma \ref{S-inv.}, we get the S-invariant for $v$:
\begin{equation*}
    S_{\mathcal{O}(1)}(v)=\frac{1}{3}(\frac{2}{n+2}+\frac{2(n+2)}{(n+3)^2})=\frac{2(2n^2+12n+17)}{3(n+3)^2(n+2)}.
\end{equation*}
We have $-K_{W_0}-wD_0=\mathcal{O}(n+3+\frac{(n+3)^2}{2}-w(n+3)^3)=r\mathcal{O}(1)$ and $S_{(W_0,wD_0)}(v)=\mathrm{deg}(-K_{W_0}-wD_0)\cdot S_{\mathcal{O}(1)}(v)=rS_{\mathcal{O}(1)}(v)$, where $r=\frac{(n+3)(n+5)}{2}-w(n+3)^2$. Also, the log discrepancy $A_{W_0}(v)=2+(n+4)=n+6$ and $A_{(W_0,wD_0)}(v)=A_{W_0}(v)-wv(D_0)=n+6-c(2n+8)$. Then we have $\beta$-invariant 
\begin{equation}\label{xi,betaD_0}
    \beta_{(W_0,wD_0)}(v)=A_{(W_0,wD_0)}(v)-S_{(W_0,wD_0)}(v)=0
\end{equation}
if and only if $w=\xi_n$.

There is no horizontal $\mathbb{T}$-invariant prime divisor by Remark \ref{remarkTone}. Next, we check $\beta_{(W_0,\xi_nD_0)}(E)> 0$ for every vertical $\mathbb{T}$-invariant prime divisor $E$. All the vertical
$\mathbb{T}$-invariant prime divisors are 
$\{[0;x_1;x_2]\},\{[x_0;0;x_2]\}, \{[x_0;x_1;0]\}, \{x^2_2-x_0x^{n+4}_1=0\}, \{x^2_2-ax_0x^{n+4}_1=0,a\neq 1\}$ under the $\bG_m$ action $\lambda[x_0,x_1,x_2]=[x_0,\lambda^2x_1,\lambda^{n+4}x_2]$.

Since $-K_{W_0}-wD_0=r\mathcal{O}(1)$, by Lemma \ref{Sinv,k}, we have $S_{(W_0,wD_0)}(\mathcal{O}(k))=\frac{r}{3k}$.

If $E$ is $\{[0;x_1;x_2]\}$, then $E=\mathcal{O}(1)$. Thus $S_{(W_0, wD_0)}(E)=\frac{r}{3}$. Note that $A_{(W_0,wD_0)}(E)=1-\mathrm{Coeff}_E(wD_0)=1-0=1$. Thus $\beta_{(W_0,wD_0)}(E)=A_{(W,wD_0)}(E)-S_{(W,wD_0)}(E)>0$ iff $w>\frac{n^2+8n+9}{2(n^2+6n+9)}$ which is satisfied by $w=\xi_n$.

If $E$ is $\{[x_0;0;x_2]\}$, then $E=\mathcal{O}(n+2)$.
Thus $S_{(W_0, wD_0)}(E)=rS_{\mathcal{O}(1)}(E)=\frac{r}{3(n+2)}$. Note that $A_{(W_0,wD_0)}(E)=1-\mathrm{Coeff}_E(wD_0)=1-0=1$. Thus $\beta_{(W_0,wD_0)}(E)=A_{(W,wD_0)}(E)-S_{(W,wD_0)}(E)>0$ iff $w>\frac{n^2+2n-15}{2(n^2+6n+9)}$ which is satisfied by $w=\xi_n$.

If $E$ is $\{[x_0;x_1;0]\}$, then $E=\mathcal{O}(\frac{(n+3)^2}{2})$. Thus $S_{(W_0, wD_0)}(E)=rS_{\mathcal{O}(1)}(E)=\frac{2r}{3(n+3)^2}=\frac{n+5}{3(n+3)}-\frac{2w}{3}$. Note that $A_{(W_0,wD_0)}(E)=1-\mathrm{Coeff}_E(wD_0)=1-0=1$. Thus $\beta_{(W_0,wD_0)}(E)=A_{(W,wD_0)}(E)-S_{(W,wD_0)}(E)>0$ when $w>0$ which is satisfied by $w=\xi_n$.

If $E$ is $\{x^2_2-ax_0x^{n+4}_1=0\}$, then $E=\mathcal{O}((n+3)^2)$. Thus $S_{(W_0, wD_0)}(E)=rS_{\mathcal{O}(1)}(E)=\frac{r}{3(n+3)^2}=\frac{n+5}{6(n+3)}-\frac{w}{3}$. If $a\neq 1$, we have $A_{(W_0,wD_0)}(E)=1-\mathrm{Coeff}_E(wD_0)=1-0=1$. Thus $\beta_{(W_0,wD_0)}(E)>0$ when $c>0$ which is satisfied by $c=\xi_n$. If $a=1$, we have $A_{(W_0,wD_0)}(E)=1-\mathrm{Coeff}_E(wD_0)=1-c$. Thus $\beta_{(W,wD_0)}(E)=A_{(W,wD_0)}(E)-S_{(W,wD_0)}(E)>0$ iff $w<\frac{5n+13}{4n+12}$ which is satisfied by $w=\xi_n$.

Hence $(W_0, \xi_nD_0)$ is K-polystable by Theorem \ref{Tone}.

Now we show $(W_0, wD_0)$ is K-semistable only when $w=\xi_n$. By Theorem \ref{Tone}, if $(W_0, wD_0)$ is K-semistable, we have $\beta_{(W_0, wD_0)}(v)=0$. By \eqref{xi,betaD_0}, $\beta_{(W_0, wD_0)}(v)=0$ is equivalent to $w=\xi_n$. Hence the conclusion holds.
\end{proof}

We aim to find all the new K-polystable pairs parametrized in $M_w$ for $w\in (\xi_n, \frac{n+5}{2n+6})$. Let $D_1$ be the curve in $W_0=\PP(1,n+2,
\frac{(n+3)^2}{2})_{x_0,x_1,x_2}$ defined by
\begin{equation}\label{defineD_1afterxi}
x^2_2-x_0x_1^{n+4}+b_{n+2}x_0^{2n+5}x_1^{n+2}+\cdots+b_{1}x_0^{(n+3)(n+2)+1}x_1+b_{0}x_0^{(n+4)(n+2)+1},
\end{equation}
where coefficients $b_0,\cdots,b_{n+2}$ are not all zero.

\begin{lemma}\label{lem:W0-general-curve}
A general curve in $|\cO_{W_0}((n+3)^2)|$ is  projectively equivalent to some $D_1$ from \eqref{defineD_1afterxi}. 
\end{lemma}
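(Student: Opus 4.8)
The plan is to normalize a general member of $|\cO_{W_0}((n+3)^2)|$ using automorphisms of $W_0=\PP(1,n+2,\tfrac{(n+3)^2}{2})$, in three normalization steps followed by a genericity remark. First I would record the monomial basis of $H^0(W_0,\cO_{W_0}((n+3)^2))$: since $n$ is odd, $\tfrac{(n+3)^2}{2}$ is an even integer and $W_0$ is well-formed, and a monomial $x_0^ax_1^bx_2^c$ of degree $(n+3)^2$ has $c\in\{0,1,2\}$, with $x_2^2$ the unique one with $c=2$; moreover, among the $c=0$ monomials $x_0^{(n+3)^2-b(n+2)}x_1^b$ for $0\le b\le n+4$, the one of largest $x_1$-degree is $x_0x_1^{n+4}$. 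For a general curve $D$ the coefficient of $x_2^2$ is nonzero, so after scaling its defining equation $D$ is cut out by $x_2^2+x_2\,g(x_0,x_1)+h(x_0,x_1)=0$ with $g,h$ quasi-homogeneous in $x_0,x_1$ of degrees $\tfrac{(n+3)^2}{2}$ and $(n+3)^2$.

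Next I would complete the square. The substitution $x_2\mapsto x_2-\tfrac12 g(x_0,x_1)$ is an automorphism of $W_0$ because $\deg g=\wt(x_2)$, and it transforms the equation into $x_2^2+\tilde h(x_0,x_1)=0$ for some quasi-homogeneous $\tilde h$ of degree $(n+3)^2$. The coefficient of $x_0x_1^{n+4}$ in $\tilde h$ is a polynomial in the coordinates of the linear system which is not identically zero (it is already nonzero when $g=0$), hence nonzero for general $D$; a scaling $x_0\mapsto\lambda x_0$ then normalizes it to $-1$. So at this point $\tilde h=-x_0x_1^{n+4}+\beta\,x_0^{n+3}x_1^{n+3}+\sum_{j=0}^{n+2}c_j\,x_0^{(n+3)^2-j(n+2)}x_1^j$ for some constants $\beta,c_0,\dots,c_{n+2}$.

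The crucial step is to eliminate the middle term $\beta\,x_0^{n+3}x_1^{n+3}$, which is the one monomial separating this normal form from the shape \eqref{defineD_1afterxi}. For this I would apply the unipotent automorphism $x_1\mapsto x_1+c\,x_0^{n+2}$ (legitimate since $\wt(x_0^{n+2})=\wt(x_1)$, and it fixes $x_0,x_2$). Under it, only the terms $-x_0x_1^{n+4}$ and $\beta x_0^{n+3}x_1^{n+3}$ can produce a monomial of $x_1$-degree $n+3$ — each $c_jx_0^{(n+3)^2-j(n+2)}x_1^j$ with $j\le n+2$ can only lower the $x_1$-degree — and the new coefficient of $x_0^{n+3}x_1^{n+3}$ becomes $\beta-(n+4)c$. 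Taking $c=\beta/(n+4)$ (possible as $n+4\ne 0$) cancels this term while keeping the $-x_0x_1^{n+4}$ term intact and introducing no monomial of $x_1$-degree exceeding $n+2$ elsewhere, so the equation becomes exactly of the form \eqref{defineD_1afterxi} for suitable $b_0,\dots,b_{n+2}$. Finally, if all these $b_j$ vanished then $D$ would be projectively equivalent to $D_0$; since the $\Aut(W_0)$-orbit of $D_0$ has dimension at most $\dim\Aut(W_0)$, which is strictly smaller than $\dim|\cO_{W_0}((n+3)^2)|$, that orbit is a proper subvariety of the linear system, so for general $D$ the coefficients $b_j$ are not all zero, as required.

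I expect the only mildly delicate point to be the monomial bookkeeping in the third step — verifying that applying $x_1\mapsto x_1+c\,x_0^{n+2}$ affects no other monomial of the relevant degree — but this is a short explicit check; everything else is a routine application of the structure of $\Aut(W_0)$ together with the enumeration of monomials in the two relevant degrees.
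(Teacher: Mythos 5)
Your proposal is correct and follows essentially the same route as the paper: scale so the $x_2^2$-coefficient is $1$, complete the square in $x_2$, normalize the $x_0x_1^{n+4}$-coefficient, and kill the $x_0^{n+3}x_1^{n+3}$-term with the unipotent substitution $x_1\mapsto x_1+c\,x_0^{n+2}$, concluding by genericity that $b_0,\dots,b_{n+2}$ are not all zero. The only differences are cosmetic (you rescale $x_0$ where the paper rescales $x_1$, and you spell out the genericity and monomial bookkeeping that the paper leaves implicit).
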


\begin{proof}
A general curve $C\in |\cO_{W_0}((n+3)^2)|$ has equation $C: c_0 x_2^2 + g(x_0,x_1) x_2 + h(x_0,x_1)=0$ where $g$ and $h$ are homogeneous of degree $\frac{(n+3)^2}{2}$ and $(n+3)^2$ respectively. Since $C$ is general, we have $c_0\neq 0$ and by rescaling we may assume $c_0 = 1$. Then after a change of coordinate $x_2 \mapsto x_2 + \frac{g}{2}$ we eliminate the $g x_2$-term, so we may assume $g = 0$. Then we can write $C:x_2^2 + \sum_{i=0}^{n+4} b_i x_0^{(n+3)^2 - (n+2)i} x_1^i = 0$. Since $C$ is general, we have $b_{n+4}\neq 0$ and we may assume $b_{n+4} = -1$ after a rescaling of $x_1$. Then after a change of coordinate $x_1\mapsto x_1 - \frac{b_{n+3}}{n+4} x_0^{n+2}$, we eliminates the $x_0^{n+3}x_1^{n+3}$-term. So we may assume $b_{n+3} = 0$. Thus $C$ has the form of $D_1$ as $b_0,\cdots, b_{n+2}$ are not all zero by $C$ being general.
\end{proof}

\begin{lemma}\label{lem:curves-W0}
Let $n>0$ be an odd integer. The pair $(W_0,wD_1)$ is K-polystable for $w\in(\xi_n,\frac{n+5}{2n+6})$.
\end{lemma}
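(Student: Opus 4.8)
The plan is to deduce the statement from the interpolation criterion Lemma~\ref{theta,w,interpolation,polystable}(2), with endpoints $w_1=\xi_n$ and $w_2=\tfrac{n+5}{2n+6}$, using the K-polystable pair $(W_0,\xi_nD_0)$ of Lemma~\ref{polystable pair at xi} as the limiting object. For this I will need two inputs: that $(W_0,\tfrac{n+5}{2n+6}D_1)$ is log canonical (hence K-semistable as a log Calabi--Yau pair), and that $(W_0,\xi_nD_1)$ is K-semistable.

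First I would compute $\lct(W_0,D_1)$. In the smooth chart $\{x_0\ne 0\}\cong\bA^2_{\xi_1,\xi_2}$, writing $g(\xi_1)=\xi_1^{n+4}-\sum_{i=0}^{n+2}b_i\xi_1^i$, the curve $D_1$ is $\xi_2^2=g(\xi_1)$, so its singularities lie over the multiple roots of $g$ and are of type $A_{m-1}$ where $m$ is the corresponding root multiplicity. Since $g$ has degree $n+4$, leading coefficient $1$, vanishing coefficient in degree $n+3$, and is not $\xi_1^{n+4}$ (the $b_i$ are not all zero), no root of $g$ has multiplicity $\ge n+4$; a root of multiplicity exactly $n+3$ is necessarily nonzero; and the multiplicity at $\xi_1=0$ is at most $n+2$. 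Thus in this chart $D_1$ has at worst an $A_{n+2}$-singularity at a point where $W_0$ is smooth and away from $\msp_{x_0}$, and at worst an $A_{n+1}$-singularity at $\msp_{x_0}$, giving local lct's $\ge\min\{\tfrac12+\tfrac1{n+3},\ \tfrac12+\tfrac1{n+2}\}=\tfrac{n+5}{2n+6}$. The point $\msp_{x_2}$ does not lie on $D_1$, and at the cyclic quotient singular point $\msp_{x_1}$ the curve $D_1$ is smooth and its preimage in the orbifold cover is a smooth curve, so $\lct_{\msp_{x_1}}(W_0,D_1)=1$. Hence $\lct(W_0,D_1)\ge\tfrac{n+5}{2n+6}$; as $-(K_{W_0}+\tfrac{n+5}{2n+6}D_1)\equiv 0$, this says precisely that the log Calabi--Yau pair $(W_0,\tfrac{n+5}{2n+6}D_1)$ is K-semistable.

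Next I would show $(W_0,\xi_nD_1)$ is K-semistable. The one-parameter subgroup $\lambda\cdot[x_0:x_1:x_2]=[x_0:\lambda^2x_1:\lambda^{n+4}x_2]$ of $\Aut(W_0)$ (the same torus as in Lemma~\ref{polystable pair at xi}) sends $D_1$ to $D_0:x_2^2-x_0x_1^{n+4}=0$ as $\lambda\to 0$, since every term $b_ix_0^{(n+3)^2-(n+2)i}x_1^i$ with $i\le n+2$ vanishes in the limit. By the lct bound above, $\lct(W_0,D_1)\ge\tfrac{n+5}{2n+6}>\xi_n$, so both $(W_0,\xi_nD_1)$ and $(W_0,\xi_nD_0)$ are klt log Fano pairs; since $(W_0,\xi_nD_0)$ is K-polystable by Lemma~\ref{polystable pair at xi}, Theorem~\ref{openness} shows $(W_0,\xi_nD_1)$ is K-semistable. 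In particular $(W_0,D_1)\in\cE$ (with $\theta=0$ and $d=\tfrac{2n+6}{n+5}$), and by uniqueness of the K-polystable degeneration its K-polystable degeneration is $(W_0,\xi_nD_0)$.

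Finally I would invoke Lemma~\ref{theta,w,interpolation,polystable}(2) with $w_1=\xi_n<w_2=\tfrac{n+5}{2n+6}$ (so $dw_2=1$): both boundary pairs $(W_0,w_iD_1)$ are K-semistable by the previous two paragraphs; the K-polystable degeneration $(W_0,\xi_nD_0)$ has $\rank\Aut(W_0,D_0)=1$ since $(W_0,D_0)$ is a non-toric complexity-one pair; and $(W_0,\tfrac{n+5}{2n+6}D_0)$ is not K-semistable because by Lemma~\ref{polystable pair at xi} the pair $(W_0,wD_0)$ is K-unstable for every $w\ne\xi_n$, in particular for $w=\tfrac{n+5}{2n+6}$. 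The lemma then yields that $(W_0,wD_1)$ is K-stable, in particular K-polystable, for all $w\in(\xi_n,\tfrac{n+5}{2n+6})$. The hard part is the first step: the elementary but slightly delicate bookkeeping showing $g$ cannot have a root of multiplicity $n+4$, that multiplicity $n+3$ forces a nonzero root, and that the quotient singularity $\msp_{x_1}$ contributes log canonical threshold $1$; once the sharp bound $\lct(W_0,D_1)\ge\tfrac{n+5}{2n+6}$ is in hand, the rest is a formal application of openness and interpolation.
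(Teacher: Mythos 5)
Your proposal is correct and follows essentially the same route as the paper's proof: bound $\lct(W_0,D_1)\geq\frac{n+5}{2n+6}$ using the fact that $D_1$ has at worst $A_{n+2}$-singularities (so the log Calabi--Yau pair at $w=\frac{n+5}{2n+6}$ is log canonical, hence K-semistable), degenerate $D_1$ to $D_0$ by a one-parameter subgroup and use Theorem \ref{openness} together with Lemma \ref{polystable pair at xi} to get K-semistability at $\xi_n$, then conclude K-stability on $(\xi_n,\frac{n+5}{2n+6})$ via Lemma \ref{theta,w,interpolation,polystable}(2) with $\rank\Aut(W_0,D_0)=1$ and $(W_0,\frac{n+5}{2n+6}D_0)$ not K-semistable. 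Your only addition is the more explicit root-multiplicity bookkeeping behind the singularity bound, which the paper asserts more briefly.
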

\begin{proof}
In the affine chart $x_0=1$, the curve $D_1$ is given by
\begin{equation*}
x^2_2-x_1^{n+4}+b_{n+2}x_1^{n+2}+b_{n+1}x_1^{n+1}\cdots+b_{1}x_1+b_{0}.
\end{equation*}
Since $b_0,\cdots,b_{n+2}$ are not all zero, the curve
$D_1$ is either smooth or has the singularity of type $A_{m}$ for $0<m\leq n+2$. Thus $\mathrm{lct}(W_0,D_1)=\mathrm{min}\{\frac{m+3}{2m+2},1\}\geq \frac{n+5}{2n+6}$. This implies $(W_0,\frac{n+5}{2n+6}D_1)$ is log canonical, so is K-semistable.

Under the $\bG_m$ action $\lambda[x_0,x_1,x_2]=[\lambda^{n+4} x_0,\lambda^{-1} x_1,x_2],$ the curve $D_1$ degenerates to $D_0$ as $\lambda \rightarrow 0,$ where $D_0$ is defined by $x^2_2-x_0x^{n+4}_1=0$.
    By Lemma \ref{polystable pair at xi}, the pair $(W_0,\xi_nD_0)$ is K-polystable. 
    Then $(W_0,\xi_nD_1)$ is K-semistable by Theorem \ref{openness}. 
    We have rank $\Aut(W_0,D_0)=1$ since the pair $(W_0,D_0)$ has complexity one. Also we have $(W_0,\frac{n+5}{2n+6}D_0)$ is not K-semistable by Lemma \ref{polystable pair at xi}.
    Therefore, by Lemma \ref{theta,w,interpolation,polystable}, the pair $(W_0,wD_1)$ is K-stable for $w\in(\xi_n,\frac{n+5}{2n+6})$, so is K-polystable. 
\end{proof}

We now show that for $w>\xi_n$, the K-moduli space $M_w$ is isomorphic to a weighted blow up of GIT moduli space $M^{GIT}_{w}$ with the exceptional divisor parametrizing the pairs $(W_0,wD_1)$, where $D_1$ is defined by Equation \eqref{defineD_1afterxi}.

\begin{proposition}\label{prop:xi_n-wall}
There is a wall crossing diagram
\[
M_{\xi_n -\epsilon }\xrightarrow[\cong]{\phi^-} M_{\xi_n}\xleftarrow{\phi^+}M_{\xi_n +\epsilon}
\]
for $0<\epsilon \ll 1$ such that the following hold.
\begin{enumerate}
    \item $\phi^-$ is an isomorphism that only replaces $(W, D_{\ss})$ by $(W_0, D_0)$.
    \item $\phi^+$ is a divisorial contraction with only one exceptional divisor $E^+$. Moreover, we have $\phi^+(E^+)= \{[(W_0, D_0)]\}$, and $E^+$ parameterizes pairs $(W_0, D_1)$ as in Lemma \ref{lem:curves-W0}.
\end{enumerate}
\end{proposition}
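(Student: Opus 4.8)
The plan is to study the two wall-crossing morphisms $\phi^-$ and $\phi^+$ produced at $\xi_n$ by Theorem~\ref{wallcrossing[0,1]} separately, using the explicit list of K-polystable pairs with surface $W$ obtained in Proposition~\ref{ksemistable,poly-allcases} together with the new pairs on $W_0$ found in Lemmas~\ref{polystable pair at xi}, \ref{lem:W0-general-curve} and \ref{lem:curves-W0}. Throughout I treat the K-semistability of $(W,\xi_n D_{\ss})$ and of $(W_0,(\xi_n+\epsilon)D_1)$ as given by Proposition~\ref{section5:main lemma} and Lemma~\ref{lem:curves-W0}.

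First I would handle $\phi^-$. By Theorem~\ref{isomorphism,GITandK} we have $M_{\xi_n-\epsilon}\cong M^{GIT}_{\xi_n-\epsilon}$, and by Proposition~\ref{ksemistable,poly-allcases} the only K-polystable pair in $M_{\xi_n-\epsilon}$ whose K-polystability fails at $w=\xi_n$ is the one from case~(1), namely $(W,D_{\ss})$ (unique up to projective equivalence by Lemma~\ref{section 5:a is nonzero Lemma}); every other K-polystable pair there remains K-polystable at $\xi_n$, since the relevant coefficient intervals in Proposition~\ref{ksemistable,poly-allcases}(2)--(3) contain $\xi_n$ in their interior. Next I would identify the K-polystable degeneration of $(W,\xi_n D_{\ss})$: Proposition~\ref{prop:Dss-degenerate} provides a $\bQ$-Gorenstein isotrivial degeneration $W\rightsquigarrow W_0$, and extending it over the family of curves one checks by a weight computation (using the one-parameter subgroup from \cite{AK16}) that $D_{\ss}$ degenerates to $D_0$; since $(W_0,\xi_n D_0)$ is K-polystable by Lemma~\ref{polystable pair at xi}, uniqueness of K-polystable degenerations \cite{LWX21} identifies it as the limit. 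Hence $\phi^-$ is bijective, and being a proper birational morphism onto the normal projective variety $M_{\xi_n}$ (\cite{ABHLX20}) it is an isomorphism which only replaces $(W,D_{\ss})$ by $(W_0,D_0)$.

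Next I would handle $\phi^+$. Let $E^+\subset M_{\xi_n+\epsilon}$ be the closure of the locus parameterizing the pairs $(W_0,(\xi_n+\epsilon)D_1)$ with $b_0,\dots,b_{n+2}$ not all zero; these are K-polystable by Lemma~\ref{lem:curves-W0}, and by the $\bG_m$-degeneration in its proof each specially degenerates to $(W_0,\xi_n D_0)$, so $\phi^+(E^+)=\{[(W_0,\xi_n D_0)]\}=\{[(W_0,D_0)]\}$. By Lemma~\ref{lem:W0-general-curve} a general curve in $|\cO_{W_0}((n+3)^2)|$ is projectively equivalent to some $D_1$, so modulo $\Aut(W_0)$ the $D_1$-family is irreducible; a dimension count (the $D_1$-slice has the $n+3$ parameters $b_\bullet$, and the residual automorphisms of the standard form cut it down by one in moduli) then shows $\dim E^+=\dim M_{\xi_n+\epsilon}-1$, so $E^+$ is a prime divisor. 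Conversely I would argue $(\phi^+)^{-1}([(W_0,\xi_n D_0)])=E^+$: any K-polystable pair of $M_{\xi_n+\epsilon}$ lying over $[(W_0,\xi_n D_0)]$ has surface a $\bQ$-Gorenstein degeneration of $W$, which by the mutation analysis behind Proposition~\ref{prop:Dss-degenerate} can only be $W_0$, and a curve which is either a $D_1$ or degenerates to one — the non-generic curves on $W_0$ (vanishing $x_2^2$-coefficient, non-reducible to standard form, all $b_\bullet=0$) being ruled out as K-unstable or shown to degenerate further, via the lct estimates of Section~\ref{section5:section number} and the interpolation Lemma~\ref{theta,w,interpolation,polystable} (note $(W_0,(\xi_n+\epsilon)D_0)$ is K-unstable by Lemma~\ref{polystable pair at xi}). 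Finally, away from $[(W_0,\xi_n D_0)]$ nothing changes: by Lemma~\ref{polystable pair at xi} and Proposition~\ref{ksemistable,poly-allcases} no other pair alters its K-(poly)stability at $\xi_n+\epsilon$, so the open immersion of stacks in Theorem~\ref{wallcrossing[0,1]} is an isomorphism there and hence so is $\phi^+$. This exhibits $\phi^+$ as a divisorial contraction with the single exceptional divisor $E^+$.

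The hard part will be the \emph{completeness} of the classification over the exceptional point: ruling out any further degenerate surface besides $W_0$ and any K-polystable curve on $W_0$ not accounted for by the $D_1$-family, and making the dimension count precise enough to conclude $E^+$ is exactly a prime divisor. This is where the $\bQ$-Gorenstein deformation theory of weighted projective planes via \cite{AK16} and the log-canonical-threshold computations of Section~\ref{section5:section number}, combined with Lemma~\ref{theta,w,interpolation,polystable}, do the real work, while the identifications of the morphisms $\phi^\pm$ with this set-theoretic picture rest on the uniqueness of K-polystable degenerations \cite{LWX21}.
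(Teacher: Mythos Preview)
Your outline for $\phi^-$ is close to the paper's, with one difference: you assert that under the $\bQ$-Gorenstein degeneration $W\rightsquigarrow W_0$ the curve $D_{\ss}$ itself degenerates to $D_0$ by a direct weight computation. The paper does not do this. Instead it argues indirectly: Kawamata--Viehweg vanishing gives $h^0(W_0,\cO_{W_0}((n+3)^2))=h^0(W,\cO_W(2n+6))$, so $(W_0,\xi_n D_0)$ is a limit in $M_{\xi_n}$ of pairs with surface $W$; having already shown $\phi^-$ is an isomorphism away from $[(W,D_{\ss})]$, surjectivity forces $[(W,D_{\ss})]\mapsto[(W_0,D_0)]$. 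Your direct degeneration claim is plausible but would require a genuine computation you have not carried out.

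There is a real gap in your treatment of $\phi^+$. For a pair $(X,B)$ in the fibre over $[(W_0,D_0)]$ you assert that $X\cong W_0$ ``by the mutation analysis behind Proposition~\ref{prop:Dss-degenerate}''. But the chain $W\rightsquigarrow X\rightsquigarrow W_0$ certainly allows $X\cong W$, and mutation theory alone does not exclude it. The paper handles this in two steps. First, since $E_1^+$ is already a divisor one has $\dim\Aut(W_0)=\dim\Aut(W)+1$, forcing $X\in\{W,W_0\}$. Second, and this is the ingredient you are missing entirely, the case $X\cong W$ is eliminated via the Paul--Tian criterion (Theorem~\ref{thm:K-imply-GIT}): K-polystability of $(W,(\xi_n+\epsilon)B)$ implies $(\xi_n+\epsilon)$-GIT polystability, and since $\xi_n$ is \emph{not} a VGIT wall this gives $(\xi_n-\epsilon)$-GIT polystability, whence $(W,(\xi_n-\epsilon)B)$ is K-polystable by Theorem~\ref{isomorphism,GITandK}. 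This contradicts $(W,\xi_n B)$ being strictly K-semistable. Neither $\bQ$-Gorenstein deformation theory nor the lct computations of Section~\ref{section5:section number} can substitute for this step; the interplay between K-moduli and VGIT walls is essential.

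Once $X\cong W_0$ is known, the paper also does not analyse non-generic curves case by case as you sketch. It simply observes that $E_1^+$ is a weighted projective space (hence proper), that a general curve in $|\cO_{W_0}((n+3)^2)|$ is projectively equivalent to some $D_1$ by Lemma~\ref{lem:W0-general-curve}, so any $(W_0,B)$ in the fibre is a limit of points of $E_1^+$ and therefore lies in $E_1^+$ by properness. This is both shorter and more robust than the lct/interpolation case analysis you propose.
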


\begin{proof}
(1) By Theorem \ref{isomorphism,GITandK} we know that $M_{\xi_n -\epsilon} \cong M_{\xi_n -\epsilon}^{GIT}$. Moreover, by Lemmas \ref{section 5:a is nonzero Lemma}, \ref{section5:a zero a_0 nonzero}, and \ref{section5:a a_0 zeros}  we know that a K-polystable pair $(X, D)\in M_{\xi_n -\epsilon}^{GIT}$ satisfies that $\lct(X,D)\geq \frac{n+5}{2n+6}$ if and only if $(X,D)$ is not isomorphic to $(W, D_{\ss})$. Therefore, by Lemma \ref{theta,w,interpolation,polystable} we know that $\phi^-$ is an isomorphism away from the point $[(W, D_{\ss})]$. By Lemma \ref{polystable pair at xi} we know that the pair $(W_0, \xi_n D_0)$ is K-polystable. We can compute that $h^0(W_0, \cO_{W_0}((n+3)^2)) =\frac{3n+17}{2}= h^0(W, \cO_W(2n+6))$ since $n$ is odd. Hence $(W_0, \xi_n D_0)$ is a K-polystable pair and a limit of pairs $(W, \xi_n D_t)$ in the K-moduli space $M_{\xi_n}$ by Proposition \ref{prop:Dss-degenerate} and Lemma \ref{polystable pair at xi}. Since $\phi^-$ only replaces $(W, D_{\ss})$, the K-polystable replacement can only be $(W_0, \xi_n D_0)$.

(2) From the above argument we know that $M_{\xi_n -\epsilon} \setminus \{[(W, D_{\ss})]\}$  admits an open immersion to $M_{\xi_n +\epsilon}$. It suffices to show that its complement $E^+$ is an irreducible divisor  that parameterizes pairs $(W_0, D_1)$. By Lemma \ref{lem:curves-W0} we know that  $\{(W_0, D_1)\}$ are parameterized by a weighted projective space $E_1^+$ of dimension $n+2$ such that $E^+_1\subset E^+$. Note that $(W_0, D_0)$ is in $M_{\xi_n}$ and $(W,D_{\ss})$ is replaced by $\phi^-$ which is an isomorphism away from the point $(W,D_{\ss})$. Hence $W_0$ is a $\bQ$-Gorenstein degeneration of $W$. Then we know that the locus $E_1^+$ of $\{(W_0, D_1)\}$ is  a prime divisor of $M_{\xi_n+\epsilon}$ contained in $E^+$ as the K-moduli space has dimension $n+3$. Thus it suffices to show that every pair $[(X,B)]\in (\phi^+)^{-1}([(W_0, D_0)])$ satisfies $[(X,B)]\in E_1^+$. 

We have a chain of isotrivial degenerations $W\rightsquigarrow X\rightsquigarrow W_0$. Since the locus $E_1^+$ of $\{(W_0, D_1)\}$ is a divisor in the K-moduli space $M_{\xi_n+\epsilon}$, we have $\dim E_1^++\dim \Aut(W_0)$ equals $h^0(W_0, \cO_{W_0}((n+3)^2))-1$. Also the $\dim \big( M_{\xi_n -\epsilon} \setminus \{[(W, D_{\ss})]\}\big)+\dim\Aut(W)$ equals $h^0(W, \cO_W(2n+6))-1$. Since $\dim E_1^++1=\dim M_{\xi_n +\epsilon}=\dim \big( M_{\xi_n -\epsilon}\setminus \{[(W, D_{\ss})]\}\big)$,
we have that $\dim\Aut(W_0) = \dim \Aut(W)+1$. Thus either $X\cong W$ or $X\cong W_0$. If $X\cong W$ (and for simplicity we assume $X=W$), then by Theorem \ref{thm:K-imply-GIT} we know that $(W,B) \in M_{\xi_n+\epsilon}^{GIT}$ which implies that $(W, B) \in M_{\xi_n -\epsilon}^{GIT} $ as $\xi_n$ is not a VGIT wall. Hence $(W, (\xi_n -\epsilon)B)$ is K-polystable by Theorem \ref{isomorphism,GITandK}. This is a contradiction to Lemma \ref{theta,w,interpolation,polystable} as $(W, \xi_n B)$ is strictly K-semistable. Thus we have $X\cong W_0$. 
Since a general curve in $|\cO_{W_0}((n+3)^2)|$ is projectively equivalent to some $D_1$ by Lemma \ref{lem:W0-general-curve}, we can find a (not necessarily isotrivial) one parameter degeneration $(W_0, D_{1,t})\rightsquigarrow (X, B)$ where $[(W_0,D_{1,t})]\in E_1^+$. By properness of $E_1^+$ we know that $[(X,B)]\in E_1^+$.
This finishes the proof of (2). 
\end{proof}

\begin{proposition}
    For any $w\in (\xi_n, \frac{n+5}{2n+6})$, we have $M_w\cong M_{\xi_n+\epsilon}$ with the same universal family.
\end{proposition}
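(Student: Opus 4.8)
The plan is to show that the K-moduli space stabilizes after the last wall $\xi_n$, i.e. there are no further walls in the open interval $(\xi_n, \frac{n+5}{2n+6})$. By the wall-crossing framework (Theorem \ref{wallcrossing[0,1]}), the K-moduli stacks and spaces $\mathcal{M}_w^K$ and $M_w^K$ are constant on any open interval between consecutive walls, so it suffices to prove that no pair $(X, wD)$ becomes strictly K-semistable (or changes its K-polystable representative) for $w$ strictly between $\xi_n$ and $\frac{n+5}{2n+6}$. First I would invoke Proposition \ref{prop:xi_n-wall} to identify $M_{\xi_n+\epsilon}$ explicitly: it is obtained from $M_{\xi_n-\epsilon}^{GIT}$ by replacing the point $[(W, D_{\ss})]$ with the exceptional divisor $E^+$ parameterizing pairs $(W_0, D_1)$, while the rest is unchanged. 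So the members of $M_{\xi_n+\epsilon}$ are: the K-polystable pairs $(W, D)$ from Proposition \ref{ksemistable,poly-allcases} (excluding the $A_{n+3}$-singular one) together with the pairs $(W_0, D_1)$ from Lemma \ref{lem:curves-W0}.

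Next I would verify that each such pair remains K-polystable for \emph{all} $w$ in the closed interval $[\xi_n, \frac{n+5}{2n+6}]$ (or at least the half-open interval needed). For pairs of the form $(W, D)$ this is exactly the content of Proposition \ref{ksemistable,poly-allcases}: in each relevant case the K-polystable range is an interval whose right endpoint is $\frac{n+5}{2n+6}$ and which contains $(\xi_n, \frac{n+5}{2n+6})$. For pairs $(W_0, D_1)$ this is precisely Lemma \ref{lem:curves-W0}, which asserts K-polystability (indeed K-stability) for all $w \in (\xi_n, \frac{n+5}{2n+6})$. Since every closed point of $M_{\xi_n+\epsilon}$ thus corresponds to a pair that is K-polystable throughout the interval, and since K-(semi/poly)stability being an open-and-constant condition on this interval means the stack $\mathcal{M}_w^K$ cannot acquire new points or lose points, we conclude $\mathcal{M}_w^K \cong \mathcal{M}_{\xi_n+\epsilon}^K$ for all such $w$, hence $M_w^K \cong M_{\xi_n+\epsilon}^K$ after passing to good moduli spaces. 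The statement about the universal family follows because the family over $\mathcal{M}_{\xi_n+\epsilon}^K$ does not depend on $w$ once the stack is fixed.

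The main obstacle I anticipate is ruling out the a priori possibility that some \emph{new} degeneration appears as $w$ increases past $\xi_n$, i.e. that the list of K-polystable pairs in Proposition \ref{ksemistable,poly-allcases} and Lemma \ref{lem:curves-W0} is genuinely exhaustive and that properness does not force in any extra boundary stratum. This is handled by a dimension-and-properness argument of the same flavor as in the proof of Proposition \ref{prop:xi_n-wall}(2): the K-moduli space $M_{\xi_n+\epsilon}$ has dimension $n+3$, and the loci already described (the GIT part and the divisor $E^+$) account for all of it, so by properness of $M_w^K$ and the constancy of the stack on the interval, there is no room for additional strata. One should also double-check that $\xi_n < \frac{n+5}{2n+6}$ with no rational wall in between, which is immediate from Definition \ref{define w_n,i,replace c_n,e} since $w_{\frac{n+3}{2}} = \frac{n+5}{2n+6}$ is the boundary of the allowed range and $w_{\frac{n+1}{2}} < \xi_n$, so the only wall-candidates in $(0, \frac{n+5}{2n+6})$ are the $w_i$ and $\xi_n$, all of which lie at or below $\xi_n$.
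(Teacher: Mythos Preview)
Your proposal is correct and follows essentially the same route as the paper. The paper's proof is very brief: it reduces, via Lemma~\ref{theta,w,interpolation,polystable}, to checking that every pair $[(X,B)]\in M_{\xi_n+\epsilon}$ has $\lct(X,B)\geq \frac{n+5}{2n+6}$, and then verifies this case by case using Lemmas~\ref{section 5:a is nonzero Lemma}, \ref{section5:a zero a_0 nonzero}, \ref{section5:a a_0 zeros} for the $(W,D)$ pairs and the $A_{\leq n+2}$ observation for the $(W_0,D_1)$ pairs. You instead cite the packaged results Proposition~\ref{ksemistable,poly-allcases} and Lemma~\ref{lem:curves-W0}, but since those are themselves proved via exactly those lct computations combined with Lemma~\ref{theta,w,interpolation,polystable}, the content is the same.

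One remark: your final paragraph about ruling out new strata via dimension and properness is more explicit than the paper, which simply says ``by Lemma~\ref{theta,w,interpolation,polystable} it suffices\ldots'' and leaves the passage from ``each pair remains K-polystable'' to ``the moduli spaces coincide'' implicit. Your concern is legitimate, and the clean way to close it is to note that the proofs of Proposition~\ref{ksemistable,poly-allcases}(2) and Lemma~\ref{lem:curves-W0} actually yield K-\emph{stability} (not merely K-polystability) on the open interval; hence every point of the proper space $M_{\xi_n+\epsilon}$ is K-stable at every $w\in(\xi_n,\frac{n+5}{2n+6})$, and a K-stable pair admits no nontrivial K-semistable degenerations, so the surjective wall-crossing morphisms $\phi^\pm$ at any putative wall must be bijections. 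This is a slight sharpening of your dimension sketch.
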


\begin{proof}
By Lemma \ref{theta,w,interpolation,polystable} it suffices to show that any pair $[(X,B)]\in M_{\xi_n+\epsilon}$ satisfies $\lct(X,B)\geq \frac{n+5}{2n+6}$. By Proposition \ref{prop:xi_n-wall}, there are two cases: either $(X,(\xi_n-\epsilon)B)$ is K-polystable (which implies $(X,B)\cong (W,D)$ and $(X,B)\not\cong (W,D_{\ss})$), or $(X,B) \cong (W_0, D_1)$. In the former case, this follows from Lemmas \ref{section 5:a is nonzero Lemma}, \ref{section5:a zero a_0 nonzero}, and \ref{section5:a a_0 zeros}. In the latter case,  this follows from the fact that $D_1$ has at worst $A_{n+2}$-singularities as $b_0, \cdots, b_{n+2}$ are not all zero (see also Lemma \ref{lem:curves-W0}). 
\end{proof}

\subsection{$n$ is even}
Let $(W',\frac{1}{2}H+D)$ be a pair such that the Equation \eqref{eveneuation2} defining $D$ satisfies $a_{2l+3}\neq 0$, $a\neq 0$ and $D$ has a type $A_{2l+3}$-singularity. Similar to the $n$ odd case, by Lemma \ref{section5 even:a nonzero} the divisor $\frac{1}{2}H+D$ is projectively equivalent to $\frac{1}{2}H + D_{\ss}$ where $D_{\ss}$ is given by Definition \ref{def:Dss-even}.  By Proposition \ref{section5:main lemma}, the pair $(W',\frac{1}{2}H+\xi_{2l}D_{\ss})$ is K-semistable, but not K-polystable. We can show the K-polystable degeneration of this pair is $(W',\frac{1}{2}H_0+\xi_{2l}D_0)$, where $W'=\PP(1,1,l+1)_{u,y,z}$ and $H_0:y=0$ and $D_0:z^2y=zu^{l+2}$.

\begin{lemma}\label{polystable pair at xi-even}
Let $l\geq 1$ be an integer. Let $W':=\mathbb{P}(1,1,l+1)_{u,y,z}$, $H:=\{u=0\}$, and $D_{\ss}$ satisfies $a_{2l+3}\neq 0$, $a\neq 0$, and has a type $A_{2l+3}$-singularity.
Then the pair $(W',\frac{1}{2}H_0+\xi_{2l}D_0)$ is the K-polystable degeneration of $(W', \frac{1}{2}H + \xi_{2l} D_{\ss})$, where  $H_0:=\{y=0\}$ and $D_0:z^2y=zu^{l+2}$. Moreover, the pair $(W',\frac{1}{2}H_0+wD_0)$ is K-unstable when $w\neq \xi_{2l}$.
\end{lemma}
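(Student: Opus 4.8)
The plan is to mirror the odd-case argument from Lemma \ref{polystable pair at xi} and Proposition \ref{prop:xi_n-wall}, adapted to the two-boundary setting. First I would observe that the pair $(W', \frac{1}{2}H+\xi_{2l}D_{\ss})$ is K-semistable by Proposition \ref{section5:main lemma}, but that it is \emph{not} K-polystable: the natural candidate for a destabilizing one-parameter subgroup is the $\bG_m$-action coming from the monomial valuation $v_{(2,\,?)}$ adapted to the $A_{2l+3}$-singularity of $D_{\ss}$, exactly as in the odd case where $D_{\ss}$ degenerates to $D_0$. Concretely, one exhibits an explicit isotrivial (or, more precisely, special) degeneration of $(W', \frac{1}{2}H+D_{\ss})$ to $(W', \frac{1}{2}H_0 + D_0)$ where $H_0 = \{y=0\}$ and $D_0: z^2y = zu^{l+2}$, so that the K-polystable replacement of $(W', \frac{1}{2}H+\xi_{2l}D_{\ss})$, if it exists, can only be this pair. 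The degeneration itself should come from a one-parameter subgroup of $\Aut(W')$ acting on the coordinates $u,y,z$; the $A_{2l+3}$ hypothesis on $D_{\ss}$ is what forces all lower-order terms to be eliminated and $H$ to move to $H_0$ in the limit. I expect this degeneration step to require a careful choice of weights and a short computation with the defining equation $(z-u^{l+2})^2 - (u-1)^{2l+4} = 0$ in the chart $y=1$.

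Next I would verify directly that $(W', \frac{1}{2}H_0 + \xi_{2l}D_0)$ is K-polystable. Since $W' = \PP(1,1,l+1)$ carries a $\bG_m$-action fixing the pair $(H_0, D_0)$ — note $D_0 = \{y(z^2 y - zu^{l+2}) = 0\}$ is reducible, with $H_0 = \{y=0\}$ one of its components — the pair $(W', \frac{1}{2}H_0 + \xi_{2l}D_0)$ has complexity one, so Theorem \ref{Tone} applies. The verification then reduces to: (i) checking that the $\beta$-invariant of the horizontal valuation induced by the $\bG_m$-action vanishes precisely when $w = \xi_{2l}$ (this is the Futaki condition (3) of Theorem \ref{Tone}, and also gives the ``K-unstable when $w\neq \xi_{2l}$'' half of the statement); and (ii) checking $\beta_{(W', \frac{1}{2}H_0 + wD_0)}(E) > 0$ for every vertical $\bG_m$-invariant prime divisor $E$, with $w = \xi_{2l}$. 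The relevant vertical divisors are the three torus-invariant lines $\{u=0\}, \{y=0\}, \{z=0\}$ together with the components of $D_0$ and its $\bG_m$-translates $\{z^2 y - a\, zu^{l+2} = 0\}$; for each I would compute $E \sim \cO(k)$ for the appropriate $k$, use Lemma \ref{Sinv,k} to get $S_{(W', \frac{1}{2}H_0 + wD_0)}(E) = \frac{r}{3k}$ with $r = l+3 - \tfrac12 - w(2l+3) - w\cdot(\text{coeff from }D_0)$ suitably adjusted, compute the log discrepancy $A$ from the coefficients of $\frac{1}{2}H_0 + wD_0$ along $E$, and check the resulting linear inequality in $w$ holds at $w = \xi_{2l}$. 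This is the bulk of the computation but it is entirely parallel to Lemma \ref{polystable pair at xi} and Proposition \ref{evenpoly+polyanyn,even}, so no new ideas are needed.

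Finally, to conclude that $(W', \frac{1}{2}H_0 + \xi_{2l}D_0)$ is genuinely \emph{the} K-polystable degeneration of $(W', \frac{1}{2}H + \xi_{2l}D_{\ss})$ (and not merely a K-polystable pair to which $(W', \frac{1}{2}H+\xi_{2l}D_{\ss})$ specializes), I would invoke uniqueness of K-polystable degenerations \cite{LWX21, BX19}: since $(W', \frac{1}{2}H + \xi_{2l}D_{\ss})$ specially degenerates to $(W', \frac{1}{2}H_0+\xi_{2l}D_0)$, the latter being K-polystable forces it to be the unique such degeneration. For the dimension bookkeeping one also needs, as in the odd case, that $h^0(W', \cO_{W'}(2l+3)) = h^0(W', \cO_{W'}(2l+3))$ is preserved — here the surface does not change, only $H$ moves to $H_0$, so this is automatic; in fact $W'$ itself is rigid (unlike $W$ in the odd case, which degenerates to $W_0$), which simplifies this half of the argument.

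\textbf{Main obstacle.} I expect the genuinely delicate point to be the construction and verification of the special degeneration $(W', \frac{1}{2}H+D_{\ss}) \rightsquigarrow (W', \frac{1}{2}H_0 + D_0)$ — in particular pinning down which $\bG_m$-action does the job and checking that the boundary divisor $H$ really limits to $H_0 = \{y=0\}$ rather than staying at $\{u=0\}$, since this is what distinguishes the even case from a naive transcription of the odd-case Proposition \ref{prop:Dss-degenerate}–\ref{prop:xi_n-wall} argument. Once that degeneration is correctly set up, the Abban–Zhuang/complexity-one K-polystability check via Theorem \ref{Tone} is routine.
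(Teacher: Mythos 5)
Your proposal follows the paper's proof essentially verbatim in structure: exhibit a special degeneration of $(W',\frac{1}{2}H+\xi_{2l}D_{\ss})$ to $(W',\frac{1}{2}H_0+\xi_{2l}D_0)$, then verify K-polystability of the limit by the complexity-one criterion (Theorem \ref{Tone}), where the valuation induced by the $\bG_m$-action has $\beta=0$ exactly at $w=\xi_{2l}$ (giving K-instability for $w\neq\xi_{2l}$) and all vertical invariant divisors have $\beta>0$. The one step you flag as the main obstacle is resolved in the paper by the explicit change of coordinates $(u,z)\mapsto\bigl(u+1,\,2z-u^{l+2}+(u+1)^{l+2}\bigr)$ in the chart $y=1$, which converts $D_{\ss}$ exactly into $D_0$ while sending $H$ to $H_1=\{u+y=0\}$, after which the diagonal action $t\cdot[u,y,z]=[tu,y,t^{l+2}z]$ leaves $D_0$ invariant and degenerates $H_1$ to $H_0$, exactly as your sketch anticipates.
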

\begin{proof}
We first show that $(W',\frac{1}{2}H_0+\xi_{2l}D_0)$ is a special degeneration of $(W', \frac{1}{2}H + \xi_{2l} D_{\ss})$. In the affine coordinate $y=1$, by Section \ref{NonCYcase} we can write $D_{\ss}: (z-u^{l+2})^2 - (u-1)^{2l+4} =0$. After pullback under a change of coordinates $(u,z)\mapsto (u+1, 2z - u^{l+2} + (u+1)^{l+2})$, we have $(W', \frac{1}{2}H + \xi_{2l} D_{\ss}) \cong (W', \frac{1}{2}H_1 + \xi_{2l} D_0)$ where $H_1 = \{u+y=0\}$.  Thus under the $\bG_m$-action $t\cdot [u,y,z] = [tu, y, t^{l+2}z]$, $H_1$ degenerates to $H_0$, and $D_0$ is $\bG_m$-invariant. Thus the special degeneration part is proved.

Next, we use Theorem \ref{Tone} to check K-polystability of $(W',\frac{1}{2}H_0+wD_0)$ and look at curve $D_0$ in the affine chart $z\neq 0$ in $W'$. The pair $(W',\frac{1}{2}H_0+wD_0)$ is a complexity one $\mathbb{T}$-pair with an action by  $\mathbb{T}=\mathbb{G}_m$: $\lambda[u,y,z]=[\lambda u,\lambda^{l+2} y,z]$ for $\lambda \in \mathbb{G}_m$. 

The monomial valuation $v_{(1,l+2)}$ such that $v(u)=1$ and $v(y)=l+2$ is induced by the $\mathbb{T}$-action.
 We first compute the $\beta$-invariant for $v_{(1,l+2)}$. 
Then by Lemma \ref{S-inv.}, we get the S-invariant for $v$:
\begin{equation*}
    S_{\mathcal{O}(1)}(v)=\frac{1}{3}(1+l+2)=\frac{l+3}{3}.
\end{equation*}
We have $-K_{W'}-\frac{1}{2}H_0-wD_0=r\mathcal{O}(1)$ and $S_{(W',\frac{1}{2}H_0+wD_0)}(v)=\mathrm{deg}(-K_{W'}-\frac{1}{2}H_0-wD_0)\cdot S_{\mathcal{O}(1)}(v)=r\frac{l+3}{3}$, where $r=l+3-\frac{1}{2}-w(2l+3)$. Also, the log discrepancy $A_{W'}(v)=l+3$ and $A_{(W',\frac{1}{2}H_0+wD_0)}(v)=A_{W'}(v)-v(\frac{1}{2}H_0+wD_0)=l+3-\frac{1}{2}(l+2)-w(2l+3)$ since $v(H_0)=l+2$. Then we have $\beta$-invariant 
\begin{equation}\label{xi,betaH_0D_0,even}
    \beta_{(W',\frac{1}{2}H_0+wD_0)}(v)=A_{(W',\frac{1}{2}H_0+wD_0)}(v)-S_{(W',\frac{1}{2}H_0+wD_0)}(v)=0.
\end{equation}
if and only if $w=\xi_{2l}$.

There is no horizontal $\mathbb{T}$-invariant prime divisor by Remark \ref{remarkTone}. Next, we check $\beta_{(W',\frac{1}{2}H_0+wD_0)}(E)$ is positive for every vertical $\mathbb{T}$-invariant prime divisor $E$. All the vertical $\mathbb{T}$-invariant prime divisors are $\{0;y;z\}$, $\{[u;0;z]\}$, $\{u;y;0\}$, $\{zy-u^{l+2}=0\}$ and $\{zy-au^{l+2}=0, a\neq 1\}$ under the $\mathbb{G}_m$ action $\lambda[u,y,z]=[\lambda u,\lambda^{l+2} y,z]$ for $\lambda \in \mathbb{G}_m$. 

Since $-K_{W'}-\frac{1}{2}H_0-wD_0=r\mathcal{O}(1)$, by Lemma \ref{Sinv,k}, we have $S_{(W',\frac{1}{2}H_0+wD_0)}(\mathcal{O}(k))=\frac{r}{3k}$.

If $E$ is $\{0;y;z\}$, then $E=\mathcal{O}(1)$ and have $S_{(W',\frac{1}{2}H_0+wD_0)}(E)=\frac{r}{3}$. We have $A_{(W',\frac{1}{2}H_0+wD_0)}(E)=1-\mathrm{Coeff}_E(W',\frac{1}{2}H_0+wD_0)=1-0=1$. Then $\beta_{(W',\frac{1}{2}H_0+wD_0)}(E)>0$ iff $w>\frac{2l-1}{4l+6}$ which is satisfied by $w=\xi_{2l}$.

If $E$ is $\{u;0;z\}$, then $E=\mathcal{O}(1)$ and have $S_{(W',\frac{1}{2}H_0+wD_0)}(E)=\frac{r}{3}$. We have $A_{(W',\frac{1}{2}H_0+wD_0)}(E)=1-\mathrm{Coeff}_E(W',\frac{1}{2}H_0+wD_0)=1-\frac{1}{2}$. Then $\beta_{(W',\frac{1}{2}H_0+wD_0)}(E)>0$ iff $w>\frac{l+1}{2l+3}$ which is satisfied by $w=\xi_{2l}$.

If $E$ is $\{u;y;0\}$, then $E=\mathcal{O}(l+1)$ and have $S_{(W',\frac{1}{2}H_0+wD_0)}(E)=\frac{r}{3(l+1)}$. Since $D_0$ is defined by $z(zy-u^{l+2})=0$, we have $A_{(W',\frac{1}{2}H_0+wD_0)}(E)=1-w$. Then $\beta_{(W',\frac{1}{2}H_0+wD_0)}(E)>0$ iff $w<\frac{4l+1}{2l}$ which is satisfied by $w=\xi_{2l}$.

If $E$ is $\{zy-au^{l+2}=0\}$, then $E=\mathcal{O}(l+2)$ and have $S_{(W',\frac{1}{2}H_0+wD_0)}(E)=\frac{r}{3(l+2)}$. If $a=1$, we have $A_{(W',\frac{1}{2}H_0+wD_0)}(E)=1-w$. Then $\beta_{(W',\frac{1}{2}H_0+wD_0)}(E)>0$ when $w>0$ which is satisfied by $w=\xi_{2l}$.
If $a\neq 1$, $A_{(W',\frac{1}{2}H_0+wD_0)}(E)=1$. Then $\beta_{(W',\frac{1}{2}H_0+wD_0)}(E)>0$ iff $w<\frac{4l+7}{2l+6}$ which is satisfied by $w=\xi_{2l}$.

Hence $(W',\frac{1}{2}H_0+\xi_{2l}D_0)$ is K-polystable by Theorem \ref{Tone}.

Now we show $(W',\frac{1}{2}H_0+wD_0)$ is K-semistable only when $w=\xi_{2l}$. By Theorem \ref{Tone}, if $(W',\frac{1}{2}H_0+wD_0)$ is K-semistable, we have $\beta_{(W',\frac{1}{2}H_0+wD_0)}(v)=0$. By \eqref{xi,betaH_0D_0,even}, $\beta_{(W',\frac{1}{2}H_0+wD_0)}(v)=0$ is equivalent to $w=\xi_{2l}$. Hence the conclusion holds.
\end{proof}

We aim to find all the new K-polystable pairs parametrized in $M_w$. Let $D_1$ be the curve in $W'$ defined by
\begin{equation}\label{defineD_1afterxi-even}
z^2 y - z u^{l+2} + b_1 u^{2l+2} y + b_2 u^{2l+1} y^2 + \cdots + b_{2l+3} y^{2l+3} = 0,
\end{equation}
where coefficients $b_1,\cdots,b_{2l+3}$ are not all zero.

\begin{lemma}\label{lem:curves-W0,even}
Let $n=2l>0$ be an even integer. The pair $(W', \frac{1}{2}H_0 + wD_1)$ is K-polystable for $w\in(\xi_{2l},\frac{2l+5}{4l+6})$.
\end{lemma}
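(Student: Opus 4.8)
The plan is to run the same argument as for Lemma~\ref{lem:curves-W0}, anchoring on the complexity-one pair $(W',\tfrac12 H_0+D_0)$ provided by Lemma~\ref{polystable pair at xi-even} and then invoking the interpolation Lemma~\ref{theta,w,interpolation,polystable}.

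First I would exhibit an isotrivial degeneration of $D_1$ to $D_0$ inside $W'$ that fixes $H_0$. Under the $\bG_m$-action $\lambda[u,y,z]=[\lambda u,\lambda^{l+2}y,z]$ --- the action appearing in the proof of Lemma~\ref{polystable pair at xi-even} --- the monomials $z^2y$ and $zu^{l+2}$ both have weight $l+2$, while each monomial $u^{2l+3-j}y^{j}$ occurring in the $b_j$-part of \eqref{defineD_1afterxi-even} has weight $2l+3+j(l+1)\ge 3l+4>l+2$; hence $D_1$ degenerates to $D_0$ as $\lambda\to 0$, and $H_0=\{y=0\}$ is $\bG_m$-invariant. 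So $(W',\tfrac12 H_0+wD_1)$ specially degenerates to $(W',\tfrac12 H_0+wD_0)$ for every $w$. By Lemma~\ref{polystable pair at xi-even}, $(W',\tfrac12 H_0+\xi_{2l}D_0)$ is K-polystable, $(W',\tfrac12 H_0+\tfrac{2l+5}{4l+6}D_0)$ is not K-semistable (since $\tfrac{2l+5}{4l+6}\neq\xi_{2l}$), and $\rank\Aut(W',\tfrac12 H_0+D_0)=1$ since that pair is a complexity-one $\bG_m$-pair.

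Next I would prove K-semistability of $(W',\tfrac12 H_0+wD_1)$ at the two endpoints $w=\xi_{2l}$ and $w=\tfrac{2l+5}{4l+6}$. At $w=\xi_{2l}$ this follows from the degeneration above together with openness of K-semistability (Theorem~\ref{openness}), once one has checked that $(W',\tfrac12 H_0+\xi_{2l}D_1)$ is a klt log Fano pair (the ampleness is immediate since $-K_{W'}-\tfrac12 H_0-\xi_{2l}D_1\equiv((l+3)-\tfrac12-\xi_{2l}(2l+3))\cO(1)$ is a positive multiple of $\cO(1)$). At $w=\tfrac{2l+5}{4l+6}$ one has $-K_{W'}-\tfrac12 H_0-\tfrac{2l+5}{4l+6}D_1\equiv 0$, so the pair is log Calabi--Yau and K-semistability is equivalent to log canonicity; this is an lct computation in the style of Section~\ref{section5:section number}: as $b_1,\dots,b_{2l+3}$ are not all zero, $D_1$ has at worst an $A_m$-singularity with $m\le 2l+2$, and one must in addition analyze the interaction of $D_1$ with $H_0$, treating the quotient point $\msp_z\in H_0\cap D_1$ by a weighted blow-up. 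Granting these, interpolation applies: by Lemma~\ref{theta,w,interpolation,polystable}(2) with $\theta=\tfrac12$, $w_1=\xi_{2l}$, $w_2=\tfrac{2l+5}{4l+6}$ --- using that $(W',\tfrac12 H_0+\xi_{2l}D_1)$ is K-semistable with K-polystable degeneration $(W',\tfrac12 H_0+\xi_{2l}D_0)$ of automorphism rank one, and that $(W',\tfrac12 H_0+\tfrac{2l+5}{4l+6}D_0)$ is not K-semistable --- we conclude that $(W',\tfrac12 H_0+wD_1)$ is K-stable, hence K-polystable, for every $w\in(\xi_{2l},\tfrac{2l+5}{4l+6})$.

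The hard part is the log-canonicity (and klt) analysis: one must read off the singularity type of $D_1$ from the vanishing pattern of the $b_j$'s and carefully control how $D_1$ meets $H_0$, especially their high-order contact at $\msp_z$; by contrast the degeneration and the interpolation steps are formal once Lemma~\ref{polystable pair at xi-even} is in hand.
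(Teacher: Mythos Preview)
Your proposal is correct and follows essentially the same approach as the paper: degenerate $(W',\tfrac12 H_0+D_1)$ to $(W',\tfrac12 H_0+D_0)$ via a $\bG_m$-action, use openness at $w=\xi_{2l}$, check log canonicity at $w=\tfrac{2l+5}{4l+6}$ via the $A_m$ bound $m\le 2l+2$, and conclude by interpolation (Lemma~\ref{theta,w,interpolation,polystable}(2)). The only cosmetic difference is that the paper uses the $1$-PS $\lambda\cdot[u,y,z]=[\lambda^{-1}u,y,\lambda^{-l-2}z]$ rather than your $\lambda\cdot[u,y,z]=[\lambda u,\lambda^{l+2}y,z]$ for the degeneration --- both fix $H_0$ and kill the $b_j$-terms as $\lambda\to 0$, so either works.
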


\begin{proof}
In the affine chart $y=1$, the curve $D_1$ is given by
\begin{equation*}
z^2  - z u^{l+2} + b_1 u^{2l+2}  + \cdots + b_{2l+3} = 0
\end{equation*}
Since $b_1,\cdots,b_{2l+3}$ are not all zero, the curve 
$D_1$ is either smooth or has the singularity of type $A_{m}$ for $0<m\leq 2l+2$. Thus $\mathrm{lct}_p(W',\frac{1}{2}H_0,D_1)=\mathrm{min}\{\frac{m+3}{2m+2},1\}\geq \frac{2l+5}{4l+6}$ for $p\not\in H_0$. If $p\in H_0$, then one can easily check that $(W',\frac{1}{2}H_0+\frac{2l+5}{4l+6}D_1)$ is lc at $p$. This implies $(W',\frac{1}{2}H_0+\frac{2l+5}{4l+6}D_1)$ is log canonical, so is K-semistable.

Under the $\bG_m$ action $\lambda[u,y,z]=[\lambda^{-1} u,  y, \lambda^{-l-2}z]$, the curve $D_1$ degenerates to $D_0$ as $\lambda \rightarrow 0,$ where $D_0$ is defined by $z^2 y - z u^{l+2}=0$.
    By Lemma \ref{polystable pair at xi-even}, the pair $(W',\frac{1}{2}H_0+ \xi_{2l} D_0)$ is K-polystable. 
    Then $(W',\frac{1}{2}H_0+ \xi_{2l} D_1)$ is K-semistable by Theorem \ref{openness}. 
    We have rank $\Aut(W',\frac{1}{2}H_0+ \xi_{2l} D_0)=1$ since the pair $(W',H_0+  D_0)$ has complexity one. Also we have $(W',\frac{1}{2}H_0+ \frac{2l+5}{4l+6} D_0)$ is not K-semistable by Lemma \ref{polystable pair at xi-even}.
    Therefore, by Lemma \ref{theta,w,interpolation,polystable}, the pair $(W',\frac{1}{2}H_0+wD_1)$ is K-stable for $w\in(\xi_{2l},\frac{2l+5}{4l+6})$, so is K-polystable.
\end{proof}

We now show that for $w>\xi_{2l}$, the K-moduli space $M_w$ is isomorphic to a weighted blow up of GIT moduli space $M^{GIT}_{w}$ with the exceptional divisor parametrizing the pairs $(W',\frac{1}{2}H_0+wD_1)$, where $D_1$ is defined by Equation \eqref{defineD_1afterxi-even}.

\begin{proposition}\label{prop:xi_n-wall,even}
There is a wall crossing diagram
\[
M_{\xi_{2l} -\epsilon }\xrightarrow[\cong]{\phi^-} M_{\xi_{2l}}\xleftarrow{\phi^+}M_{\xi_{2l} +\epsilon}
\]
for $0<\epsilon \ll 1$ such that the following hold.
\begin{enumerate}
    \item $\phi^-$ is an isomorphism that only replaces $(W', \frac{1}{2}H+D_{\ss})$ by $(W', \frac{1}{2}H_0+D_0)$.
    \item $\phi^+$ is a divisorial contraction with only one exceptional divisor $E^+$. Moreover, we have $\phi^+(E^+)= \{[(W', \frac{1}{2}H_0+D_0)]\}$, and $E^+$ parameterizes pairs $(W', \frac{1}{2}H_0+D_1)$ as in Lemma \ref{lem:curves-W0,even}.
\end{enumerate}
\end{proposition}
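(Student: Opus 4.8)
The plan is to mirror the proof of Proposition~\ref{prop:xi_n-wall}, the essential difference being that at the wall $\xi_{2l}$ the surface $W'$ is \emph{not} modified: only the boundary hyperplane $H$ is replaced by $H_0$, so that $H_0$ now plays the structural role that the degenerate surface $W_0$ played in the odd case.

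\medskip

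\emph{Part (1).} By Theorem~\ref{isomorphism,GITandK,even}, $M_{\xi_{2l}-\epsilon}\cong M^{GIT}_{\xi_{2l}-\epsilon}$. The log canonical threshold computations of Lemmas~\ref{section5 even:a nonzero}, \ref{section5 even:a zero a_0 nonzero} and~\ref{section5 even:a and a_0 zero} show that every K-polystable pair $(W',\frac{1}{2}H+D)$ in this moduli satisfies $\lct(W',\frac{1}{2}H;D)\ge \frac{2l+5}{4l+6}$ --- and hence, by interpolation (Lemma~\ref{theta,w,interpolation,polystable}), stays K-semistable past $\xi_{2l}$ --- \emph{unless} $(W',\frac{1}{2}H+D)$ is projectively equivalent to $(W',\frac{1}{2}H+D_{\ss})$, in which case the lct is strictly smaller and the pair is destabilized just beyond $\xi_{2l}$. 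Thus $\phi^-$ is an isomorphism away from the single point $[(W',\frac{1}{2}H+D_{\ss})]$. By Lemma~\ref{polystable pair at xi-even}, $(W',\frac{1}{2}H_0+\xi_{2l}D_0)$ is K-polystable and is the special (hence K-semistable) degeneration of $(W',\frac{1}{2}H+\xi_{2l}D_{\ss})$, so since the good moduli space separates closed points and $\phi^-$ modifies only this one point, the replacement is forced to be $(W',\frac{1}{2}H_0+\xi_{2l}D_0)$. Unlike the odd case, no Kawamata--Viehweg / $h^0$ comparison is needed here, since the surface is unchanged and $H_0\sim_{\bQ}H$.

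\medskip

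\emph{Part (2).} Part (1) yields an open immersion $M_{\xi_{2l}-\epsilon}\setminus\{[(W',\frac{1}{2}H+D_{\ss})]\}\hookrightarrow M_{\xi_{2l}+\epsilon}$, whose complement $E^+$ is closed of codimension $\ge 1$. By Lemma~\ref{lem:curves-W0,even} the pairs $(W',\frac{1}{2}H_0+D_1)$ with $b_1,\dots,b_{2l+3}$ not all zero form a locus $E_1^+\subseteq E^+$; a dimension count as in the odd case (the $D_1$'s form a $(2l+2)$-dimensional family modulo the automorphisms of $(W',H_0)$ preserving the normal form, and the general such pair is K-stable, hence has finite automorphism group) shows $E_1^+$ is a prime divisor of the $(2l+3)$-dimensional space $M_{\xi_{2l}+\epsilon}$. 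It remains to prove $(\phi^+)^{-1}\bigl([(W',\frac{1}{2}H_0+D_0)]\bigr)=E_1^+$. Given $[(X,\frac{1}{2}H'+B)]$ in this fibre, one first shows $X\cong W'$ (the open part $M_{\xi_{2l}+\epsilon}\setminus E^+$ carries the surface $W'$, and $W'$ admits no $\bQ$-Gorenstein degeneration relevant to this wall --- this is the even counterpart of the ``$X\cong W$ or $X\cong W_0$'' dichotomy, which here simply collapses to $X\cong W'$). If $(X,\frac{1}{2}H'+(\xi_{2l}-\epsilon)B)$ were K-polystable, it would give a point of $M_{\xi_{2l}-\epsilon}$ whose image under $\phi^-$ is $[(W',\frac{1}{2}H_0+\xi_{2l}D_0)]$; since $\phi^-$ is an isomorphism away from $[(W',\frac{1}{2}H+D_{\ss})]$, either $(X,\frac{1}{2}H'+B)\cong(W',\frac{1}{2}H+D_{\ss})$ --- impossible, as that pair is K-unstable at $\xi_{2l}+\epsilon$ --- or $(X,\frac{1}{2}H'+\xi_{2l}B)\cong(W',\frac{1}{2}H_0+\xi_{2l}D_0)$, forcing $(X,\frac{1}{2}H'+(\xi_{2l}-\epsilon)B)\cong(W',\frac{1}{2}H_0+(\xi_{2l}-\epsilon)D_0)$, which is K-unstable by Lemma~\ref{polystable pair at xi-even} --- again a contradiction. (One may equivalently run the GIT argument of the odd case using Theorems~\ref{thm:K-imply-GIT-even} and~\ref{describe,GITmoduli,even}, since $\xi_{2l}$ is not a VGIT wall.) Hence $(X,\frac{1}{2}H'+B)$ is a ``new'' pair, and by the even analogue of Lemma~\ref{lem:W0-general-curve} --- a short coordinate change putting a general member of $|\cO_{W'}(2l+3)|$ together with a hyperplane into the normal form $\frac{1}{2}H_0+D_1$ --- it arises as a (not necessarily isotrivial) degeneration of pairs parameterized by $E_1^+$; properness of $E_1^+$ then yields $[(X,\frac{1}{2}H'+B)]\in E_1^+$. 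Finally, the $\bG_m$-degeneration in the proof of Lemma~\ref{lem:curves-W0,even} carries every $(W',\frac{1}{2}H_0+D_1)$ to $(W',\frac{1}{2}H_0+D_0)$, so $\phi^+$ contracts the irreducible divisor $E^+=E_1^+$ onto the point $[(W',\frac{1}{2}H_0+D_0)]$; that is, $\phi^+$ is a divisorial contraction with unique exceptional divisor $E^+$.

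\medskip

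The step I expect to be the main obstacle is the identification $E^+=E_1^+$, and in particular the claim $X\cong W'$: one must exclude limit pairs whose surface degenerates beyond $W'$ or whose boundary is not captured by the $\frac{1}{2}H_0+D_1$ normal form. In the odd case this was handled by the automorphism-dimension dichotomy forced by the degeneration $W\rightsquigarrow W_0$; here the analogous input is the (more elementary, but still-to-be-checked) statement that $W'$ is not altered across this wall, together with the even analogue of Lemma~\ref{lem:W0-general-curve}, which requires determining exactly which coefficients of a general curve can be removed by $\Aut(W',H_0)$ and verifying that the residual group is finite, so that $E_1^+$ indeed has dimension $2l+2$.
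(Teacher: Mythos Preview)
Your Part~(1) is correct and matches the paper's proof essentially verbatim.

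Part~(2) has a genuine gap in the final step. The ``even analogue of Lemma~\ref{lem:W0-general-curve}'' you invoke is \emph{false} as you state it: a general pair $(L,B)\in|\cO_{W'}(1)|\times|\cO_{W'}(2l+3)|$ is projectively equivalent to $(H,D)$ with $D$ of the standard form \eqref{defineequationD,even}, \emph{not} to $(H_0,D_1)$. Indeed, once you move a generic $L$ to $(y=0)$ by a $\GL_2$-action on $(u,y)$, the $z^2$-coefficient of $B$ becomes a generic linear form in $(u,y)$ and is \emph{not} a multiple of $y$; the shift $z\mapsto z+p(u,y)$ cannot fix this. The locus of pairs where $L$ divides the $z^2$-coefficient of $B$ is a proper closed divisor $\cE$ in the parameter space $\cP=|\cO_{W'}(1)|\times|\cO_{W'}(2l+3)|$, and it is exactly this divisor that maps onto $E^+$. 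So the properness argument you borrow from the odd case breaks down: you cannot approximate an arbitrary new pair by members of $E_1^+$, because a generic nearby pair lies in the \emph{open} stratum (isomorphic to some $(H,D)$), not in $E_1^+$.

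The paper resolves this as follows. After establishing $X\cong W'$ from the chain $W'\rightsquigarrow X\rightsquigarrow W'$, it normalizes $B$ so that its $z^2$-coefficient is $y$, and then writes $L:\ell(u,y)=0$. Case~1 ($\ell$ has a $u$-term): one can further move $L$ to $H=(u=0)$ while keeping $B$ in the standard form $D$; then your GIT argument (Theorem~\ref{thm:K-imply-GIT-even} and the fact that $\xi_{2l}$ is not a VGIT wall) gives the contradiction. Case~2 ($\ell$ is a multiple of $y$): now $(L,B)$ lies in the divisor $\cE\subset\cP$. The paper shows $\cE=\GL_2\cdot\cE_y$ (with $\cE_y$ a projective space) is irreducible, and since every surface in $M_{\xi_{2l}+\epsilon}$ is $W'$, the K-semistable locus $\cP^+\subset\cP$ surjects onto $M_{\xi_{2l}+\epsilon}$; combining this with Case~1 yields $\sigma(\cE\cap\cP^+)=E^+$, so $E^+$ is irreducible and hence equals the prime divisor $E_1^+$ it contains. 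This irreducibility-of-$\cE$ argument genuinely replaces the ``general curve plus properness of $E_1^+$'' trick from the odd case, and is the missing ingredient in your outline.
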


\begin{proof}
    (1) By Theorem \ref{isomorphism,GITandK,even} we know that $M_{\xi_{2l} -\epsilon} \cong M_{\xi_{2l} -\epsilon}^{GIT}$. Moreover, by Lemmas \ref{section5 even:a nonzero}, \ref{section5 even:a zero a_0 nonzero}, and \ref{section5 even:a and a_0 zero}  we know that a K-polystable pair $(W', \frac{1}{2}H+D)\in M_{\xi_{2l} -\epsilon}^{GIT}$ satisfies that $\lct(W',\frac{1}{2}H;D)\geq \frac{2l+5}{4l+6}$ if and only if $(W',\frac{1}{2}H+D)$ is not isomorphic to $(W', \frac{1}{2}H+D_{\ss})$. Therefore, by Lemma \ref{theta,w,interpolation,polystable} we know that $\phi^-$ is an isomorphism away from the point $[(W', \frac{1}{2}H+D_{\ss})]$. Thus (1) follows from Lemma \ref{polystable pair at xi-even}.

    (2) From the above argument we know that $M_{\xi_{2l} -\epsilon} \setminus \{[(W', \frac{1}{2}H+D_{\ss})]\}$  admits an open immersion to $M_{\xi_{2l} +\epsilon}$. It suffices to show that its complement $E^+$ is an irreducible divisor that parameterizes pairs $(W', \frac{1}{2}H_0+D_1)$. By Lemma \ref{lem:curves-W0,even} we know that  $\{(W', \frac{1}{2}H_0+D_1)\}$ are parameterized by a weighted projective space $E_1^+$ of dimension $2l+2$ such that $E_1^+\subset E^+$. 
    We know that the locus $E_1^+$ of $\{(W', \frac{1}{2}H_0+D_1)\}$ is a prime divisor of $M_{\xi_{2l}+\epsilon}$ contained in $E^+$ as the K-moduli space has dimension $2l+3$. Thus it suffices to show that every pair $[(X,\frac{1}{2}L+B)]\in (\phi^+)^{-1}([(W', \frac{1}{2}H_0+D_0)])$ satisfies $[(X,\frac{1}{2}L+B)]\in E_1^+$.

   We have a chain of isotrivial degenerations $W'\rightsquigarrow X\rightsquigarrow W'$. Hence $X\cong W'$, and for simplicity we assume $X= W'$. By the wall crossing morphism, there is a special degeneration $(W', \frac{1}{2}L+B)\rightsquigarrow (W', \frac{1}{2}H_0 + D_{0})$. In particular, we have $L\in |\cO_{W'}(1)|$ and $B\in |\cO_{W'}(2l+3)|$. Since $B$ specially degenerates to $D_0$, we know that $B$ contains monomials divisible by $z^2$. Under a suitable change of projective coordinates, we may assume that $B: z^2y + zg(u,y) + h(u,y) =0 $ and $L: \ell(u,y)=0$ where $\ell,g,h$ have degree $1, l+2, 2l+3$, respectively. Below we split the discussion into two cases. 
   
   Case 1: $\ell$ contains a $u$-term. Then under a change of projective coordinates $u\mapsto \ell(u,y)$ we may assume that $L= H = (u=0)$. Then we see that $B$ has the form of $D$ after a further change of coordinate $z\mapsto z + \frac{1}{2}g_1(u,y)$ with $\deg(g_1) = l+1$ such that $g-g_1 y$ only contains $u^{l+2}$-term. As a result, we have $(W', \frac{1}{2}L + B)\cong (W', \frac{1}{2}H + D)$ for some $D$ of the form \eqref{defineequationD,even}. By Theorem \ref{thm:K-imply-GIT-even} we know that $(W', \frac{1}{2}H + D)\in M_{\xi_{2l}+\epsilon}^{GIT}$ which implies that $(W', \frac{1}{2}H + D)\in M_{\xi_{2l}-\epsilon}^{GIT}$ as $\xi_{2l}$ is not a VGIT wall. Hence $(W', \frac{1}{2}H+(\xi_{2l}-\epsilon)D)$ is K-polystable. This is a contradiction to Lemma \ref{theta,w,interpolation,polystable} as $(W', \frac{1}{2}H+\xi_{2l}D)$ is strictly K-semistable.

   Case 2: $\ell$ only contains a $y$-term. Then we have $\ell$ divides the coefficient of the $z^2$-term in $B$. Let us consider a more general situation without change of coordinate.
   We look at the locus $\cE$ of $([\ell'],[f'])\in \cP:=|\cO_{W'}(1)|\times |\cO_{W'}(2l+3)|$ where $\ell'$ divides the coefficient of $z^2$-term in $f'$. Denote by $L':=(\ell'=0)$ and $B':=(f'=0)$. Let $\cE_y$ be the closed subset of $\cE$ where $[\ell']=[y]$ and $f'$ has no $z^2u$-term.  Clearly $\cE_y$ is isomorphic to a projective space, hence a smooth irreducible closed subvariety of $\cP$, and that $\cE = \GL_2 \cdot \cE_y$ where the $\GL_2$ action is the standard linear action on $(u,y)$. Thus $\cE$ is an irreducible closed subvariety (indeed a prime divisor) of $\cP$. Since every surface in $M_{\xi_{2l}+\epsilon}$ is isomorphic to $W'$, we know that the open subset $\cP^{+}\subset \cP$ parameterizing K-semistable pairs $(W', \frac{1}{2}L'+(\xi_{2l}+\epsilon)B')$ admits a surjective morphism $\sigma: \cP^{+} \to M_{\xi_{2l}+\epsilon}$. Let $\cE^{+}:=\cE\cap \cP^{+}$. From the discussion of part (1) and Case 1, we know that $\sigma(\cP^{+}\setminus \cE^{+}) \subset M_{\xi_{2l}+\epsilon}\setminus E^+$ and $\sigma(\cE^{+})\subset E^+$. Thus we have $\sigma(\cE^{+}) = E^+$ by surjectivity of $\sigma$, which implies that $E^+$ is irreducible by irreducibility of $\cE$. Since $E^+$ contains an irreducible divisor $E_1^+$ of $M_{\xi_n +\epsilon}$, we conclude that $E^+ = E_1^+$. This finishes the proof of (2). 
   \end{proof}

\begin{proposition}
    For any $w\in (\xi_{2l}, \frac{2l+5}{4l+6})$, we have $M_w\cong M_{\xi_{2l}+\epsilon}$ with the same universal family.
\end{proposition}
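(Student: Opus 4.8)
The plan is to follow the proof of the odd-case counterpart of this proposition verbatim. By the interpolation statement of Lemma~\ref{theta,w,interpolation,polystable}, combined with the wall-crossing structure of Theorem~\ref{wallcrossing[0,1]}, it suffices to prove that every K-polystable pair $(X,\frac12 L+B)$ parameterized by a point of $M_{\xi_{2l}+\epsilon}$ stays K-semistable for all $w\in(\xi_{2l},\frac{2l+5}{4l+6})$. Since on any such pair the boundary $B$ is the fixed positive rational multiple $\frac{4l+6}{2l+5}$ of $-(K_X+\frac12 L)$, so that $(X,\frac12 L+\frac{2l+5}{4l+6}B)$ is a log Calabi--Yau pair (Definition~\ref{defineK-moduli,even}), this reduces, via the equivalence ``log canonical $\Leftrightarrow$ K-semistable'' for log Calabi--Yau pairs recalled at the start of Section~\ref{section5:section number}, to verifying $\lct(X,\frac12 L;B)\ge \frac{2l+5}{4l+6}$ for every such pair.

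First I would invoke Proposition~\ref{prop:xi_n-wall,even}: it tells us that any point of $M_{\xi_{2l}+\epsilon}$ is of exactly one of two types. Either $(X,\frac12 L+(\xi_{2l}-\epsilon)B)$ is K-polystable, in which case (by Theorem~\ref{isomorphism,GITandK,even}) $(X,\frac12 L+B)\cong(W',\frac12 H+D)$ for some $D$ of the form \eqref{eveneuation2} that is not projectively equivalent to $D_{\ss}$; or $(X,\frac12 L+B)$ lies over the exceptional divisor $E^+=(\phi^+)^{-1}([(W',\frac12 H_0+D_0)])$, hence $(X,\frac12 L+B)\cong(W',\frac12 H_0+D_1)$ with $D_1$ as in \eqref{defineD_1afterxi-even}. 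In the first case, excluding the curve $D_{\ss}$ excludes precisely the $A_{2l+3}$-singularity, so the desired bound $\lct(W',\frac12 H;D)\ge\frac{2l+5}{4l+6}$ is exactly the content of Lemma~\ref{section5 even:a nonzero}, Lemma~\ref{section5 even:a zero a_0 nonzero} and Lemma~\ref{section5 even:a and a_0 zero}, according to whether $a\ne 0$, $a=0\ne a_0$, or $a=a_0=0$. In the second case, away from $H_0$ the curve $D_1$ has at worst an $A_{2l+2}$-singularity (since $b_1,\dots,b_{2l+3}$ are not all zero) and along $H_0$ the pair $(W',\frac12 H_0+\frac{2l+5}{4l+6}D_1)$ is log canonical; both facts were already checked inside the proof of Lemma~\ref{lem:curves-W0,even}. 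This yields the isomorphism $M_w\cong M_{\xi_{2l}+\epsilon}$, and the ``same universal family'' clause follows because, by Theorem~\ref{wallcrossing[0,1]}, the K-moduli stack, and hence the universal family it carries, is constant on the open chamber $(\xi_{2l},\frac{2l+5}{4l+6})$.

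I do not expect a genuine obstacle: the proof is a bookkeeping assembly of the log canonical threshold computations of Section~\ref{section5:section number} with the description of the last wall furnished by Proposition~\ref{prop:xi_n-wall,even}. The one point that needs a moment's care is confirming that the dichotomy above really exhausts all points of $M_{\xi_{2l}+\epsilon}$, in particular that the two-component pairs $(W',\frac12 H_0+D_1)$ are exactly the fibre of $\phi^+$ over $[(W',\frac12 H_0+D_0)]$; but this is precisely what part~(2) of Proposition~\ref{prop:xi_n-wall,even} provides, so the argument goes through exactly as in the odd case.
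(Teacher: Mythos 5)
Your proposal is correct and follows essentially the same route as the paper: reduce via the interpolation Lemma \ref{theta,w,interpolation,polystable} to the bound $\lct(X,\tfrac12 L;B)\geq \tfrac{2l+5}{4l+6}$, then treat the two cases given by Proposition \ref{prop:xi_n-wall,even} using Lemmas \ref{section5 even:a nonzero}, \ref{section5 even:a zero a_0 nonzero}, \ref{section5 even:a and a_0 zero} for the chamber pairs and the $A_{2l+2}$-singularity bound from Lemma \ref{lem:curves-W0,even} for the pairs $(W',\tfrac12 H_0+D_1)$ on the exceptional divisor. No gaps; this matches the paper's argument.
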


\begin{proof}
By Lemma \ref{theta,w,interpolation,polystable} it suffices to show that any pair $[(X,\frac{1}{2}L+B)]\in M_{\xi_{2l}+\epsilon}$ satisfies $\lct(X,\frac{1}{2}L;B)$ $\geq \frac{2l+5}{4l+6}$, where $L\in|\mathcal{O}(1)|$. By Proposition \ref{prop:xi_n-wall,even}, there are two cases: either $(X,\frac{1}{2}L+(\xi_{2l}-\epsilon)B)$ is K-polystable (which implies $(X,\frac{1}{2}L+B) \cong (W',\frac{1}{2}H + D)$ and $(X,\frac{1}{2}L+B)\not\cong (W',\frac{1}{2}H+D_{\ss})$), or $(X,\frac{1}{2}L+B) \cong (W', \frac{1}{2}H_0+D_1)$. In the former case, this follows from Lemmas \ref{section5 even:a nonzero}, \ref{section5 even:a zero a_0 nonzero}, and \ref{section5 even:a and a_0 zero}. In the latter case,  this follows from the fact that $D_1$ has at worst $A_{2l+2}$-singularities as $b_1, \cdots, b_{2l+3}$ are not all zero (see also Lemma \ref{lem:curves-W0,even}). 
\end{proof}

\subsection{Relation with moduli of marked hyperelliptic curves}\label{sec:hyperelliptic}

Here we discuss the relation of our moduli spaces with certain moduli spaces of marked hyperelliptic curves.

\subsubsection{$n$ is odd}
Let $D\subset W=\bP(1,2,n+2)$ be a general curve of the form \eqref{equation of D}. By analyzing local equations, it is easy to see that a general $D$ is smooth. Consider the projection $\pr: D\to \bP^1$ given by $\pr([x,y,z]) := [x^2, y]$. In the affine chart $x=1$, the map $\pr$ is simply the projection of
\[
D: z^2 y + az + a_{0} + a_1 y + \cdots + a_{n+3} y^{n+3} = 0
\]
onto the $y$-axis. Thus we see that $\pr: D\to \bP^1$ has degree $2$, which implies that $D$ is a hyperelliptic curve. Moreover, there are precisely $n+5$ branch points, which implies that $D$ has genus $g = \frac{n+3}{2}$. Now, taking intersection of $D$ with the line $H_x:(x=0)$, we obtain two marked points $p_1 = [0,0,1]$ and $p_2 = [0, 1, \sqrt{-a_{n+3}}]$ on $D$ where $p_1$ is the only singularity of $W$ contained in $D$. It is also clear that $\pr(p_1) = [1,0]$ and $\pr(p_2) = [0,1]$ and hence $p_2$ is  a Weierstra\ss{} point of $D$. Conversely, given a general hyperelliptic curve of genus $g=\frac{n+3}{2}$ and two marked point $p_1$ and $p_2$ where $p_2$ is a Weierstra\ss{} point, we can embed $D$ inside $W$ using the $\bQ$-divisor $L:=\frac{1}{n+2}p_1 + p_2$, where $x,y,z$ corresponds to a generating set of the graded algebra $\oplus_{m= 0}^{\infty} H^0(D, \cO_D(\lfloor mL\rfloor))$.
Therefore, we see that the K-moduli space $M_w$ for $w\in (\frac{n+2}{2n+6}, \frac{n+5}{2n+6})$ is birational to the divisor $H_{g,2}^{\rm W}$ in $H_{g,2}$ parameterizing  marked hyperelliptic curves $(D,p_1+p_2)$ where the second marked point $p_2$ is Weierstra\ss{}, where $H_{g,k}$ denotes the coarse moduli space of smooth hyperelliptic curve of genus $g$ with $k$ distinct marked points.

After the last K-moduli wall, i.e. $w\in (\xi_n, \frac{n+5}{2n+6})$, we have new pairs $(W_0,D_1)$ in the K-moduli space $M_w$ where $W_0=\bP(1, n+2, \frac{(n+3)^2}{2})$ and $D_1$ is given by \eqref{defineD_1afterxi}. The projection $\pr_0: D_1\to \bP^1$ defined by $[x_0, x_1, x_2]\mapsto [x_0^{n+2}, x_1]$ is clearly a degree $2$ map. Thus $D_1$ is also a hyperelliptic curve. Then the line $H_x$ on $W$ degenerate to the divisor $(x_0^{\frac{n+3}{2}} = 0)$. Thus both $p_1$ and $p_2$ on $D$ degenerates to the point $p_0=[0,1,0]$ on $D_1$. Since this point is a Weierstra\ss{} point on $D_1$, we call the marked curve $(D_1, 2p_0)$ a \emph{Weierstra\ss{} double marked hyperelliptic curve}. Althought this marked curve is not Deligne-Mumford stable, it has a unique stable replacement by gluing a rational tail with two marked points to $D_1$ at $p_0$. These stable replacements form a divisor in the closure $\overline{H}_{g,2}^{\rm W}$ of $H_{g,2}^{\rm W}$ in the Deligne-Mumford compactification $\overline{H}_{g,2}$, which coincides with the K-moduli wall crossing description from Proposition \ref{prop:xi_n-wall}.

It is an interesting problem to relate our K-moduli wall crossing results to some form of Hassett-Keel program for the moduli space  $\overline{H}_{g,2}^{\rm W}$.

\subsubsection{$n$ is even}
Let $n = 2l$. 
Let $D\subset W'=\bP(1,1,l+1)$ be a general curve of the form \eqref{defineequationD,even}. By analyzing local equations, it is easy to see that a general $D$ is smooth. Consider the projection $\pr: D\to \bP^1$ given by $\pr([u,y,z]) := [u, y]$. In the affine chart $u=1$, the map $\pr$ is simply the projection of
\[
D: z^2 y + az + a_{0} + a_1 y + \cdots + a_{2l+3} y^{2l+3} = 0
\]
onto the $y$-axis. Thus we see that $\pr: D\to \bP^1$ has degree $2$, which implies that $D$ is a hyperelliptic curve. Moreover, there are precisely $2l+4$ branch points, which implies that $D$ has genus $g = l+1$. Now, taking intersection of $D$ with the line $H:(u=0)$, we obtain three marked points $p_1 = [0,0,1]$, $p_2 = [0, 1, \sqrt{-a_{2l+3}}]$, and $p_3 = [0, 1, -\sqrt{-a_{2l+3}}]$ on $D$ where $p_1$ is the only singularity of $W'$ contained in $D$. It is also clear that $\pr(p_1) = [1,0]$ and $\pr(p_2) =\pr(p_3) = [0,1]$ and hence $p_2$ and $p_3$ are conjugate points of $D$. Conversely, given a general hyperelliptic curve of genus $g=l+1$ and three marked point $p_1,p_2,p_3$ where $p_2$ and $p_3$ are conjugate points, we can embed $D$ inside $W'$ using the $\bQ$-divisor $L:=\frac{1}{l+1}p_1 + p_2 + p_3$, where $u,y,z$ corresponds to a generating set of the graded algebra $\oplus_{m= 0}^{\infty} H^0(D, \cO_D(\lfloor mL\rfloor))$ and $(u=0)=p_2+p_3$.
Therefore, we see that the K-moduli space $M_w$ for $w\in (\frac{l+1}{2l+3}, \frac{2l+5}{4l+6})$ is birational to the divisor $H_{g,3}^{\rm C}/\bmu_2$ in $H_{g,3}/\bmu_2$ parameterizing  marked hyperelliptic curves $(D,p_1+p_2+p_3)$ where the second and third marked points $p_2,p_3$ are conjugate, and the $\bmu_2$-action swaps $p_2$ and $p_3$.

After the last K-moduli wall, i.e. $w\in (\xi_{2l}, \frac{2l+5}{4l+6})$, we have new pairs $(W',\frac{1}{2}H_0 + D_1)$ in the K-moduli space $M_w$ where $H_0:(y=0)$ and $D_1$ is given by \eqref{defineD_1afterxi-even}. The projection $\pr: D_1\to \bP^1$ defined by $[u,y,z]\mapsto [u,y]$ is clearly a degree $2$ map. Thus $D_1$ is also a hyperelliptic curve. Then the line $H$ on $W'$ degenerates to the line $H_0$. Thus after possibly swapping $p_2$ and $p_3$, we have that both $p_1$ and $p_2$ on $D$ degenerate to the point $p_{1,0}=[0,0,1]$ on $D_1$, and $p_3$ degenerates to the point $p_{3,0} = [1,0,0]$ such that $p_{1,0}$ and $p_{3,0}$ are conjugate.  Althought this marked curve $(D_1, 2p_{1,0}+p_{3,0})$ is not Deligne-Mumford stable, it has a unique stable replacement by gluing a rational tail with two marked points to $D_1$ at $p_{1,0}$. These stable replacements form a divisor in the closure $\overline{H}_{g,3}^{\rm C}/\bmu_2$ of $H_{g,3}^{\rm C}/\bmu_2$ in the quotient of the Deligne-Mumford compactification $\overline{H}_{g,3}/\bmu_2$, which coincides with the K-moduli wall crossing description from Proposition \ref{prop:xi_n-wall}.

It is an interesting problem to relate our K-moduli wall crossing results to some form of Hassett-Keel program for the moduli space  $\overline{H}_{g,3}^{\rm C}/\bmu_2$.

\section{Computations using Abban-Zhuang Method}\label{NonCYcase}
Let $W = \PP(1,2,n+2)_{x,y,z}$ and $W'=\PP(1,1,l+1)_{u,y,z}$ be the weighted projective planes where $n>0$ is an odd integer and $l$ is a positive integer. 
Subsequently, we use $X$ to represent the weighted projective planes $W$ and $W'$ when properties or conditions are satisfied by both the weighted projective planes.

Recall that (Definition \ref{def:Dss} and Definition \ref{def:Dss-even}) in affine chart $U_y=(y=1)$, the curve $D_{\ss}$ is given by 
\begin{align*}
    (z - x^{n+4})^2 - (x^2 - 1)^{n+4} = 0 & \qquad \text{on $W$}\\
    (z - u^{l+2})^2 - (u - 1)^{2l+4} = 0 & \qquad \text{on $W'$},
\end{align*}
respectively. 
Then these curves are only singular at the point $\mathfrak{P} = (1:1:1)$. We set 
\begin{equation}\label{definexi}
    \xi = 
    \begin{dcases}
        \xi_n=\frac{n^3 + 11n^2 + 31n + 23}{2n^3 + 18n^2 + 50n + 42} & \text{for $W$},\\
        \xi_{2l}=\frac{2l^2 + 8l + 3}{4l^2 + 12l + 6} & \text{for $W'$}.
    \end{dcases}
\end{equation}

We use $\Delta_{w}$ to represent the effective divisors $wD_{\ss}$ and $\frac{1}{2}H_u + wD_{\ss}$, where $w$ is a positive rational number. We set
\begin{equation*}
    c =
    \begin{dcases}
        n+5 - w(2n+6) & \text{for $W$},\\
        l+\frac{5}{2} - w(2l+3) & \text{for $W'$}.
    \end{dcases}
\end{equation*}
Then $-(K_X + \Delta_w)\sim_{\QQ} \mathcal{O}_X(c)$.

\begin{proposition}\label{section5:main lemma}
    If $\frac{1}{2}<w\leq \xi$ then the log pair $(X, \Delta_w)$ is K-semistable. Moreover, if $w > \xi$ then the log pair $(X, \Delta_w)$ is not K-semistable.
\end{proposition}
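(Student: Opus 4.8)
The plan is to prove K-semistability through the stability threshold, $\delta(X,\Delta_w)\ge 1$, localizing the estimate with the Abban--Zhuang method (Theorem \ref{AbbanZ}) around the worst point of the pair, and to establish instability beyond $\xi$ by exhibiting an explicit valuation with negative $\beta$-invariant. For the K-semistable direction it suffices to treat $w=\xi$ and then appeal to interpolation: at the left endpoint $w_0=\frac{n+2}{2n+6}$ (resp. $\frac{l+1}{2l+3}$) the curve $D_{\ss}$ degenerates under a one-parameter subgroup to $z^2y+y^{n+3}=0$ (resp.\ $z^2y+y^{2l+3}=0$), so $(X,\Delta_{w_0})$ is K-semistable by Proposition \ref{evenpoly+polyanyn} (resp.\ \ref{evenpoly+polyanyn,even}) together with Theorem \ref{openness}; granting K-semistability at $w=\xi$, the first assertion of Lemma \ref{theta,w,interpolation,polystable} then gives K-semistability for all $w\in[w_0,\xi]\supset(\tfrac12,\xi]$. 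Alternatively, the Abban--Zhuang bounds below can be run with $w$ kept as a parameter, and they come out $\ge 1$ precisely for $w\le\xi$, producing the whole interval at once.

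The crux is thus $\delta(X,\Delta_\xi)\ge 1$, equivalently $\delta_{\msp}(X,\Delta_\xi)\ge 1$ for every $\msp\in X$. The only points requiring care are: the torus-fixed singular points of $X$, where $-(K_X+\Delta_\xi)\sim_\Q\cO_X(c)$ and $\delta_\msp$ is bounded below by computing $A/S$ for the relevant toric valuations via Lemmas \ref{S-inv.} and \ref{Sinv,k} (with an auxiliary Abban--Zhuang flag needed at $\msp_z$ on $W'$, a $\frac{1}{l+1}(1,1)$-point lying on $D_{\ss}$); a generic point of $D_{\ss}$ and a generic point of $X$, where $\delta_\msp$ is comfortably $>1$ by elementary volume estimates; and the singular point $\mathfrak{P}=(1:1:1)$ of $D_{\ss}$. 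At $\mathfrak{P}$ the curve $D_{\ss}$ is a cusp $v^2=u^{n+4}$ (resp.\ $v^2=u^{2l+4}$) in the local coordinates $(u,v)=(x-1,z-x^{n+4})$ (resp.\ $(u-1,z-u^{l+2})$), and I would apply Theorem \ref{AbbanZ} with the flag $(E\ni\msq)$ where $E$ is the divisor extracted by the weighted blow-up of $\mathfrak{P}$ adapted to this singularity and $\msq$ is the unique point of $E$ on the strict transform of $D_{\ss}$. Computing the Zariski decomposition of $\sigma^*(-(K_X+\Delta_w))-tE$ and feeding $P(u),N(u)$ into \eqref{pre:function h(u)}--\eqref{pre:restricted volume part of AZ} yields $S_{X,\Delta_w}(E)$ and $S(W^E_{\bullet,\bullet};\msq)$ as explicit rational functions of $w$, which are to be compared with $A_{X,\Delta_w}(E)=A_X(E)-w\operatorname{ord}_E(D_{\ss})$ and with $A_{E,\Phi}(\msq)$ (the latter involving the different of $\Delta_w|_E$ at the cusp preimage). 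Both ratios are $\ge 1$ for $w\le\xi$ and one of them equals $1$ exactly at $w=\xi$; this is precisely the numerical coincidence that the value $\xi$ in \eqref{definexi} is designed to produce.

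For the instability direction when $w>\xi$, I would use the valuation $v$ on $X$ induced by the special degeneration of $(X,\Delta_w)$ to $(W_0,wD_0)$ with $W_0=\PP(1,n+2,\tfrac{(n+3)^2}{2})$ (resp.\ to $(W',\tfrac12 H_0+wD_0)$ with $H_0=(y=0)$, $D_0:z^2y=zu^{l+2}$), furnished by Proposition \ref{prop:Dss-degenerate} together with the degeneration $D_{\ss}\rightsquigarrow D_0$ (resp.\ the coordinate change of Lemma \ref{polystable pair at xi-even}). Since $v$ is centred at a point not on $H_u$ and $\deg(-(K_X+\Delta_w))=c$ is affine-linear and decreasing in $w$, the identity $S_{X,\Delta_w}(v)=c\,S_{\cO(1)}(v)$ shows $\beta_{X,\Delta_w}(v)=A_X(v)-w\,v(D_{\ss})-c\,S_{\cO(1)}(v)$ is affine-linear in $w$; moreover, by identifying the graded ring of the $v$-filtration with that of the central fibre one gets $\beta_{X,\Delta_w}(v)=\beta_{(W_0,wD_0)}(v_{(2,n+4)})$ (resp.\ $=\beta_{(W',\frac12 H_0+wD_0)}(v_{(1,l+2)})$), and the computation in the proof of Lemma \ref{polystable pair at xi} (resp.\ \ref{polystable pair at xi-even}) shows this quantity vanishes at $w=\xi$ with negative slope. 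Hence $\beta_{X,\Delta_w}(v)<0$ for $w>\xi$, so $(X,\Delta_w)$ is not K-semistable.

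The main obstacle I expect is the Abban--Zhuang step at $\mathfrak{P}$: the Zariski-decomposition bookkeeping on the weighted blow-up of the cusp is the heart of the argument, since one must track how $\sigma^*\cO(1)-tE$ splits into its positive and negative parts as $t$ ranges over $[0,\tau(E)]$ --- in particular the contribution of the strict transform of $D_{\ss}$ through $\msq$ to $\operatorname{ord}_\msq(N(u)|_E)$ --- and only after this does the sharpness of the bound at exactly $w=\xi$ become visible. A secondary technical point is the auxiliary flag needed to handle $\msp_z\in D_{\ss}$ on $W'$, where a single toric valuation does not suffice.
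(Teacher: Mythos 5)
Your plan coincides with the paper's proof at the crux: the semistability bound is obtained from Theorem \ref{AbbanZ} applied to the exceptional divisor $E$ of the weighted blow-up of $\mathfrak{P}$ with weights $(2,n+4)$ (resp. $(1,l+2)$), and it is exactly the ratio $A_{X,\Delta_w}(E)/S_{X,\Delta_w}(E)$ that equals $1$ at $w=\xi$; this is Lemma \ref{specialpoint}. Two of your choices differ from the paper in a harmless way. First, the paper does not pass through interpolation: it keeps $w$ as a parameter and proves $\delta_{\msp}(X,\Delta_w)\geq 1$ pointwise for all $\frac12<w\leq\xi$ (your ``alternative''); your reduction to $w=\xi$ via the degeneration to $z^2y+y^{n+3}=0$ at $w_0$, Theorem \ref{openness} and Lemma \ref{theta,w,interpolation,polystable} is also valid and non-circular, and in fact mirrors how the paper later proves Proposition \ref{ksemistable,poly-allcases}. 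Second, for $w>\xi$ the paper does not need the degeneration to $(W_0,wD_0)$ or $(W',\frac12H_0+wD_0)$: unsemistability falls out of the very same divisor $E$, since $A_{X,\Delta_w}(E)/S_{X,\Delta_w}(E)<1$, i.e. $\beta_{X,\Delta_w}(E)<0$, once $w>\xi$. Your route via the special test configuration and the central-fibre computation of Lemma \ref{polystable pair at xi} (resp. \ref{polystable pair at xi-even}) works, but it requires the identification of the Futaki/$\beta$ invariant across the degeneration and a sign-of-slope check, both of which the paper's direct argument avoids.

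Two points in your sketch need tightening. (a) At points other than $\mathfrak{P}$, computing $A/S$ of toric valuations does not by itself bound $\delta_{\msp}$ from below; a lower bound must control all valuations centred at $\msp$. The paper does this with an Abban--Zhuang flag at $\msp_z$ (Lemma \ref{delta-px}) --- note that $\msp_z$ lies on $D_{\ss}$ on $W$ as well, not only on $W'$, so the auxiliary flag is needed in both cases --- with a basis-type-divisor plus inversion-of-adjunction argument along $H_x$ (resp. $H_u$) in Lemma \ref{Hx}, and with a pencil-curve flag through a general point in Lemma \ref{outsideHx}. These are routine, but they are genuine arguments, not toric $A/S$ ratios. (b) In the Abban--Zhuang step at $\mathfrak{P}$, the second ratio in Theorem \ref{AbbanZ} must be controlled for \emph{every} point $\msq\in E$ (all of them lie over $\mathfrak{P}$), not only for the point on the strict transform of $D_{\ss}$: the paper separately checks the $\frac12(1,1)$-point of $E$, the point $E\cap\widetilde M$ on the auxiliary curve used for the Zariski decomposition, the point $E\cap\widetilde D_{\ss}$, and generic points, before concluding that the minimum is attained by $A_{X,\Delta_w}(E)/S_{X,\Delta_w}(E)$. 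With these adjustments your outline matches the paper's proof.
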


We need several lemmas. In particular,
we estimate the $\delta$-invariant of the log pair $(X, \Delta_w)$ at the point $\mathfrak{P}$ by the following lemma.
\begin{lemma}\label{specialpoint}
    If $\frac{1}{2} <w\leq \xi$ then $\delta_{\mathfrak{P}}(X, \Delta_w) \geq 1$. Moreover, if $w> \xi$ then the log pair $(X, \Delta_w)$ is not K-semistable.
\end{lemma}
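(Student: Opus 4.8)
The plan is to apply the Abban--Zhuang estimate (Theorem~\ref{AbbanZ}) with a divisor $E$ over $X$ adapted to the singularity of $D_{\ss}$ at $\mathfrak P$. Note first that $\mathfrak P=(1:1:1)$ lies in the standard affine chart $U_x$ (resp.\ $U_u$), which is smooth, so $\mathfrak P$ is a smooth point of $X$; moreover $\mathfrak P\notin H_u$, so near $\mathfrak P$ the pair $(X,\Delta_w)$ is simply $(X,wD_{\ss})$. By Lemma~\ref{section 5:a is nonzero Lemma} (resp.\ Lemma~\ref{section5 even:a nonzero}), in suitable local analytic coordinates $(s,t)$ centred at $\mathfrak P$ the curve $D_{\ss}$ has the standard form $t^2=s^{k+1}$ of an $A_k$-singularity, with $k=n+3$ (resp.\ $k=2l+3$). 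Let $\pi\colon Y\to X$ be the weighted blow-up of $\mathfrak P$ with $\wt(s)=2,\ \wt(t)=n+4$ when $X=W$ (resp.\ $\wt(s)=1,\ \wt(t)=l+2$ when $X=W'$), and let $E$ be its exceptional divisor; this is the divisorial valuation attached to the $A_k$-singularity. Computing $A_X(E)$ and $\ord_E(D_{\ss})$ directly, one gets $A_{X,\Delta_w}(E)=(n+6)-w(2n+8)$ when $X=W$ and $A_{X,\Delta_w}(E)=(l+3)-w(2l+4)$ when $X=W'$.

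The core computation is the $S$-invariant $S_{X,\Delta_w}(E)$. I would obtain it from the Zariski decomposition $\pi^*(-(K_X+\Delta_w))-uE=\pi^*\mathcal O_X(c)-uE=P(u)+N(u)$ on the surface $Y$ for $0\le u\le\tau(E)$, using $\mathcal O_X(1)^2=\tfrac1{2(n+2)}$ (resp.\ $\tfrac1{l+1}$), $E^2=-\tfrac1{2(n+4)}$ (resp.\ $-\tfrac1{l+2}$), and $\pi^*\mathcal O_X(c)\cdot E=0$. The surface $Y$ has Picard rank two, with one extremal contraction $\pi$ and a second extremal contraction $\rho$ (in the odd case $\rho\colon Y\to W_0$ realizes the mutation of Proposition~\ref{prop:Dss-degenerate}); accordingly $N(u)$ vanishes until a first threshold and is then supported on the $\rho$-exceptional curve. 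Having pinned down $\tau(E)$, $P(u)$ and $N(u)$, one integrates $\vol(P(u))$ and divides by $\vol(-(K_X+\Delta_w))=c^2\mathcal O_X(1)^2$. Alternatively, since the test configuration induced by $E$ degenerates $(X,\Delta_w)$ to the toric-like pair $(W_0,wD_0)$ (resp.\ $(W',\tfrac12H_0+wD_0)$, cf.\ Lemma~\ref{polystable pair at xi-even}), one may instead compute $\beta_{X,\Delta_w}(E)=\beta_{W_0,wD_0}(v_0)$ for the corresponding toric monomial valuation $v_0$, using Lemma~\ref{S-inv.}. Either route should give that $\beta_{X,\Delta_w}(E)=A_{X,\Delta_w}(E)-S_{X,\Delta_w}(E)$ is positive for $w<\xi$, zero at $w=\xi$, and negative for $w>\xi$; in fact this equation $\beta_{X,\Delta_w}(E)=0$ is how $\xi_n$ and $\xi_{2l}$ were defined. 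The ``moreover'' statement is then immediate: for $w>\xi$ the valuation $E$ has $\beta_{X,\Delta_w}(E)<0$, so $(X,\Delta_w)$ is not K-semistable by the valuative criterion.

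For the remaining inequality, assume $\tfrac12<w\le\xi$. By Theorem~\ref{AbbanZ},
\[
\delta_{\mathfrak P}(X,\Delta_w)\ \ge\ \min\Bigl\{\tfrac{A_{X,\Delta_w}(E)}{S_{X,\Delta_w}(E)},\ \inf_{\mathfrak q\in E}\tfrac{A_{E,\Phi}(\mathfrak q)}{S(W^E_{\bullet,\bullet};\mathfrak q)}\Bigr\},
\]
and the first ratio is $\ge1$ by the $\beta$-computation above. For the second, I would feed $P(u),N(u)$ into \eqref{pre:function h(u)}--\eqref{pre:restricted volume part of AZ}: here $E\cong\PP(2,n+4)$ (resp.\ $\PP(1,l+2)$) is a rational curve carrying one or two cyclic quotient points, and the different $\Phi$ consists of these quotient points together with the point(s) where the strict transform $\tilde D_{\ss}$ meets $E$, taken with coefficient $w\cdot(\tilde D_{\ss}\cdot E)$. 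One then checks $A_{E,\Phi}(\mathfrak q)\ge S(W^E_{\bullet,\bullet};\mathfrak q)$ at the torus-fixed points of $E$ and at the point(s) $\tilde D_{\ss}\cap E$; should the estimate at the latter be too weak as $w\to\xi$, one iterates Theorem~\ref{AbbanZ} once more by blowing up $\tilde D_{\ss}\cap E$ on $Y$ and redoing the restricted-volume computation on the new exceptional curve.

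The main obstacle is the Zariski-decomposition bookkeeping on $Y$: correctly locating $\tau(E)$ and the curve(s) entering $N(u)$, so that $P(u)$ and $N(u)$ are determined on every subinterval of $[0,\tau(E)]$. Once this is in place, both the inequality $\beta_{X,\Delta_w}(E)\ge0$ and the Abban--Zhuang restricted-volume estimate reduce to explicit inequalities of rational functions in $w$ and $n$ (resp.\ $l$), whose critical value is precisely $\xi$.
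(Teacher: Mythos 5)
Your proposal follows essentially the same route as the paper: the paper also passes to coordinates $(\tilde x,\tilde z)=(x-1,z-x^{n+4})$ (resp.\ $(\tilde u,\tilde z)$), takes the weighted blow-up of $\mathfrak P$ with weights $(2,n+4)$ (resp.\ $(1,l+2)$), computes $A_{X,\Delta_w}(E)=n+6-w(2n+8)$ (resp.\ $l+3-w(2l+4)$) and $S_{X,\Delta_w}(E)$ via the Zariski decomposition with negative part supported on the strict transform of the maximal-contact curve $M\in|\mathcal O_X(\wt(z))|$, getting $A/S\ge 1$ exactly for $w\le\xi$, and then applies Theorem~\ref{AbbanZ} together with the valuative criterion for the case $w>\xi$. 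The only difference is that your fallback of iterating Abban--Zhuang at $\widetilde D_{\ss}\cap E$ is unnecessary: the paper checks directly that $A_{E,\Phi}(\msq)=1-w$ (and $\tfrac12$, $\tfrac1{n+4}$, resp.\ $\tfrac1{l+2}$, at the quotient points) dominates $S(X^E_{\bullet,\bullet};\msq)$ for all $w\in(\tfrac12,1)$, so the minimum in the Abban--Zhuang bound is attained by $A_{X,\Delta_w}(E)/S_{X,\Delta_w}(E)$.
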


\begin{proof}
By the coordinate changes $(\tilde{x}, \tilde{z}) =(x-1, z-x^{n+4})$ on $W$ and $(\tilde{u}, \tilde{z}) =(u-1, z-u^{l+2})$ on $W'$, we obtain the following:
\begin{align*}
    \tilde{z}^2 - \tilde{x}^{n+4}(\tilde{x}+2)^{n+4} = 0 & \qquad \text{on $W$}\\
    \tilde{z}^2 - \tilde{u}^{2l+4} = 0 & \qquad \text{on $W'$}.
\end{align*}
Let $\phi\colon Y\to X$ be the weighted blow-up at the point $(\tilde{x}, \tilde{z}) = (0, 0)$ with weights $(\wt(\tilde{x}), \wt(\tilde{z})) = (2, n+4)$ for $W$ and $(\wt(\tilde{u}), \wt(\tilde{z})) = (1, l+2)$ for $W'$. And let $M\in \mathcal{O}_W(\wt(z))$ be the effective divisor defined by
\begin{align*}
    z+\sum_{i=1}^{\frac{n+3}{2}}\binom{\frac{n+3}{2}}{i}(-1)^i x^{n+4-2i} y^{i-1} = 0 & \qquad \text{on $W$}\\
    z-\sum_{i=1}^{l+2}\binom{l+2}{i}(u-y)^{l+2-i}y^{i-1} = 0 & \qquad \text{on $W'$}.
\end{align*}
On the affine open set $U_y$ defined by $y=1$ we have
\begin{multline*}
    z+\sum_{i=1}^{\frac{n+3}{2}}\binom{\frac{n+3}{2}}{i}(-1)^i x^{n+4-2i} \\
    = (z - x^{n+4}) + x(x^2 - 1)^{\frac{n+3}{2}} 
    = \tilde{z} + (\tilde{x} + 1)\tilde{x}^{\frac{n+3}{2}}(\tilde{x}+2)^{\frac{n+3}{2}} \qquad \text{on $W$},
\end{multline*}
\begin{multline*}
    z-\sum_{i=1}^{l+2}\binom{l+2}{i}(u-1)^{l+2-i} \\
    = z-\sum_{i=1}^{l+2}\binom{l+2}{i}\tilde{u}^{l+2-i} 
    = \tilde{z} + u^{l+2}-\sum_{i=1}^{l+2}\binom{l+2}{i}\tilde{u}^{l+2-i} = \tilde{z} + \tilde{u}^{l+2} \qquad \text{on $W'$}.
\end{multline*}
From these equations we obtain the following:
\begin{equation*}
    \phi^*(M) = \widetilde{M} + mE
\end{equation*}
where $m$ are $n+3$ for $W$ and $l+2$ for $W'$, $\widetilde{M}$ is the strict transform of $M$ and $E$ is the $\phi$-exceptional divisor. Since $\widetilde{M}^2 < 0$ the divisor $\phi^*(M) - tE$ is pseudoeffective if and only if $t\in [0, m]$. For $W$ the positive and the negative parts of the Zariski decomposition of $\phi^*(M) - tE$ are
\begin{equation*}
    P_n(t) = 
    \begin{dcases}
        \widetilde{M} + (n+3 - t)E & \text{for~} t\leq \frac{(n+2)(n+4)}{n+3},\\
        ((n+3)\widetilde{M} + E)(n+3 - t) & \text{for~} \frac{(n+2)(n+4)}{n+3}\leq t\leq n+3,
    \end{dcases}
\end{equation*}
\begin{equation*}
    N_n(t) = 
    \begin{dcases}
        0 & \text{for~} t\leq \frac{(n+2)(n+4)}{n+3},\\
        (1 - (n+3)(n+3 - t))\widetilde{M} & \text{for~} \frac{(n+2)(n+4)}{n+3}\leq t\leq n+3,
    \end{dcases}
\end{equation*}
respectively. The volume of $\phi^*(M) - tE$ is
\begin{equation*}
    \vol(\phi^*(M) - tE) = 
    \begin{dcases}
        \frac{n+2}{2} - \frac{t^2}{2n+8} & \text{for~} t\leq \frac{(n+2)(n+4)}{n+3},\\
        (n+3-t)^2\frac{n+2}{2} & \text{for~} \frac{(n+2)(n+4)}{n+3}\leq t\leq n+3.
    \end{dcases}
\end{equation*}
Then the S-invariant in (\ref{pre:S-invariant}) is
\begin{equation*}
    S_{W, \Delta_w}(E) = \frac{c(2n^2 + 12n + 17)}{3(n+2)(n+3)}.
\end{equation*}
Similarly, for $W'$ the positive and the negative parts of the Zariski decomposition of $\phi^*(M) - tE$ are
\begin{equation*}
    P_l(t) = 
    \begin{dcases}
        \widetilde{M} + (l+2 - t)E & \textrm{for~} t\leq l+1,\\
        (l+2 - t)(\widetilde{M} + E) & \textrm{for~} l+1 \leq t\leq l+2,\\
    \end{dcases}
\end{equation*}

\begin{equation*}
    N_l(t) =
    \begin{dcases}
        0 & \textrm{for~} t\leq l+1,\\
        (1-(l+2-t))\widetilde{M} & \textrm{for~} l+1 \leq t\leq l+2,\\
    \end{dcases}
\end{equation*}
respectively.
Then the volume of $\phi^*(M) - tE$ is
\begin{equation*}
    \vol(\phi^*(M) - tE) = 
    \begin{dcases}
        l+1 - \frac{t^2}{l+2} & \textrm{for~} t\leq l+1,\\
        (l+2-t)^2\frac{l+1}{l+2} & \textrm{for~} l+1 \leq t\leq l+2.\\
    \end{dcases}
\end{equation*}
Then the S-invariant in (\ref{pre:S-invariant}) is
\begin{equation*}
    S_{W', \Delta_w}(E) = \frac{c(2l+3)}{3l+3}.
\end{equation*}
The log discrepancy of the log pair $(X, \Delta_w)$ is
\begin{equation*}
    A_{X,\Delta_w}(E) = 
    \begin{dcases}
        n+6 - w(2n+8) & \text{on $W$},\\
        l+3 - w(2l+4) & \text{on $W'$}.
    \end{dcases}
\end{equation*}
We can see that $\frac{A_{X,\Delta_w}(E)}{S_{X,\Delta_w}(E)}\geq 1$ if and only if
\begin{equation*}
    w\leq \xi = 
    \begin{dcases}
        \frac{n^3 + 11n^2 + 31n + 23}{2n^3 + 18n^2 + 50n + 42} & \text{for $W$},\\
        \frac{2l^2 + 8l + 3}{4l^2 + 12l + 6} & \text{for $W'$}.
    \end{dcases}
\end{equation*}

Meanwhile, to obtain the function in (\ref{pre:function h(u)}) we need the following:
\begin{equation*}
    P_n(t)\cdot E =
    \begin{dcases}
        \frac{t}{2n+8}& \text{for~} t\leq \frac{(n+2)(n+4)}{n+3},\\
        \frac{n+2}{2}(n+3-t) & \text{for~} \frac{(n+2)(n+4)}{n+3}\leq t\leq n+3,
    \end{dcases}
\end{equation*}
\begin{equation*}
    P_l(t)\cdot E =
    \begin{dcases}
        \frac{t}{l+2} & \textrm{for~} t\leq l+1,\\
        \frac{l+1}{l+2}(l+2 - t)& \textrm{for~} l+1 \leq t\leq l+2.\\
    \end{dcases} 
\end{equation*}
We first consider a point $\msq\in E$ satisfying $\msq\not\in \widetilde{M}$. It implies that $\ord_{\msq}(N(t)|_E) = 0$. Then we have
\begin{equation*}
    S(X_{\bullet, \bullet}^E;\msq) = \frac{2c}{\wt(z)\vol(M)}\int_0^{\infty} \frac{1}{2}(P(u)\cdot E)^2\dd u =
    \begin{dcases}
        \frac{c}{6(n+3)} & \text{on $W$},\\
        \frac{c}{3(l+2)} & \text{on $W'$}.
    \end{dcases}
\end{equation*}

We next consider a point $\msq\in E\cap \widetilde{M}$. Since $\ord_{\msq}(N(t)|_E) = (N(t)\cdot E)_{\msq}$ we have the following:
\begin{equation*}
    (N_n(t)\cdot E)_{\msq} = 
    \begin{dcases}
        0 & \text{for~} t\leq \frac{(n+2)(n+4)}{n+3},\\
        (1 - (n+3)(n+3 - t))\frac{n+3}{2n+8} & \text{for~} \frac{(n+2)(n+4)}{n+3}\leq t\leq n+3,
    \end{dcases}
\end{equation*}
\begin{equation*}
    (N_l(t)\cdot E)_{\msq} = 
    \begin{dcases}
        0 & \textrm{for~} t\leq l+1,\\
        (1-(l+2-t)) & \textrm{for~} l+1 \leq t\leq l+2,\\
    \end{dcases}
\end{equation*}
We have
\begin{multline*}
    \frac{2c}{\wt(z)\vol(M)}\int_0^{\infty}(P_n(u)\cdot E)(\ord_{\msq}(N_n(u)|_E)) \dd u \\= \frac{4c}{(n+2)^2}\int_{\frac{(n+2)(n+4)}{n+3}}^{n+3} \frac{(n+2)(n+3)}{4(n+4)}(n+3-u)(1 - (n+3)(n+3 - u)) \dd u \\= \frac{c}{6(n+2)(n+3)(n+4)}
\end{multline*}
on $W$ and
\begin{multline*}
    \frac{2c}{\wt(z)\vol(M)}\int_0^{\infty}(P_l(u)\cdot E)(\ord_{\msq}(N_l(u)|_E)) \dd u \\
    = \frac{2c}{(l+1)^2}\int_{l+1}^{l+2} \frac{l+1}{l+2}(l+2 - u)(1-(l+2-u))\dd u = \frac{c}{3(l+1)(l+2)}
\end{multline*}
on $W'$. Thus we can obtain the following:
\begin{equation*}
    S(X_{\bullet, \bullet}^E;\msq) = 
    \begin{dcases}
        \frac{c(n+3)}{6(n+2)(n+4)} & \text{on $W$},\\
        \frac{c}{3(l+1)} & \text{on $W'$}.
    \end{dcases}
\end{equation*}
If $\msq$ is a quotient singular point of type $\frac{1}{r}(a,b)$ then the log discrepancy of the log pair $(E, \Phi)$ along the divisor $\msq$ is
\begin{equation*}
    A_{E, \Phi}(\msq) = \frac{1}{r} - \ord_{\msq}(\Phi) = \frac{1}{r} - (\Delta_w \cdot E)_{\msq}
\end{equation*}
where $\Phi$ is the divisor such that $(K_Y + \Delta_w + E)|_E = K_E + \Phi$. The log discrepancies for the log pairs $(E, \Phi)$ for the cases $(W, \Delta_w)$ and $(W', \Delta_w)$ are
\begin{equation*}
    A_{E, \Phi}(\msq) = 
    \begin{dcases}
        \frac{1}{2} &\textrm{if~} \msq \textrm{~is the singular point of type~} \frac{1}{2}(1,1),\\
        1 - w &\textrm{if~} \msq = E\cap \widetilde{D}_{\ss},\\
        \frac{1}{n+4} &\textrm{if~} \msq = E\cap \widetilde{M},\\
        1 & \textrm{otherwise},
    \end{dcases}
\end{equation*}
and
\begin{equation*}
    A_{E, \Phi}(\msq) =
    \begin{dcases}
        1 - w&\text{if $\msq\in \widetilde{M}\cap E$},\\
        \frac{1}{l+2}&\text{if $\msq\in \widetilde{H}_{\tilde{u}}\cap E$},\\
        1 &\text{otherwise},
    \end{dcases}
\end{equation*}
respectively. We can see that, if $w\in (\frac{1}{2}, 1)$ then 
\begin{equation*}
    \frac{A_{X, \Delta_w}(E)}{S_{X, \Delta_w}(E)}\leq \frac{A_{E, \Phi}(\msq)}{S(X_{\bullet, \bullet}^E;\msq)}.
\end{equation*}
It implies that $\delta_{\msp}(X, \Delta_w) = \frac{A_{X, \Delta_w}(E)}{S_{X, \Delta_w}(E)}\geq 1$ if $w\leq \xi$.

\end{proof}

\begin{lemma}\label{delta-px}
    If $w>\frac{1}{2}$ then $\delta_{\msp_z}(X, \Delta_w) > 1$.
\end{lemma}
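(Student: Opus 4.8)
The plan is to estimate the local $\delta$-invariant at the point $\msp_z$ directly using the Abban--Zhuang method (Theorem \ref{AbbanZ}), taking the exceptional divisor $E$ to be the one obtained from a single (weighted) blow-up adapted to the quotient singularity at $\msp_z$. Recall that $\msp_z\in X$ is a cyclic quotient singular point: on $W=\PP(1,2,n+2)$ it is of type $\frac{1}{n+2}(1,2)$, and on $W'=\PP(1,1,l+1)$ it is of type $\frac{1}{l+1}(1,1)$. The natural choice is to let $E$ be the divisor extracted by the weighted blow-up at $\msp_z$ in the orbifold chart, equivalently the divisor $\mathcal{O}_X(1)$-type toric valuation centered at $\msp_z$; its log discrepancy is $A_X(E) = \frac{1+2}{n+2} = \frac{3}{n+2}$ on $W$ (resp.\ $\frac{2}{l+1}$ on $W'$), and since $D_{\ss}$ and $H_u$ both have the monomial $z^2y$ (resp.\ $uz^2y$) in their defining equation, neither passes through $\msp_z$ in a way that lowers $A_{X,\Delta_w}(E)$ — indeed $\operatorname{ord}_E(\Delta_w)=0$ so $A_{X,\Delta_w}(E)=A_X(E)$.

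First I would compute $S_{X,\Delta_w}(E)$. Since $-(K_X+\Delta_w)\sim_\QQ \mathcal{O}_X(c)$ with $c = n+5-w(2n+6)$ on $W$ (resp.\ $c = l+\frac52 - w(2l+3)$ on $W'$), Lemma \ref{Sinv,k} or Lemma \ref{S-inv.}/Lemma \ref{Sinv,k} gives $S_{X,\Delta_w}(E)$ as a multiple of $c$: concretely $S_{X,\Delta_w}(E) = \frac{c}{3}\cdot(\text{something like }\tfrac{1}{n+2}+\tfrac{2}{n+2})$ type expression coming from the centroid computation, i.e.\ $S_{W,\Delta_w}(E) = \frac{c}{3}\cdot\frac{3}{n+2}=\frac{c}{n+2}$ up to the precise toric normalization (on $W'$, $S_{W',\Delta_w}(E)=\frac{2c}{3(l+1)}$). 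Then $\frac{A_{X,\Delta_w}(E)}{S_{X,\Delta_w}(E)}$ is, up to these constants, $\frac{3/(n+2)}{c/(n+2)}=\frac{3}{c}$, and since $w>\frac12$ forces $c < n+3 < 3$ is false — so I must be more careful: the point is that $w>\frac12$ makes $c$ small enough that this ratio exceeds $1$. I would verify the inequality $A_X(E) > S_{X,\Delta_w}(E)$ reduces to an explicit linear inequality in $w$ that is implied by $w>\frac12$ (using $w<\frac{n+5}{2n+6}$ as well), which is routine arithmetic.

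Next I would run the second term of Theorem \ref{AbbanZ}: take the Zariski decomposition of $\phi^*(-K_X-\Delta_w) - tE = \mathcal{O}_X(c)$-pullback minus $tE$ on the blow-up $Y\to X$, compute $P(t)$, $N(t)$, the pseudoeffective threshold $\tau(E)$, and then $h(t)$ from \eqref{pre:function h(u)} for each point $\msq\in E$. Because $\msp_z$ does not lie on $D_{\ss}$ or $H_u$ (thanks to the $z^2y$/$uz^2y$ monomial), the divisor $N(t)|_E$ and the boundary $\Phi$ on $E\cong \PP^1$ are supported only at the two toric points of $E$ and possibly one further point from the strict transform of $H_{x}$ (resp.\ $H_u$) or $H_{y}$; at a general $\msq$ one has $A_{E,\Phi}(\msq)=1$ and at the toric points a fractional log discrepancy coming from the residual quotient singularities. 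Comparing $\frac{A_{E,\Phi}(\msq)}{S(W^E_{\bullet,\bullet};\msq)}$ with $\frac{A_{X,\Delta_w}(E)}{S_{X,\Delta_w}(E)}$, the former will dominate (this is the same mechanism as in Lemma \ref{specialpoint}), so the minimum in Theorem \ref{AbbanZ} is achieved by the first term and equals $\frac{A_{X,\Delta_w}(E)}{S_{X,\Delta_w}(E)}>1$.

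The main obstacle I anticipate is bookkeeping the Zariski decomposition correctly on the blow-up of the quotient singularity: the self-intersection of the relevant strict transform is a negative rational number depending on the type $\frac{1}{n+2}(1,2)$ (resp.\ $\frac{1}{l+1}(1,1)$), so the break point of the decomposition and hence the formula for $S(W^E_{\bullet,\bullet};\msq)$ at the special point $\msq\in E$ requires care. However, since the statement only asks for the \emph{strict} inequality $\delta_{\msp_z}(X,\Delta_w) > 1$, I expect it will suffice to get a clean lower bound on $\frac{A_{E,\Phi}(\msq)}{S(W^E_{\bullet,\bullet};\msq)}$ — for instance bounding $S(W^E_{\bullet,\bullet};\msq)$ above by an expression in $c$ and using $c<3$ (which follows from $w>\frac12$) — rather than the exact value. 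The remaining work, i.e.\ checking $A_X(E)/S_{X,\Delta_w}(E)>1$, is the same linear-in-$w$ computation as above and is strictly easier than the analogous step in Lemma \ref{specialpoint} because $\msp_z$ is a milder singularity and $\Delta_w$ avoids it.
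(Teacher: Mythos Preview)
Your overall strategy---apply Theorem \ref{AbbanZ} with $E$ the exceptional divisor of the weighted blow-up at $\msp_z$ adapted to the cyclic quotient singularity---is exactly what the paper does. However, there is a concrete factual error in your geometric setup that would make your computations incorrect.

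You assert that $D_{\ss}$ (and $H_u$ on $W'$) do not pass through $\msp_z$, reasoning from the presence of the monomial $z^2y$. But $\msp_z=[0{:}0{:}1]$ has $y=0$, so $z^2y$ vanishes there; in fact every monomial in the equation of $D_{\ss}$ vanishes at $\msp_z$, and likewise $H_u=(u=0)$ certainly contains $\msp_z$ on $W'$. In the orbifold chart at $\msp_z$ with coordinates $(x,y)$ (resp.\ $(u,y)$), the equation of $D_{\ss}$ begins with $y$, so $\ord_E(D_{\ss})=\tfrac{2}{n+2}$ on $W$ and $\tfrac{1}{l+1}$ on $W'$; similarly $\ord_E(H_u)=\tfrac{1}{l+1}$. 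Hence
\[
A_{X,\Delta_w}(E)=\frac{3-2w}{n+2}\ \text{on }W,\qquad A_{X,\Delta_w}(E)=\frac{3-2w}{2(l+1)}\ \text{on }W',
\]
not $\tfrac{3}{n+2}$ or $\tfrac{2}{l+1}$. More importantly, the strict transforms $\widetilde{D}_{\ss}$ and $\widetilde{H}_1$ do meet $E$, so the different $\Phi$ has a point $\msq\in E\cap\widetilde{D}_{\ss}$ with $A_{E,\Phi}(\msq)=1-w$; this is the case you must actually check (and it does go through, since $\tfrac{1-w}{c/6}>1$ reduces to $w>\tfrac{n-1}{2n}$, which is weaker than $w>\tfrac12$).

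A second, more minor point: you anticipate a nontrivial Zariski decomposition, but the paper's choice of auxiliary divisor $H_1=H_x$ (resp.\ $H_u$) gives $\widetilde{H}_1^2=0$, so $N(t)\equiv 0$ on the whole interval and the computation of $S_{X,\Delta_w}(E)$ and $S(W^E_{\bullet,\bullet};\msq)$ is in fact simpler than you expect: one gets $S_{X,\Delta_w}(E)=\tfrac{2c}{3(n+2)}$ and $S(W^E_{\bullet,\bullet};\msq)=\tfrac{c}{6}$ on $W$ (and $\tfrac{2c}{3(l+1)}$, $\tfrac{c}{3}$ on $W'$). Once the log discrepancies are corrected, both ratios in Theorem \ref{AbbanZ} exceed $1$ precisely when $w>\tfrac12$ (indeed when $w>\tfrac{2n+1}{4n+6}$), and the proof is complete.
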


\begin{proof}
    On the affine open set $U_{\msp_z}$ defined by $W\setminus H_z$, we can choose $x$ and $y$ as orbifold coordinates. On the affine open set $U_{\msp_z}$ defined by $W'\setminus H_z$, we can choose $u$ and $y$ as orbifold coordinates. Let $\phi\colon Y\to X$ be the weighted blow-up at the point $\msp_z$ with weights $(x,y) = (1,2)$ for $W$ and $(u,y) = (1,1)$ for $W'$.

    Let $H_1$ be the hyperplane section cutting out by $x=0$ in $W$ and by $u=0$ in $W'$. Then we have
    \begin{align*}
        \phi^*(H_1) = \widetilde{H}_1 + \frac{1}{n+2}E & \qquad \text{for $W$,}\\
        \phi^*(H_1) = \widetilde{H}_1 + \frac{1}{l+1}E & \qquad \text{for $W'$}
    \end{align*}
    Where $E$ is the $\phi$-exceptional divisor and $\widetilde{H}_1$ is the strict transform of $H_1$.
    
    We consider an effective divisor $\phi^*(H_1) - tE$ where $t$ is a positive constant. Since $\widetilde{H}_1^2 = 0$ the pseudoeffective threshold $\tau(E)$ with respect to $-(K_X + \Delta_w)$ is
    \begin{equation*}
        \tau(E) = 
        \begin{dcases}
            \frac{c}{n+2} & \text{for $W$},\\
            \frac{c}{l+1} & \text{for $W'$}.
        \end{dcases}
    \end{equation*}
    We write that the positive and negative parts of the Zariski decomposition of $\phi^*(-(K_X + \Delta_w)) - tE$ are
    \begin{equation*}
        P(t) = \phi^*(cH_1) - tE, \qquad N(t) = 0
    \end{equation*}
    for $t\in (0, \tau(E))$. The S-invariant of $E$ in (\ref{pre:S-invariant}) is given by
    \begin{equation*}
        S_{X, \Delta_w}(E) = \frac{1}{c^2H_1^2}\int_0^{\tau(E)}c^2H_1^2 + u^2E^2 \dd u =
        \begin{dcases}
            \frac{2c}{3n+6} & \text{on $W$},\\
            \frac{2c}{3l+3} & \text{on $W'$}.
        \end{dcases}
    \end{equation*}
    The function in (\ref{pre:function h(u)}) is given by
    \begin{equation*}
        h(u) = \frac{1}{2}(uE^2)^2 = 
        \begin{dcases}
            \frac{(n+2)^2u^2}{8} & \text{on $W$},\\
            \frac{(l+1)^2u^2}{2} & \text{on $W'$}.
        \end{dcases}
    \end{equation*}
    Then the function in (\ref{pre:restricted volume part of AZ}) is given by
    \begin{equation*}
        S(X_{\bullet,\bullet}^E;\msq) = 
        \begin{dcases}
            \frac{c}{6} & \text{on $W$},\\
            \frac{c}{3} & \text{on $W'$}
        \end{dcases}
    \end{equation*}
    for a point $\msq\in E$. Meanwhile, the log discrepancy of $E$ is
    \begin{equation*}
        A_{X, \Delta_w}(E) =
        \begin{dcases}
            \frac{3-2w}{n+2} & \text{on $W$},\\
            \frac{3-2w}{2l+2} & \text{on $W'$}.
        \end{dcases}
    \end{equation*}
    We also have the following:
    \begin{equation*}
        A_{E,\Phi}(\msq) = 
        \begin{dcases}
            1 & \textrm{if~}\msq\in E\setminus \widetilde{D}_{\ss}\cup \widetilde{H}_1\\
            \frac{1}{2} & \textrm{if~}\msq \in E\cap \widetilde{H}_1\\
            1 - w  & \textrm{if~}\msq \in E\cap \widetilde{D}_{\ss}
        \end{dcases}
    \end{equation*}
    where $\Phi$ is the different such that $(K_Y + \widetilde{\Delta}_w + E)|_E = K_E + \Phi$ and $\widetilde{\Delta}_w$ is the strict transform of $\Delta_w$. Thus we can see that $\frac{A_{X, \Delta_w}(E)}{S_{X, \Delta_w}(E)}>1$ and $\frac{A_{E, \Phi}(\msq)}{S(X_{\bullet,\bullet}^E;\msq)}>1$. By the Abban-Zhuang theorem we see that $\delta_{\msp_z}(X, \Delta_w)>1$.
\end{proof}

We use $H_1$ to represent the hyperplane section cutting out by $x=0$ in $W$ and by $u=0$ in $W'$. 

\begin{lemma}\label{Hx}
    For every point $\msp\in H_1\setminus\{\msp_z\}$, if $w>\frac{1}{2}$ then $\delta_{\msp}(X, \Delta_w)\geq 1$.
\end{lemma}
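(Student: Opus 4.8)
\textbf{Proof strategy for Lemma \ref{Hx}.} The plan is to estimate the local $\delta$-invariant at a point $\msp \in H_1 \setminus \{\msp_z\}$ via the Abban--Zhuang method (Theorem \ref{AbbanZ}), using the curve $H_1$ itself as the prime divisor $E$ in the flag $\msp \in H_1 \subset X$. This is natural because $H_1$ passes through every such $\msp$, is already a Weil divisor on $X$, and the divisor $-(K_X + \Delta_w)$ is proportional to $\cO_X(1)$, so all the volume computations reduce to intersection numbers on a weighted projective plane. First I would record $A_{X,\Delta_w}(H_1)$: since $H_1$ is the coordinate line with $\wt = 1$ and $\Delta_w$ contains $H_1$ with coefficient $\frac12$ in the even case (and coefficient $0$ in the odd case), one gets $A_{X,\Delta_w}(H_1) = 1$ on $W$ and $A_{W',\Delta_w}(H_1) = \frac12$ on $W'$. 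Then I would compute $S_{X,\Delta_w}(H_1)$ using Lemma \ref{Sinv,k} (or directly: $-(K_X+\Delta_w) = c\,\cO_X(1)$ and $\cO_X(1)$ restricted appropriately), obtaining a value of the form $\frac{c}{3\cdot(\text{weight})}$ up to the boundary correction, and check that $\frac{A_{X,\Delta_w}(H_1)}{S_{X,\Delta_w}(H_1)} \geq 1$ for $w > \frac12$; this should hold comfortably since $c < n+5$ (resp. $< l+\frac52$) shrinks as $w$ grows.

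The second ingredient is the restricted term $S(X^{H_1}_{\bullet,\bullet};\msp)$ together with the log discrepancy $A_{H_1,\Phi}(\msp)$ on the curve $H_1 \cong \bP^1$. Here the key point is that $H_1$ is the line $(x=0)$ (resp. $(u=0)$), which contains the quotient singularities $\msp_y$ (of type $\frac12(1,1)$ in the odd case) and $\msp_z$ — but $\msp_z$ is excluded by hypothesis. So for $\msp \in H_1 \setminus \{\msp_z\}$ the worst that can happen is either $\msp = \msp_y$ (a $\frac12(1,1)$ point, contributing $A_{H_1,\Phi}(\msp_y) = \frac12$ plus whatever multiplicity $D_{\ss}$ has there) or $\msp$ is a point where $\widetilde{D}_{\ss}$ meets $H_1$. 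Since $-(K_X+\Delta_w)-tH_1$ has no negative Zariski part (because $H_1^2 > 0$ on these weighted planes — indeed $H_1 = \cO(1)$ has positive self-intersection since $H_1$ moves), the formula \eqref{pre:function h(u)} simplifies: $N(t)|_{H_1} \equiv 0$, so $h(u) = \int_0^\infty \vol(P(u)|_{H_1} - v\msp)\,dv$, a pure one-dimensional volume integral on $\bP^1$ weighted by the local index $r$ at $\msp$. Working this out gives $S(X^{H_1}_{\bullet,\bullet};\msp) = \frac{c}{3 r \cdot (\text{appropriate weight})}$ or similar, and then $\frac{A_{H_1,\Phi}(\msp)}{S(X^{H_1}_{\bullet,\bullet};\msp)} \geq 1$ needs to be verified case by case: the transversality statements already established in Section \ref{section5:section number} (e.g. $H$ and $D$ meet transversely, $(H\cdot D)_{\msp_z} = \frac{1}{l+1}$) control the local intersection numbers $(\Delta_w \cdot H_1)_\msp$, so $\ord_\msp(\Phi) = (\Delta_w\cdot H_1)_\msp$ is at most $\frac12 + w$ at $\msp_y$ and at most $w$ (or $1-w$ more precisely for the different) at a point of $\widetilde{D}_{\ss}\cap H_1$.

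Putting these together, Theorem \ref{AbbanZ} yields
\[
\delta_\msp(X,\Delta_w) \geq \min\left\{\frac{A_{X,\Delta_w}(H_1)}{S_{X,\Delta_w}(H_1)},\ \inf_{\msq}\frac{A_{H_1,\Phi}(\msq)}{S(X^{H_1}_{\bullet,\bullet};\msq)}\right\} \geq 1
\]
for all $w > \frac12$, where the infimum is over $\msq \in H_1$ lying over $\msp$ (which, by the exclusion of $\msp_z$, are only finitely many controlled points). The main obstacle I anticipate is the bookkeeping of the orbifold structure along $H_1$: one must be careful about the index $r$ of the quotient singularity at the relevant point (it is $2$ at $\msp_y$ in the odd $W$-case, $l+1$ at $\msp_z$ which we avoid, and $1$ elsewhere), since this $r$ appears both in $A_{H_1,\Phi}(\msq) = \frac1r - \ord_\msq(\Phi)$ and as a normalization $\frac1r$ in the restricted volume $S(X^{H_1}_{\bullet,\bullet};\msq)$, and a sign error or misplaced factor of $r$ would break the inequality. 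A secondary subtlety is confirming that for $w$ in the relevant range $(\frac12, \xi]$ the numerical inequalities are strict or at least non-strict in the right direction; but since at $w = \frac12$ one expects equality at worst and the relevant $S$-invariants are monotone decreasing in $w$ (as $c$ decreases), this should follow from a short computation analogous to the one in Lemma \ref{delta-px}.
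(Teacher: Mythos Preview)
Your approach is correct and in fact more systematic than the paper's, but it is genuinely different. The paper does \emph{not} run the Abban--Zhuang inequality from Theorem \ref{AbbanZ} with the flag $\msp\in H_1\subset X$. Instead it argues by contradiction: assuming $\delta_\msp(X,\Delta_w)<1$, it produces an $m$-basis type divisor $M$ with $(X,\Delta_w+M)$ not log canonical at $\msp$, decomposes $M=\alpha H_1+\Xi$ with $H_1\not\subset\operatorname{Supp}\Xi$, bounds $\alpha\leq S_{X,\Delta_w}(H_1)=c/3<1$, and then applies inversion of adjunction on $H_1$ together with the global intersection bound $\Xi\cdot H_1=(c-\alpha)H_1^2$ to force $w<\tfrac12$, a contradiction.

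Your direct Abban--Zhuang computation recovers exactly the same numerics but packaged differently: the first ratio $\frac{A_{X,\Delta_w}(H_1)}{S_{X,\Delta_w}(H_1)}=\frac{3}{c}$ on $W$ (resp.\ $\frac{3}{2c}$ on $W'$) is the same $S$-invariant the paper uses to bound $\alpha$, and the second ratio $\frac{A_{H_1,\Phi}(\msq)}{S(X^{H_1}_{\bullet,\bullet};\msq)}$ is the clean form of the paper's inversion-of-adjunction inequality. Since $N(t)\equiv 0$ (as you note, $H_1\sim\cO_X(1)$ is nef so no Zariski negative part appears), one gets $S(X^{H_1}_{\bullet,\bullet};\msq)=\frac{cH_1^2}{3}$, namely $\frac{c}{6(n+2)}$ on $W$ and $\frac{c}{3(l+1)}$ on $W'$, independently of $\msq$. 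The only nontrivial values of $A_{H_1,\Phi}(\msq)$ on $H_1\setminus\{\msp_z\}$ are $\tfrac12$ at $\msp_y$ on $W$ (an $A_1$-point, not on $D_{\ss}$) and $1-w$ at the one (on $W$) or two (on $W'$) transverse intersection points of $D_{\ss}$ with $H_1$; in each case the ratio exceeds $1$ once $w>\tfrac12$. Two small corrections to your bookkeeping: on $W'$ the point $\msp_y$ is smooth (weight $1$), so there is no orbifold $\tfrac12$ there; and in both cases $\msp_y\notin D_{\ss}$, so the ``$\tfrac12+w$'' you worried about does not occur. With those in hand your argument goes through and is in fact more uniform with the proofs of Lemmas \ref{specialpoint}, \ref{delta-px}, and \ref{outsideHx} than the paper's own proof of this lemma.
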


\begin{proof}
    To obtain a contradiction we assume that $\delta_{\msp}(X, \Delta_w)< 1$ for a point $\msp\in H_1\setminus\{\msp_z\}$. Then there is a $m$-basis type divisor $M$ such that the log pair $(X, \Delta_w + M)$ is not log canonical at the point $\msp$. We write
    \begin{align*}
        c_0D_{\ss} + M &= c_0D_{\ss} + \alpha H_1 + \Xi \qquad \text{on $W$},\\
        c_0D_{\ss} + \frac{1}{2}H_1 + M &= c_0D_{\ss} + \left(\frac{1}{2} + \alpha'\right) H_1 + \Xi \qquad \text{on $W'$}
    \end{align*}
    where $\Xi$ is an effective $\QQ$-divisor such that the support of $\Xi$ does not contain $H_1$.
    Since $-(K_X + \Delta_w) - tH_1 \equiv (c-t)H_1$ for a positive constant $t$, the pseudoeffective threshold $\tau(H_1)$ of $H_1$ with respect to $-(K_X + \Delta_w)$ is $c$. From the function in (\ref{pre:S-invariant}) the S-invariant of $H_1$ is given by
    \begin{equation*}
        S_{X, \Delta_w}(H_1) = \frac{1}{c^2H_1^2}\int_0^{\tau(H_1)} (c-u)^2H_1^2 \dd u = \frac{c}{3}.
    \end{equation*}
    It implies that $\alpha < 1$ and $\frac{1}{2}+\alpha' < 1$. By the inversion of adjunction formula we have
    \begin{align*}
        w + \frac{c}{2n + 4} - \frac{\alpha}{2n + 4}
        \geq (wD_{\ss}\cdot H_1)_{\msp} + \Xi\cdot H_1 
        \geq \mult_{\msp}(wD_{\ss} + \Xi)|_{H_1} > 1 
        & \text{~on $W$, where $\msp\neq \msp_y$},\\
        \frac{c}{2n+4} - \frac{\alpha}{2n+4}
        \geq (wD_{\ss}\cdot H_1)_{\msp} + \Xi\cdot H_1 
        \geq \mult_{\msp}(wD_{\ss} + \Xi)|_{H_1} > \frac{1}{2} 
        & \text{~on $W$, where $\msp = \msp_y$},\\
        w + \frac{c}{l+1} - \frac{\alpha'}{l+1}
        \geq (wD_{\ss}\cdot H_1)_{\msp} + \Xi\cdot H_1 
        \geq \mult_{\msp}(wD_{\ss} + \Xi)|_{H_1} > 1 
        & \text{~on $W'$}.
    \end{align*}
   They imply that $w<\frac{1}{2}$. It is a contradiction. 
\end{proof}

\begin{lemma}\label{outsideHx}
    For every point $\msp\in X\setminus H_1\cup \{\mathsf{P}\}$ if $w>\frac{1}{2}$ then $\delta_{\msp}(X, \Delta_w)>1$.
\end{lemma}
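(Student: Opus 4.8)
The approach is to apply the Abban--Zhuang method (Theorem~\ref{AbbanZ}) with a well-chosen flag through $\msp$, splitting the argument according to whether $\msp$ lies on $D_{\ss}$. First observe that $\msp$ is a smooth point of $X$, since every quotient singular point of $W$ and of $W'$ lies on $H_1$; that $\Delta_w$ coincides with $wD_{\ss}$ on a neighborhood of $\msp$, because the component $\frac12 H_1$ is supported on $H_1$; and that $-(K_X+\Delta_w)\sim_{\bQ}\mathcal{O}_X(c)$ with $c<2$ on $W$ and $c<1$ on $W'$ whenever $w>\frac12$.

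\emph{Case 1: $\msp\notin D_{\ss}$.} Then no component of $\Delta_w$ passes through $\msp$. Let $C=C_\msp$ be a curve in $|\mathcal{O}_W(2)|$ (resp.\ in $|\mathcal{O}_{W'}(1)|$) through $\msp$; such a curve exists since $|\mathcal{O}_W(2)|=\bP\langle x^2,y\rangle$ (resp.\ $|\mathcal{O}_{W'}(1)|=\bP\langle u,y\rangle$) contains a member through any point off $H_1$, it is a smooth rational curve, and it is smooth at $\msp$ because its only possible singular point is $\msp_z\in H_1$. I will use the flag $\msp\in C\subset X$ (no blow-up needed). Since $-(K_X+\Delta_w)-tC\sim_{\bQ}\mathcal{O}_X(c-2t)$ (resp.\ $\mathcal{O}_X(c-t)$) is nef for every $t$ below the pseudoeffective threshold of $C$, the Zariski decomposition of $-(K_X+\Delta_w)-tC$ has trivial negative part, so both $A_{X,\Delta_w}(C)/S_{X,\Delta_w}(C)$ and $A_{C,\Phi}(\msp)/S(X^C_{\bullet,\bullet};\msp)$ reduce to elementary one-variable integrals. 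Using $A_{X,\Delta_w}(C)=1$ and $A_{C,\Phi}(\msp)=1$, these evaluate to $6/c$ and $3(n+2)/c$ on $W$, and to the analogous expressions on $W'$, all strictly greater than $1$ for $w>\frac12$ since $c<2$ (resp.\ $c<1$); hence $\delta_\msp(X,\Delta_w)>1$ by Theorem~\ref{AbbanZ}.

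\emph{Case 2: $\msp\in D_{\ss}\setminus\{\mathfrak{P}\}$.} Here $D_{\ss}$ is smooth at $\msp$, as $\mathfrak{P}$ is its only singular point. On $W$ the curve $D_{\ss}$ is irreducible; on $W'$ it splits as $D_{\ss}=D'+D''$ with $D'\in|\mathcal{O}_{W'}(l+2)|$ and $D''\in|\mathcal{O}_{W'}(l+1)|$, and the two components meet only at $\mathfrak{P}$, so $\msp$ lies on exactly one of them and that component is smooth at $\msp$. I will use the flag $\msp\in D_{\ss}\subset W$ (resp.\ $\msp\in D_0\subset W'$, where $D_0$ is the component of $D_{\ss}$ through $\msp$). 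As before, $-(K_X+\Delta_w)-tD_{\ss}$ (resp.\ $-tD_0$) stays nef, so the restricted-volume $S$-invariant again reduces to an elementary integral; with $A_{X,\Delta_w}(D_{\ss})=1-w$ (resp.\ $A_{W',\Delta_w}(D_0)=1-w$) and $A_{D_{\ss},\Phi}(\msp)=1$ (resp.\ $A_{D_0,\Phi}(\msp)=1$), one obtains the bounds $3(2n+6)(1-w)/c$ and $3(n+2)/\bigl((n+3)c\bigr)$ on $W$, together with their analogues on $W'$. A short check shows that each of these exceeds $1$ for $w\in(\frac12,1)$ (the tightest being $3(n+2)/\bigl((n+3)c\bigr)\to \tfrac{3(n+2)}{2(n+3)}>1$ as $w\to\tfrac12^+$), so again $\delta_\msp(X,\Delta_w)>1$.

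None of the individual computations is hard once the flag is fixed. The only steps requiring genuine care are checking that the auxiliary curves ($C_\msp$, and $D_{\ss}$ or its component) are smooth at $\msp$ and not contained in $\Delta_w$, and---in the even case---tracking the reducibility $D_{\ss}=D'+D''$ on $W'$ so that the flag uses the correct smooth component. I expect this bookkeeping, rather than any serious geometric difficulty, to be the main thing to get right.
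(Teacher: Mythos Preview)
Your argument is essentially correct and the numerical bounds you state check out, but the route differs from the paper's and one step needs more care. The paper uses a single, uniform flag: after moving $\msp$ to $[1:0:0]$ by an automorphism of $X$ (which moves $D_{\ss}$ as well, but that is harmless), it chooses $M$ from the pencil $\{a\,x^{\wt(z)-\wt(y)}y+bz=0\}\subset|\mathcal{O}_X(\wt(z))|$ with $(D_{\ss}\cdot M)_\msp\le 1$. Since this pencil separates tangent directions at $[1:0:0]$ in the chart $x=1$ (resp.\ $u=1$), such an $M$ exists whether or not $\msp\in D_{\ss}$; the two Abban--Zhuang ratios are then $3\wt(z)/c$ and $\tfrac{1-w(D_{\ss}\cdot M)_\msp}{c/6}\ge\tfrac{6(1-w)}{c}$ on $W$ (and $\ge\tfrac{3(1-w)}{c}$ on $W'$), both exceeding $1$ for $w>\tfrac12$. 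This one flag handles your two cases at once, and $M$ is a smooth rational curve not contained in $\Delta_w$, so Theorem~\ref{AbbanZ} applies directly.

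Your Case~2 on $W$, by contrast, takes $E=D_{\ss}$ itself. But $D_{\ss}$ has an $A_{n+3}$-singularity at $\mathfrak{P}$, so $(W,D_{\ss})$ is not globally plt, whereas the Abban--Zhuang inequality in \cite[Theorem~3.2]{AZ22} is stated for plt-type divisors. This is not fatal---the pair is plt in a neighborhood of $\msp$, and one can justify the local $\delta_\msp$-estimate from that, or simply replace $D_{\ss}$ by a smooth curve through $\msp$ transversal to it (exactly the paper's move)---but you should address it rather than file it under bookkeeping. Your Case~2 on $W'$ is unaffected, since each component $D',D''$ of $D_{\ss}$ is a smooth rational curve and the plt hypothesis is satisfied.
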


\begin{proof}
    By a suitable coordinate change we can assume that $\msp = (1:0:0)$. We use $\mathcal{L}\subset |\mathcal{O}_X(\wt(z))|$ to represent the sublinear system defined by $ax^{\wt(z)-\wt(y)}y + bz = 0$ on $W$ and the sublinear system defined by $au^{\wt(z)-\wt(y)}y + bz = 0$ on $W'$, where $(a:b)\in \PP^1$. There is an effective divisor $M\in \mathcal{L}$ such that $(D_{\ss}\cdot M)_{\msp} \leq 1$. 
    Since $-(K_X + \Delta) \equiv \frac{c}{\wt(z)}M$, the function in (\ref{pre:S-invariant}) is given by
    \begin{equation*}
        S_{X, \Delta_w}(M) = \frac{\wt(z)^2}{c^2M^2}\int_0^{\frac{c}{\wt(z)}} \left(\frac{c}{\wt(z)} - t\right)^2M^2 \dd t = \frac{c}{3\wt(z)} =
        \begin{dcases}
            \frac{c}{3n+6} & \text{on $W$},\\
            \frac{c}{3l+3} & \text{on $W'$}.
        \end{dcases}
    \end{equation*}
    The function in (\ref{pre:function h(u)}) is given by
    \begin{equation*}
        h(u) = \frac{1}{2}\left(\left(\frac{c}{\wt(z)} - u\right)M^2\right)^2.
    \end{equation*}
    Then the function in (\ref{pre:restricted volume part of AZ}) is given by
    \begin{equation*}
        S(X_{\bullet,\bullet}^C;\msp) = \frac{2\wt(z)^2}{c^2M^2}\int_0^{\frac{c}{\wt(z)}}\frac{1}{2}\left(\left(\frac{c}{\wt(z)} - u\right)M^2\right)^2 \dd u = 
        \begin{dcases}
            \frac{c}{6} & \text{on $W$},\\
            \frac{c}{3} & \text{on $W'$}.
        \end{dcases}
    \end{equation*}
    Meanwhile, we have $A_{X, \Delta_w}(M) = 1$ and $A_{X, \Phi}(\msp) = 1 - (wD_{\ss}\cdot M)_{\msp}\geq 1 - w$ where $\Phi$ is an effective divisor on $M$ such that $(K_X + \Delta_w + M)|_M = K_M + \Phi$. Thus we can see that $\frac{A_{X, \Delta_w}(M)}{S_{X, \Delta_w}(M)}>1$ and $\frac{A_{M, \Phi}(\msp)}{S(X_{\bullet,\bullet}^C;\msp)}>1$. By Theorem \ref{AbbanZ} we see that $\delta_{\msp}(X, \Delta_w)>1$.
\end{proof}

\begin{proof}[\bf Proof of Proposition \ref{section5:main lemma}]
By Lemma \ref{specialpoint}, Lemma \ref{delta-px}, Lemma \ref{Hx} and Lemma \ref{outsideHx}.
For every point $\msp\in W$ we see that $\delta_{\msp}(X, \Delta_w)\geq 1$ if $\frac{1}{2}<w\leq \xi$. By Lemma \ref{specialpoint}, if $w> \xi$, the log pair $(X, \Delta_w)$ is not K-semistable. Therefore, we obtain the result.

\end{proof}

Now we prove Proposition \ref{t<=(n+3)/2}. We need Lemma \ref{alla,a_izero,notK-s.s}, Lemma \ref{all,a_izero,notK-s.s} and Lemma \ref{0<=t<=(n+3)/2notk-s.s.}.
Recall that $W = \PP(1,2,n+2)_{x,y,z}$ where $n$ is a positive odd number.  
\begin{lemma}\label{alla,a_izero,notK-s.s}
Assume $D$ is a curve in $W$ defined by $z^2y = 0.$
Then the log pair $(W, wD)$ is not K-semistable for any $\frac{n+2}{2n+6}\leq w\leq \frac{n+5}{2n+6}$. 
\end{lemma}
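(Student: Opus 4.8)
The curve $D:z^2y=0$ is reducible, with components the non-reduced divisor $2H_z=\{z=0\}$ (well, the reduced divisor $H_z$ with multiplicity one in $D$, together with $H_y$) --- more precisely $D=2\{z=0\}$? No: the polynomial $z^2y$ vanishes to order $2$ along $H_z=(z=0)$ and order $1$ along $H_y=(y=0)$, so as a Weil divisor $D=2H_z+H_y$. The plan is to exhibit a single prime divisor $E$ over $W$ with $\beta_{(W,wD)}(E)<0$ for every $w$ in the stated range. The natural candidates are the toric divisors $H_x=(x=0)$, $H_y$, $H_z$, for which $S$-invariants are already computed via Lemma~\ref{Sinv,k}, and the monomial valuations appearing in \eqref{betav,GIT,odd}.

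\textbf{Key steps.} First I would compute $\beta_{(W,wD)}(H_z)$. We have $-K_W-wD=\cO(n+5-w(2n+6))$, so $H_z=\cO(n+2)$ gives $S_{(W,wD)}(H_z)=\tfrac{n+5-w(2n+6)}{3(n+2)}$ by Lemma~\ref{Sinv,k}. The log discrepancy is $A_{(W,wD)}(H_z)=1-\mathrm{Coeff}_{H_z}(wD)=1-2w$ since $D=2H_z+H_y$. Hence
\begin{equation*}
\beta_{(W,wD)}(H_z)=1-2w-\frac{n+5-w(2n+6)}{3(n+2)}.
\end{equation*}
Second, I would check that this is negative on $[\tfrac{n+2}{2n+6},\tfrac{n+5}{2n+6}]$: it is an affine (decreasing in $w$, since the coefficient of $w$ is $-2+\tfrac{2n+6}{3(n+2)}=\tfrac{-6(n+2)+2n+6}{3(n+2)}=\tfrac{-4n-6}{3(n+2)}<0$) function of $w$, so it suffices to check the left endpoint $w=\tfrac{n+2}{2n+6}$. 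There $n+5-w(2n+6)=n+5-(n+2)=3$, so $S=\tfrac{3}{3(n+2)}=\tfrac{1}{n+2}$ and $1-2w=1-\tfrac{2n+4}{2n+6}=\tfrac{2}{2n+6}=\tfrac{1}{n+3}$, giving $\beta=\tfrac{1}{n+3}-\tfrac{1}{n+2}<0$. Since $\beta$ is decreasing, $\beta_{(W,wD)}(H_z)<0$ for all $w\ge\tfrac{n+2}{2n+6}$, in particular on the whole interval. By the Fujita--Li criterion this shows $(W,wD)$ is not K-semistable.

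\textbf{Alternative / robustness.} If one prefers an argument parallel to the $\bG_m$-degeneration style used elsewhere (e.g.\ Proposition~\ref{evenpoly+polyanyn}), note $(W,wD)$ with $D=2H_z+H_y$ already carries a two-dimensional torus action, hence is toric; for a toric log Fano pair K-semistability forces the barycenter condition, which one can check fails in this range --- but the $\beta(H_z)$ computation above is the cleanest route, so that is the one I would write up. I do not anticipate a real obstacle here; the only mild care needed is to get the coefficient of $H_z$ in $D$ right ($D=2H_z+H_y$, not $H_z$), which is what makes $A_{(W,wD)}(H_z)=1-2w$ small enough to beat $S$. The same remark will presumably be reused in Lemmas~\ref{all,a_izero,notK-s.s} and \ref{0<=t<=(n+3)/2notk-s.s.}, where $H_x$ (via Lemma~\ref{notK-s.s.}, already proved) and other toric divisors take over.
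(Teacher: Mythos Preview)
Your proposal is correct and is essentially identical to the paper's proof: both compute $\beta_{(W,wD)}(H_z)$ using $A_{(W,wD)}(H_z)=1-2w$ and $S_{(W,wD)}(H_z)=\frac{n+5-w(2n+6)}{3(n+2)}$, and conclude negativity on the stated interval. The only cosmetic difference is that the paper simplifies $\beta$ to $\frac{2n+1}{3(n+2)}-\frac{4n+6}{3(n+2)}w$ and reads off the threshold $\frac{2n+1}{4n+6}<\frac{n+2}{2n+6}$, whereas you evaluate at the left endpoint and invoke monotonicity.
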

\begin{proof}
    We have $A_{W, wD}(H_z) = 1 - 2w$ and $S_{H_x}(H_x)=\frac{1}{3}$ by Lemma \ref{Sinv,k}.
Then $S_{W, wD}(H_z) = \frac{n+5 - w(2n+6)}{3(n+2)}$. Therefore, the $\beta$-invariant
\begin{equation*}
    \beta_{W, wD}(H_z) = 1 - 2w - \frac{n+5 - w(2n+6)}{3(n+2)} = \frac{2n+1}{3(n+2)} - \frac{4n+6}{3(n+2)}w.
\end{equation*}
We have $\beta_{W, wD}(H_z))\geq 0$ if and only if $\frac{2n+1}{4n+6}\geq w$. Since $\frac{n+2}{2n+6} >\frac{2n+1}{4n+6}$, the log pair $(W, wD)$ is not K-semistable for any $\frac{n+2}{2n+6}\leq w\leq \frac{n+5}{2n+6}$.
\end{proof}

\begin{lemma}\label{all,a_izero,notK-s.s}
Assume $D$ is a curve in $W$ defined by $z^2y+azx^{n+4}= 0,$ where $a\neq 0$.
Then the log pair $(W, wD)$ is not K-semistable for any $\frac{n+2}{2n+6}\leq w\leq \frac{n+5}{2n+6}$. 
\end{lemma}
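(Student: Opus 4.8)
The plan is to exhibit a single divisorial valuation $E$ over $W$ with $\beta_{(W,wD)}(E)<0$ for every $w\in[\tfrac{n+2}{2n+6},\tfrac{n+5}{2n+6}]$, which shows $(W,wD)$ is not K-semistable exactly as in the proof of Lemma \ref{alla,a_izero,notK-s.s}. The naive candidates all fail: since $z^2y+azx^{n+4}=z(zy+ax^{n+4})$ we have $D=H_z+D'$ with $D':=(zy+ax^{n+4}=0)\in|\cO_W(n+4)|$, and one computes $\beta_{(W,wD)}(H_z)=\tfrac{2n+1-nw}{3(n+2)}>0$, $\beta_{(W,wD)}(D')=\tfrac{2n+7-(n+6)w}{3(n+4)}>0$, and every monomial valuation centered at $\msp_x$ also has $\beta\ge 0$ throughout this range. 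So the destabilizing feature must be the tangency of the two components: $H_z$ and $D'$ meet only at $\msp_y=[0:1:0]$, a quotient singularity of type $\tfrac12(1,n+2)\cong\tfrac12(1,1)$, and in the orbifold chart $\bA^2_{s,t}$ near $\msp_y$ (with $s^2\sim x^2/y$, $t^2\sim z^2/y^{n+2}$) the curve $D$ pulls back to $t(t+as^{n+4})=0$, namely two smooth branches with contact order $n+4$.

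I would take $E$ to be the exceptional divisor of the weighted blow-up of $\msp_y$ with weights $(1,n+4)$ on $(s,t)$. Passing to the degree-two orbifold cover (along which $E$ appears with ramification index two, accounting for the usual factor $\tfrac12$), one gets $A_W(E)=\tfrac{1+(n+4)}{2}=\tfrac{n+5}{2}$ and $\ord_E(H_z)=\ord_E(D')=\tfrac{n+4}{2}$, hence $\ord_E(D)=n+4$ and $A_{(W,wD)}(E)=\tfrac{n+5}{2}-w(n+4)$. Moreover $E$ is precisely the toric monomial valuation $v_{(1/2,(n+4)/2)}$ of $W$ in the chart $y\neq 0$, so Lemma \ref{S-inv.} gives $S_{\cO(1)}(E)=\tfrac13\bigl(\tfrac{1/2}{1}+\tfrac{(n+4)/2}{n+2}\bigr)=\tfrac{n+3}{3(n+2)}$; since $-(K_W+wD)\sim_{\bQ}\cO(n+5-w(2n+6))$ this yields
\[
S_{(W,wD)}(E)=\bigl(n+5-w(2n+6)\bigr)\cdot\frac{n+3}{3(n+2)}.
\]
Putting $A_{(W,wD)}(E)$ and $S_{(W,wD)}(E)$ over the common denominator $6(n+2)$ gives
\[
\beta_{(W,wD)}(E)=\frac{n^{2}+5n-(2n^{2}+12n+12)\,w}{6(n+2)},
\]
which is negative precisely when $w>\tfrac{n(n+5)}{2(n^{2}+6n+6)}$. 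The elementary inequality $n(n+5)(n+3)<(n+2)(n^{2}+6n+6)$ shows this threshold lies strictly below $w_0=\tfrac{n+2}{2n+6}$, so $\beta_{(W,wD)}(E)<0$ on all of $[\tfrac{n+2}{2n+6},\tfrac{n+5}{2n+6}]$, proving the lemma.

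The only genuinely nontrivial part is the local computation at the quotient point $\msp_y$: correctly reading off $A_W(E)$, $\ord_E(H_z)$ and $\ord_E(D')$ from the weighted blow-up (equivalently, handling the $\mu_2$-normalization on the orbifold cover) and verifying that the divisor $E$ so produced is the toric valuation to which Lemma \ref{S-inv.} applies. Once those identifications are in place, the conclusion is the displayed rational-function identity together with the numerical inequality above.
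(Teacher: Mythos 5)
Your proposal is correct and, despite the longer geometric derivation, is essentially the paper's own argument: the divisor $E$ you produce is (up to the harmless factor $2$ in normalization) exactly the toric monomial valuation $v_{(1,n+4)}$ with $v(x)=1$, $v(z)=n+4$ centered at $\msp_y$ that the paper tests, computed with the same Lemma \ref{S-inv.}, and both yield the identical threshold $\beta\geq 0\iff w\leq \frac{n^2+5n}{2n^2+12n+12}<\frac{n+2}{2n+6}$. The preliminary checks on $H_z$, $D'$ and the weighted blow-up of the orbifold cover are just a more explicit way of arriving at this same destabilizing valuation.
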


\begin{proof}
Let $v_{(1,n+4)}$ be a monomial valuation such that $v(x)=1$ and $v(z)=n+4$. By Lemma \ref{S-inv.}, we get the S-invariant for $v$:
\begin{equation*}
    S_{\mathcal{O}(1)}(v)=\frac{1}{3}(1+\frac{n+4}{n+2})=\frac{2n+6}{3(n+2)}.
\end{equation*}
Since $-K_W-wD=\mathcal{O}(r)$, where $r=n+5-w(2n+6)$, we have $S_{W,wD}(v)=\frac{2n+6}{3(n+2)}r$. The log discrepancy 
$A_{W,wD}(v)=A_W(v)-wv(D)=n+5-2w(n+4)$.
Then 
\begin{equation*}
    \beta_{W, wD}(E) = A_{W, wD}(E) - S_{W, wD}(E) = \frac{n(n+5)}{6(n+2)} - w\frac{n^2 + 6n + 6}{3(n+2)}
\end{equation*}
Note that $\beta(E)\geq 0$ if and only if
\begin{equation*}
    \frac{n^2 + 5n}{2n^2 + 12n + 12} \geq w.
\end{equation*}
Since $\frac{n+2}{2n+6}>\frac{n^2 + 5n}{2n^2 + 12n + 12}$, the log pair $(W, wD)$ is not K-semistable for any $\frac{n+2}{2n+6}\leq w\leq \frac{n+5}{2n+6}$.
\end{proof}

\begin{lemma}\label{0<=t<=(n+3)/2notk-s.s.}
    Let the curve $D$ in $W$ is defined by
\begin{equation*}
    z^2y + azx^{n+4}+a_0x^{2n+6}+\cdots +a_t x^{2n+6-2t}y^t = 0,
\end{equation*} where $0\leq t\leq\frac{n+3}{2}$, $a_t\neq0$ and $a,a_0,\cdots,a_{t-1}$ are arbitrary constants. Then the log pair $(W, wD)$ is not K-semistable for $\frac{n+2}{2n+6}\leq w<\frac{n+5}{2n+6}$. 
\end{lemma}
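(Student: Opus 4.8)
The plan is to exhibit a single divisorial valuation over $W$ with strictly negative $\beta$-invariant; by the Fujita--Li valuative criterion this already shows that $(W,wD)$ is not K-semistable. Since $D$ contains no monomial among $y^{t+1},\dots,y^{n+3}$ while $t$ is small, the curve is "too concentrated" along $H_x$, and the natural destabilizing direction is a toric valuation based at the singular point $\msp_y=[0:1:0]$. Concretely, I would take $v:=v_{(1,\,n+3-t)}$ to be the monomial valuation at $\msp_y$ with $v(x)=1$, $v(z)=n+3-t$, $v(y)=0$; this is legitimate because $0\le t\le\tfrac{n+3}{2}<n+3$, and its invariants at the quotient singularity $\msp_y$ are governed exactly by Lemma \ref{S-inv.} and by the conventions already used in Lemma \ref{all,a_izero,notK-s.s} (in particular $A_W(v)=1+(n+3-t)=n+4-t$ and $S_{\mathcal{O}(1)}(v)=\tfrac13\bigl(1+\tfrac{n+3-t}{n+2}\bigr)$).

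First I would compute $v(f)$. The monomials occurring in $f=z^2y+azx^{n+4}+\sum_{i=0}^{t}a_ix^{2n+6-2i}y^{i}$ satisfy $v(z^2y)=2(n+3-t)$, $v(zx^{n+4})=(n+3-t)+(n+4)$, and $v(x^{2n+6-2i}y^{i})=2n+6-2i$; because $a_t\neq 0$ the monomial $a_tx^{2n+6-2t}y^{t}$ is present with value $2n+6-2t=2(n+3-t)$, and every other monomial has value $\ge 2(n+3-t)$ regardless of the (arbitrary) coefficients $a,a_0,\dots,a_{t-1}$. Hence $v(D)=v(f)=2n+6-2t$. Consequently $A_{W,wD}(v)=A_W(v)-w\,v(D)=(n+4-t)-w(2n+6-2t)$, and using $-K_W-wD=\mathcal{O}(n+5-w(2n+6))$ together with Lemma \ref{S-inv.} one gets $S_{W,wD}(v)=\bigl(n+5-w(2n+6)\bigr)\cdot\tfrac13\bigl(1+\tfrac{n+3-t}{n+2}\bigr)$.

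The final step is a short algebraic check of the sign of $\beta_{W,wD}(v)=A_{W,wD}(v)-S_{W,wD}(v)$. Setting $r:=n+5-w(2n+6)$, which runs over $(0,3]$ as $w$ runs over $\bigl[\tfrac{n+2}{2n+6},\tfrac{n+5}{2n+6}\bigr)$, the quantity $\beta_{W,wD}(v)$ becomes an affine function of $r$ with $\beta|_{r=0}=-1+\tfrac{2t}{n+3}\le 0$ (using $t\le\tfrac{n+3}{2}$) and $\beta|_{r=3}=\tfrac{t-(n+3)}{(n+2)(n+3)}<0$ (using $t<n+3$). An affine function that is nonpositive at $r=0$ and negative at $r=3$ is negative on all of $(0,3]$, so $\beta_{W,wD}(v)<0$ for every $w$ in the stated range, and $(W,wD)$ is not K-semistable. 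The only point genuinely needing care is the claim that $v(f)$ is insensitive to the free coefficients $a,a_0,\dots,a_{t-1}$ — this is exactly where the hypothesis $a_t\neq 0$ enters, guaranteeing that the extremal monomial actually appears — and being careful about the meaning of the toric valuation $v$ at the quotient singular point $\msp_y$ (handled by citing Lemma \ref{S-inv.} and the precedent of Lemma \ref{all,a_izero,notK-s.s}); beyond this, no serious obstacle is expected.
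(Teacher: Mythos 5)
Your proposal is correct and follows essentially the same route as the paper's proof, which destabilizes with exactly the same toric valuation $v_{(1,n+3-t)}$ and the same invariants $A_W(v)=n+4-t$, $S_{\mathcal{O}(1)}(v)=\frac{1}{3}\bigl(1+\frac{n+3-t}{n+2}\bigr)$, and $v(D)=2(n+3-t)$. Your endpoint check in the variable $r=n+5-w(2n+6)\in(0,3]$ (I verified $\beta|_{r=0}=\frac{2t}{n+3}-1\le 0$ and $\beta|_{r=3}=\frac{t-(n+3)}{(n+2)(n+3)}<0$, so affineness gives $\beta<0$ on the whole range) is simply a tidier way to conclude than the paper's case analysis in $t$; the one slight misstatement is that $v(f)=2(n+3-t)$ is already achieved by the monomial $z^2y$, so this value does not actually hinge on $a_t\neq 0$, which only enters in defining $t$.
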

\begin{proof}
Let $v=v_{(1,n+3-t)}$ be a monomial valuation such that $v(x)=1$ and $v(z)=n+3-t$. By Lemma \ref{S-inv.}, we get the S-invariant for $v$:
\begin{equation*}
    S_{\mathcal{O}(1)}(v)=\frac{1}{3}(1+\frac{n+3-t}{n+2})=\frac{2n+5-t}{3(n+2)}.
\end{equation*}
Since $-K_W-wD=\mathcal{O}(r),$ where $r=n+5-w(2n+6)$, we get $S_{W,wD}(v)=\frac{2n+5-t}{3(n+2)}r$.
Since $v(D)=v(z^2y)=2(n+3-t)$, the log discrepancy
\begin{equation*}
    A_{W, wD}(v) =A_{W}(v)-wv(D)= n+4-t-2w(n+3-t).
\end{equation*}
Let $w_1=\frac{n+4-t}{2(n+3-t)}$. Since $0\leq t\leq \frac{n+3}{2}$, we have $\frac{n+2}{2n+6}<w_1\leq \frac{n+5}{2n+6}$. The log discrepancy $A_{W, wD}(v)\geq 0$ iff $w_1\geq w$. Then if $w>w_1$, the log pair $(W, wD)$ is not log canonical, thus not K-semistable.

We first consider the case $t=0$. Then 
\begin{equation*}
    \beta_{W, wD}(v) = n+4-2w(n+3)-\frac{2n+5}{3(n+2)}r= \frac{n^2 + 3n -1}{6(n+2)} - \frac{(n+3)(n+1)}{3(n+2)}w.
\end{equation*}
We have $\beta_{W, wD}(v)\geq 0$ iff $w\leq \frac{n^2 + 3n -1}{2(n+3)(n+1)}$. Since $\frac{n^2 + 3n -1}{2(n+3)(n+1)} < \frac{n+2}{2n+6}$, thus if $t=0$, the log pair $(W, wD)$ is not K-semistable for $\frac{n+2}{2n+6}\leq w\leq \frac{n+5}{2n+6}$.

We next consider the case that $t\geq 1$. Then
\begin{equation*}
    \beta_{W, wD}(v) = \frac{n^2 - 2nt + 3n -t -1}{6(n+2)} - \frac{n^2 - 2nt + 4n -3t +3}{3(n+2)}w.
\end{equation*}
We set $a(w) = \frac{n+4-t}{2} - w(n+3-t)$ and $s(w) = \frac{2n-t+5}{6(n+2)}(n+5 - w(2n+6))$. 
Let $t_1=\frac{n^2+3n-1}{2n+1}$ and $t_2=\frac{n^2+4n+3}{2n+3}.$ We have $n^2-2nt+3n-t-1\geq0$ iff $t\leq t_1, $
                and  $n^2-2nt+4n-3t+3\geq0$ iff $t\leq t_2.$
                Note that $t_1<t_2<(n+3)/2.$

When $t_1<t\leq t_2$, we get $\beta_{W, wD}(v)<0$ for all $w\geq 0$. Hence $(W, wD)$ is not K-semistable for $\frac{n+2}{2n+6}\leq w\leq \frac{n+5}{2n+6}$.

When $t_2<t\leq \frac{n+3}{2}$, we have $\beta_{W, wD}(v)$ increase as $w$ increase.
We assume $t_2<t< \frac{n+3}{2}$. Recall, if $t<\frac{n+3}{2}$, we have $s(w_1)<0$ and $\frac{n+2}{2n+6}<w_1<\frac{n+5}{2n+6}$. So $\beta_{W, wD}(v)\leq \beta_{W, w_1D}(v)=a(w_1)-s(w_1)<0$ for $w\leq w_1$.
Moreover, if $w>w_1$, the log discrepancy $A_{W, wD}(v)< 0$. So the log pair $(W, wD)$ is not log canonical. Thus $(W, wD)$ is not K-semistable for $\frac{n+2}{2n+6}\leq w< \frac{n+5}{2n+6}$.

When $t=\frac{n+3}{2}$, we have $w_1=\frac{n+5}{2n+6}$ and $s(w_1)=0$. So $\beta_{W, wD}(v)< \beta_{W, w_1D}(v)=a(w_1)-s(w_1)=0$ for $w<w_1$.
Thus $(W, wD)$ is not K-semistable for $\frac{n+2}{2n+6}\leq w< \frac{n+5}{2n+6}$.

 When $1\leq t \leq t_1$, we have $a(0)>s(0)$. Thus $\beta_{W, wD}(v)\geq 0$ only holds for
\begin{equation*}
    w\in \left[0, \frac{n^2 - 2nt + 3n -t -1}{2(n^2 - 2nt + 4n -3t +3)}\right].
\end{equation*} 
We have 
\begin{equation*}
    \frac{n^2 - 2nt + 3n -t -1}{2(n^2 - 2nt + 4n -3t +3)} < \frac{n+2}{2n+6} \qquad\text{iff} \qquad 0<\frac{3n-3t+9}{2(n+3)(n^2 - 2nt + 4n -3t +3)},
\end{equation*} which is true since $1\leq t \leq t_1<t_2$.
This implies $w<\frac{n+2}{2n+6}$. Therefore the log pair $(W, wD)$ is not K-semistable for $\frac{n+2}{2n+6}\leq w\leq \frac{n+5}{2n+6}$.

\end{proof}

Now we prove Proposition \ref{t<=(n+3)/2,even}. We need Lemma \ref{alla,a_izero,notK-s.s,even}, Lemma \ref{all,a_izero,notK-s.s,even} and Lemma \ref{0<=t<=(n+3)/2notk-s.s.,even}.
Recall that $W' = \PP(1,1,l+1)_{u,y,z}$ and $H$ is the hyperplane section defined by $u=0$, where $l$ is a positive integer.  

\begin{lemma}\label{alla,a_izero,notK-s.s,even}
Assume $D$ is a curve in $W'$ defined by $z^2y = 0.$
Then the log pair $(W', \frac{1}{2}H + wD)$ is not K-semistable for $\frac{l+1}{2l+3}\leq w\leq \frac{2l+5}{4l+6}$. 
\end{lemma}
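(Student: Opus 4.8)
The plan is to mimic the proof of Lemma \ref{alla,a_izero,notK-s.s} by exhibiting a single destabilizing divisor, namely the hyperplane section $H_z = (z=0)$ on $W'$. First I would compute the log discrepancy $A_{W', \frac{1}{2}H + wD}(H_z)$. Since $D: z^2y = 0$ vanishes to order $2$ along $H_z$ and $H = (u=0)$ does not contain $H_z$, we get $A_{W', \frac12 H + wD}(H_z) = 1 - 2w$. Next I would compute the $S$-invariant: we have $-K_{W'} - \frac12 H - wD = \mathcal{O}(l+3 - \frac12 - w(2l+3))$, and by Lemma \ref{Sinv,k} (applied with $H_z = \mathcal{O}(l+1)$, i.e.\ $k = l+1$, and total degree $r = l + \frac52 - w(2l+3)$), we obtain $S_{W', \frac12 H + wD}(H_z) = \frac{l + \frac52 - w(2l+3)}{3(l+1)}$.

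Then I would form the $\beta$-invariant
\begin{equation*}
\beta_{W', \frac12 H + wD}(H_z) = 1 - 2w - \frac{l + \tfrac52 - w(2l+3)}{3(l+1)}
\end{equation*}
and simplify it to a linear function of $w$ with negative leading coefficient. One checks that $\beta_{W', \frac12 H + wD}(H_z) \geq 0$ holds if and only if $w$ is at most some explicit threshold $w_0^{*}$, and the remaining point is to verify the strict inequality $w_0^{*} < \frac{l+1}{2l+3}$, so that $\beta < 0$ for every $w$ in the interval $[\frac{l+1}{2l+3}, \frac{2l+5}{4l+6}]$. By the Fujita--Li criterion this shows $(W', \frac12 H + wD)$ is not K-semistable on the whole interval.

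The only real arithmetic is the comparison $w_0^{*} < \frac{l+1}{2l+3}$, which I expect to be the main (but still routine) obstacle; it should reduce to a positivity statement like $0 < (\text{something linear in } l)$ after clearing denominators. This parallels exactly the computation $\frac{2n+1}{4n+6} < \frac{n+2}{2n+6}$ appearing in the proof of Lemma \ref{alla,a_izero,notK-s.s}, so no genuine difficulty is anticipated; the point is simply that the anticanonical volume shrinks fast enough relative to the discrepancy that the hyperplane $H_z$ already destabilizes before $w$ reaches $\frac{l+1}{2l+3}$.
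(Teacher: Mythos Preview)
Your proposal is correct and follows essentially the same approach as the paper: both use $H_z$ as the destabilizing divisor, compute $A_{W',\frac12 H+wD}(H_z)=1-2w$ and $S_{W',\frac12 H+wD}(H_z)=\frac{l+\frac52-w(2l+3)}{3(l+1)}$ via Lemma~\ref{Sinv,k}, and then check that the resulting threshold $w_0^{*}=\frac{4l+1}{8l+6}$ satisfies $\frac{4l+1}{8l+6}<\frac{l+1}{2l+3}$ (which reduces to $3<6$ after clearing denominators).
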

\begin{proof}
  By Lemma \ref{Sinv,k}, we have
\begin{equation*}
    S_{W',\frac{1}{2}H + wD}(H_z) = \frac{1}{3(l+1)}\left(l+\frac{5}{2} - w(2l+3)\right)
\end{equation*}
Meanwhile, the log discrepancy of $H_z$ is $A_{W',\frac{1}{2}H + wD}(H_z) = 1 - 2w$. Then we have
\begin{equation*}
    \beta_{W',\frac{1}{2}H + wD}(H_z) = \frac{4l+1}{6l+6} -\frac{4l+3}{3l+3}w\geq0
\end{equation*}
if and only if $w\leq \frac{4l+1}{8l+6}$ holds. The conclusion holds since $\frac{4l+1}{8l+6}<\frac{l+1}{2l+3}$.
\end{proof}

\begin{lemma}\label{all,a_izero,notK-s.s,even}
Assume $D$ is a curve in $W'$ defined by $z^2y+azu^{l+2}= 0,$ where $a\neq 0$.
Then the log pair $(W', \frac{1}{2}H + wD)$ is not K-semistable for any $\frac{l+1}{2l+3}\leq w\leq \frac{2l+5}{4l+6}$. 
\end{lemma}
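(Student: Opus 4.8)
The plan is to mirror the argument for Lemma \ref{all,a_izero,notK-s.s} (the odd-case analogue), using a toric monomial valuation adapted to the exponents appearing in the equation of $D$. First I would pass to the affine chart $U_y = \{y=1\}$, where $D$ becomes $z^2 + azu^{l+2} = z(z+au^{l+2})=0$, so that $D$ is the union of the line $\{z=0\}$ and the curve $\{z+au^{l+2}=0\}$, both passing through $\msp_u = [1:0:0]$. The natural destabilizing valuation to test is the monomial valuation $v = v_{(1,l+2)}$ on the chart $y=1$ with $v(u)=1$, $v(z)=l+2$; this is the valuation that ``sees'' the component $\{z+au^{l+2}=0\}$ with high order of vanishing.

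The key computation proceeds in four short steps. First, by Lemma \ref{S-inv.} (applied on $W' = \PP(1,1,l+1)$ with the affine chart indexed so that $u,y$ are the affine coordinates and $\msp$ is a smooth point, so $m_j = 1$ and $m_k = l+1$ for the relevant weights), one has $S_{\mathcal{O}(1)}(v) = \tfrac13\bigl(1 + \tfrac{l+2}{l+1}\bigr) = \tfrac{2l+3}{3(l+1)}$. Second, since $-K_{W'} - \tfrac12 H - wD \sim_{\bQ} \mathcal{O}(r)$ with $r = l + \tfrac52 - w(2l+3)$ (the value of $c$ recorded in Section \ref{NonCYcase}), we get $S_{(W',\frac12 H + wD)}(v) = r\cdot S_{\mathcal{O}(1)}(v)$. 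Third, the log discrepancy is $A_{W'}(v) = 1 + (l+2) = l+3$, and since $v(\tfrac12 H + wD) = v(wD) = w\,v(z^2y) = 2w(l+2)$ (note $v(H)=v(u)=1$ does contribute if $H$ is $\{u=0\}$—wait, here $v(u)=1$, so actually $v(\frac12 H) = \frac12$; I should instead track $A_{W', \frac12 H + wD}(v) = l+3 - \tfrac12 - 2w(l+2)$, being careful about whether $H$ passes through the center of $v$). Fourth, I would form $\beta_{(W',\frac12 H + wD)}(v) = A - S$, simplify to an affine-linear function of $w$, solve $\beta(v) = 0$ for the threshold $w^\ast$, and check that $w^\ast < \tfrac{l+1}{2l+3}$, so that $\beta(v) < 0$ for all $w$ in the claimed range $[\tfrac{l+1}{2l+3}, \tfrac{2l+5}{4l+6}]$, which forces K-instability by the Fujita--Li criterion.

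The main obstacle—really a bookkeeping subtlety rather than a deep point—will be correctly handling the boundary $\tfrac12 H$ in the log discrepancy and $S$-invariant: I must decide whether the center of $v = v_{(1,l+2)}$ on $W'$ lies on $H = \{u=0\}$ (it does not, since $v(u) = 1 < \infty$ means the center is $\msp_u \notin H$), so in fact $v(\tfrac12 H) = 0$ and $A_{W',\frac12 H + wD}(v) = l+3 - 2w(l+2)$, while the coefficient $\tfrac12$ still enters through the smaller volume $r$ of $-(K_{W'}+\tfrac12 H + wD)$. Getting this consistent with the normalization used in \eqref{betav,GIT,even} and in Lemma \ref{all,a_izero,notK-s.s} is the one place to be careful; once the arithmetic is pinned down, the inequality $w^\ast < \tfrac{l+1}{2l+3}$ should follow from an elementary comparison of rational functions in $l$, exactly as in the odd case where one checks $\tfrac{n^2+5n}{2n^2+12n+12} < \tfrac{n+2}{2n+6}$.
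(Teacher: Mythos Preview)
Your overall strategy matches the paper's exactly: use the monomial valuation $v=v_{(1,l+2)}$ on the chart $y=1$, compute $A$ and $S$, and show $\beta<0$ on the whole interval. However, your final resolution of the ``main obstacle'' is wrong and would break the proof. You conclude that $v(\tfrac12 H)=0$ because ``$v(u)=1<\infty$ means the center is $\msp_u\notin H$.'' This reasoning is backwards on two counts. First, $v(H)$ is simply $v$ of a local equation of $H$; since $H=(u=0)$ we have $v(H)=v(u)=1$ by definition, regardless of where the center sits. Second, $v(u)=1>0$ means precisely that $u$ \emph{does} vanish at the center, so the center is $\msp_y=[0:1:0]\in H$, not $\msp_u=[1:0:0]$ (which is not even in the chart $y=1$). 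Your earlier parenthetical formula $A_{W',\frac12 H+wD}(v)=l+3-\tfrac12-2w(l+2)=l+\tfrac52-2w(l+2)$ was the correct one; this is exactly what the paper uses.

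This half-unit error is fatal, not cosmetic. With your final (incorrect) $A=l+3-2w(l+2)$, at the left endpoint $w=\tfrac{l+1}{2l+3}$ one computes
\[
\beta=\frac{3l+5}{2l+3}-\frac{2l+3}{2(l+1)}=\frac{2l^2+4l+1}{2(l+1)(2l+3)}>0,
\]
so this valuation would fail to destabilize. With the correct $A=l+\tfrac52-2w(l+2)$ the paper gets $\beta=\frac{2l^2+5l}{6(l+1)}-\frac{(2l^2+6l+3)}{3(l+1)}w$, which is negative for $w>\frac{2l^2+5l}{4l^2+12l+6}$; since $\frac{2l^2+5l}{4l^2+12l+6}<\frac{l+1}{2l+3}$ (equivalent to $0<3l+6$), the claim follows. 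So trust your first instinct: $v(\tfrac12 H)=\tfrac12$.
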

\begin{proof}
Let $v_{(1,l+2)}$ be a monomial valuation such that $v(u)=1$ and $v(z)=l+2$. By Lemma \ref{S-inv.}, we get the S-invariant for $v$:
\begin{equation*}
    S_{\mathcal{O}(1)}(v)=\frac{1}{3}(1+\frac{l+2}{l+1})=\frac{2l+3}{3(l+1)}.
\end{equation*}
Since $-K_{W'}-(\frac{1}{2}H+wD)=\mathcal{O}(r)$, where $r=l+3-(\frac{1}{2}+w(2l+3))=l+\frac{5}{2}-w(2l+3)$, we have $S_{W',\frac{1}{2}H+wD}(v)=\frac{2l+3}{3(l+1)}r$. Since $v(\frac{1}{2}H+wD)=\frac{1}{2}+w(2l+4)$, the log discrepancy 
$A_{W',\frac{1}{2}H+wD}(v)=A_{W'}(v)-v(\frac{1}{2}H+wD)=l+\frac{5}{2}-w(2l+4)$.
Then
\begin{equation*}
    \beta_{W',\frac{1}{2}H + wD}(v) = A_{W',\frac{1}{2}H + wD}(v) - S_{W',\frac{1}{2}H + wD}(v) = \frac{2l^2+5l}{6l+6} - \frac{(2l^2 + 6l + 3)w}{3l+3}.
\end{equation*}
Note that $\beta(v)\geq 0$ if and only if
\begin{equation*}
    w\leq \frac{2l^2 + 5l}{4l^2 + 12l + 6}.
\end{equation*}
Thus the conclusion holds 
since $\frac{2l^2 + 5l}{4l^2 + 12l + 6} < \frac{l+1}{2l+3}$.

\end{proof}

\begin{lemma}\label{0<=t<=(n+3)/2notk-s.s.,even}
    Let the curve $D$ in $W'$ is defined by
\begin{equation*}
    z^2y + azu^{l+2} + a_0u^{2l+3} + \cdots + a_tu^{2l+3-t}y^t=0,
\end{equation*} where $t\leq l+1$ is a nonnegative integer, $a_t\neq0$ and $a,a_0,\cdots,a_{t-1}$ are arbitrary constants. Then the log pair $(W', \frac{1}{2}H + wD)$ is not K-semistable for $\frac{l+1}{2l+3}\leq w\leq \frac{2l+5}{4l+6}$. 
\end{lemma}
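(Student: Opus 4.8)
The plan is to mirror the proof of Lemma~\ref{0<=t<=(n+3)/2notk-s.s.} in the odd case, working now on $W'=\PP(1,1,l+1)$ with the extra boundary $\frac{1}{2}H$. First I would pick the toric monomial valuation $v=v_{(1,l+2-t)}$ in the affine chart $u=1$ with $v(u)=1$ and $v(z)=l+2-t$, which is the natural candidate since $t\le l+1$ makes $l+2-t\ge 1$ and the leading term of $D$ on the chart $y=1$ is $z^2$ paired against $u^{2l+3-2t}$. Using Lemma~\ref{S-inv.} one computes
\[
S_{\mathcal{O}(1)}(v)=\frac{1}{3}\Bigl(1+\frac{l+2-t}{l+1}\Bigr)=\frac{2l+3-t}{3(l+1)},
\]
and since $-K_{W'}-\tfrac12 H-wD\sim_{\QQ}\mathcal{O}(r)$ with $r=l+\tfrac52-w(2l+3)$ we get $S_{(W',\frac12 H+wD)}(v)=r\,S_{\mathcal{O}(1)}(v)$. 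For the log discrepancy, $v(H)=v(u)=1$ and $v(D)=v(z^2y)=2(l+2-t)$, so
\[
A_{(W',\frac12 H+wD)}(v)=A_{W'}(v)-v(\tfrac12 H+wD)=l+3-\tfrac12-2w(l+2-t)=l+\tfrac52-2w(l+2-t).
\]

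Next I would assemble $\beta_{(W',\frac12 H+wD)}(v)=A-S$ as an affine-linear function of $w$ and analyze its sign on $[\tfrac{l+1}{2l+3},\tfrac{2l+5}{4l+6}]$, exactly as in the odd case. There will be two bookkeeping quantities: the threshold $w_1=\frac{2l+4-2t}{4(l+2-t)}$... more precisely the value making $A_{(W',\frac12 H+wD)}(v)=0$, namely $w_1=\frac{2l+5}{4(l+2-t)}$, beyond which $(W',\frac12 H+wD)$ is already not log canonical (hence not K-semistable), and the $\beta$-root $w_2$ with $\beta(v)=0$. I would split into cases according to the integer $t$: a boundary case $t=0$; a small range $1\le t\le t_1$ where $\beta$ is positive only for $w$ below some value that turns out to be $<\frac{l+1}{2l+3}$; an intermediate range $t_1<t\le t_2$ where $\beta(v)<0$ for all $w\ge 0$; and the largest range $t_2<t\le l+1$ where $\beta$ increases in $w$ but one checks $\beta(w_1)\le 0$ (with equality only at the endpoint $t=l+1$, $w_1=\frac{2l+5}{4l+6}$), so $\beta(v)<0$ on the relevant open interval while for $w>w_1$ the pair fails to be log canonical. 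The cutoffs $t_1,t_2$ are the rational numbers where the two linear coefficients (constant term and $w$-coefficient) of $\beta(v)$ change sign; one verifies $t_1<t_2<\tfrac{l+3}{2}$ or the appropriate bound, so these cases genuinely partition $\{0,1,\dots,l+1\}$.

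The main obstacle will be the numerology: getting the precise rational thresholds $t_1$, $t_2$, $w_1$, $w_2$ right and checking the chain of inequalities $\frac{2l^2+\cdots}{\cdots}<\frac{l+1}{2l+3}$ in each case, since the fractions are somewhat unwieldy and one must be careful that the strict/non-strict endpoints match the statement (the interval is $\frac{l+1}{2l+3}\le w\le\frac{2l+5}{4l+6}$, and at $t=l+1$ the failure at the right endpoint is only strict K-instability via $\beta(v)<0$ for $w<w_1$ together with $A(v)=0$ at $w=w_1$, so one should double-check the endpoint behavior there). Once the four cases are dispatched, I would close by invoking the Fujita--Li criterion: in every case either $\beta_{(W',\frac12 H+wD)}(v)<0$, or $A_{(W',\frac12 H+wD)}(v)<0$ so that $(W',\frac12 H+wD)$ is not even log canonical; either way it is not K-semistable for all $w\in[\frac{l+1}{2l+3},\frac{2l+5}{4l+6}]$, which is the claim. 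I expect the proof to be essentially identical in structure to Lemma~\ref{0<=t<=(n+3)/2notk-s.s.}, and the write-up can reasonably say so and present only the modified computation.
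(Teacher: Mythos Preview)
Your overall strategy---exhibit a single toric valuation with negative $\beta$ on the whole interval---is right, but the specific valuation you chose does \emph{not} work, and this is a genuine gap rather than a bookkeeping slip.

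First, two arithmetic errors. With $v(u)=1$, $v(z)=l+2-t$ (in the chart $y=1$, not $u=1$), one has $A_{W'}(v)=1+(l+2-t)=l+3-t$, not $l+3$; your displayed $A$ is off by $t$. Also, for $t=0$ the minimal-valuation monomial in $D$ is $a_0u^{2l+3}$, so $v(D)=2l+3$, not $2(l+2)$.

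More seriously, even after correcting these, your valuation fails to destabilise in part of the required range. Take $l=1$, $t=2$: then $v(u)=1$, $v(z)=1$, and one computes
\[
\beta_{(W',\frac12 H+wD)}(v)=\tfrac32-2w-\tfrac12\bigl(\tfrac72-5w\bigr)=-\tfrac14+\tfrac{w}{2},
\]
which is \emph{non-negative} for $w\ge\tfrac12$, while the interval runs up to $\tfrac{7}{10}$. Since $A=\tfrac32-2w>0$ there as well, no log-canonicity argument rescues it. So your valuation simply does not witness K-instability for $w\in(\tfrac12,\tfrac{7}{10}]$ in this case.

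The paper instead uses $v(u)=2$, $v(z)=2l+3-t$. This is the correct analogue of the odd-case valuation $v(x)=1$, $v(z)=n+3-t$ under the correspondence $u\leftrightarrow x^2$ (recall the double cover $\bP(1,2,n+2)\to\bP(1,1,l+1)$ sends $x^2\mapsto u$). With this choice the computation is actually \emph{simpler} than in the odd case: for $l\ge 2$ and $1\le t\le l+1$ both the constant term and the $w$-coefficient of $\beta(v)$ turn out negative, so $\beta(v)<0$ for all $w\ge 0$ with no case-split on $t$ needed; only $l=1$ and $t=0$ require separate (short) checks. Your planned four-case analysis with thresholds $t_1,t_2,w_1$ is therefore unnecessary once the right valuation is in hand.
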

\begin{proof}
Let $v_{(2,2l+3-t)}$ be a monomial valuation such that $v(u)=2$ and $v(z)=2l+3-t$. By Lemma \ref{S-inv.}, we get the S-invariant for $v$:
\begin{equation*}
    S_{\mathcal{O}(1)}(v)=\frac{1}{3}(2+\frac{2l+3-t}{l+1})=\frac{4l+5-t}{3(l+1)}.
\end{equation*}
Since $-K_{W'}-(\frac{1}{2}H+wD)=\mathcal{O}(r)$, where $r=l+3-(\frac{1}{2}+w(2l+3))=l+\frac{5}{2}-w(2l+3)$, we have $S_{W',\frac{1}{2}H+wD}(v)=\frac{4l+5-t}{3(l+1)}r$. Since $v(\frac{1}{2}H + wD)=1+wv(D)=1+2w(2l+3-t)$, the log discrepancy is
\begin{equation*}
    A_{W',\frac{1}{2}H + wD}(v) = 2l+5-t-1-2w(2l+3-t)=2l+4-t-2w(2l+3-t).
\end{equation*}
Then we have the $\beta$-invariant
\begin{equation*}
    \beta_{W',\frac{1}{2}H + wD}(v) = \frac{-8l^2 - 4lt + 6l - t + 11}{6(l + 1)} +\frac{(-4l^2 + 4lt - 8l + 3t - 3)w}{3(l + 1)}.
\end{equation*}
Since $1\leq t\leq l+1$, the inequality
\begin{equation*}
    -4l^2 + 4lt - 8l + 3t - 3<0
\end{equation*}
holds. Let $t_1=\frac{-8l^2+6l+11}{4l+1}$. We have $-8l^2 - 4lt + 6l - t + 11<0$ iff $t_1<t$. We have $t_1<1$ for $1<l$, and $t_1\geq1$ only when $t=1$.
Thus for $1<l$, we have $t_1<1\leq t$. Hence $\beta_{W',\frac{1}{2}H + wD}(v)<0$ for any $w\geq 0$.

If $l=1$, then $t=1$ or $2$. We have $ \beta_{W',\frac{1}{2}H + wD}(v) = \frac{9-5t}{12} + \frac{-15+7t}{6}w<0$ for $\frac{l+1}{2l+3}\leq w\leq \frac{2l+5}{4l+6}$. Therefore when $1\leq t\leq l+1$, the log pair $(W', \frac{1}{2}H + wD)$ is not K-semistable for $\frac{l+1}{2l+3}\leq w\leq \frac{2l+5}{4l+6}$.

We next consider the case that $t=0$. The log discrepancy is $A_{W',\frac{1}{2}H + wD}(v) =2l+4-2w(2l+3).$ And $S_{W',\frac{1}{2}H+wD}(v)=\frac{4l+5}{3(l+1)}(l+\frac{5}{2}-w(2l+3))$.
Thus the $\beta$-invariant $\beta_{W',\frac{1}{2}H + wD}(v)=A_{W',\frac{1}{2}H + wD}(v)-S_{W',\frac{1}{2}H+wD}(v)\geq 0$ 
iff $w\leq \frac{4l^2+6l-1}{8l^2+16l+6}$.
 Since $\frac{4l^2+6l-1}{8l^2+16l+6}<\frac{l+1}{2l+3}$ for $l\geq 1$, we have $\beta_{W',\frac{1}{2}H + wD}(v) < 0$ for $\frac{l+1}{2l+3}\leq w\leq \frac{2l+5}{4l+6}$. Hence we get the conclusion.

\end{proof}

\bibliographystyle{alpha}
\bibliography{ref}

@article {ABHLX20,
    AUTHOR = {Alper, Jarod and Blum, Harold and Halpern-Leistner, Daniel and
              Xu, Chenyang},
     TITLE = {Reductivity of the automorphism group of {$K$}-polystable
              {F}ano varieties},
   JOURNAL = {Invent. Math.},
  FJOURNAL = {Inventiones Mathematicae},
    VOLUME = {222},
      YEAR = {2020},
    NUMBER = {3},
     PAGES = {995--1032},
      ISSN = {0020-9910},
   MRCLASS = {14J45 (14D23 14E30 14J10)},
  MRNUMBER = {4169054},
       DOI = {10.1007/s00222-020-00987-2},
       URL = {https://doi.org/10.1007/s00222-020-00987-2},
}

@article{ADL22,
    AUTHOR = {Ascher, Kenneth and DeVleming, Kristin and Liu, Yuchen},
     TITLE = {K-stability and birational models of moduli of quartic {K}3
              surfaces},
   JOURNAL = {Invent. Math.},
  FJOURNAL = {Inventiones Mathematicae},
    VOLUME = {232},
      YEAR = {2023},
    NUMBER = {2},
     PAGES = {471--552},
}

@article{ADL21,
    AUTHOR = {Ascher, Kenneth and DeVleming, Kristin and Liu, Yuchen},
     TITLE = {K-moduli of curves on a quadric surface and {K}3 surfaces},
   JOURNAL = {J. Inst. Math. Jussieu},
  FJOURNAL = {Journal of the Institute of Mathematics of Jussieu. JIMJ.
              Journal de l'Institut de Math\'{e}matiques de Jussieu},
    VOLUME = {22},
      YEAR = {2023},
    NUMBER = {3},
     PAGES = {1251--1291},
      ISSN = {1474-7480,1475-3030},
   MRCLASS = {14J10 (14D23 14J28)},
  MRNUMBER = {4574172},
MRREVIEWER = {Giacomo\ Mezzedimi},
       DOI = {10.1017/S1474748021000384},
       URL = {https://doi.org/10.1017/S1474748021000384},
}

@book{9p,
  title="{The Calabi problem for Fano threefolds}",
  author={Araujo, Carolina and Castravet, Ana-Maria and Cheltsov, Ivan and Fujita, Kento and Kaloghiros, Anne-Sophie and Martinez-Garcia, Jesus and Shramov, Constantin and S{\"u}{\ss}, Hendrik and Viswanathan, Nivedita},
    SERIES = {London Mathematical Society Lecture Note Series},
    VOLUME = {485},
 PUBLISHER = {Cambridge University Press, Cambridge},
      YEAR = {2023},
     PAGES = {vii+441},
}

@article {AZ22,
    AUTHOR = {Abban, Hamid and Zhuang, Ziquan},
     TITLE = {K-stability of {F}ano varieties via admissible flags},
   JOURNAL = {Forum Math. Pi},
  FJOURNAL = {Forum of Mathematics. Pi},
    VOLUME = {10},
      YEAR = {2022},
     PAGES = {Paper No. e15, 43},
      ISSN = {2050-5086},
   MRCLASS = {14J45 (32Q20)},
  MRNUMBER = {4448177},
       DOI = {10.1017/fmp.2022.11},
       URL = {https://doi.org/10.1017/fmp.2022.11},
}

@article {BJ17,
    AUTHOR = {Blum, Harold and Jonsson, Mattias},
     TITLE = {Thresholds, valuations, and {K}-stability},
   JOURNAL = {Adv. Math.},
  FJOURNAL = {Advances in Mathematics},
    VOLUME = {365},
      YEAR = {2020},
     PAGES = {107062, 57},
      ISSN = {0001-8708,1090-2082},
   MRCLASS = {14C20 (14M25)},
  MRNUMBER = {4067358},
MRREVIEWER = {Chenyang\ Xu},
       DOI = {10.1016/j.aim.2020.107062},
       URL = {https://doi.org/10.1016/j.aim.2020.107062},
}

@article {BX19,
    AUTHOR = {Blum, Harold and Xu, Chenyang},
     TITLE = {Uniqueness of {K}-polystable degenerations of {F}ano
              varieties},
   JOURNAL = {Ann. of Math. (2)},
  FJOURNAL = {Annals of Mathematics. Second Series},
    VOLUME = {190},
      YEAR = {2019},
    NUMBER = {2},
     PAGES = {609--656},
      ISSN = {0003-486X,1939-8980},
   MRCLASS = {14J45 (14D20 14E30)},
MRREVIEWER = {James\ McKernan},
       DOI = {10.4007/annals.2019.190.2.4},
       URL = {https://doi.org/10.4007/annals.2019.190.2.4},
}

@article {BLX22,
    AUTHOR = {Blum, Harold and Liu, Yuchen and Xu, Chenyang},
     TITLE = {Openness of {K}-semistability for {F}ano varieties},
   JOURNAL = {Duke Math. J.},
  FJOURNAL = {Duke Mathematical Journal},
    VOLUME = {171},
      YEAR = {2022},
    NUMBER = {13},
     PAGES = {2753--2797},
      ISSN = {0012-7094,1547-7398},
   MRCLASS = {14J45 (14E30)},
       DOI = {10.1215/00127094-2022-0054},
       URL = {https://doi.org/10.1215/00127094-2022-0054},
}

@article {BHJ17,
    AUTHOR = {Boucksom, S\'{e}bastien and Hisamoto, Tomoyuki and Jonsson,
              Mattias},
     TITLE = {Uniform {K}-stability, {D}uistermaat-{H}eckman measures and
              singularities of pairs},
   JOURNAL = {Ann. Inst. Fourier (Grenoble)},
  FJOURNAL = {Universit\'{e} de Grenoble. Annales de l'Institut Fourier},
    VOLUME = {67},
      YEAR = {2017},
    NUMBER = {2},
     PAGES = {743--841},
      ISSN = {0373-0956,1777-5310},
   MRCLASS = {32Q26 (14E30 14G22)},
MRREVIEWER = {Yuji\ Odaka},
       URL = {http://aif.cedram.org/item?id=AIF_2017__67_2_743_0},
}

@article {BHLLX21,
    AUTHOR = {Blum, Harold and Halpern-Leistner, Daniel and Liu, Yuchen and
              Xu, Chenyang},
     TITLE = {On properness of {K}-moduli spaces and optimal degenerations
              of {F}ano varieties},
   JOURNAL = {Selecta Math. (N.S.)},
  FJOURNAL = {Selecta Mathematica. New Series},
    VOLUME = {27},
      YEAR = {2021},
    NUMBER = {4},
     PAGES = {Paper No. 73, 39},
      ISSN = {1022-1824,1420-9020},
   MRCLASS = {14J45 (14D23)},
MRREVIEWER = {Kenta\ Hashizume},
       DOI = {10.1007/s00029-021-00694-7},
       URL = {https://doi.org/10.1007/s00029-021-00694-7},
}

@article {CP21,
    AUTHOR = {Codogni, Giulio and Patakfalvi, Zsolt},
     TITLE = {Positivity of the {CM} line bundle for families of
              {$K$}-stable klt {F}ano varieties},
   JOURNAL = {Invent. Math.},
  FJOURNAL = {Inventiones Mathematicae},
    VOLUME = {223},
      YEAR = {2021},
    NUMBER = {3},
     PAGES = {811--894},
      ISSN = {0020-9910,1432-1297},
   MRCLASS = {14J45 (14C20)},
MRREVIEWER = {James\ McKernan},
       DOI = {10.1007/s00222-020-00999-y},
       URL = {https://doi.org/10.1007/s00222-020-00999-y},
}

@article {Fuj19b,
    AUTHOR = {Fujita, Kento},
     TITLE = {A valuative criterion for uniform {K}-stability of {$\Bbb
              Q$}-{F}ano varieties},
   JOURNAL = {J. Reine Angew. Math.},
  FJOURNAL = {Journal f\"{u}r die Reine und Angewandte Mathematik. [Crelle's
              Journal]},
    VOLUME = {751},
      YEAR = {2019},
     PAGES = {309--338},
      ISSN = {0075-4102,1435-5345},
   MRCLASS = {14E05 (14J45 32Q26)},
  MRNUMBER = {3956698},
MRREVIEWER = {Ziquan\ Zhuang},
       DOI = {10.1515/crelle-2016-0055},
       URL = {https://doi.org/10.1515/crelle-2016-0055},
}

@article {FO18,
    AUTHOR = {Fujita, Kento and Odaka, Yuji},
     TITLE = {On the {K}-stability of {F}ano varieties and anticanonical
              divisors},
   JOURNAL = {Tohoku Math. J. (2)},
  FJOURNAL = {The Tohoku Mathematical Journal. Second Series},
    VOLUME = {70},
      YEAR = {2018},
    NUMBER = {4},
     PAGES = {511--521},
      ISSN = {0040-8735,2186-585X},
   MRCLASS = {14J45 (32Q26 53C55)},
  MRNUMBER = {3896135},
MRREVIEWER = {Andreas\ H\"{o}ring},
       DOI = {10.2748/tmj/1546570823},
       URL = {https://doi.org/10.2748/tmj/1546570823},
}

@article {KVNW,
    AUTHOR = {Kim, In-Kyun and Viswanathan, Nivedita and Won, Joonyeong},
     TITLE = {K-stability of log del {P}ezzo hypersurfaces with index 2},
   JOURNAL = {Internat. J. Math.},
  FJOURNAL = {International Journal of Mathematics},
    VOLUME = {33},
      YEAR = {2022},
    NUMBER = {14},
     PAGES = {Paper No. 2250070, 46},
      ISSN = {0129-167X,1793-6519},
   MRCLASS = {14J45 (53C25)},
  MRNUMBER = {4536258},
       DOI = {10.1142/S0129167X22500707},
       URL = {https://doi.org/10.1142/S0129167X22500707},
}

@article {LXZ22,
    AUTHOR = {Liu, Yuchen and Xu, Chenyang and Zhuang, Ziquan},
     TITLE = {Finite generation for valuations computing stability
              thresholds and applications to {K}-stability},
   JOURNAL = {Ann. of Math. (2)},
  FJOURNAL = {Annals of Mathematics. Second Series},
    VOLUME = {196},
      YEAR = {2022},
    NUMBER = {2},
     PAGES = {507--566},
      ISSN = {0003-486X,1939-8980},
   MRCLASS = {14J45 (14D20 14E30 32Q20)},
  MRNUMBER = {4445441},
MRREVIEWER = {James\ McKernan},
       DOI = {10.4007/annals.2022.196.2.2},
       URL = {https://doi.org/10.4007/annals.2022.196.2.2},
}

@article {Xu21,
    AUTHOR = {Xu, Chenyang},
     TITLE = {K-stability of {F}ano varieties: an algebro-geometric
              approach},
   JOURNAL = {EMS Surv. Math. Sci.},
  FJOURNAL = {EMS Surveys in Mathematical Sciences},
    VOLUME = {8},
      YEAR = {2021},
    NUMBER = {1-2},
     PAGES = {265--354},
      ISSN = {2308-2151,2308-216X},
   MRCLASS = {14D20 (14E30 14J10 14J45)},
  MRNUMBER = {4307210},
       DOI = {10.4171/emss/51},
       URL = {https://doi.org/10.4171/emss/51},
}

@article {Xu20,
    AUTHOR = {Xu, Chenyang},
     TITLE = {A minimizing valuation is quasi-monomial},
   JOURNAL = {Ann. of Math. (2)},
  FJOURNAL = {Annals of Mathematics. Second Series},
    VOLUME = {191},
      YEAR = {2020},
    NUMBER = {3},
     PAGES = {1003--1030},
      ISSN = {0003-486X,1939-8980},
   MRCLASS = {14E30 (14J17 14J45)},
  MRNUMBER = {4088355},
MRREVIEWER = {Yuchen\ Liu},
       DOI = {10.4007/annals.2020.191.3.6},
       URL = {https://doi.org/10.4007/annals.2020.191.3.6},
}

@article {XZ20,
    AUTHOR = {Xu, Chenyang and Zhuang, Ziquan},
     TITLE = {On positivity of the {CM} line bundle on {K}-moduli spaces},
   JOURNAL = {Ann. of Math. (2)},
  FJOURNAL = {Annals of Mathematics. Second Series},
    VOLUME = {192},
      YEAR = {2020},
    NUMBER = {3},
     PAGES = {1005--1068},
      ISSN = {0003-486X,1939-8980},
   MRCLASS = {14J45 (14D20 14E30)},
  MRNUMBER = {4172625},
MRREVIEWER = {Kenta\ Hashizume},
       DOI = {10.4007/annals.2020.192.3.7},
       URL = {https://doi.org/10.4007/annals.2020.192.3.7},
}

@article {XZ21,
    AUTHOR = {Xu, Chenyang and Zhuang, Ziquan},
     TITLE = {Uniqueness of the minimizer of the normalized volume function},
   JOURNAL = {Camb. J. Math.},
  FJOURNAL = {Cambridge Journal of Mathematics},
    VOLUME = {9},
      YEAR = {2021},
    NUMBER = {1},
     PAGES = {149--176},
      ISSN = {2168-0930,2168-0949},
   MRCLASS = {14B05 (13A18 14E30)},
  MRNUMBER = {4325260},
MRREVIEWER = {Yuchen\ Liu},
       DOI = {10.4310/CJM.2021.v9.n1.a2},
       URL = {https://doi.org/10.4310/CJM.2021.v9.n1.a2},
}

@article {Zh23,
    AUTHOR = {Zhou, Chuyu},
     TITLE = {On wall-crossing for {K}-stability},
   JOURNAL = {Adv. Math.},
  FJOURNAL = {Advances in Mathematics},
    VOLUME = {413},
      YEAR = {2023},
     PAGES = {Paper No. 108857, 26},
      ISSN = {0001-8708,1090-2082},
   MRCLASS = {14J45 (14D23 14E30)},
  MRNUMBER = {4533746},
       DOI = {10.1016/j.aim.2022.108857},
       URL = {https://doi.org/10.1016/j.aim.2022.108857},
}

@article {Zh23c,
    AUTHOR = {Zhou, Chuyu},
    Title = "{On the shape of K-semistable domain and wall crossing for K-stability}",
    YEAR = {2023},
    NOTE = {\href{https://arxiv.org/abs/2302.13503}{\textsf{arXiv:2302.13503}}}
}

@ARTICLE {ADL19,
    AUTHOR = {Ascher, Kenneth and DeVleming, Kristin and Liu, Yuchen},
     TITLE = "{Wall crossing for K-moduli spaces of plane curves}",
      YEAR = {2024},
JOURNAL = {Proc. Lond. Math. Soc. (3)},
  FJOURNAL = {Proceedings of the London Mathematical Society. Third Series},
    VOLUME = {128},
      YEAR = {2024},
    NUMBER = {6},
     PAGES = {Paper No. e12615, 113},
}

@ARTICLE {King1994,
    AUTHOR = {A.D. King},
     TITLE = "{Moduli of representations of finite dimensional algebras}",
      YEAR = {1994},
JOURNAL = {The Quarterly Journal of Mathematics},
  FJOURNAL = {The Quarterly Journal of Mathematics},
    VOLUME = {45},
      YEAR = {1994},
    NUMBER = {4},
     PAGES = {515–530},
}

@article {LWX21,
    AUTHOR = {Li, Chi and Wang, Xiaowei and Xu, Chenyang},
     TITLE = {Algebraicity of the metric tangent cones and equivariant
              {K}-stability},
   JOURNAL = {J. Amer. Math. Soc.},
  FJOURNAL = {Journal of the American Mathematical Society},
    VOLUME = {34},
      YEAR = {2021},
    NUMBER = {4},
     PAGES = {1175--1214},
      ISSN = {0894-0347,1088-6834},
   MRCLASS = {14B07 (14E30 14J17 14J45 53C55)},
  MRNUMBER = {4301561},
MRREVIEWER = {Carlos\ Galindo},
       DOI = {10.1090/jams/974},
       URL = {https://doi.org/10.1090/jams/974},
}

@article {LX19,
    AUTHOR = {Liu, Yuchen and Xu, Chenyang},
     TITLE = {K-stability of cubic threefolds},
   JOURNAL = {Duke Math. J.},
  FJOURNAL = {Duke Mathematical Journal},
    VOLUME = {168},
      YEAR = {2019},
    NUMBER = {11},
     PAGES = {2029--2073},
      ISSN = {0012-7094,1547-7398},
   MRCLASS = {14L24 (14E30 14J30 32Q20)},
  MRNUMBER = {3992032},
MRREVIEWER = {Yuji\ Odaka},
       DOI = {10.1215/00127094-2019-0006},
       URL = {https://doi.org/10.1215/00127094-2019-0006},
}

@article {SS17,
    AUTHOR = {Spotti, Cristiano and Sun, Song},
     TITLE = {Explicit {G}romov-{H}ausdorff compactifications of moduli
              spaces of {K}\"{a}hler-{E}instein {F}ano manifolds},
   JOURNAL = {Pure Appl. Math. Q.},
  FJOURNAL = {Pure and Applied Mathematics Quarterly},
    VOLUME = {13},
      YEAR = {2017},
    NUMBER = {3},
     PAGES = {477--515},
      ISSN = {1558-8599,1558-8602},
   MRCLASS = {32Q20 (14J45 14L24 32G13 32J05 32Q25 53C55)},
  MRNUMBER = {3882206},
MRREVIEWER = {P.\ E.\ Newstead},
       DOI = {10.4310/pamq.2017.v13.n3.a5},
       URL = {https://doi.org/10.4310/pamq.2017.v13.n3.a5},
}

@article {OSS16,
    AUTHOR = {Odaka, Yuji and Spotti, Cristiano and Sun, Song},
     TITLE = {Compact moduli spaces of del {P}ezzo surfaces and
              {K}\"{a}hler-{E}instein metrics},
   JOURNAL = {J. Differential Geom.},
  FJOURNAL = {Journal of Differential Geometry},
    VOLUME = {102},
      YEAR = {2016},
    NUMBER = {1},
     PAGES = {127--172},
      ISSN = {0022-040X,1945-743X},
   MRCLASS = {14J15 (14J45 32Q25 53C25 53C55 58D27)},
  MRNUMBER = {3447088},
MRREVIEWER = {I.\ Dolgachev},
       URL = {http://projecteuclid.org/euclid.jdg/1452002879},
}

@incollection {MM93,
    AUTHOR = {Mabuchi, Toshiki and Mukai, Shigeru},
     TITLE = {Stability and {E}instein-{K}\"{a}hler metric of a quartic del
              {P}ezzo surface},
 BOOKTITLE = {Einstein metrics and {Y}ang-{M}ills connections ({S}anda,
              1990)},
    SERIES = {Lecture Notes in Pure and Appl. Math.},
    VOLUME = {145},
     PAGES = {133--160},
 PUBLISHER = {Dekker, New York},
      YEAR = {1993},
      ISBN = {0-8247-9069-3},
   MRCLASS = {32J15 (32J27 32L07 53C55 53C56)},
  MRNUMBER = {1215285},
MRREVIEWER = {J.\ S.\ Joel},
}

@article {LiuZhu,
    AUTHOR = {Liu, Yuchen and Zhu, Ziwen},
     TITLE = {Equivariant {$K$}-stability under finite group action},
   JOURNAL = {Internat. J. Math.},
  FJOURNAL = {International Journal of Mathematics},
    VOLUME = {33},
      YEAR = {2022},
    NUMBER = {1},
     PAGES = {Paper No. 2250007, 21},
      ISSN = {0129-167X,1793-6519},
   MRCLASS = {14J45 (14E20 14L30)},
  MRNUMBER = {4380122},
MRREVIEWER = {Jie\ Liu},
       DOI = {10.1142/S0129167X22500070},
       URL = {https://doi.org/10.1142/S0129167X22500070},
}

@article {Tia97,
    AUTHOR = {Tian, Gang},
     TITLE = {K\"{a}hler-{E}instein metrics with positive scalar curvature},
   JOURNAL = {Invent. Math.},
  FJOURNAL = {Inventiones Mathematicae},
    VOLUME = {130},
      YEAR = {1997},
    NUMBER = {1},
     PAGES = {1--37},
      ISSN = {0020-9910},
   MRCLASS = {53C25 (32L07 53C55)},
  MRNUMBER = {1471884},
MRREVIEWER = {Thalia D. Jeffres},
       DOI = {10.1007/s002220050176},
       URL = {https://doi.org/10.1007/s002220050176},
}

@article {Don02,
    AUTHOR = {Donaldson, S. K.},
     TITLE = {Scalar curvature and stability of toric varieties},
   JOURNAL = {J. Differential Geom.},
  FJOURNAL = {Journal of Differential Geometry},
    VOLUME = {62},
      YEAR = {2002},
    NUMBER = {2},
     PAGES = {289--349},
      ISSN = {0022-040X},
   MRCLASS = {32Q15 (14M25 53C21 53C55)},
  MRNUMBER = {1988506},
       URL = {http://projecteuclid.org/euclid.jdg/1090950195},
}

@article {Li17,
    AUTHOR = {Li, Chi},
     TITLE = {K-semistability is equivariant volume minimization},
   JOURNAL = {Duke Math. J.},
  FJOURNAL = {Duke Mathematical Journal},
    VOLUME = {166},
      YEAR = {2017},
    NUMBER = {16},
     PAGES = {3147--3218},
      ISSN = {0012-7094},
   MRCLASS = {14B05 (13A18 14J45 52A27 53C25 53C55)},
  MRNUMBER = {3715806},
       DOI = {10.1215/00127094-2017-0026},
       URL = {https://doi.org/10.1215/00127094-2017-0026},
}

@article {Liu22,
    AUTHOR = {Liu, Yuchen},
     TITLE = {K-stability of cubic fourfolds},
   JOURNAL = {J. Reine Angew. Math.},
  FJOURNAL = {Journal f\"{u}r die Reine und Angewandte Mathematik. [Crelle's
              Journal]},
    VOLUME = {786},
      YEAR = {2022},
     PAGES = {55--77},
      ISSN = {0075-4102,1435-5345},
   MRCLASS = {14D20 (14J35 14J70 14L24 32Q20)},
  MRNUMBER = {4434748},
MRREVIEWER = {Constantin\ Shramov},
       DOI = {10.1515/crelle-2022-0002},
       URL = {https://doi.org/10.1515/crelle-2022-0002},
}

@article {L23,
    AUTHOR = {Liu, Yuchen},
     TITLE = {K-stability of {F}ano threefolds of rank 2 and degree 14 as
              double covers},
   JOURNAL = {Math. Z.},
  FJOURNAL = {Mathematische Zeitschrift},
    VOLUME = {303},
      YEAR = {2023},
    NUMBER = {2},
     PAGES = {Paper No. 38, 9},
      ISSN = {0025-5874,1432-1823},
   MRCLASS = {14J45 (32Q26)},
  MRNUMBER = {4530187},
       DOI = {10.1007/s00209-022-03192-4},
       URL = {https://doi.org/10.1007/s00209-022-03192-4},
}

@article {Jia20,
    AUTHOR = {Jiang, Chen},
     TITLE = {Boundedness of {$\Bbb Q$}-{F}ano varieties with degrees and
              alpha-invariants bounded from below},
   JOURNAL = {Ann. Sci. \'{E}c. Norm. Sup\'{e}r. (4)},
  FJOURNAL = {Annales Scientifiques de l'\'{E}cole Normale Sup\'{e}rieure.
              Quatri\`eme S\'{e}rie},
    VOLUME = {53},
      YEAR = {2020},
    NUMBER = {5},
     PAGES = {1235--1248},
      ISSN = {0012-9593,1873-2151},
   MRCLASS = {14J45 (14L24)},
  MRNUMBER = {4174851},
MRREVIEWER = {Sung\ Rak\ Choi},
       DOI = {10.24033/asens.2445},
       URL = {https://doi.org/10.24033/asens.2445},
}

@article {LP20,
    AUTHOR = {Liu, Yuchen and Petracci, Andrea},
     TITLE = {On {K}-stability of some del {P}ezzo surfaces of {F}ano index
              2},
   JOURNAL = {Bull. Lond. Math. Soc.},
  FJOURNAL = {Bulletin of the London Mathematical Society},
    VOLUME = {54},
      YEAR = {2022},
    NUMBER = {2},
     PAGES = {517--525},
      ISSN = {0024-6093,1469-2120},
   MRCLASS = {14J45 (14D23 32Q20)},
  MRNUMBER = {4441626},
MRREVIEWER = {Chen\ Jiang},
       DOI = {10.1112/blms.12581},
       URL = {https://doi.org/10.1112/blms.12581},
}

@article{LZ24,
    AUTHOR = {Liu, Yuchen and Zhao, Junyan},
     TITLE = "{K-moduli of Fano threefolds and genus four curves}",
      YEAR = {2024},
        note = {\href{https://arxiv.org/abs/2403.16747}{\textsf{arXiv:2403.16747}}}
}

@article{ACKLP,
  title="{On K-moduli of quartic threefolds}",
  author={Abban, Hamid and Cheltsov, Ivan and Kasprzyk, Alexander and Liu, Yuchen and Petracci, Andrea},
      YEAR = {2023},
    journal = {Algebr. Geom., to appear},
        note = {\href{https://arxiv.org/abs/2210.14781}{\textsf{arXiv:2210.14781}}}
}

@book {La04,
    AUTHOR = {Lazarsfeld, Robert},
     TITLE = {Positivity in algebraic geometry. {II}},
    SERIES = {Ergebnisse der Mathematik und ihrer Grenzgebiete. 3. Folge. A
              Series of Modern Surveys in Mathematics [Results in
              Mathematics and Related Areas. 3rd Series. A Series of Modern
              Surveys in Mathematics]},
    VOLUME = {49},
      NOTE = {Positivity for vector bundles, and multiplier ideals},
 PUBLISHER = {Springer-Verlag, Berlin},
      YEAR = {2004},
     PAGES = {xviii+385},
      ISBN = {3-540-22534-X},
   MRCLASS = {14-02 (14C20 14F05 14F17)},
  MRNUMBER = {2095472},
MRREVIEWER = {Mihnea\ Popa},
       DOI = {10.1007/978-3-642-18808-4},
       URL = {https://doi.org/10.1007/978-3-642-18808-4},
}

@incollection {Kollar97,
    AUTHOR = {Koll\'{a}r, J\'{a}nos},
     TITLE = {Singularities of pairs},
 BOOKTITLE = {Algebraic geometry---{S}anta {C}ruz 1995},
    SERIES = {Proc. Sympos. Pure Math.},
    VOLUME = {62, Part 1},
     PAGES = {221--287},
 PUBLISHER = {Amer. Math. Soc., Providence, RI},
      YEAR = {1997},
      ISBN = {0-8218-0894-X; 0-8218-0493-6},
   MRCLASS = {14E30 (14E15 32S40)},
  MRNUMBER = {1492525},
MRREVIEWER = {Alessio\ Corti},
       DOI = {10.1090/pspum/062.1/1492525},
       URL = {https://doi.org/10.1090/pspum/062.1/1492525},
}

@article {IS17,
    AUTHOR = {Ilten, Nathan and S\"{u}ss, Hendrik},
     TITLE = {K-stability for {F}ano manifolds with torus action of
              complexity 1},
   JOURNAL = {Duke Math. J.},
  FJOURNAL = {Duke Mathematical Journal},
    VOLUME = {166},
      YEAR = {2017},
    NUMBER = {1},
     PAGES = {177--204},
      ISSN = {0012-7094,1547-7398},
   MRCLASS = {32Q20 (14J45 14L30)},
  MRNUMBER = {3592691},
MRREVIEWER = {G.\ K.\ Sankaran},
       DOI = {10.1215/00127094-3714864},
       URL = {https://doi.org/10.1215/00127094-3714864},
}

@article {F21,
    AUTHOR = {Fujita, Kento},
     TITLE = {On {K}-stability for {F}ano threefolds of rank 3 and degree
              28},
   JOURNAL = {Int. Math. Res. Not. IMRN},
  FJOURNAL = {International Mathematics Research Notices. IMRN},
      YEAR = {2023},
    NUMBER = {15},
     PAGES = {12601--12784},
      ISSN = {1073-7928,1687-0247},
   MRCLASS = {14J45 (14E30 32Q20)},
  MRNUMBER = {4621852},
       DOI = {10.1093/imrn/rnac190},
       URL = {https://doi.org/10.1093/imrn/rnac190},
}

@article{LZ23,
    AUTHOR = {Loginov, Konstantin and Zhou, Chuyu},
     TITLE = "{Boundedness of log Fano pairs with certain K-stability}",
      YEAR = {2023},
        note = {\href{https://arxiv.org/abs/2302.06558}{\textsf{arXiv:2302.06558}}}
}

@article {CPS10,
    AUTHOR = {Cheltsov, Ivan and Park, Jihun and Shramov, Constantin},
     TITLE = {Exceptional del {P}ezzo hypersurfaces},
   JOURNAL = {J. Geom. Anal.},
  FJOURNAL = {Journal of Geometric Analysis},
    VOLUME = {20},
      YEAR = {2010},
    NUMBER = {4},
     PAGES = {787--816},
      ISSN = {1050-6926,1559-002X},
   MRCLASS = {14J26 (14B05 14C20 14J45 32Q20)},
  MRNUMBER = {2683768},
MRREVIEWER = {Stefan\ Schr\"{o}er},
       DOI = {10.1007/s12220-010-9135-2},
       URL = {https://doi.org/10.1007/s12220-010-9135-2},
}

@article {Der16,
    AUTHOR = {Dervan, Ruadha\'{\i}},
     TITLE = {On {K}-stability of finite covers},
   JOURNAL = {Bull. Lond. Math. Soc.},
  FJOURNAL = {Bulletin of the London Mathematical Society},
    VOLUME = {48},
      YEAR = {2016},
    NUMBER = {4},
     PAGES = {717--728},
      ISSN = {0024-6093,1469-2120},
   MRCLASS = {32Q26 (32Q20)},
  MRNUMBER = {3532146},
MRREVIEWER = {Giulio\ Codogni},
       DOI = {10.1112/blms/bdw029},
       URL = {https://doi.org/10.1112/blms/bdw029},
}

@article {Zhu21,
    AUTHOR = {Zhuang, Ziquan},
     TITLE = {Optimal destabilizing centers and equivariant {K}-stability},
   JOURNAL = {Invent. Math.},
  FJOURNAL = {Inventiones Mathematicae},
    VOLUME = {226},
      YEAR = {2021},
    NUMBER = {1},
     PAGES = {195--223},
      ISSN = {0020-9910,1432-1297},
   MRCLASS = {14J45 (32Q26 53C55)},
  MRNUMBER = {4309493},
MRREVIEWER = {Usha\ N.\ Bhosle},
       DOI = {10.1007/s00222-021-01046-0},
       URL = {https://doi.org/10.1007/s00222-021-01046-0},
}

@article {LWX19,
    AUTHOR = {Li, Chi and Wang, Xiaowei and Xu, Chenyang},
     TITLE = {On the proper moduli spaces of smoothable
              {K}\"{a}hler-{E}instein {F}ano varieties},
   JOURNAL = {Duke Math. J.},
  FJOURNAL = {Duke Mathematical Journal},
    VOLUME = {168},
      YEAR = {2019},
    NUMBER = {8},
     PAGES = {1387--1459},
      ISSN = {0012-7094,1547-7398},
   MRCLASS = {14J45 (14D20 14J10 53C25 53C55)},
  MRNUMBER = {3959862},
MRREVIEWER = {Indranil\ Biswas},
       DOI = {10.1215/00127094-2018-0069},
       URL = {https://doi.org/10.1215/00127094-2018-0069},
}

@article {CDS15,
    AUTHOR = {Chen, Xiuxiong and Donaldson, Simon and Sun, Song},
     TITLE = {K\"{a}hler-{E}instein metrics on {F}ano manifolds. {I}: {A}pproximation of metrics with cone singularities, {II}: {L}imits with cone angle less than $2\pi$, {III}: {L}imits as cone angle approaches $2\pi$ and completion of the main proof.},
   JOURNAL = {J. Amer. Math. Soc.},
  FJOURNAL = {Journal of the American Mathematical Society},
    VOLUME = {28},
      YEAR = {2015},
    NUMBER = {1},
     PAGES = {183--197, 199--234, 235--278},
      ISSN = {0894-0347},
   MRCLASS = {53C55 (14J45 32Q20 32W20 53C25)},
  MRNUMBER = {3264766},
MRREVIEWER = {Julius Ross},
       DOI = {10.1090/S0894-0347-2014-00799-2},
       URL = {https://doi.org/10.1090/S0894-0347-2014-00799-2},
}

@article {Oda12,
    AUTHOR = {Odaka, Yuji},
     TITLE = {The Calabi Conjecture and K-stability},
   JOURNAL = {Int. Math. Res. Not},
  FJOURNAL = {International Mathematics Research Notices},
    VOLUME = {10},
      YEAR = {2012},
    NUMBER = {10},
     PAGES = {2272-2288},
      ISSN = {},
   MRCLASS = {},
  MRNUMBER = {},
MRREVIEWER = {},
       DOI = {},
       URL = {},
}

@article {Tia15,
    AUTHOR = {Tian, Gang},
     TITLE = {K-stability and {K}\"{a}hler-{E}instein metrics},
   JOURNAL = {Comm. Pure Appl. Math.},
  FJOURNAL = {Communications on Pure and Applied Mathematics},
    VOLUME = {68},
      YEAR = {2015},
    NUMBER = {7},
     PAGES = {1085--1156},
      ISSN = {0010-3640},
   MRCLASS = {53C55 (53C25)},
  MRNUMBER = {3352459},
MRREVIEWER = {Matthew B. Stenzel},
       DOI = {10.1002/cpa.21578},
       URL = {https://doi.org/10.1002/cpa.21578},
}

@article {Fuj17,
    AUTHOR = {Fujita, Kento},
     TITLE = {K-stability of log {F}ano hyperplane arrangements},
   JOURNAL = {J. Algebraic Geom.},
  FJOURNAL = {Journal of Algebraic Geometry},
    VOLUME = {30},
      YEAR = {2021},
    NUMBER = {4},
     PAGES = {603--630},
      ISSN = {1056-3911,1534-7486},
   MRCLASS = {14E30 (14N20 32Q26)},
  MRNUMBER = {4372401},
MRREVIEWER = {Chi\ Li},
       DOI = {10.1090/jag/783},
       URL = {https://doi.org/10.1090/jag/783},
}

@article {GMGS18,
    AUTHOR = {Gallardo, Patricio and Martinez-Garcia, Jesus and Spotti,
              Cristiano},
     TITLE = {Applications of the moduli continuity method to log {K}-stable
              pairs},
   JOURNAL = {J. Lond. Math. Soc. (2)},
  FJOURNAL = {Journal of the London Mathematical Society. Second Series},
    VOLUME = {103},
      YEAR = {2021},
    NUMBER = {2},
     PAGES = {729--759},
      ISSN = {0024-6107,1469-7750},
   MRCLASS = {32Q20 (14D22 14J10 14J45 14L24)},
  MRNUMBER = {4230917},
MRREVIEWER = {P.\ E.\ Newstead},
       DOI = {10.1112/jlms.12390},
       URL = {https://doi.org/10.1112/jlms.12390},
}

@article{Zha22,
    AUTHOR = {Zhao, Junyan},
     TITLE = {Moduli of genus six curves and {K}-stability},
   JOURNAL = {Trans. Amer. Math. Soc. Ser. B},
  FJOURNAL = {Transactions of the American Mathematical Society. Series B},
    VOLUME = {11},
      YEAR = {2024},
     PAGES = {863--900},
      ISSN = {2330-0000},
   MRCLASS = {14H10 (14D23 14J10 14J17 14L24)},
  MRNUMBER = {4741264},
       DOI = {10.1090/btran/195},
       URL = {https://doi.org/10.1090/btran/195},
}

@article{Pap22,
    AUTHOR = {Papazachariou, Theodoros Stylianos},
     TITLE = "{K-moduli of log Fano complete intersections}",
      YEAR = {2022},
        note = {\href{https://arxiv.org/abs/2212.09332}{\textsf{arXiv:2212.09332}}}
}

@article{Zho23,
    AUTHOR = {Zhou, Chuyu},
     TITLE = {On {K}-semistable domains---more examples},
   JOURNAL = {Internat. J. Math.},
  FJOURNAL = {International Journal of Mathematics},
    VOLUME = {35},
      YEAR = {2024},
    NUMBER = {2},
     PAGES = {Paper No. 2350103, 30},
      ISSN = {0129-167X,1793-6519},
   MRCLASS = {14J45},
  MRNUMBER = {4712669},
       DOI = {10.1142/s0129167x23501033},
       URL = {https://doi.org/10.1142/s0129167x23501033},
}

@article{PSW23,
    AUTHOR = {Pan, Long and Si, Fei and Wu, Haoyu},
     TITLE = "{K moduli of log del Pezzo pairs}",
      YEAR = {2023},
        note = {\href{https://arxiv.org/abs/2303.05651}{\textsf{arXiv:2303.05651}}},
}

@ARTICLE{PT06,
       author = {Paul, Sean Timothy and Tian, Gang},
        title = {C{M} stability and the generalized {F}utaki invariant {I}},
         year = "2006",
         note = {\href{https://arxiv.org/abs/math/0605278}{\textsf{arXiv:math/0605278}}}
}

@article {AK16,
    AUTHOR = {Akhtar, Mohammad E. and Kasprzyk, Alexander M.},
     TITLE = {Mutations of fake weighted projective planes},
   JOURNAL = {Proc. Edinb. Math. Soc. (2)},
  FJOURNAL = {Proceedings of the Edinburgh Mathematical Society. Series II},
    VOLUME = {59},
      YEAR = {2016},
    NUMBER = {2},
     PAGES = {271--285},
      ISSN = {0013-0915,1464-3839},
   MRCLASS = {14M25 (11D09 14J45 52B20)},
  MRNUMBER = {3509229},
MRREVIEWER = {Ma\l gorzata\ Aneta\ Marciniak},
       DOI = {10.1017/S0013091515000115},
       URL = {https://doi.org/10.1017/S0013091515000115},
}

@incollection {Por19,
    AUTHOR = {Portakal, Irem},
     TITLE = {A note on deformations and mutations of fake weighted
              projective planes},
 BOOKTITLE = {Algebraic and geometric combinatorics on lattice polytopes},
     PAGES = {354--366},
 PUBLISHER = {World Sci. Publ., Hackensack, NJ},
      YEAR = {2019},
      ISBN = {978-981-120-047-2},
   MRCLASS = {14B07 (14L30 14M25)},
  MRNUMBER = {3971704},
}

@article {Oda13,
    AUTHOR = {Odaka, Yuji},
     TITLE = {The {GIT} stability of polarized varieties via discrepancy},
   JOURNAL = {Ann. of Math. (2)},
  FJOURNAL = {Annals of Mathematics. Second Series},
    VOLUME = {177},
      YEAR = {2013},
    NUMBER = {2},
     PAGES = {645--661},
      ISSN = {0003-486X,1939-8980},
   MRCLASS = {14C20 (14J17 14L24 32Q26)},
  MRNUMBER = {3010808},
MRREVIEWER = {Daniel\ Greb},
       DOI = {10.4007/annals.2013.177.2.6},
       URL = {https://doi.org/10.4007/annals.2013.177.2.6},
}

@article{ACD+,
  title="{One-dimensional components in the K-moduli of smooth Fano 3-folds}",
  author={Abban, Hamid and Cheltsov, Ivan and  Denisova, Elena and  Etxabarri-Alberdi, Erroxe and Kaloghiros, Anne-Sophie and Jiao, Dongchen and  Martinez-Garcia, Jesus and  Papazachariou, Theodoros},
  note={\href{https://arxiv.org/abs/2309.12518}{\textsf{arXiv:2309.12518}}},
  year={2023}
}

@article{CDGF,
  title="{K-stability of Casagrande-Druel varieties}",
  author={Cheltsov, Ivan and  Duarte Guerreiro, Tiago and Fujita, Kento and Krylov, Igor and  Martinez-Garcia, Jesus},
  note={\href{https://arxiv.org/abs/2309.12522}{\textsf{arXiv:2309.12522}}},
  year={2023}
}

@article{DJKHQ24,
  title="{The K-moduli space of a family of conic bundles threefolds}",
  author={DeVleming, Kristin and Ji, Lena and Kennedy-Hunt, Patrick and Quek, Ming Hao},
  note={\href{https://arxiv.org/abs/2403.09557}{\textsf{arXiv:2403.09557}}},
  year={2024}
}

@article {Bun24,
    AUTHOR = {Bunnett, Dominic},
     TITLE = {On the moduli of hypersurfaces in toric orbifolds},
   JOURNAL = {Proc. Edinb. Math. Soc. (2)},
  FJOURNAL = {Proceedings of the Edinburgh Mathematical Society. Series II},
    VOLUME = {67},
      YEAR = {2024},
    NUMBER = {2},
     PAGES = {577--616},
      ISSN = {0013-0915,1464-3839},
   MRCLASS = {14J70 (14L24)},
  MRNUMBER = {4746492},
       DOI = {10.1017/S0013091524000166},
       URL = {https://doi.org/10.1017/S0013091524000166},
}

@article{BDHK16,
  title="{Projective completions of graded unipotent quotients}",
  AUTHOR = {B\'{e}rczi, Gergely and Doran, Brent and Hawes, Thomas and Kirwan, Frances},
  note={\href{https://arxiv.org/abs/1607.04181}{\textsf{arXiv:1607.04181}}},
  year={2016}
}

@incollection {BDHK18,
    AUTHOR = {B\'{e}rczi, Gergely and Doran, Brent and Hawes, Thomas and Kirwan, Frances},
     TITLE = {Constructing quotients of algebraic varieties by linear
              algebraic group actions},
 BOOKTITLE = {Handbook of group actions. {V}ol. {IV}},
    SERIES = {Adv. Lect. Math. (ALM)},
    VOLUME = {41},
     PAGES = {341--446},
 PUBLISHER = {Int. Press, Somerville, MA},
      YEAR = {2018},
      ISBN = {978-1-57146-365-4},
   MRCLASS = {14L24 (13A50 14L30)},
  MRNUMBER = {3888691},
MRREVIEWER = {M\'{a}ty\'{a}s\ Domokos},
}

@incollection {BJK18,
    AUTHOR = {B\'{e}rczi, Gergely and Jackson, Joshua and Kirwan, Frances},
     TITLE = {Variation of non-reductive geometric invariant theory},
 BOOKTITLE = {Surveys in differential geometry 2017. {C}elebrating the 50th
              anniversary of the {J}ournal of {D}ifferential {G}eometry},
    SERIES = {Surv. Differ. Geom.},
    VOLUME = {22},
     PAGES = {49--69},
 PUBLISHER = {Int. Press, Somerville, MA},
      YEAR = {2018},
      ISBN = {978-1-57146-361-6},
   MRCLASS = {14L24},
  MRNUMBER = {3838113},
MRREVIEWER = {P.\ E.\ Newstead},
}

@article {Ku99,
    AUTHOR = {Kuwata, Takayasu},
     TITLE = {On log canonical thresholds of reducible plane curves},
   JOURNAL = {Amer. J. Math.},
  FJOURNAL = {American Journal of Mathematics},
    VOLUME = {121},
      YEAR = {1999},
    NUMBER = {4},
     PAGES = {701--721},
      ISSN = {0002-9327,1080-6377},
   MRCLASS = {14H20 (14B05 14E30)},
  MRNUMBER = {1704476},
MRREVIEWER = {Alessio\ Corti},
       URL =
              {http://muse-jhu-edu.vnlib.kias.re.kr/journals/american_journal_of_mathematics/v121/121.4kuwata.pdf},
}

@book {MFK94,
    AUTHOR = {Mumford, D. and Fogarty, J. and Kirwan, F.},
     TITLE = {Geometric invariant theory},
    SERIES = {Ergebnisse der Mathematik und ihrer Grenzgebiete (2) [Results
              in Mathematics and Related Areas (2)]},
    VOLUME = {34},
   EDITION = {Third},
 PUBLISHER = {Springer-Verlag, Berlin},
      YEAR = {1994},
     PAGES = {xiv+292},
      ISBN = {3-540-56963-4},
   MRCLASS = {14D25 (58E05 58F05)},
  MRNUMBER = {1304906},
MRREVIEWER = {Yi\ Hu},
}

\end{document}